
\documentclass[12pt,letterpaper]{report}
\usepackage[latin1]{inputenc}
\usepackage{amsmath}
\usepackage{amsfonts}
\usepackage{setspace}
\title{Title}

\makeatletter
\renewcommand{\ps@plain}{
\renewcommand\@oddhead{\hfill\normalfont\textrm{\thepage}}
\renewcommand\@evenhead{}
\renewcommand\@oddfoot{}
\renewcommand\@evenfoot{}}

\def\@makechapterhead#1{%
  \vspace*{-48\p@}%
  {\parindent \z@ \raggedright \normalfont
    \ifnum \c@secnumdepth >\m@ne
        \huge\bfseries \@chapapp\space \thechapter
        \par\nobreak
        \vskip 0\p@
    \fi
    \interlinepenalty\@M
    \Huge \bfseries #1\par\nobreak
    \vskip 20\p@
  }}

\def\@makeschapterhead#1{%
  \vspace*{-48\p@}%
  {\parindent \z@ \raggedright
    \normalfont
    \interlinepenalty\@M
    \Huge \bfseries  #1\par\nobreak
    \vskip 20\p@
  }}
\makeatother

\renewcommand{\thepage}{\roman{page}}

\addtolength{\voffset}{-.5in}
\addtolength{\hoffset}{-.145in}
\setlength{\marginparwidth}{1.25in}
\setlength{\oddsidemargin}{.625in}
\setlength{\marginparsep}{0in}
\setlength{\topmargin}{12pt}
\setlength{\headheight}{12pt}
\setlength{\headsep}{20pt}
\setlength{\textheight}{9in}
\setlength{\textwidth}{6in}
\setlength{\footskip}{0in}

\doublespacing


\usepackage{amssymb}
\usepackage{amsthm}
\usepackage{mathrsfs}
\usepackage{listings}
\usepackage{url}
\usepackage{enumerate}
\usepackage[titletoc]{appendix}
\usepackage{tikz}
\usepackage{amscd}
\usepackage{verbatim}
\usepackage{xspace}
\usepackage{array}
\usepackage[all]{nowidow}

\usetikzlibrary{snakes,arrows,shapes,patterns}

\allowdisplaybreaks

\newcommand{\kk}{\Bbbk}
\newcommand{\NN}{\mathbb N}
\newcommand{\PP}{\mathbb P}

\newcommand{\ZZ}{\mathbb Z}
\newcommand{\stirlingtwo}[2]{\genfrac{\lbrace}{\rbrace}{0pt}{}{#1}{#2}}
\newcommand{\define}[1]{\textbf{#1}}
\newcommand{\statement}[1]{\mathfrak{#1}}
\newcommand{\sheaf}[1]{\mathscr{#1}}
\newcommand{\defect}{\delta}
\newcommand{\tuple}[1]{\textup{\textbf{#1}}}
\newcommand{\nth}{\textsuperscript{th}\xspace}

\DeclareMathOperator{\expdim}{expdim}
\DeclareMathOperator{\Split}{Split}
\DeclareMathOperator{\Span}{span}
\DeclareMathOperator{\rank}{rank}
\DeclareMathOperator{\Seg}{Seg}

\newtheorem{theorem}{Theorem}[section]
\newtheorem{proposition}[theorem]{Proposition}
\newtheorem{lemma}[theorem]{Lemma}
\newtheorem{corollary}[theorem]{Corollary}
\newtheorem{conjecture}[theorem]{Conjecture}
\newtheorem{problem}[theorem]{Problem}
\theoremstyle{definition}
\newtheorem{definition}[theorem]{Definition}
\newtheorem{computation}[theorem]{Macaulay2 Computation}

    \newtheoremstyle{TheoremNum}
        {\topsep}{\topsep}              
        {\itshape}                      
        {}                              
        {\bfseries}                     
        {.}                             
        { }                             
        {\thmname{#1}\thmnote{ \bfseries #3}}
    \theoremstyle{TheoremNum}
    \newtheorem{thmn}{Theorem}

\begin{document}
\pagestyle{myheadings}

\thispagestyle{empty}

\begin{center}
Nondefective secant varieties of varieties of completely decomposable forms\\
A Dissertation\\
Presented in Partial Fulfillment of the Requirements for the\\
Degree of Doctor of Philosophy\\
with a\\
Major in Mathematics\\
in the\\
College of Graduate Studies\\
University of Idaho\\
\vspace{84pt}
by\\
Douglas A. Torrance\\
\vspace{48pt}
June 2013\\
\vspace{60pt} 
Major Professor: Hirotachi Abo, Ph.D.\\
\end{center}
\pagebreak

\addcontentsline{toc}{chapter}{Authorization to Submit Dissertation}
\section*{\begin{center}AUTHORIZATION TO SUBMIT DISSERTATION\end{center}}
\begin{flushleft}
This dissertation of Douglas A. Torrance, submitted for the degree of Doctor of Philosophy with a major in Mathematics and titled ``Nondefective secant varieties of varieties of completely decomposable forms,'' has been reviewed in final form. Permission, as indicated by the signatures and dates given below, is now granted to submit final copies to the College of Graduate Studies for approval.
\end{flushleft}
\begin{singlespace}
\ \ \ \ \ Major Professor\indent\underline{\makebox[2.8in][l]{\ }}Date\underline{\makebox[1.2in][l]{\ }}\\
\ \ \indent\indent\indent\indent\indent\indent\indent Hirotachi Abo\\
\ \\
\ \ \ \indent Committee\\
\ \ \ \indent Members\indent\indent\ \ \ \ \ \ \underline{\makebox[2.8in][l]{\ }}Date\underline{\makebox[1.2in][l]{\ }}\\
\ \ \indent\indent\indent\indent\indent\indent\indent Roger Cole\\
\ \\
\ \ \indent\indent\indent\indent\indent\indent\ \ \ \underline{\makebox[2.8in][l]{\ }}Date\underline{\makebox[1.2in][l]{\ }}\\
\ \ \indent\indent\indent\indent\indent\indent\indent Jennifer Johnson-Leung\\
\ \\
\ \ \indent\indent\indent\indent\indent\indent\ \ \ \underline{\makebox[2.8in][l]{\ }}Date\underline{\makebox[1.2in][l]{\ }}\\
\ \ \indent\indent\indent\indent\indent\indent\indent Alexander Woo\\
\ \\
\ \ \indent Department\\
\ \ \indent Administrator\indent\ \ \ \underline{\makebox[2.8in][l]{\ }}Date\underline{\makebox[1.2in][l]{\ }}\\
\ \ \indent\indent\indent\indent\indent\indent\indent Monte Boisen\\
\ \\
\ \ \indent Discipline's\\
\ \ \indent College Dean\indent\ \ \ \ \ \underline{\makebox[2.8in][l]{\ }}Date\underline{\makebox[1.2in][l]{\ }}\\
\ \ \indent\indent\indent\indent\indent\indent\indent Paul Joyce\\
\ \\
Final Approval and Acceptance by the College of Graduate Studies\\
\ \\
\ \ \indent\indent\indent\indent\indent\indent\ \ \underline{\makebox[2.8in][l]{\ }}Date\underline{\makebox[1.2in][l]{\ }}\\
\ \ \indent\indent\indent\indent\indent\indent\indent Jie Chen\\
\end{singlespace}
\pagebreak

\addcontentsline{toc}{chapter}{Abstract}
\chapter*{Abstract}
A variation of Waring's problem from classical number theory is the question, ``What is the smallest number $s$ such that any generic homogeneous polynomial of degree $d$ in $n+1$ variables may be written as the sum of at most $s$ products of linear forms?''  This question may be answered geometrically by determining the smallest $s$ such that the $s$\nth secant variety of the variety of completely decomposable forms fills the ambient space.  If this secant variety has the expected dimension, it is called nondefective, and $s=\left\lceil\binom{n+d}{d}/(dn+1)\right\rceil$.  It is conjectured that the secant variety is always nondefective unless $d=2$ and $2\leq s\leq\frac{n}{2}$.  We prove several special cases of this conjecture.  In particular, we define functions $s_1$ and $s_2$ such that the secant variety is nondefective when $n\geq 3$ and $s\leq s_1(d)$ or when $n=3$ and $s\geq s_2(d)$ and a function $c$ such that the secant variety is nondefective when $d\geq n\geq 4$ and $s\leq 2^{n-3}c(n,d)$.  We further show that the secant variety is nondefective when $s\leq 30$ unless $d=2$ and $2\leq s\leq\frac{n}{2}$.

\pagebreak

\addcontentsline{toc}{chapter}{Acknowledgements}
\chapter*{Acknowledgements}

First and foremost, I would like to thank my advisor, Hirotachi Abo.  His guidance, helpfulness, and patience have been remarkable.  I would also like to thank the rest of the faculty in the Department of Mathematics at the University of Idaho, especially my committee members Alexander Woo and Jennifer Johnson-Leung for their insight and comments, and department chair Monte Boisen for his cheer and support.  I also thank my fellow graduate student Jia Wan for many helpful discussions.

I would also like to thank my family.  Thanks go to my father, Douglas E. Torrance, who inspired me to pursue a career in academia, and to the memory of my mother, Diane Palliser.  I thank my infant son Gabriel, whose smiles light up my day, and my mother-in-law, Bobbi Cady, for all the help she has been this past year.  Finally, I thank my wife Sarah, my partner through all of this.  I love you!

\pagebreak

\addcontentsline{toc}{chapter}{Dedication}
\topskip0pt
\vspace*{\fill}
\begin{center}
\textit{To Sarah and Gabe.}
\end{center}
\vspace*{\fill}
\pagebreak

\addcontentsline{toc}{chapter}{Table of Contents}
\tableofcontents
\pagebreak

\addcontentsline{toc}{chapter}{List of Tables}
\listoftables
\pagebreak

\addcontentsline{toc}{chapter}{List of Figures}
\listoffigures
\pagebreak

\setcounter{page}{1}
\renewcommand{\thepage}{\arabic{page}}
\chapter{Preliminaries}

\section{Introduction}

Consider the following classical problem from number theory.

\begin{problem}[Waring's problem]
What is the smallest $s$ such that any natural number may be written as the sum of at most $s$ $d$\nth powers of natural numbers?
\end{problem}

This problem was named after Edward Waring, who in 1770, stated the following theorem without proof \cite[Theorem 47]{Waring}.

\begin{theorem}
There exists a function $s:\NN\rightarrow\NN$ such that, for every $d\in\NN$, any natural number may be minimally written as the sum of at most $s(d)$ $d$\nth powers.  In particular, $s(2)=4$, $s(3)=9$, and $s(4)=19$.
\end{theorem}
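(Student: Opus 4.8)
\emph{The plan.} The statement bundles together four classical facts: that $s$ is everywhere finite (Hilbert), that $s(2)=4$ (Lagrange), that $s(3)=9$, and that $s(4)=19$. Since Hilbert's finiteness proof takes Lagrange's four-square theorem as an input, I would organize the argument around proving finiteness first (quoting the four-square theorem and returning to its proof afterwards) and then pinning down the three explicit values. I would also keep in mind throughout that each value $s(d)$ hides two inequalities --- an \emph{upper bound}, that every natural number is a sum of at most $s(d)$ $d$\nth powers, and a \emph{lower bound}, that some natural number is not a sum of fewer --- and that for $d\ge 3$ these two halves require completely different machinery.

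For finiteness the engine is the \emph{Hilbert--Waring identity}: for every $k\in\NN$ there are a positive integer $N$, positive rationals $\rho_1,\dots,\rho_N$, and integer vectors $\mathbf a_1,\dots,\mathbf a_N\in\ZZ^4$ with
\[
(x_1^2+x_2^2+x_3^2+x_4^2)^k=\sum_{i=1}^N\rho_i\,(\mathbf a_i\cdot\mathbf x)^{2k}
\]
as a polynomial identity in $\mathbf x=(x_1,x_2,x_3,x_4)$. I would produce it by averaging $(\mathbf a\cdot\mathbf x)^{2k}$ over the unit sphere $\lvert\mathbf a\rvert=1$, which by rotational symmetry equals a positive rational multiple of $(x_1^2+\dots+x_4^2)^k$; this exhibits $(x_1^2+\dots+x_4^2)^k$ as a point --- in fact an \emph{interior} point --- of the finite-dimensional convex cone spanned by the $2k$\nth powers of linear forms, so by density of integer directions on the sphere, Carath\'eodory's theorem, and a little linear algebra over $\QQ$, it is a finite positive-rational combination of $2k$\nth powers of integer linear forms. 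Clearing denominators and feeding four-square decompositions of integers (Lagrange) into this identity, I would then run Hilbert's inductive argument on $d$: given a large integer $m$, peel off a principal block $(x_1^2+\dots+x_4^2)^k$ with $x_1^2+\dots+x_4^2$ near $(cm)^{1/k}$, expand it via the identity, and recurse on the smaller remainder. This yields a finite --- though astronomically large --- bound $s(d)$ for every $d$.

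For the explicit values I would proceed as follows. When $d=2$, the upper bound is Lagrange's four-square theorem: Euler's four-square multiplicativity identity reduces it to the case of a prime $p$, which follows from Minkowski's convex-body theorem applied to the index-$p^2$ sublattice of $\ZZ^4$ cut out by two congruences modulo $p$ coming from a solution of $a^2+b^2+1\equiv 0\pmod p$; the matching lower bound is that no integer $\equiv 7\pmod 8$ is a sum of three squares, so $s(2)=4$. When $d=3$ or $d=4$, the lower bounds come from exceptional integers --- $23$ (and, likewise, $239$) is not a sum of fewer than nine positive cubes, and $79$ is not a sum of fewer than nineteen fourth powers, each seen by a short congruence-and-size check. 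The upper bound $s(3)\le 9$ is still essentially elementary (Wieferich, with a finite range patched by Kempner), via explicit cubic identities and a covering argument over small moduli; the sharper facts that all sufficiently large integers need only seven cubes (Linnik) and that $23$ and $239$ are the only exceptions (Dickson), I would merely cite. The upper bound $s(4)\le 19$ is genuinely hard: the Hardy--Littlewood circle method (Davenport) shows that every sufficiently large integer is a sum of sixteen fourth powers --- one bounds the singular series below and the minor-arc contribution above --- and the explicit but enormous remaining range is cleared by machine computation (Balasubramanian, Deshouillers, and Dress, 1986).

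The hard part --- and the reason I would present this theorem with attributions rather than a self-contained proof --- is twofold. First, making the Hilbert--Waring identity fully rigorous: extracting genuinely rational weights and integer nodes from the sphere-averaging argument, and keeping the inductive bookkeeping honest, takes real care. Second, and more seriously, bridging the gap for $d=4$ between the circle-method threshold and the small exceptional integers demanded both refined analytic estimates and a substantial numerical search, and is not something one carries out by hand.
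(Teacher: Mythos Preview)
Your proposal is correct in spirit and attribution, but it is worth noting that the paper itself does not prove this theorem at all: it is stated purely as historical background, immediately followed by the remark that Waring stated it without proof and by citations to Hilbert (existence of $s$), Lagrange ($s(2)=4$), Wieferich and Kempner ($s(3)=9$), and Balasubramanian--Deshouillers--Dress ($s(4)=19$). There is no argument given in the paper beyond these citations.

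Your outline is therefore strictly more than the paper offers. The attributions you give match the paper's exactly, and your sketches of the underlying arguments (the Hilbert--Waring identity via sphere averaging and Carath\'eodory, Lagrange via Euler's identity and Minkowski, the circle method for $d=4$) are accurate high-level descriptions of the classical proofs. The paper treats this theorem as motivation only and moves on immediately to Waring's problem for polynomials, so a one-line ``see \cite{Hilbert,Lagrange,Wieferich,Kempner,BDD}'' would already match its treatment; your more substantive plan is a bonus rather than a requirement.
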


The existence of $s$ was not proven until 1909 by Hilbert \cite{Hilbert}.  Lagrange proved that $s(2)=4$ in 1770 \cite{Lagrange} in what has become known as ``Lagrange's four-square theorem.''  Wieferich proved much of the $d=3$ case in 1908 \cite{Wieferich}, with gaps filled in by Kempner in 1912 \cite{Kempner}, and Balasubramanian, Deshouillers, and Dress finally proved that $s(4)=19$ in 1986 \cite{BDD}.

Waring's problem serves as an inspiration for similar problems, such as the following Waring's problem for polynomials.

\begin{problem}[Waring's problem for polynomials]\label{WaringPolynomials}
What is the smallest $s$ such that any generic polynomial in $n+1$ variables of degree $d$ may be written as the sum of at most $s$ $d$\nth powers of linear forms?
\end{problem}

Waring's problem for polynomials was solved by Alexander and Hirschowitz, as will be discussed in Section \ref{SecantVarieties}.

In this dissertation, we will examine a generalization of Waring's problem for polynomials.  A \define{completely decomposable form} is a homogeneous polynomial which is the product of linear forms.

\begin{problem}[Waring's problem for completely decomposable forms]\label{WaringCDF}
What is the smallest $s$ such that any generic polynomial in $n+1$ variables of degree $d$ may be written as the sum of at most $s$ completely decomposable forms?
\end{problem}

First, note that such an $s$ exists, as every polynomial is the sum of monomials, and every monomial is a completely decomposable form.  Since there are, up to scalar multiplication, $\binom{n+d}{d}$ monomials of degree $d$ in $n+1$ variables, we have $s\leq\binom{n+d}{d}$.

Our goal is to determine the exact value of this $s$ for as many pairs $(n,d)$ as possible.  Our main tool will be secant varieties of varieties of completely decomposable forms, denoted $\sigma_s(\Split_d(\PP^n))$, which we will introduce more precisely later in this chapter. 

This version of Waring's problem is easy to answer if the secant variety is known to be nondefective, \textit{i.e.}, it has the expected dimension.  It is known that $\sigma_s(\Split_d(\PP^n))$ is nondefective if $d=1$, $d=2$ and $s=1$ or $n<2s$, $n\leq 2$, or $3(s-1)\leq n$.  Also, there exist both upper and lower bounds for $s$ such that $\sigma_s(\Split_3(\PP^n))$ and $\sigma_s(\Split_d(\PP^3))$ are known to be nondefective.  See Section \ref{KnownResults} for more details on these results.

We prove the three additional results regarding the nondefectivity of particular cases.

In the first result, we use a method we call ``restriction induction'' to find upper and lower bounds for $s$ for which $\sigma_s(\Split_d(\PP^3))$ is nondefective.  These bounds, which are proven in Theorem \ref{nEqualsThreeResult}, are better than the bounds which were previously known.  We later use a method we call ``splitting induction'' to extend the upper bound to $n\geq 3$ in Corollary \ref{S1Generalized}.

\begin{theorem}\label{S1AndS2}
Let 
\begin{align*}
s_1(d) =
\begin{cases}
\frac{1}{18}d^2+\frac{5}{18}d&\text{ if }d\equiv 0,4\pmod{9}\\
\frac{1}{18}d^2+\frac{5}{18}d+\frac{2}{9}&\text{ if }d\equiv 2,5,8\pmod{9}\\
\frac{1}{18}d^2+\frac{5}{18}d+\frac{2}{3}&\text{ if }d\equiv 1,3\pmod{9}\\
\frac{1}{18}d^2+\frac{5}{18}d+\frac{1}{3}&\text{ if }d\equiv 6,7\pmod{9}\\
\end{cases}\\
\intertext{and}
s_2(d) =
\begin{cases}
\frac{1}{18}d^2+\frac{1}{3}d+1 &\text{ if }d\equiv 0\pmod{6}\\
\frac{1}{18}d^2+\frac{1}{3}d+\frac{1}{2} &\text{ if }d\equiv 3\pmod{6}\\
\frac{1}{18}d^2+\frac{7}{18}d+\frac{5}{9}&\text{ if }d\equiv 1,4,7\pmod{9}\\
\frac{1}{18}d^2+\frac{7}{18}d+1&\text{ if }d\equiv 2\pmod{9}\\
\frac{1}{18}d^2+\frac{7}{18}d+\frac{2}{3}&\text{ if }d\equiv 5\pmod{9}\\
\frac{1}{18}d^2+\frac{7}{18}d+\frac{1}{3}&\text{ if }d\equiv 8\pmod{9}.\\
\end{cases}
\end{align*}
If $s\leq s_1(d)$, then $\sigma_s(\Split_d(\PP^n)$ is nondefective for all $d\in\NN$ and $n\geq 3$.  Ifs $s\geq s_2(d)$, then $\sigma_s(\Split_d(\PP^3))$ is nondefective for all $d\in\NN$. 
\end{theorem}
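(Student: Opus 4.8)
\emph{Proof strategy.} The statement is a clean repackaging of two sharper facts about $\PP^3$ --- nondefectivity of $\sigma_{s_1(d)}(\Split_d(\PP^3))$ and of $\sigma_{s_2(d)}(\Split_d(\PP^3))$, both established in Theorem~\ref{nEqualsThreeResult} --- together with a lift of the first to all $n\ge 3$, Corollary~\ref{S1Generalized}; so the plan is to prove those and then to observe that the displayed piecewise formulas are exactly the optimal thresholds the arguments yield. I would open with the standard two reductions. By Terracini's Lemma, $\sigma_s(\Split_d(\PP^n))$ is nondefective precisely when, for general forms $F_j=\ell_{j,1}\cdots\ell_{j,d}$, the span of the affine tangent spaces $\widehat T_{F_j}=\sum_{i=1}^d\bigl(\prod_{k\ne i}\ell_{j,k}\bigr)V$ (with $V$ the space of linear forms) has dimension $\min\{s(dn+1),\binom{n+d}{d}\}$; by lower semicontinuity of the span's dimension it suffices to exhibit one, possibly degenerate, configuration that attains this value. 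Dualizing by apolarity, $\widehat T_{F_j}^{\perp}$ is the space of degree-$d$ forms annihilated by every $\prod_{k\ne i}\ell_{j,k}$ under the differential action, so the question becomes the expected codimension of an explicit system of differential conditions. Finally it is enough to treat $s=s_1(d)$ and $s=s_2(d)$: in the subgeneric range dropping a point only drops a tangent space, and in the filling range adjoining a point only enlarges the secant variety, so nondefectivity at the threshold propagates downward, resp.\ upward.

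The core is the ``restriction induction'' on $d$ inside $\PP^3$, a version of la m\'ethode d'Horace. Fixing a plane $H\subset\PP^3$ with equation $h$, I would specialize a carefully chosen number $t=t(d)$ of the $F_j$ to carry $h$ among their $d$ linear factors, keeping the other $s-t$ general, and then split the differential conditions using the Castelnuovo exact sequence relating forms of degree $d$ on $\PP^3$, their restrictions to $H\cong\PP^2$, and the residual forms of degree $d-1$ on $\PP^3$. This produces a trace system on $\PP^2$ --- a secant-type postulation problem on $\PP^2$, where the needed nondefectivity is already known --- and a residual system of exactly the same shape on $\PP^3$ for degree $d-1$ with fewer points. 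Choosing $t$ so that neither the trace nor the residue is overdetermined lets the induction close, reducing degree $d$ to degree $d-1$ and, eventually, to a handful of small base cases that I would settle by direct Macaulay2 computation. Carrying the bookkeeping through, $s_1(d)$ is the largest $s$ for which the residue stays subgeneric --- i.e.\ for which $s(3d+1)\le\binom{d+3}{3}$ survives every step --- and $s_2(d)$ is the smallest $s$ for which trace-plus-residue already fills, i.e.\ for which $s(3d+1)\ge\binom{d+3}{3}$ survives; the residues mod $9$ and mod $6$ in the formulas are exactly the case distinctions forced by the ceilings and floors when one solves these inequalities together with the divisibilities coming from the sizes of the traces.

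The last ingredient, the ``splitting induction'', transports the upper bound from $\PP^3$ to all $n\ge 3$. I would degenerate the $s$ general forms of $\Split_d(\PP^n)$ so that every factor $\ell_{j,k}$ lies in one fixed $4$-dimensional subspace $W\subset V$, i.e.\ in one fixed $\PP^3\subset\PP^n$. Writing $V=W\oplus W'$ and using $S^dV=\bigoplus_{a+b=d}S^aW\otimes S^bW'$, each $\widehat T_{F_j}$ splits as the $\PP^3$-tangent space $T_j^{(3)}:=\sum_i(\prod_{k\ne i}\ell_{j,k})W\subseteq S^dW$ together with $U_j\otimes W'$, where $U_j=\langle\,\prod_{k\ne i}\ell_{j,k}\ \mid\ i=1,\dots,d\,\rangle\subseteq S^{d-1}W$; since the two pieces lie in complementary summands of $S^dV$, the span of all $s$ tangent spaces has dimension exactly $\dim\sum_j T_j^{(3)}+(n-3)\dim\sum_j U_j$. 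Using the just-proved nondefectivity of $\sigma_{s_1(d)}(\Split_d(\PP^3))$, which gives $\dim\sum_j T_j^{(3)}=s(3d+1)$ in this range, and the much easier fact that $\sum_j U_j$ has the expected dimension $\min\{sd,\binom{d+2}{3}\}$, one checks that the total equals the expected value $\min\{s(dn+1),\binom{n+d}{d}\}=s(dn+1)$ exactly when $s\le s_1(d)$ --- the inequality $s_1(d)\,d\le\binom{d+2}{3}$, valid for every $d$, being what makes this work. That is Corollary~\ref{S1Generalized}, hence the first assertion; the second assertion is the $n=3$ case of the restriction induction. I expect the real obstacle to be the numerical balancing in the restriction step: keeping a single choice of $t(d)$ that leaves \emph{both} the trace on $H$ \emph{and} the residue in $\PP^3$ within reach of the inductive hypothesis is delicate, and it is precisely the small $d$ for which no good $t$ exists that produce the base cases, and precisely the optimization over the admissible $t$ that produces the otherwise opaque constants $\tfrac{5}{18},\tfrac{2}{9},\tfrac{2}{3},\tfrac{1}{3}$ appearing in $s_1$ and $s_2$.
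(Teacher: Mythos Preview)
Your overall architecture is right and matches the paper: prove nondefectivity for $\sigma_{s_1(d)}(\Split_d(\PP^3))$ and $\sigma_{s_2(d)}(\Split_d(\PP^3))$ by a Horace-type restriction induction with Macaulay2 base cases, then lift the subabundant bound to all $n\ge 3$. Your lift is essentially the paper's Theorem~\ref{induction on n}/Corollary~\ref{S1Generalized} done in one shot rather than one $n$ at a time; specializing all factors into a fixed $W\cong\kk^4$ and splitting $S^dV=S^dW\oplus(S^{d-1}W\otimes W')\oplus\cdots$ gives exactly $\dim\sum_jT_j^{(3)}+(n-3)\dim\sum_jU_j$, and the ``easier'' fact $\dim\sum_jU_j=sd$ is the paper's Lemma~\ref{F true}, itself a corollary of the $\PP^3$ nondefectivity you already have. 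So that half is fine.

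The restriction induction, however, is not what the paper does, and your description has a gap precisely where the formulas for $s_1,s_2$ come from. You propose the classical hyperplane Horace: fix $H\subset\PP^3$, specialize some $F_j$ to contain $h$, trace on $H\cong\PP^2$, residual in degree $d-1$. The paper instead works inside $\PP R_d$: it chooses \emph{three} generic degree-$9$ completely decomposable forms $f_1,f_2,f_3$, sets $P_i=\PP(f_iR_{d-9})\subset\PP R_d$, and applies the Castelnuovo inequality (Lemma~\ref{HilbertFunctionInequality}) to the chain $Z,\,Z\cup P_1,\,Z\cup P_1\cup P_2,\,Z\cup P_1\cup P_2\cup P_3$. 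Each trace $P_i\cong\PP R_{d-9}$ is again a $\Split_{d-9}(\PP^3)$ problem---one never leaves $\PP^3$---and the step is $d\to d-9$, not $d\to d-1$. This is Theorem~\ref{RestrictionInductionFixedDimension} with $n=3$, $\ell=9$; equiabundance of the top statement $\statement A_3(d)$ for $d\ge 28$ is forced by Lemma~\ref{AnEquiabundant} (the identity $3!\cdot 3\cdot\tfrac{1}{18}=1$), and the $28=9\cdot3+1$ base cases are the Macaulay2 checks. The period-$9$ pieces of $s_1$ and $s''_2$ are \emph{not} an artifact of floors and ceilings as you suggest; they are dictated by the choice $\ell=9$, which is what makes $s(d)-s(d-9)$ a single polynomial in $d$ on each residue class. (The $d\equiv 0,3\pmod 6$ pieces of $s_2$ are inherited from Abo's earlier $\ell=6$ argument, which this paper improves only on the other residue classes.) A single-hyperplane step with trace on $\PP^2$ might be made to work, but it would not produce these particular functions, and your sketch does not supply the bookkeeping that would.
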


In the second result, we combine restriction and splitting induction to get an upper bound on $s$ for which $\sigma_s(\Split_d(\PP^n))$ is nondefective for $n\geq 4$.  This result is proven in Section \ref{NAtLeastFour}.

\begin{theorem}\label{ExponentialBound}
Consider the function
\begin{equation*}
c(n,d)=\min\left\{\left\lfloor\frac{\tilde s(d-m)}{g_n(m)}\right\rfloor:0\leq m\leq n-2\right\}
\end{equation*}
where
\begin{equation*}
\tilde s(d) = \begin{cases}
\frac{1}{24}d^2+\frac{1}{12}d&\text{ if }d\equiv 0,4\pmod{6}\\
\frac{1}{24}d^2+\frac{1}{6}d-\frac{5}{24}&\text{ if }d\equiv 1\pmod{6}\\
\frac{1}{24}d^2+\frac{1}{12}d-\frac{1}{3}&\text{ if }d\equiv 2\pmod{6}\\
\frac{1}{24}d^2+\frac{1}{6}d+\frac{1}{8}&\text{ if }d\equiv 3,5\pmod{6}\\
\end{cases}
\end{equation*}
and
\begin{equation*}
g_n(m)=\begin{cases}
n-3 &\text{ if }m=0\text{ or }m=n-3\\
n-4 &\text{ if }n\geq 5\text{ and }m=1\\
1 &\text{ if }m=n-2\\
m(n-m-3) &\text{ if }2\leq m\leq n-4.\\
\end{cases}.
\end{equation*}
If $d\geq n\geq 4$ and $s\leq 2^{n-3}c(n,d)$, then $\sigma_s(\Split_d(\PP^n))$ is nondefective.
\end{theorem}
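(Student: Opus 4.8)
The plan is to argue via Terracini's lemma combined with the two inductive techniques developed earlier in the dissertation. A routine estimate using $d\ge n$ shows that the bound $s\le 2^{n-3}c(n,d)$ keeps $s(dn+1)\le\binom{n+d}{n}$, so it suffices to prove that for generic $F_1,\dots,F_s\in\Split_d(\PP^n)$ the affine tangent spaces are \emph{linearly independent} in $S_d=\kk[x_0,\dots,x_n]_d$. Writing $F_k=\ell_1^{(k)}\cdots\ell_d^{(k)}$, one has $\hat T_{F_k}\Split_d(\PP^n)=\sum_{i=1}^{d}\bigl(\prod_{j\ne i}\ell_j^{(k)}\bigr)S_1=I^{(k)}_d$, the degree-$d$ component of the ideal $I^{(k)}$ generated by the partial products of $F_k$; so the goal is $\dim\bigl(S/\sum_k I^{(k)}\bigr)_d=\binom{n+d}{n}-s(dn+1)$.

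I would organize the argument as an iteration of two lemmas. The \emph{splitting induction} lemma lowers the ambient dimension: after fixing a hyperplane $H\cong\PP^{n-1}$ and degenerating a prescribed number of the $F_k$ so that some of their linear factors coincide with the equation of $H$ while the rest restrict to $H$, the Castelnuovo sequence $0\to\mathcal I_{\mathrm{Res}_H Z}(d-1)\to\mathcal I_Z(d)\to\mathcal I_{\mathrm{Tr}_H Z,H}(d)\to 0$ relates a nondefectivity statement for $\Split_d(\PP^n)$ to one for $\Split_d(\PP^{n-1})$ (coming from the trace) together with a residual problem back on $\PP^n$; the construction is arranged so that the admissible number of points roughly doubles at each dimension drop, which is the source of the factor $2^{n-3}$ after $n-3$ applications bring us down to $\PP^3$. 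The \emph{restriction induction} lemma (the technique behind the $\PP^3$ bounds, and hence behind Theorem~\ref{S1AndS2}) is used both to supply the base case — $\sigma_t(\Split_{d-m}(\PP^3))$ is nondefective for $t\le s_1(d-m)$, and one checks $\tilde s(d-m)\le s_1(d-m)$, so the arithmetically convenient function $\tilde s$, whose constant terms are chosen exactly so that the doublings and floor operations of the splitting step stay integral, may be used throughout — and, interleaved with the splitting steps, to trade $m$ units of degree for a more favorable configuration. The descent then leaves interpolation problems at all degrees $d,d-1,\dots,d-(n-2)$ simultaneously, the stratum of degree $d-m$ imposing $g_n(m)$ conditions per original point, so each stratum must be solvable; this is what forces $s\le 2^{n-3}\lfloor \tilde s(d-m)/g_n(m)\rfloor$ for \emph{every} $m$, hence $s\le 2^{n-3}c(n,d)$, with the $\min$ in the definition of $c(n,d)$ recording the tightest stratum.

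The five cases in $g_n(m)$ — $m=0$ or $m=n-3$ with $g_n=n-3$; $m=1$ and $n\ge5$ with $g_n=n-4$; $m=n-2$ with $g_n=1$; and $2\le m\le n-4$ with $g_n=m(n-m-3)$ — are precisely the distinct combinatorial types of how the union of linear subspaces $L_i^{(k)}=\bigcap_{j\ne i}V(\ell_j^{(k)})$ cut out by the tangent spaces meets $H$ as the degree is lowered, the hypothesis $d\ge n$ being what keeps these types uniform, and each type must be analyzed separately.

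The main obstacle is the splitting lemma itself. One must choose the degeneration so that it is a flat limit of generic configurations — so that upper semicontinuity of $h^1$ transports nondefectivity back up the tower — while simultaneously arranging that \emph{both} the trace problem on $\PP^{n-1}$ and the residual problem on $\PP^n$ land in their respective subabundant ranges with the dimension counts matching \emph{exactly}; there is essentially no slack, which is exactly why the constants in $\tilde s$ and the weights $g_n(m)$ must be what they are. Compounding this, the schemes one residuates are neither reduced nor irreducible — unions of the $L_i^{(k)}$, whose dimensions and mutual incidences move with $d-n$ — so correctly tracking $\mathrm{Res}_H$ and $\mathrm{Tr}_H$ through the entire descent, for each case of $g_n$, is where the real difficulty concentrates; assembling the base estimate, taking the $\min$ over $m$, and verifying the integrality of $c(n,d)$ are routine by comparison.
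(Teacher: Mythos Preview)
Your high-level picture is right: iterate a dimension-lowering step until you reach $\PP^3$, and invoke a $\PP^3$ base case proved by restriction induction. But two points diverge from what the paper actually does, and the second is a genuine gap.

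First, a mechanism mismatch. You describe the splitting step via the Castelnuovo sequence and trace/residual with respect to a hyperplane. That is the \emph{restriction} technique. The paper's splitting induction (Theorem~\ref{SplittingInductionTheorem}) instead specializes some linear factors to equal $x_0$ and uses the exact direct-sum decomposition $R_d = S_dU \oplus x_0 R_{d-1}$ (and one further layer $x_0^2 R_{d-2}$) to write the specialized tangent span as a direct sum of three pieces living in $\PP^{n-1}$ at degrees $d$, $d-1$, $d-2$. This gives an equality of dimensions, not an inequality, which is why subabundance propagates exactly.

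Second, and more important, the base case at $\PP^3$ is \emph{not} the nondefectivity of $\sigma_t(\Split_{d-m}(\PP^3))$, so checking $\tilde s(d-m)\le s_1(d-m)$ does not close the argument. Each application of the splitting step sends $\statement A(n,d,s,t,u,v)$ to three statements in which the auxiliary parameters $t,u,v$ (counting, respectively, single points, $\PP^d$-spans, and $\PP^n$-spans adjoined to the tangent spaces) become nonzero even if they started at zero. After $n-3$ iterations the sink statements have the form $\statement A(3,d-m,s,t,u,v)$ with, in general, all four entries nonzero; the function $g_n(m)$ is precisely $\max\{s,t,u,v\}/c$ over sinks of degree $d-m$ (computed in Lemma~\ref{VertexFormulas}), not a count of ``conditions per original point.'' The required base input is therefore Proposition~\ref{TildeSBound}: $\statement A(3,d,s,t,u,v)$ is true and subabundant whenever \emph{all four} of $s,t,u,v$ are at most $\tilde s(d)$. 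That proposition is itself proved by restriction induction, but with the full four-parameter machinery of Section~\ref{Definitions}; the constants in $\tilde s$ are chosen so that $\statement A_3(3,d,6,\tilde s,\tilde s,\tilde s,\tilde s)$ is equiabundant (via Lemma~\ref{AnEquiabundant}), not for integrality under doubling. Your proposal, as written, never accounts for the $t,u,v$ terms and so cannot verify the sink vertices.
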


In the third result, we note that for any fixed $s$, the defectivity or nondefectivity of $\sigma_s(\Split_d(\PP^n))$ is known for all but finitely many cases.  We use computational methods to prove the nondefectivity of these remaining cases for small $s$.  This result is proven in Section \ref{SmallS}.

\begin{theorem}\label{SUpperBound}
If $s\leq 30$, then $\sigma_s(\Split_d(\PP^n))$ is nondefective for all $n,d\in\NN$ unless $d=2$ and $2\leq s\leq\frac{n}{2}$.
\end{theorem}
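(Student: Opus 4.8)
The plan is to fix $s$ with $s \le 30$ and observe that, outside a finite and explicitly bounded set of pairs $(n,d)$, nondefectivity of $\sigma_s(\Split_d(\PP^n))$ already follows from the results recalled in Section \ref{KnownResults} together with Theorem \ref{S1AndS2}; the remaining finitely many cases are then checked by computer. The prior results do most of the work. The cases $d = 1$ and $n \le 2$ are immediate. The case $d = 2$ is classical: $\Split_2(\PP^n)$ is the variety of symmetric $(n+1)\times(n+1)$ matrices of rank at most $2$, so $\sigma_s(\Split_2(\PP^n))$ is the variety of symmetric matrices of rank at most $2s$, and a dimension count shows that this is nondefective exactly when $s = 1$ or $n < 2s$; its defective locus is therefore precisely $2 \le s \le n/2$, which is the exception in the statement. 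The condition $3(s-1) \le n$ removes all large $n$, and since the leading term of $s_1(d)$ is $\tfrac{1}{18}d^2$ one checks that $s_1(d) \ge 31 \ge s$ whenever $d \ge 21$, so Theorem \ref{S1AndS2} (together with the $n\le 2$ case) removes all large $d$. What survives is contained in $\{(n,d) : 3 \le n \le 3s-4,\ 3 \le d,\ s_1(d) < s\}$, which is finite since it forces $d \le 20$.

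For each surviving $(n,d)$ it is enough to verify nondefectivity of $\sigma_s(\Split_d(\PP^n))$ for at most two values of $s$. Indeed, by a standard argument, if $\sigma_{s}(\Split_d(\PP^n))$ has the expected dimension and is not the whole ambient space then so does $\sigma_{s-1}(\Split_d(\PP^n))$, while if $\sigma_{s}(\Split_d(\PP^n))$ fills the ambient space then so does $\sigma_{s+1}(\Split_d(\PP^n))$. Writing $\bar s(n,d)$ for the least $s$ with $s(dn+1) \ge \binom{n+d}{d}$, it therefore suffices to treat $s = \min\{30,\bar s(n,d)\}$ and, when $\bar s(n,d) \le 30$, also $s = \bar s(n,d)-1$; every other $s \le 30$ then follows. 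The restriction and splitting inductions developed earlier (as used in Theorems \ref{S1AndS2} and \ref{ExponentialBound} and Corollary \ref{S1Generalized}) can be invoked to shorten the list still further by propagating a handful of base cases upward in $n$.

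Each remaining case is decided by Terracini's lemma. If $X = \Split_d(\PP^n)$ and $p_1,\dots,p_s \in X$ are general, then $\dim\sigma_s(X) = \dim\langle \hat T_{p_1}X,\dots,\hat T_{p_s}X\rangle - 1$, where $\hat T_p X$ is the affine tangent space to the cone over $X$; for an arbitrary choice of points this gives a lower bound. The idea is to choose pseudo-random points of $X$ over a finite field $\mathbb{F}_q$ for a suitably large prime $q$ --- that is, products of $d$ random linear forms --- form the associated Terracini matrix, and compute its rank over $\mathbb{F}_q$ in Macaulay2. Since reduction modulo $q$ can only lower the rank of an integer matrix, if this rank equals $\min\{\binom{n+d}{d},\, s(dn+1)\}$ then the matrix obtained from integer lifts of the points has at least that rank over $\QQ$; as $\min\{\binom{n+d}{d},\, s(dn+1)\}$ is also an upper bound for $\dim\sigma_s(\Split_d(\PP^n))+1$, equality holds and $\sigma_s(\Split_d(\PP^n))$ has the expected dimension over $\mathbb{C}$.

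The main obstacle is the size of the computations in the corner where $d$ is small (say $d = 3,4,5$) while $n$ runs up into the eighties: there the ambient dimension $\binom{n+d}{d}$ is in the tens of thousands, so although the Terracini matrix is sparse it is large and the $\mathbb{F}_q$-rank computation becomes the binding constraint. This is precisely where the inductive results of the earlier chapters are needed: instead of computing every case directly, one computes a small set of base cases and uses restriction and splitting induction to reach the rest, leaving only a manageable explicit list of Macaulay2 computations, which are recorded in the Macaulay2 Computation environments that follow.
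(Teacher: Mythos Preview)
Your approach matches the paper's: reduce to finitely many $(n,d)$ using the known results, then verify the survivors via Terracini's lemma over a finite field in Macaulay2. The one substantive difference is in how you bound $n$. You invoke the Arrondo--Bernardi bound $3(s-1)\le n$ from Proposition~\ref{LargeN} to cap $n$ at roughly $3s$, and then gesture at ``restriction and splitting induction'' to thin the resulting list when $d$ is small and $n$ runs into the eighties. The paper instead proves a clean propagation lemma (Theorem~\ref{induction on n}): once $\sigma_s(\Split_d(\PP^{n_0}))$ is nondefective and subabundant, the same holds for all $n\ge n_0$, via a one-step splitting argument (Lemma~\ref{F true} combined with Theorem~\ref{SplittingInductionTheorem}). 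This replaces your loose bound $n\le 3s-4$ with the sharp cutoff $n\le\min\bigl\{n:s(dn+1)\le\binom{n+d}{d}\bigr\}$ for each $d$, so Proposition~\ref{LargeN} is never used and the Macaulay2 list shrinks to something that actually terminates for $s\le 30$ rather than being merely finite in principle. Your proposal is correct in outline, but the missing ingredient that makes it executable is precisely this upward-in-$n$ propagation theorem, which you allude to but do not state.
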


\pagebreak

\section{Notation}

In this dissertation, $\kk$ is an algebraically closed field of characteristic zero and $R=\kk[x_0,\ldots,x_n]$, \textit{i.e.}, the ring of polynomials in $n+1$ variables with coefficients in $\kk$.  The vector space over $\kk$ of all degree $d$ forms in $R$ is denoted by $R_d$.

For any finite-dimensional vector space $V$ over $\kk$, $\PP V$ is the projective space of lines through the origin in $V$.  If $\dim V=n+1$, then $\PP V$ may also be denoted $\PP^n$.  For any nonzero vector $v\in V$, the line spanned by $v$ is denoted $[v]$ when considered as a point in $\PP^n$.

Consider $s$ varieties $X_1,\ldots,X_s\subset\PP^N$.  Their \define{linear span} $\langle X_1,\ldots,X_s\rangle$ is the smallest linear subspace containing each of them.

Consider the variety $\Sigma$ parameterizing all collections of $s$ points in $\PP^N$.  To say that property $P$ holds for $s$ \define{generic} points in $\PP^N$ is to say the subset of $\Sigma$ corresponding to points for which $P$ holds contains a Zariski open dense subset of $\Sigma$.  (See \cite[Lecture 5]{Harris}.)

Recall that there is a natural map $\pi:\mathbb A^{n+1}\setminus\{0\}\rightarrow\PP^n$ defined by $x\mapsto[x]$.  The \define{affine cone} of a projective variety $X\subset\PP^n$ is the affine variety $\widehat X=\pi^{-1}(X)\cup\{0\}$.  Note that $\dim X = \dim\widehat X-1$.

If $V$ is a vector space with dual $V^*$ and $U$ is a subspace of $V$, then $U^{\perp}$ is the annihilator of $U$, \textit{i.e.}, $U^\perp = \{\varphi\in V^*:\varphi(u)=0\text{ for all }u\in U\}$.

If $V$ is a vector space over $\kk$ with basis $\{v_0,\ldots,v_n\}$, then its $d$\nth \define{symmetric power} is the vector space $S_dV=(\kk[v_0,\ldots,v_n])_d$.

If $\sheaf F$ is a coherent sheaf on $\PP^N$, then each cohomology group $H^i(\PP^n,\sheaf F)$ is vector space over $\kk$ of dimension $h^i(\PP^n,\sheaf F)$.  (See, for example, \cite[Section III.5]{Hartshorne}.)

\pagebreak

\section{Secant Varieties}\label{SecantVarieties}

In this section, we introduce secant varieties and discuss how the result of Alexander and Hirschowitz about secant varieties of Veronese varieties can be used to answer Waring's problem for polynomials (Problem \ref{WaringPolynomials}).

\begin{definition}
Consider a projective variety $X\subset\PP^N$ for some $N\in\NN$.  For some $s\in\NN$, choose $s$ generic points $p_1,\ldots,p_s\in X$.  Then $\langle p_1,\ldots,p_s\rangle$ is an $(s-1)$-plane, \textit{i.e.}, a linear subspace of dimension $s-1$.  The Zariski closure of the union of all the $(s-1)$-planes obtained in this manner is the $s$\nth \define{secant variety} to $X$, denoted $\sigma_s(X)$.
\end{definition}

\begin{lemma}
If $X\subset\PP^N$ is a nonsingular projective variety, then 
\begin{equation*}
\dim\sigma_s(X)\leq\min\{s(\dim X+1)-1,N\}.
\end{equation*}
\end{lemma}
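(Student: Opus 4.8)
The plan is to produce the bound by a standard parameter-count argument, which makes the inequality almost tautological, and then to get it rigorously by a dimension-of-image estimate. First I would set up the incidence variety
\[
  \mathcal I = \overline{\{(p_1,\dots,p_s,q) : p_i \in X,\ q \in \langle p_1,\dots,p_s\rangle\}} \subset X^s \times \PP^N,
\]
and let $\pi_1 : \mathcal I \to X^s$ and $\pi_2 : \mathcal I \to \PP^N$ be the two projections. By construction the image of $\pi_2$ is exactly $\sigma_s(X)$ (its closure being already built into the definition), so $\dim \sigma_s(X) = \dim \overline{\pi_2(\mathcal I)} \le \dim \mathcal I$.

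Next I would bound $\dim \mathcal I$ using $\pi_1$. For a generic point $(p_1,\dots,p_s) \in X^s$ the $s$ points are in general position, so their linear span $\langle p_1,\dots,p_s\rangle$ is a genuine $(s-1)$-plane, whose fiber contribution to $\mathcal I$ is a $\PP^{s-1}$. Thus the generic fiber of $\pi_1$ over a point in its image has dimension $s-1$, and since $\dim X^s = s \dim X$, the theorem on the dimension of fibers gives
\[
  \dim \mathcal I = s\dim X + (s-1) = s(\dim X + 1) - 1.
\]
Combining with the previous paragraph yields $\dim \sigma_s(X) \le s(\dim X+1)-1$. On the other hand $\sigma_s(X) \subseteq \PP^N$ forces $\dim \sigma_s(X) \le N$. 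Taking the minimum gives the claimed inequality. The nonsingularity (or at least irreducibility) of $X$ is used to guarantee that $X^s$ is irreducible, so that $\mathcal I$ is irreducible and these dimension counts behave as stated; it also ensures there is a well-defined notion of "generic" point on $X$ at which the span is honestly $(s-1)$-dimensional.

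The one genuine subtlety — the part worth spelling out rather than waving through — is the passage from "the generic fiber of $\pi_1$ has dimension $s-1$" to "$\dim \mathcal I = s(\dim X+1)-1$" via the fiber-dimension theorem, and the observation that on the dense open locus where the $p_i$ are in linearly general position the span really is an $(s-1)$-plane (this can fail on a proper closed subset, e.g.\ when some $p_i$ coincide or lie in a smaller linear subspace, but such loci do not affect the dimension since $\mathcal I$ is irreducible and we took the closure). Everything else is bookkeeping. I would therefore devote the bulk of the write-up to setting up $\mathcal I$ cleanly and invoking the fiber-dimension theorem correctly, and mention only briefly the two trivial reductions ($\dim \overline{\pi_2(\mathcal I)} \le \dim \mathcal I$ and $\dim \sigma_s(X) \le N$).
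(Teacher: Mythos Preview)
Your proposal is correct and follows essentially the same approach as the paper: both construct the incidence variety in $X^s\times\PP^N$, compute its dimension as $s\dim X+(s-1)$ via the projection to $X^s$, and bound $\dim\sigma_s(X)$ by the dimension of its image under the projection to $\PP^N$. The only cosmetic difference is that the paper excises the diagonal before taking the closure, whereas you handle degenerate tuples by appealing to irreducibility; your write-up is in fact more careful about justifying the fiber-dimension step.
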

\begin{proof}
Let $\Gamma=\overline{\left\{(p_1,\ldots,p_s,q)\in\left(\left(\prod_{i=1}^sX\right)\setminus\Delta\right)\times\PP^N:q\in\langle p_1,\ldots,p_s\rangle\right\}}$, where $\Delta=\{(x,\ldots,x)\in\prod_{i=1}^s:x\in X\}$.  Then $\dim\Gamma=s\dim X+s-1=s(\dim X+1)-1$.

Let $\pi:\prod_{i=1}^sX\times\PP^N\rightarrow\PP^N$ be the natural projection onto $\PP^N$.  Then, by definition, $\pi(\Gamma)=\sigma_s(X)$.  Consequently, $\dim\sigma_s(X)\leq\min\{\dim\Gamma,N\}$.
\end{proof}

With this in mind, we introduce the following definitions.
\begin{definition}
The \define{expected dimension} of a secant variety to a nonsingular projective variety $X\subset\PP^N$ is 
\begin{equation*}
\expdim\sigma_s(X)=\min\{s(\dim X+1)-1,N\}.
\end{equation*}
A secant variety $\sigma_s(X)$ is \define{defective} if $\dim\sigma_s(X)<\expdim\sigma_s(X)$ and \define{nondefective} otherwise.  The \define{defect} of $\sigma_s(X)$ is 
\begin{equation*}
\defect(\sigma_s(X))=\expdim\sigma_s(X)-\dim\sigma_s(X).
\end{equation*}
\end{definition}

For each $d$, we define the \define{Veronese map} $\nu_d:\PP^n\rightarrow \PP^{\binom{n+d}{d}-1}$ by $[\ell]\mapsto[\ell^d]$.  Each image $\nu_d(\PP^n)$ is a \define{Veronese variety}.

Note that $\sigma_s(\nu_d(\PP^n))$ contains the equivalence classes of all homogeneous polynomials of degree $d$ which can be written as the sum of $s$ $d$\nth powers of linear forms.  In other words, we may solve Waring's problem for polynomials (Problem \ref{WaringPolynomials}) by answering the question, ``What is the smallest $s$ such that $\sigma_s(\nu_d(\PP^n))=\PP^{\binom{n+d}{d}-1}$?''

If $\sigma_s(\nu_d(\PP^n))$ is nondefective for the appropriate values of $s$, then this is trivial.  Indeed, since $\dim\nu_d(\PP^n)=n$, we have $\dim\sigma_s(\nu_d(\PP^n))=\min\{s(n+1),\binom{n+d}{d}\}-1$.  In order for $\sigma_s(\nu_d(\PP^n))=\PP^{\binom{n+d}{d}-1}$, we must have
\begin{align*}
s(n+1)&\geq\binom{n+d}{d}\\
s&\geq\left\lceil\frac{\binom{n+d}{d}}{n+1}\right\rceil.
\end{align*}

It remains to describe all of the defective cases.  The following theorem was conjectured by Palatini in 1903 \cite{Palatini} but not proven until 1995 by Alexander and Hirschowitz \cite{AlexanderHirschowitz}.

\begin{theorem}[Alexander-Hirschowitz theorem]\label{AlexanderHirschowitzTheorem}
The secant variety $\sigma_s(\nu_d(\PP^n))$ is nondefective except for the cases in Table \ref{DefectiveVeronese}.
\end{theorem}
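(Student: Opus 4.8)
The plan is to convert the statement into a statement about linear systems of hypersurfaces with assigned double points, prove that interpolation statement by the Horace (Castelnuovo) method together with its differential refinement, and dispose of the finitely many genuine exceptions by hand.

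First, reduce to double points. By Terracini's Lemma the tangent space to $\sigma_s(\nu_d(\PP^n))$ at a generic point of $\langle p_1,\dots,p_s\rangle$, with $p_i=[\ell_i]$ generic, equals $\langle T_{p_1}\nu_d(\PP^n),\dots,T_{p_s}\nu_d(\PP^n)\rangle$, so $\dim\sigma_s(\nu_d(\PP^n))$ is the projective dimension of that span. Identifying $\PP^{\binom{n+d}{d}-1}$ with $\PP S_dV$ and, by apolarity, its dual with $\PP S_dV^{*}$, the annihilator of that span is the space of degree-$d$ forms singular at each $p_i$, namely $H^0(\PP^n,\sheaf I_{2Z}(d))$ with $Z=\{p_1,\dots,p_s\}$ a generic set of points and $2Z$ the associated scheme of double points (first infinitesimal neighbourhoods). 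Hence $\sigma_s(\nu_d(\PP^n))$ is nondefective exactly when $2Z$ imposes independent conditions on forms of degree $d$, i.e.
\begin{equation*}
h^0\bigl(\PP^n,\sheaf I_{2Z}(d)\bigr)=\max\Bigl\{0,\ \tbinom{n+d}{d}-s(n+1)\Bigr\},
\end{equation*}
and the theorem asserts this holds for all $(n,d,s)$ outside the tabulated list.

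Next, run the Horace induction. Fix a hyperplane $H\cong\PP^{n-1}$, specialize $a$ of the points of $Z$ onto $H$ and leave $b=s-a$ off it, and write $W$ for the resulting scheme. From the Castelnuovo (trace--residue) exact sequence
\begin{equation*}
0\longrightarrow\sheaf I_{\operatorname{Res}_HW}(d-1)\longrightarrow\sheaf I_W(d)\longrightarrow\sheaf I_{\operatorname{Tr}_HW,\,H}(d)\longrightarrow 0,
\end{equation*}
in which a double point supported on $H$ leaves a double point of $H$ in the trace and a simple point in the residue, semicontinuity gives $h^0(\PP^n,\sheaf I_{2Z}(d))\le h^0(\PP^n,\sheaf I_{\operatorname{Res}}(d-1))+h^0(H,\sheaf I_{\operatorname{Tr}}(d))$. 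Choosing $a$ so that both summands are forced to their expected values by induction on $n+d$ then yields the desired equality; the rational normal curves $\nu_d(\PP^1)$ (never defective), plane curves over $\PP^2$ (classical), and small degrees supply the base of the induction. The naive specialization, however, only works when some split $s=a+b$ makes both terms numerically tight, and for the ``critical'' congruence classes of $(n,d,s)$ no such split exists. There one invokes the differential Horace method of Alexander--Hirschowitz: one degenerates not merely the points but the linear system itself, so that a critical double point approaching $H$ contributes only a simple point to the trace and a simple point---together with a first-order tangency condition---to the residue, the bookkeeping being governed by ``differential Horace lemmas'' comparing $h^0$ of the degenerate and generic configurations. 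Mixing points that are ``on $H$'', ``off $H$'', and ``half on $H$'' gives enough flexibility to split the cohomology exactly in every residue class and push the induction through, at the cost of priming it with a sizeable collection of explicitly verified base cases at small $n$, small $d$, and the awkward numerics near $d=3,4$.

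Finally, the exceptions: the induction is arranged to break exactly on the tabulated triples, each checked directly. For $d=2$ a generic quadric has rank $n+1$, so $\sigma_s(\nu_2(\PP^n))$ is the locus of rank-$\le s$ quadrics, of dimension $s(n+1)-\binom{s}{2}-1<\expdim\sigma_s(\nu_2(\PP^n))$ when $2\le s\le n$. For $(d,n,s)\in\{(4,2,5),(4,3,9),(4,4,14)\}$ the $s$ general points lie on a unique quadric $Q$, and $Q^2$ is a quartic singular at all of them, so $h^0(\sheaf I_{2Z}(4))\ge1$ against an expected value of $0$. For $(d,n,s)=(3,4,7)$ a classical special-geometry argument (via the rational normal quartic through $7$ general points of $\PP^4$) gives defect $1$; one must also prove these are the \emph{only} surviving obstructions. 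I expect the differential Horace induction to be the main obstacle: arranging the degenerations so that in each residue class some configuration yields the exact count, proving the needed lemmas on flat limits of double-point schemes, and clearing all base cases (partly by explicit computation)---together with the separate combinatorial check that the exceptional list is complete.
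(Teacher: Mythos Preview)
The paper does not prove this theorem. It is stated as background, attributed to Alexander and Hirschowitz \cite{AlexanderHirschowitz}, and the reader is referred to \cite{BrambillaOttaviani} for a streamlined proof and history; the dissertation's own contributions concern $\Split_d(\PP^n)$, not the Veronese variety.

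That said, your outline is an accurate high-level sketch of the proof one finds in those references: the reduction via Terracini and apolarity to the statement that a generic union of $s$ double points in $\PP^n$ imposes independent conditions on $|\mathcal O(d)|$, the Castelnuovo trace--residue sequence with a hyperplane, the use of the differential Horace lemma to handle the arithmetically obstructed splittings, and the explicit verification that the listed triples are defective (rank stratification of quadrics for $d=2$; the double of the unique quadric through the points for the three quartic cases; the rational normal quartic through seven points in $\PP^4$ for the cubic case). As you anticipate, the real work---and the part your sketch only gestures at---is organizing the differential specializations so that the induction closes in every residue class and then discharging a nontrivial list of initial cases; turning your paragraph into an actual proof would essentially reproduce \cite{BrambillaOttaviani}. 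There is nothing to compare against in the paper itself.
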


\begin{table}[h]
\centering
\begin{tabular}{|c|c|c|}
\hline
$n$ & $d$ & $s$ \\
\hline
$\geq 2$ & 2 & $2,\ldots,n$\\
2 & 4 & 5 \\
3 & 4 & 9 \\
4 & 3 & 7 \\
4 & 4 & 14 \\
\hline
\end{tabular}
\caption{Defective cases of $\sigma_s(\nu_d(\PP^n))$}
\label{DefectiveVeronese}
\end{table}

For a simplified proof and historical overview of the Alexander-Hirschowitz theorem, see \cite{BrambillaOttaviani}.

In the quadratic case, $n\geq\left\lceil\frac{\binom{n+2}{2}}{n+1}\right\rceil$ for all $n\geq 2$, so the secant variety will fill the ambient space when $s=n+1$.  Note that $1+1=\left\lceil\frac{\binom{1+2}{2}}{1+1}\right\rceil
$, so this also holds for $n=1$.

In the final four cases, $s=\left\lceil\frac{\binom{n+d}{d}}{n+1}\right\rceil$, so the secant variety will fill the ambient space when $s$ is one greater.  

Consequently, we can solve Problem \ref{WaringPolynomials}.

\begin{corollary}
The smallest $s$ such that any generic polynomial in $n + 1$ variables of degree $d$ may be written as the sum of at most $s$ $d$\nth powers of linear forms is
\begin{equation*}
s = \begin{cases}
n + 1 &\text{ if }d=2\\
\left\lceil\frac{\binom{n+d}{d}}{n+1}\right\rceil+1 & \text{ if }(n,d)\in\{(2,4),(3,4),(4,3),(4,4)\}\\
\left\lceil\frac{\binom{n+d}{d}}{n+1}\right\rceil & \text{ otherwise.}
\end{cases}
\end{equation*}
\end{corollary}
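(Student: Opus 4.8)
The plan is to read the corollary off the Alexander--Hirschowitz theorem (Theorem~\ref{AlexanderHirschowitzTheorem}) together with the reduction already recorded in this section: solving Waring's problem for polynomials amounts to finding the least $s$ with $\sigma_s(\nu_d(\PP^n))=\PP^{\binom{n+d}{d}-1}$, and since $\dim\nu_d(\PP^n)=n$ we have $\dim\sigma_s(\nu_d(\PP^n))\le\min\{s(n+1),\binom{n+d}{d}\}-1$ with equality precisely when $\sigma_s(\nu_d(\PP^n))$ is nondefective. Set $s_0=\left\lceil\binom{n+d}{d}/(n+1)\right\rceil$, the least $s$ with $s(n+1)\ge\binom{n+d}{d}$, equivalently the least $s$ whose expected dimension already equals $\binom{n+d}{d}-1$. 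The first step is a case-free lower bound: if $s<s_0$ then $\dim\sigma_s(\nu_d(\PP^n))\le s(n+1)-1<\binom{n+d}{d}-1$, so $\sigma_s(\nu_d(\PP^n))$ cannot fill; hence the answer is always at least $s_0$.

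The second step treats the pairs $(n,d)$ that do not occur in Table~\ref{DefectiveVeronese}, i.e.\ $d\ne 2$ and $(n,d)\notin\{(2,4),(3,4),(4,3),(4,4)\}$: here $\sigma_{s_0}(\nu_d(\PP^n))$ is nondefective, so its dimension is $\min\{s_0(n+1),\binom{n+d}{d}\}-1=\binom{n+d}{d}-1$ by the definition of $s_0$, and combined with the lower bound the answer is exactly $s_0$. This is the third branch of the stated formula.

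The third step handles $d=2$ for every $n$, where I claim the answer is $n+1$. Since $(n,2,n+1)$ is not in Table~\ref{DefectiveVeronese}, the variety $\sigma_{n+1}(\nu_2(\PP^n))$ is nondefective, and $(n+1)^2\ge\binom{n+2}{2}$ forces it to fill, so the answer is at most $n+1$. Conversely, for any $s\le n$ the variety $\sigma_s(\nu_2(\PP^n))$ fails to fill: if $s<s_0$ this is the lower bound of the first step, while if $s_0\le s\le n$ then $(n,2,s)$ lies in Table~\ref{DefectiveVeronese}, so $\sigma_s(\nu_2(\PP^n))$ is defective although its expected dimension equals $\binom{n+2}{2}-1$ (because $s\ge s_0$), whence $\dim\sigma_s(\nu_2(\PP^n))<\binom{n+2}{2}-1$; alternatively, a sum of $s\le n$ squares of linear forms is a quadratic form of rank at most $n$, whereas a generic quadric has rank $n+1$. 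This gives the first branch.

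The last step is the four sporadic pairs $(n,d)\in\{(2,4),(3,4),(4,3),(4,4)\}$: a direct check shows that in each case $s_0$ equals the defective value recorded in Table~\ref{DefectiveVeronese} and that $s_0(n+1)\ge\binom{n+d}{d}$, so $\sigma_{s_0}(\nu_d(\PP^n))$ is defective with expected dimension $\binom{n+d}{d}-1$ and therefore does not fill, forcing the answer above $s_0$. On the other hand $(n,d,s_0+1)$ is not in the table, so $\sigma_{s_0+1}(\nu_d(\PP^n))$ is nondefective, and $(s_0+1)(n+1)\ge\binom{n+d}{d}+(n+1)>\binom{n+d}{d}$ forces it to fill; hence the answer is $s_0+1$, the second branch. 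The only point requiring a little care is the $d=2$ lower bound for $s_0\le s\le n$, where one must use either the defect inequality or the elementary rank bound on sums of squares rather than Table~\ref{DefectiveVeronese} by itself; everything else is bookkeeping with binomial coefficients.
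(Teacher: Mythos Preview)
Your proof is correct and follows essentially the same approach as the paper, which simply reads the result off the Alexander--Hirschowitz theorem after observing that in the quadratic case the defective range $2\le s\le n$ forces $s=n+1$, and that in each of the four sporadic cases the defective value coincides with $\left\lceil\binom{n+d}{d}/(n+1)\right\rceil$. Your write-up is considerably more careful than the paper's terse discussion---in particular you make explicit the general lower bound $s\ge s_0$ and the argument that $\sigma_s(\nu_2(\PP^n))$ fails to fill for $s_0\le s\le n$---but the underlying idea is identical.
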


In the next section, we will look at the secant variety of another variety to attempt to answer Waring's problem for completely decomposable forms.

\pagebreak

\section{Varieties of Completely Decomposable Forms}\label{CDFIntro}

In this section, we introduce varieties of completely decomposable forms and some of their basic properties.  We also discuss the connection between their secant varieties and Waring's problem for completely decomposable forms (Problem \ref{WaringCDF}).

\begin{definition}
A \define{variety of completely decomposable forms}, also known as a \define{Chow variety of zero cycles}, is a variety of the form
\begin{equation*}
\Split_d(\PP^n) = \{[f]\in\PP^{\binom{n+d}{d}-1}:f\text{ is a completely decomposable form}\}.
\end{equation*}
\end{definition}

Note that the variety of completely decomposable forms is in fact a variety.  Indeed, it is the image of the map $\varphi:(\PP^n)^d\rightarrow\PP^{\binom{n+d}{d}+1}$ defined by $([\ell_1],\ldots,[\ell_d])\mapsto[\ell_1\cdots\ell_d]$.  This is a polynomial (and therefore regular) map, as the coefficients of a product of linear forms are obtained by adding products of the coefficients of the linear forms.  For example, in the case of $\Split_2(\PP^1)$, the point $([a:b],[c:d])$ is sent by $\varphi$ to the point $[ac:ad+bc:bd]$.  Therefore, by \cite[Theorem 3.13]{Harris}, $\varphi((\PP^n)^d)$ is a variety.

The ideal of $\Split_d(\PP^n)$ is the kernel of the Foulkes-Howe map, and although minimal generators of this ideal are not yet known, it is defined set-theoretically by Brill's equations.\cite[Section 8.6]{Landsberg}

We summarize some important facts about varieties of completely decomposable forms.

\begin{lemma}
For all $n,d\in\NN$, $\dim\Split_d(\PP^n)=dn$.
\end{lemma}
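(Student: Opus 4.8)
The plan is to use the surjection $\varphi\colon(\PP^n)^d\to\Split_d(\PP^n)$ established in the previous paragraph together with the theorem on the dimension of fibers of a dominant morphism of irreducible varieties. Since $(\PP^n)^d$ is irreducible of dimension $dn$ and $\varphi$ maps it onto $\Split_d(\PP^n)$, it suffices to exhibit a nonempty open subset of $\Split_d(\PP^n)$ over which the fibers of $\varphi$ are finite (indeed zero-dimensional); the fiber-dimension theorem then gives $\dim\Split_d(\PP^n)=\dim(\PP^n)^d-0=dn$. The upper bound $\dim\Split_d(\PP^n)\leq dn$ is automatic from surjectivity, so the real task is the lower bound, i.e. the genericity of finite fibers.

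The key step is to identify the generic fiber explicitly. Suppose $[f]\in\Split_d(\PP^n)$ with $f=\ell_1\cdots\ell_d$ where the linear forms $\ell_1,\dots,\ell_d$ are pairwise non-proportional. I claim $\varphi^{-1}([f])$ consists of exactly the $d!$ tuples obtained by permuting $([\ell_1],\dots,[\ell_d])$. This rests on unique factorization: $R=\kk[x_0,\dots,x_n]$ is a UFD, each nonzero linear form is irreducible, and any two factorizations of $f$ into irreducibles agree up to order and units; hence if $\ell_1'\cdots\ell_d'$ is a nonzero scalar multiple of $f$ with each $\ell_i'$ linear, then $([\ell_1'],\dots,[\ell_d'])$ is a permutation of $([\ell_1],\dots,[\ell_d])$. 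In particular this fiber is finite. To see such points are generic, note that $d$ generic points of $\PP^n$ are pairwise distinct when $n\geq1$, so the open subset of $(\PP^n)^d$ on which the $[\ell_i]$ are pairwise distinct dominates the locus $U\subseteq\Split_d(\PP^n)$ of forms with distinct factors, which is therefore a dense subset over which every fiber has cardinality $d!$. Applying the fiber-dimension theorem \cite[Lecture 11]{Harris} to $\varphi$ yields $\dim\Split_d(\PP^n)=dn$. The degenerate cases $n=0$ or $d\leq 1$ are handled by inspection, as there $\Split_d(\PP^n)$ is a single point or all of $\PP^n$ and the formula reads $0=0$ or $\dim\PP^n=n$.

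The substantive point is the finiteness of the generic fiber, and even that reduces to unique factorization in a polynomial ring; everything else is standard dimension theory and bookkeeping. The only place demanding a little care is checking that the ``pairwise distinct factors'' locus is genuinely dense in $\Split_d(\PP^n)$ rather than merely nonempty, which follows because its preimage under $\varphi$ is a nonempty, hence dense, open subset of the irreducible variety $(\PP^n)^d$ and $\varphi$ is surjective.
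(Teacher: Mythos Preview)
Your argument is correct and follows essentially the same route as the paper: both use the surjection $\varphi\colon(\PP^n)^d\to\Split_d(\PP^n)$, get the upper bound $dn$ from surjectivity, and obtain equality from finiteness of fibers via the fiber-dimension theorem. The only minor difference is that the paper asserts \emph{every} fiber is finite (bounded by $d!$ via the $\mathfrak{S}_d$-action, which is justified by unique factorization even with repeated factors), whereas you restrict to the generic locus of squarefree forms; either version suffices.
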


\begin{proof}
Recall from the above the map $\varphi:(\PP^n)^d\rightarrow\PP^{\binom{n+d}{d}+1}$ defined by $([\ell_1],\ldots,[\ell_d])\mapsto[\ell_1\cdots\ell_d]$, which has the property that $\varphi((\PP^n)^d)=\Split_d(\PP^n)$.  Therefore, $\dim\Split_d(\PP^n)\allowbreak\leq\dim(\PP^n)^d=dn$.  However, for any $[f]=[\ell_1\cdots\ell_d]\in\Split_d(\PP^n)$, the fibre $\varphi^{-1}([f])$ consists of a finite set.  Indeed, it is the orbit of the set $\{[\ell_1],\ldots,[\ell_d]\}$ under the natural action of the symmetric group $\mathfrak S_d$ and thus has at most $d!$ elements.  Consequently, $\dim\Split_d(\PP^n)=dn$.
\end{proof}

\begin{lemma}\label{TangentToSplit}
Consider the point $[f]\in\Split_d(\PP^n)$, where $f=\ell_1\cdots\ell_d$ with $\ell_i$ a generic linear form for all $i$.  Then the affine cone of the tangent space to $\Split_d(\PP^n)$ at $[f]$ is
\begin{equation*}
\widehat T_{[f]}\Split_d(\PP^n)=\sum_{j=1}^d\ell_1\cdots\ell_{j-1}\ell_{j+1}\cdots\ell_dR_1,
\end{equation*}
where $R_1$ is the vector space of linear forms.
\end{lemma}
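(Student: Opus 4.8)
The plan is to read the tangent space off the parametrization $\varphi\colon(\PP^n)^d\to\PP^{\binom{n+d}{d}-1}$, $([\ell_1],\dots,[\ell_d])\mapsto[\ell_1\cdots\ell_d]$, whose image is $\Split_d(\PP^n)$. Passing to affine cones, I would consider $\widehat\varphi\colon(R_1)^d\to R_d$ given by $(\ell_1,\dots,\ell_d)\mapsto\ell_1\cdots\ell_d$. Differentiating $t\mapsto\prod_{j=1}^d(\ell_j+tm_j)$ at $t=0$ shows that the differential of $\widehat\varphi$ at $(\ell_1,\dots,\ell_d)$ is the linear map $(m_1,\dots,m_d)\mapsto\sum_{j=1}^d\ell_1\cdots\ell_{j-1}m_j\ell_{j+1}\cdots\ell_d$. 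Since each $m_j$ ranges freely over $R_1$, the image of this differential is exactly $\sum_{j=1}^d\ell_1\cdots\ell_{j-1}\ell_{j+1}\cdots\ell_d R_1$, which is the subspace appearing on the right-hand side of the lemma. So the whole content is to show that, for generic $\ell_i$, this image equals $\widehat T_{[f]}\Split_d(\PP^n)$.

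The inclusion $\subseteq$ is automatic, since $\widehat\varphi$ maps into the affine cone of $\Split_d(\PP^n)$. For the reverse inclusion I would invoke two facts: first, a generic $[f]\in\Split_d(\PP^n)$ is a smooth point, so $\dim\widehat T_{[f]}\Split_d(\PP^n)=dn+1$; second, from the proof that $\dim\Split_d(\PP^n)=dn$ we already know that $\varphi$ is dominant onto $\Split_d(\PP^n)$ with finite generic fibers, so in characteristic zero generic smoothness makes $\varphi$ étale over a dense open and hence $d\widehat\varphi$ has rank $dn+1$ at a generic point. A subspace of $\widehat T_{[f]}\Split_d(\PP^n)$ of that same dimension $dn+1$ must be everything, which gives equality. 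Since the formula is symmetric in the $\ell_i$ and genericity of $[f]$ corresponds to genericity of the tuple $(\ell_1,\dots,\ell_d)$, there is no loss in stating the result for a generic factorization $f=\ell_1\cdots\ell_d$.

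The one place that deserves care is the rank computation for $d\widehat\varphi$, i.e. confirming it is exactly $dn+1$ rather than merely at most that. Generic smoothness handles this abstractly, but if one prefers an elementary argument the plan would be to identify the kernel of $d\widehat\varphi$ directly: a tuple $(m_1,\dots,m_d)$ lies in the kernel iff $\sum_{j=1}^d m_j\prod_{i\ne j}\ell_i=0$, and for generic pairwise-coprime $\ell_i$ one checks that this forces $m_j=c_j\ell_j$ with $\sum_{j=1}^d c_j=0$, so the kernel is the $(d-1)$-dimensional space of ``balanced rescalings'' and the rank is $(n+1)d-(d-1)=dn+1$. Either way, this is the only genuinely computational step; everything else is product-rule differentiation and a dimension count. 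I expect the coprimality/kernel verification to be the main obstacle if one avoids citing generic smoothness.
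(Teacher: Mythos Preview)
Your proposal is correct and follows essentially the same route as the paper: differentiate the parametrization via the product rule to obtain the inclusion $V:=\sum_j\ell_1\cdots\widehat{\ell_j}\cdots\ell_d\,R_1\subseteq\widehat T_{[f]}\Split_d(\PP^n)$, then conclude by a dimension count using $\dim\Split_d(\PP^n)=dn$. The only cosmetic difference is in how the count is carried out: the paper computes $\dim V=dn+1$ directly by observing that the $d$ summands pairwise intersect only in $\Span\{f\}$, whereas you compute the rank of $d\widehat\varphi$ either abstractly via generic smoothness or by identifying its kernel as the $(d-1)$-dimensional space of balanced rescalings $m_j=c_j\ell_j$ with $\sum c_j=0$. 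These are dual versions of the same calculation.
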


\begin{proof}
Consider the tangents to the curves of the following form, where $m_1,\ldots,m_d\in R_1$.
\begin{align*}
g(t)&=(\ell_1+m_1t)\cdots(\ell_d+m_dt)\\
&= f + t\sum_{j=1}^d\ell_1\cdots\ell_{j-1}m_j\ell_{j+1}\cdots\ell_d+O(t^2)\\
g'(t) &= \sum_{j=1}^d\ell_1\cdots\ell_{j-1}m_j\ell_{j+1}\cdots\ell_d+O(t)\\
g'(0) &= \sum_{j=1}^d\ell_1\cdots\ell_{j-1}m_j\ell_{j+1}\cdots\ell_d
\end{align*}

Note that these curves all lie on $\widehat\Split_d(\PP^n)$, and so the space
\begin{equation*}
V=\sum_{j=1}^d\ell_1\cdots\ell_{j-1}\ell_{j+1}\cdots\ell_dR_1
\end{equation*}
of all such tangent vectors is a subspace of $\widehat T_{[f]}\Split_d(\PP^n)$.

Note that the summands in this subspace intersect only in the line $\Span\{f\}$, and so we have
\begin{align*}
\dim V &= \dim\sum_{j=1}^d\ell_1\cdots\ell_{j-1}\ell_{j+1}\cdots\ell_dR_1\\
&= \sum_{j=1}^d\dim\ell_1\cdots\ell_{j-1}\ell_{j+1}\cdots\ell_dR_1-(d-1)\dim\Span\{f\}\\
&= d(n+1)-d+1\\
&= dn+1.
\end{align*}

Since $\dim\Split_d(\PP^n)=dn$, we have $\dim\widehat T_{[f]}\Split_d(\PP^n)=dn+1$, and thus it follows that $V=\widehat T_{[f]}\Split_d(\PP^n)$.
\end{proof}

Just as the Waring's problem for polynomials may be answered by looking at secant varieties of Veronese varieties, the Waring's problem for completely decomposable forms (Problem \ref{WaringCDF}) may be answered by looking at secant varieties of varieties of completely decomposable forms.  We ask, ``What is the smallest $s$ such that $\sigma_s(\Split_d(\PP^n))=\PP^{\binom{n+d}{d}-1}$?''

As in the Veronese case, this is trivial when $\sigma_s(\Split_d(\PP^n))$ is nondefective.  Since
\begin{equation*}
\dim\sigma_s(\Split_d(\PP^n)) = \min\left\{s(dn+1),\binom{n+d}{d}\right\}-1,
\end{equation*}
the secant variety will fill the ambient space when
\begin{align*}
s(dn+1)&\geq\binom{n+d}{d}\\
s&\geq\left\lceil\frac{\binom{n+d}{d}}{dn+1}\right\rceil.
\end{align*}

In order to fully solve Waring's problem for completely decomposable forms, we must identity all of the defective cases.  

There is one known defective family (see Proposition \ref{DEqualsTwo}).  In \cite{ArrondoBernardi}, Arrondo and Bernardi conjectured that this is the only one.

\begin{conjecture}\label{Conjecture}
The secant variety $\sigma_s(\Split_d(\PP^n))$ is nondefective except for the case when $d=2$ and $2\leq s\leq\frac{n}{2}$.
\end{conjecture}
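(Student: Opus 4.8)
The plan is to turn the conjecture into an interpolation statement via Terracini's lemma and apolarity, and then to attack that statement by a double Horace-type induction on the pair $(n,d)$, with the base of the induction certified by computation; a genuine proof would consist in carrying this through for \emph{all} $(n,d,s)$ outside the family of Proposition~\ref{DEqualsTwo}, and the reason the dissertation instead proves only Theorems~\ref{S1AndS2}, \ref{ExponentialBound} and~\ref{SUpperBound} is precisely that the induction cannot at present be closed uniformly.

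First I would invoke Terracini's lemma: for generic $[f_1],\dots,[f_s]\in\Split_d(\PP^n)$ one has $\widehat T_{[\sum_k f_k]}\sigma_s(\Split_d(\PP^n))=\sum_{k=1}^s\widehat T_{[f_k]}\Split_d(\PP^n)$. Combining this with Lemma~\ref{TangentToSplit} and writing $f_k=\ell_{k,1}\cdots\ell_{k,d}$ with all $\ell_{k,j}\in R_1$ generic, nondefectivity of $\sigma_s(\Split_d(\PP^n))$ is equivalent to the equality
\begin{equation*}
\dim\sum_{k=1}^s\sum_{j=1}^d\Bigl(\prod_{i\neq j}\ell_{k,i}\Bigr)R_1=\min\left\{s(dn+1),\binom{n+d}{d}\right\}.
\end{equation*}
Dualizing under the apolarity pairing $R_d\times R_d\to\kk$, $(p,q)\mapsto p(\partial)q$, the orthogonal complement of the left-hand span is the space of $G\in R_d$ with $\bigl(\prod_{i\neq j}\ell_{k,i}\bigr)(\partial)G=0$ for all $k,j$; geometrically this is the degree-$d$ part of the ideal of a scheme supported on $s$ generic star configurations of $d$ hyperplanes. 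So the conjecture becomes: this linear system always has the expected dimension, the sole exception being the rank locus $\Split_2(\PP^n)$ of Proposition~\ref{DEqualsTwo}.

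Next I would run two interlocking inductions. \emph{Restriction induction} intersects with a generic hyperplane $H\cong\PP^{n-1}$ and plays the Castelnuovo sequence $0\to\sheaf I_{Z'}(d-1)\to\sheaf I_Z(d)\to\sheaf I_{Z\cap H,H}(d)\to0$, reducing a case of degree $d$ in $\PP^n$ to a case of degree $d-1$ in $\PP^n$ together with a case of degree $d$ in $\PP^{n-1}$; this drops $d$. \emph{Splitting induction} uses the defining map $\varphi\colon(\PP^n)^d\to\Split_d(\PP^n)$: specializing one factor $\ell_{k,d}$ of $f_k$ to vanish on $H$ degenerates $[f_k]$ to a product $[g_k]\cdot\ell$ with $[g_k]\in\Split_{d-1}(\PP^n)$ and $\ell$ supported on $H$, and tracking the limiting tangent space by Lemma~\ref{TangentToSplit} trades a $\Split_d(\PP^n)$-condition for a $\Split_{d-1}(\PP^n)$-condition plus residual conditions on $H$; iterating peels off factors and, after restriction, also drops $n$. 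At each step one must check that the \emph{expected} branch of the relevant Horace/differential-Horace lemma actually occurs, which is arranged by balancing how many tangent-space conditions are specialized onto $H$ against how many are left generic.

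The base of both descents is a finite but not a priori bounded list of small pairs $(n,d)$, and there I would choose the $\ell_{k,j}$ at random over a large finite field and compute in Macaulay2 the Hilbert function of the scheme above, certifying the expected dimension; this is exactly what produces the ranges $s\le s_1(d)$ for $n\ge3$ (extended to all $n\ge3$ in Corollary~\ref{S1Generalized}), $s\ge s_2(d)$ for $n=3$, $s\le2^{n-3}c(n,d)$ for $d\ge n\ge4$, and $s\le30$ in general. The hard part — and the reason the conjecture is not settled in full — is that neither induction terminates at a controlled finite set of base cases: a single undetected sporadic defective secant variety $\sigma_s(\Split_d(\PP^n))$ at large $(n,d)$, in the spirit of the Alexander--Hirschowitz exceptions of Theorem~\ref{AlexanderHirschowitzTheorem}, would break the descent. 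A complete proof would therefore additionally need either a closed-form estimate showing that the combined reach of the two inductions overtakes the threshold $\bigl\lceil\binom{n+d}{d}/(dn+1)\bigr\rceil$ for \emph{every} $(n,d)$, or a structural obstruction argument (via Brill's equations or the Foulkes--Howe map) ruling out sporadic defects directly; the present work supplies the former only in the stated ranges.
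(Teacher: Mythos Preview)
This statement is a conjecture, not a theorem: the paper does not prove it, and your proposal explicitly acknowledges that your outline does not close either. So there is no proof in the paper to compare against, only the partial results (Theorems~\ref{S1AndS2}, \ref{ExponentialBound}, \ref{SUpperBound}) and the methods behind them.

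Your description of those methods differs from the paper in two substantive ways. First, you dualize via apolarity to recast the problem as interpolation through star configurations in $\PP^n$, and then run a Horace induction by cutting with a hyperplane $H\cong\PP^{n-1}$ and using the sequence $0\to\sheaf I_{Z'}(d-1)\to\sheaf I_Z(d)\to\sheaf I_{Z\cap H,H}(d)\to0$. The paper does neither: it stays in $\PP R_d$ throughout. The scheme $Z$ consists of double points \emph{on} $\Split_d(\PP^n)\subset\PP R_d$ (Proposition~\ref{DoublePointTerracini}), and restriction induction (Section~\ref{RestrictionInduction}) cuts not with a hyperplane of $\PP^n$ but with linear subspaces $P_j=\PP\bigl((\prod\tuple g_j)R_{d-\ell}\bigr)$ of $\PP R_d$, each isomorphic to $\PP R_{d-\ell}$ and carrying a copy of $\Split_{d-\ell}(\PP^n)$; the Castelnuovo sequence is applied in degree~$1$ on $\PP R_d$, not in degree~$d$ on $\PP^n$, and the effect is to drop $d$ by a fixed step $\ell$ with $n$ held constant. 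Second, the paper's splitting induction (Theorem~\ref{SplittingInductionTheorem}) is not the degeneration-and-trace argument you sketch but a direct-sum decomposition $R_d=x_0R_{d-1}\oplus S_dU$; it only works in the subabundant direction and reduces a statement at $(n,d)$ to three statements at $(n-1,d)$, $(n-1,d-1)$, $(n-1,d-2)$.

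Your diagnosis of the obstruction is also off. For $n=3$ the paper exhibits functions $\underline s,\overline s$ with $\underline s(d)=\lfloor\binom{d+3}{3}/(3d+1)\rfloor$ and $\overline s(d)=\lceil\binom{d+3}{3}/(3d+1)\rceil$ (away from three small $d$) for which restriction induction with step $\ell=27$ \emph{does} reduce the full conjecture to a finite, explicit list of base cases. The barrier is purely computational: those base cases require rank computations on matrices of order roughly $10^5\times10^5$, beyond the author's available resources. A sporadic defect at large $d$ would not ``break the descent'' in the way you suggest; it would show up as one of those finitely many base cases being false.
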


The goal of this dissertation is to prove this conjecture for as many cases as possible.  In the next section, we summarize the previously known results.

\pagebreak

\section{Known Results}
\label{KnownResults}

We next summarize all of the cases of Conjecture \ref{Conjecture} which were previously known to be true.  First, note that every linear form is by definition completely decomposable, so $\Split_1(\PP^n)=\PP^n$, and consequently $\sigma_s(\Split_1(\PP^n))$ is nondefective for all $n,s\in\NN$.  Next, since $\kk$ is algebraically closed, every homogeneous polynomial over $\kk$ in two variables is completely decomposable, and so $\Split_d(\PP^1)=\PP^d$ and $\sigma_s(\Split_d(\PP^1))$ is nondefective for all $d,s\in\NN$.

In \cite{ArrondoBernardi}, Arrondo and Bernardi proved the following results.

\begin{proposition}\label{LargeN}
If $d\geq 3$ and $3(s-1)\leq n$, then $\sigma_s(\Split_d(\PP^n))$ is nondefective.  For all $n,s\in\NN$, $\dim\sigma_s(\mathbb G(1,n+1))=\dim\sigma_s(\Split_2(\PP^n))$.
\end{proposition}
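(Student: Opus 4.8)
The plan is to prove the two parts of Proposition~\ref{LargeN} separately, since they are of quite different character. For the first part, I would reduce the nondefectivity statement to a dimension count on the span of tangent spaces, using the Terracini-type approach already implicit in the lemmas above: by Terracini's lemma, for generic points $[f_1],\dots,[f_s]\in\Split_d(\PP^n)$ we have $\widehat T_{\sigma_s(\Split_d(\PP^n))} = \sum_{i=1}^s \widehat T_{[f_i]}\Split_d(\PP^n)$ at a generic point of the span, so it suffices to show that these $s$ affine tangent spaces are in general position, i.e. that their sum has dimension $\min\{s(dn+1),\binom{n+d}{d}\}$. Each $f_i = \ell_{i,1}\cdots\ell_{i,d}$ is a product of generic linear forms, and by Lemma~\ref{TangentToSplit} its tangent space is $\sum_{j=1}^d (\prod_{k\neq j}\ell_{i,k}) R_1$.

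When $3(s-1)\le n$, the crucial observation is that we have enough variables to make the $sd$ linear forms $\{\ell_{i,j}\}$ behave ``as independently as possible.'' I would choose the $\ell_{i,j}$ so that the linear spans of the fibers are in direct sum as much as the ambient space allows; concretely, one can try to arrange that each factor $\prod_{k\neq j}\ell_{i,k}$ of $f_i$ uses a block of coordinates disjoint (or nearly disjoint) across different $i$, which is possible precisely because $n+1 \ge 3s-2$ gives room for $s$ essentially disjoint $3$-dimensional coordinate patches (the relevant count being that each point of $\Split_d$ ``costs'' roughly $3$ dimensions of coordinates in this construction, matching the hypothesis $3(s-1)\le n$). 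The main obstacle is to verify that the sum $\sum_i \widehat T_{[f_i]}\Split_d(\PP^n)$ then has the full expected dimension $s(dn+1)$ --- one must check there are no unexpected linear relations among the spaces $(\prod_{k\neq j}\ell_{i,k})R_1$ across different $i$, which reduces to showing that a certain collection of products of linear forms is linearly independent; this is where the genericity of the $\ell_{i,j}$ and the bound $3(s-1)\le n$ must be used in tandem, and it is the heart of the argument. One should also note that since $3(s-1)\le n$ forces $s(dn+1)\le \binom{n+d}{d}$ (which itself needs a short verification), the expected dimension is $s(dn+1)-1$ and filling the ambient space is not at issue here.

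For the second part, I would exhibit an explicit isomorphism (or at least a dimension-preserving correspondence) between $\Split_2(\PP^n)$ and the Grassmannian $\mathbb{G}(1,n+1)$ of lines in $\PP^{n+1}$, i.e. of $2$-planes in an $(n+2)$-dimensional vector space. The point is that a completely decomposable quadratic form $\ell_1\ell_2$ in $n+1$ variables corresponds, via its symmetric-matrix representation of rank at most $2$, to the $2$-plane $\Span\{\ell_1,\ell_2\}$ in the dual space $R_1^* $ --- but one must account for the ambient projective space dimension matching, which is why $\mathbb{G}(1,n+1)$ rather than $\mathbb{G}(1,n)$ appears (the Pl\"ucker embedding sits inside $\PP^{\binom{n+2}{2}-1}$, and rank-$\le 2$ symmetric matrices of size $n+1$ also live naturally in a space of this dimension after the appropriate identification; this bookkeeping is the one genuinely fiddly point). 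Once the varieties are identified up to the relevant ambient spaces, secant varieties correspond --- $\sigma_s$ of rank-$\le 2$ symmetric matrices is rank-$\le 2s$ symmetric matrices, whose dimension is classical --- and the equality $\dim\sigma_s(\mathbb{G}(1,n+1)) = \dim\sigma_s(\Split_2(\PP^n))$ follows immediately. I expect the first part to be the real work and the second to be essentially a translation once the matrix/Grassmannian dictionary is set up carefully.
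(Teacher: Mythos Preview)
The paper does not prove this proposition at all; it is quoted in Section~1.5 as a known result of Arrondo and Bernardi \cite{ArrondoBernardi}, so there is no ``paper's own proof'' to compare against. I will therefore comment on your sketch on its merits.

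Your outline for the first statement is in the right spirit (Terracini plus a careful specialization of the linear forms $\ell_{i,j}$), but the explanation of where the constant $3$ in $3(s-1)\le n$ comes from is not correct: a point of $\Split_d(\PP^n)$ does not ``cost roughly $3$ dimensions of coordinates,'' and the actual argument in Arrondo--Bernardi is combinatorial in a more delicate way. As written your sketch does not yet contain the key idea that makes the bound $3(s-1)\le n$ appear.

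For the second statement there is a genuine conceptual gap. The varieties $\Split_2(\PP^n)$ and $\mathbb G(1,n+1)$ are \emph{not} isomorphic, so the ``explicit isomorphism'' you propose cannot exist: $\mathbb G(1,n+1)$ is smooth, while $\Split_2(\PP^n)$ (the rank-$\le 2$ locus of symmetric $(n{+}1)\times(n{+}1)$ matrices) is singular along the Veronese $\nu_2(\PP^n)$. What is actually going on is a numerical coincidence between two \emph{different} determinantal loci: $\sigma_s(\Split_2(\PP^n))$ is the locus of symmetric $(n{+}1)\times(n{+}1)$ matrices of rank $\le 2s$, while $\sigma_s(\mathbb G(1,n+1))$ is (via Pl\"ucker) the locus of skew-symmetric $(n{+}2)\times(n{+}2)$ matrices of rank $\le 2s$. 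Both sit in a $\PP^{\binom{n+2}{2}-1}$, and the classical codimension formulas give
\[
\dim\sigma_s(\Split_2(\PP^n)) \;=\; \binom{n+2}{2}-\binom{n-2s+2}{2}-1 \;=\; \dim\sigma_s(\mathbb G(1,n+1)).
\]
So the equality of dimensions is a computation, not a consequence of an identification of the two varieties; your ``translation once the dictionary is set up'' understates this.
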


The second statement shows that $\sigma_s(\Split_2(\PP^n))$ is defective if and only if $2\leq s\leq\frac{n}{2}$, as the dimensions of secant varieties of Grassmannians of lines are well known (see, for example, \cite{CGG}).  A direct proof of this case is shown in Proposition \ref{DEqualsTwo}.

In \cite{Shin}, Shin showed that the dimension of $\sigma_2(\Split_d(\PP^2))$ can be determined by the Hilbert function of the union of two linear star configurations of type $d$, leading to the following result.

\begin{theorem}
If $d\geq 3$, then $\sigma_2(\Split_d(\PP^2))$ is nondefective.
\end{theorem}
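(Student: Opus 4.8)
The plan is to use Terracini's lemma together with the tangent-space computation of Lemma \ref{TangentToSplit} to convert the statement into a claim about the Hilbert function of a union of two linear star configurations of type $d$ in $\PP^2$, and then to establish that claim.

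First I would set up the Terracini reduction. A generic point of $\sigma_2(\Split_d(\PP^2))$ is $[f_1+f_2]$ with $f_1=\ell_1\cdots\ell_d$ and $f_2=m_1\cdots m_d$ for generic linear forms $\ell_i,m_i$, and Terracini's lemma gives
\[
\widehat T_{[f_1+f_2]}\sigma_2(\Split_d(\PP^2))=\widehat T_{[f_1]}\Split_d(\PP^2)+\widehat T_{[f_2]}\Split_d(\PP^2),
\]
so everything reduces to computing the dimension of this sum. Writing $\widehat\ell_j=\prod_{i\neq j}\ell_i$, Lemma \ref{TangentToSplit} identifies $\widehat T_{[f_1]}\Split_d(\PP^2)=\sum_{j=1}^d\widehat\ell_j R_1$. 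Next I would identify this space with $(I_{\mathbb{X}_1})_d$, the degree-$d$ part of the ideal of the star configuration $\mathbb{X}_1=\{L_i\cap L_j:1\le i<j\le d\}$ of $\binom{d}{2}$ points, where $L_i\subset\PP^2$ is the line defined by $\ell_i$: the inclusion $\sum_j\widehat\ell_j R_1\subseteq(I_{\mathbb{X}_1})_d$ is immediate, and for the reverse inclusion one restricts $g\in(I_{\mathbb{X}_1})_d$ to $L_1$ (where it vanishes at $d-1$ points), writes $g\equiv\widehat\ell_1 h_1\pmod{\ell_1}$ for some $h_1\in R_1$, and inducts on $d$ applied to $(g-\widehat\ell_1 h_1)/\ell_1$. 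Thus $\widehat T_{[f_k]}\Split_d(\PP^2)=(I_{\mathbb{X}_k})_d$, which has dimension $2d+1$ by Lemma \ref{TangentToSplit}.

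Now $(I_{\mathbb{X}_1})_d\cap(I_{\mathbb{X}_2})_d=(I_{\mathbb{X}_1\cup\mathbb{X}_2})_d$, so inclusion-exclusion gives
\[
\dim\widehat T_{[f_1+f_2]}\sigma_2(\Split_d(\PP^2))=2(2d+1)-\dim(I_{\mathbb{X}_1\cup\mathbb{X}_2})_d .
\]
Using the identity $\binom{d+2}{2}=(2d+1)+\binom{d}{2}$, a direct check shows this equals $\expdim\sigma_2(\Split_d(\PP^2))+1$ exactly when $\dim(I_{\mathbb{X}_1\cup\mathbb{X}_2})_d=\max\{0,(2d+1)-\binom{d}{2}\}$, i.e. when the $d^2-d$ points of a general union of two star configurations of type $d$ impose the maximal possible number of conditions, $\min\{d^2-d,\binom{d+2}{2}\}$, on forms of degree $d$; equivalently, the $\binom{d}{2}$ points of $\mathbb{X}_2$ should impose $\min\{\binom{d}{2},2d+1\}$ independent conditions on the $(2d+1)$-dimensional space $(I_{\mathbb{X}_1})_d$.

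The hard part is this last Hilbert-function statement, since the points of $\mathbb{X}_2$ are highly special (each line $M_i$ carries $d-1$ of them), so one cannot simply invoke genericity. The key observation I would use is that the linear system $|(I_{\mathbb{X}_1})_d|$ of degree-$d$ curves through $\mathbb{X}_1$ has no fixed component: a common irreducible factor of all the forms $\widehat\ell_j h$ (for $1\le j\le d$, $h\in R_1$) would, after taking $h$ generic, have to be one of the $\ell_i$, and that $\ell_i$ fails to divide $\widehat\ell_i h$. Hence the base locus of $|(I_{\mathbb{X}_1})_d|$ is finite, and introducing the lines $m_1,\dots,m_d$ one at a time, each new line contributes points lying on a generic line and thus off the current (still finite) base locus, so the dimension of the system of curves through everything chosen so far drops by one with each such point, down to $\max\{0,(2d+1)-\binom{d}{2}\}$. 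Combined with the previous steps this gives $\dim\widehat T_{[f_1+f_2]}\sigma_2(\Split_d(\PP^2))=\expdim\sigma_2(\Split_d(\PP^2))+1$, hence nondefectivity. This Hilbert-function computation is carried out in \cite{Shin}; the main obstacle is the bookkeeping that guarantees no new fixed component is created as the $m_i$ are added (no line ever accumulating $d+1$ forced points), so that each newly imposed point condition is genuinely independent.
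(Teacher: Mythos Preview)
Your proposal is correct and follows exactly the approach the paper attributes to Shin in \cite{Shin}: the paper does not prove this theorem itself but merely states that ``Shin showed that the dimension of $\sigma_2(\Split_d(\PP^2))$ can be determined by the Hilbert function of the union of two linear star configurations of type $d$,'' which is precisely the reduction you carry out via Terracini's lemma and Lemma~\ref{TangentToSplit}. You in fact go further than the paper by sketching how the Hilbert-function computation might proceed (no fixed components, adding lines one at a time), though you rightly defer the delicate bookkeeping to \cite{Shin}.
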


In \cite{Shin2}, Shin generalized this result to include $n\geq 3$, giving another proof for the $s=2$ case of Proposition \ref{LargeN}.

In \cite{Abo}, Abo used induction to complete the proof for 3-variable forms and provide partial proofs for 4-variable forms and cubic forms.

\begin{theorem}\label{AboResultNEqualsTwo}
For all $d,s\in\NN$, $\sigma_s(\Split_d(\PP^2))$ is nondefective.
\end{theorem}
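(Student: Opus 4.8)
The plan is to follow the strategy of \cite{Abo}: use Terracini's lemma to translate the statement into one about linear systems of plane curves, and then induct on the degree $d$. First I would fix $s$ generic completely decomposable forms $f_i=\ell_{i,1}\cdots\ell_{i,d}$ in $R=\kk[x_0,x_1,x_2]$. By Terracini's lemma, the affine cone of the tangent space to $\sigma_s(\Split_d(\PP^2))$ at a generic point of $\langle[f_1],\dots,[f_s]\rangle$ is $\sum_{i=1}^s\widehat T_{[f_i]}\Split_d(\PP^2)$, so the theorem is equivalent to the equality
\begin{equation*}
\dim\left(\sum_{i=1}^s\widehat T_{[f_i]}\Split_d(\PP^2)\right)=\min\left\{s(2d+1),\binom{d+2}{2}\right\}.
\end{equation*}
By Lemma~\ref{TangentToSplit} each summand equals $\sum_{j=1}^d\ell_{i,1}\cdots\ell_{i,j-1}\ell_{i,j+1}\cdots\ell_{i,d}\,R_1$, and a short computation identifies it with $(I_{X_i})_d$, where $X_i\subset\PP^2$ is the star configuration formed by the $\binom{d}{2}$ pairwise intersection points of the $d$ generic lines $\{\ell_{i,k}=0\}$: those points impose independent conditions in degree $d$, and the evident inclusion $\widehat T_{[f_i]}\Split_d(\PP^2)\subseteq(I_{X_i})_d$ is forced to be an equality by the dimension count $dn+1=2d+1$ of Lemma~\ref{TangentToSplit}. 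Thus the problem becomes: $s$ generic linear star configurations of type $d$ in $\PP^2$ should be ``as independent as possible'' in degree $d$ — a direct extension of the $s=2$ case of \cite{Shin}.

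Next I would induct on $d$. The cases $d\le 2$ are already known (Section~\ref{KnownResults}), the case $s=1$ is trivial, the case $s=2$ is \cite{Shin}, and for each fixed $d$ only finitely many values of $s$ remain, since once $s(2d+1)\ge\binom{d+2}{2}$ nondefectivity means filling the ambient space, which propagates upward in $s$. For the inductive step I would use a Horace-type degeneration: pick a generic line $L=\{x_0=0\}$ and specialize $f_i$ to $L\cdot g_i$ with $g_i\in R_{d-1}$ generic, so that Lemma~\ref{TangentToSplit} yields
\begin{equation*}
\widehat T_{[Lg_i]}\Split_d(\PP^2)=L\cdot\widehat T_{[g_i]}\Split_{d-1}(\PP^2)+g_iR_1 .
\end{equation*}
Summing over $i$ and intersecting with the restriction sequence $0\to R_{d-1}\xrightarrow{\cdot L}R_d\xrightarrow{|_L}H^0(L,\mathcal O_L(d))\to 0$ splits $\dim\sum_i\widehat T_{[Lg_i]}\Split_d(\PP^2)$ into a trace on $L$ — an elementary interpolation problem for the generic binary forms $g_i|_L$, of expected dimension $\min\{2s,d+1\}$ — and a residue containing $L\cdot\sum_i\widehat T_{[g_i]}\Split_{d-1}(\PP^2)$, whose dimension is $\min\{s(2d-1),\binom{d+1}{2}\}$ by the inductive hypothesis. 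Since $2s+s(2d-1)=s(2d+1)$ and $(d+1)+\binom{d+1}{2}=\binom{d+2}{2}$, these two contributions add up to the expected value; and because specializing the $f_i$ can only decrease the dimension of the sum of tangent spaces, an equality in the degenerate configuration proves the theorem for generic $f_i$.

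The main obstacle is that this plain ``common factor'' degeneration succeeds only when $s\le\tfrac{d(d+1)}{2(2d-1)}$ (so that $\sigma_s(\Split_{d-1}(\PP^2))$ is still non-filling and the inductive hypothesis has real content) or when $s\ge\tfrac{d+1}{2}$ (so that the trace already exhausts $H^0(L,\mathcal O_L(d))$), leaving an intermediate band of $s$ of width roughly $d/4$ — precisely the values for which $\sigma_s(\Split_{d-1}(\PP^2))$ has filled its ambient space, so the inductive hypothesis gives too little — completely untreated. Closing this band is the heart of the proof. I would do it with a finer degeneration in which only part of the collection $f_i$ acquires the factor $L$ while, for the remaining ones, a second linear factor is collapsed onto $L$ as well, forcing some singular points of the star configuration $X_i$ onto $L$; this feeds more restricted data into the trace term while keeping the residue controllable, in the spirit of the differential Horace method. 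The finitely many small degrees and any residual pairs $(s,d)$ where the numerics are awkward I would dispatch by a direct Macaulay2 computation. Throughout, the technical crux is to bound the overlaps among the spaces $L\cdot\widehat T_{[g_i]}\Split_{d-1}(\PP^2)$ and the $g_iR_1$ inside $L\cdot R_{d-1}$ and to show they are no larger than the generic expectation; verifying this, together with the exceptional cases, is where the genuine effort lies.
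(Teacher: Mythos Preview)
The paper does not give its own proof of this theorem; it is quoted from \cite{Abo}. So there is nothing to compare against in the paper proper, beyond the observation that the method of \cite{Abo} is precisely what Section~\ref{RestrictionInduction} abstracts as ``restriction induction''.

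Your outline is sound as far as it goes and is indeed in the spirit of \cite{Abo}: the Terracini reduction, the identification $\widehat T_{[f_i]}\Split_d(\PP^2)=(I_{X_i})_d$ with a star configuration, the $L$-degeneration, and the trace/residue count are all correct, and you have located the real obstruction accurately --- the intermediate band $\tfrac{d(d+1)}{2(2d-1)}<s<\tfrac{d+1}{2}$ where the naive step fails. The gap is that your proposal stops exactly there. Saying that one will ``collapse a second linear factor onto $L$'' and invoke a differential Horace method is not yet an argument: you have to specify \emph{how many} of the $f_i$ acquire the factor $L$ at each step, prove that the resulting trace and residue statements themselves have the expected dimensions (these are no longer instances of $\sigma_s(\Split_{d'}(\PP^2))$ but mixed configurations involving extra points or lines, as in the spaces $A_i(d)$ of Section~\ref{Definitions}), and check that the numerics actually close the band rather than just narrowing it. In \cite{Abo} this is done by choosing a step size $\ell>1$ and a specific piecewise-quadratic function $s(d)$ so that the mixed statements $\statement A_i$ stay sub-/superabundant, then verifying a finite list of base cases; your proposal contains none of this structure. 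As written it is a plan, not a proof, and the part you defer (``this is where the genuine effort lies'') is in fact the entire content of the theorem.
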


\begin{theorem}\label{AboResult}
Consider the following functions.
\begin{align*}
s'_1(d) &=
\begin{cases}
\frac{1}{18}d^2+\frac{1}{6}d+1&\text{ if }d\equiv 0\pmod{6}\\
\frac{1}{18}d^2+\frac{2}{9}d-\frac{5}{18}&\text{ if }d\equiv 1\pmod{6}\\
\frac{1}{18}d^2+\frac{5}{18}d+\frac{2}{9}&\text{ if }d\equiv 2,5\pmod{6}\\
\frac{1}{18}d^2+\frac{1}{6}d&\text{ if }d\equiv 3\pmod{6}\\
\frac{1}{18}d^2+\frac{2}{9}d+\frac{2}{9}&\text{ if }d\equiv 4\pmod{6}\\
\end{cases}\\
s'_2(d) &=
\begin{cases}
\frac{1}{18}d^2+\frac{1}{3}d+1&\text{ if }d\equiv 0\pmod{6}\\
\frac{1}{18}d^2+\frac{7}{18}d+\frac{14}{9}&\text{ if }d\equiv 1\pmod{6}\\
\frac{1}{18}d^2+\frac{4}{9}d+\frac{8}{9}&\text{ if }d\equiv 2\pmod{6}\\
\frac{1}{18}d^2+\frac{1}{3}d+\frac{1}{2}&\text{ if }d\equiv 3\pmod{6}\\
\frac{1}{18}d^2+\frac{7}{18}d+\frac{5}{9}&\text{ if }d\equiv 4\pmod{6}\\
\frac{1}{18}d^2+\frac{4}{9}d+\frac{7}{18}&\text{ if }d\equiv 5\pmod{6}\\
\end{cases}
\end{align*}
If $n=3$ and $s\leq s'_1(d)$ or $s\geq s'_2(d)$, or $d=3$ and $s\leq s'_1(n)$ or $s\geq s'_2(n)$, then $\sigma_s(\Split_d(\PP^n))$ is nondefective.
\end{theorem}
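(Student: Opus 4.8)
The plan is to reduce the statement, via Terracini's lemma and Lemma~\ref{TangentToSplit}, to a cohomological problem about linear systems of degree-$d$ hypersurfaces, and then to settle that problem by two hyperplane-restriction (Horace) inductions: one on $d$ for the $n=3$ assertions, and one on $n$ for the $d=3$ assertions, both anchored at the already-known $\PP^2$ and degree-$2$ cases. By Terracini's lemma, $\sigma_s(\Split_d(\PP^n))$ is nondefective exactly when, for $s$ general points $[f_i]$, the tangent spaces $\widehat T_{[f_i]}\Split_d(\PP^n)$ span a subspace of $R_d$ of dimension $\min\{s(dn+1),\binom{n+d}{d}\}$. By Lemma~\ref{TangentToSplit} this span is $\sum_{i=1}^s\sum_{j=1}^d\bigl(\prod_{k\ne j}\ell^{(i)}_k\bigr)R_1$, and a short computation (in the spirit of Shin~\cite{Shin,Shin2}) identifies each summand $\widehat T_{[f_i]}\Split_d(\PP^n)$ with the degree-$d$ graded piece of the ideal of the linear star configuration of type $d$ cut out by the $d$ hyperplanes $\{\ell^{(i)}_k=0\}$; when the $d$ factors are linearly independent it is, in suitable coordinates, the span of the degree-$d$ monomials using at least $d-1$ of $d$ fixed variables. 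Thus the problem becomes: show that $s$ general linear star configurations of type $d$ in $\PP^n$ impose the expected number of conditions on forms of degree $d$ (equivalently, that the corresponding $s$ subspaces of $R_d$ are in general position). The two hypotheses $s\le s'_1$ and $s\ge s'_2$ isolate the regimes in which the expected dimension is $s(dn+1)$ (few configurations) and $\binom{n+d}{d}$ (the configurations already fill $R_d$), and they are handled by parallel but separately optimized arguments.

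For $n=3$ I would induct on $d$. Fixing a plane $H\cong\PP^2\subset\PP^3$ and letting $Z$ denote the scheme carrying the linear system, the Castelnuovo restriction sequence yields
\begin{equation*}
h^0\bigl(\sheaf{I}_Z(d)\bigr)\le h^0\bigl(\sheaf{I}_{Z\cap H,\,H}(d)\bigr)+h^0\bigl(\sheaf{I}_{\operatorname{Res}_H Z}(d-1)\bigr).
\end{equation*}
I would then specialize a carefully chosen number of the star configurations so that one defining hyperplane of each becomes $H$, and---following the differential Horace method of Alexander--Hirschowitz---further degenerate a few individual directions rather than whole configurations, to fine-tune the counts. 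The trace $Z\cap H$ is then a union of linear star configurations of type $d$ in $\PP^2$, controlled by Theorem~\ref{AboResultNEqualsTwo}; the residue $\operatorname{Res}_H Z$ is a union of linear star configurations of type $d-1$ in $\PP^3$, controlled by the inductive hypothesis in degree $d-1$; and matching both against the appropriate binomials pins $h^0(\sheaf{I}_Z(d))$ to its expected value. Choosing how many configurations to push onto $H$ so that exactly $s'_1(d-1)$ (respectively $s'_2(d-1)$) survive in the residue is precisely what produces the piecewise shape of $s'_1,s'_2$ and their dependence on $d\bmod 6$; a finite list of small-$d$ base cases, checked by direct Macaulay2 computation, starts the induction.

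The $d=3$ assertion is handled symmetrically. Since $\Split_d(\PP^n)$ and $\Split_n(\PP^d)$ are both $nd$-dimensional subvarieties of the same $\PP^{\binom{n+d}{d}-1}$, one can either transport the $n=3$ argument to the statement about cubics or run the parallel induction on $n$ directly: restricting a cubic hypersurface in $\PP^n$ to a hyperplane $\PP^{n-1}$ keeps the degree equal to $3$, the trace is a union of linear star configurations of type $3$ in $\PP^{n-1}$, and the residue is a union of type-$3$ configurations taken in degree $2$, so the inductive hypothesis together with Proposition~\ref{LargeN} and Theorem~\ref{AboResultNEqualsTwo} closes the loop, starting from small $n$. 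The main obstacle throughout is making these inductions close up \emph{exactly}: one must check that each specialization of star configurations onto the chosen hyperplane is admissible in the differential-Horace sense (the flat limit has the correct length and is as special as required but no more), that the numbers of degenerated configurations and directions are always nonnegative integers realizing the required split, and that every residue case---several of them, indexed by $d\bmod 6$ or $n\bmod 6$---lands inside the inductive hypothesis or among the finitely many computer-verified base cases. Keeping the piecewise bounds sharp across each degree-drop (or dimension-drop) step, rather than losing a linear-in-$d$ amount per iteration, is the delicate heart of the argument.
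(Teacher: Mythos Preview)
The paper does not actually prove this theorem; it is quoted from \cite{Abo} as a previously known result (see Section~\ref{KnownResults}). What the paper does contain is a generalized version of Abo's method (the ``restriction induction'' of Section~\ref{RestrictionInduction}), so one can infer how Abo's argument runs and compare it against your sketch.

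Your overall strategy is the right one: reduce via Terracini's lemma and Lemma~\ref{TangentToSplit} to a statement about subspaces of $R_d$, then attack that by a Castelnuovo/Horace restriction argument, with base cases handled by Theorem~\ref{AboResultNEqualsTwo} and computer checks. The identification with linear star configurations is also legitimate. Where your proposal diverges from Abo's actual proof is in the step size of the induction. You restrict to a single hyperplane $H$ and drop from degree $d$ to $d-1$. Abo instead specializes to linear subspaces of the form $P=\PP\bigl((\prod_{k=1}^{\ell}g_k)\,R_{d-\ell}\bigr)$ with $\ell=6$, i.e.\ subspaces cut out by a product of six generic linear forms, and the induction steps from $d$ to $d-6$. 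This is precisely why $s'_1$ and $s'_2$ are piecewise quadratic with pieces indexed by $d\bmod 6$: the period of the induction is $6$, and one needs a separate chain of base cases for each residue class (all verified computationally). The paper's own improvement in Theorem~\ref{nEqualsThreeResult} follows the same template with $\ell=9$, which is why those bounds are indexed mod $9$.

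With your step-$1$ approach the numerics do not close as cleanly: at each stage you must split $s(d)$ into a trace part governed by the $\PP^2$ result and a residue part satisfying the degree-$(d-1)$ hypothesis, and arranging both to land exactly on the allowed values for every $d$ is not obviously compatible with the specific functions $s'_1,s'_2$ given. The larger step $\ell$ buys exactly the flexibility needed to make the arithmetic work out, at the cost of a longer list of base cases. The $d=3$ half is handled symmetrically (cf.\ Section~\ref{FixedDegree}): one inducts on $n$ with the same step $\ell=6$, specializing linear forms to lie in codimension-$\ell$ subspaces rather than single hyperplanes.
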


We summarize the known results in Table \ref{NondefectiveSplit}.

\begin{table}[h]
\centering
\begin{tabular}{|c|c|c|}
\hline
$n$ & $d$ & $s$ \\
\hline
1,2& $\geq 1$ &$\geq 1$\\
$\geq 1$&1&$\geq 1$\\
$\geq 1$&2&$=1$ or $>\frac{n}{2}$\\
3&$\geq 1$&$\leq s'_1(d)$ or $\geq s'_2(d)$\\
$\geq 1$&3&$\leq s'_1(n)$ or $\geq s'_2(n)$\\
$\geq 3(s-1)$ & $\geq 3$ &$\geq 1$\\
\hline
\end{tabular}
\caption{Previously known nondefective cases of $\sigma_s(\Split_d(\PP^n))$}
\label{NondefectiveSplit}
\end{table}

\pagebreak

\chapter{Useful Tools}

In the following chapter, we introduce several techniques which will be vital in proving our results.

\section{Terracini's Lemma}
\label{TerraciniSection}

One of the earliest mathematicians interested in secant varieties of Veronese varieties was Alessandro Terracini, who in 1911 proved the following extremely useful result \cite{Terracini}.  It allows us to use linear algebra to answer questions regarding secant varieties.

\begin{lemma}[Terracini's lemma]\label{TerracinisLemma}
Let $X\subset\PP^N$ be a projective variety.  Choose generic points $p_1,\ldots,p_s\in X$ and a generic point $q\in\langle p_1,\ldots,p_s\rangle$.  Then
\begin{equation*}
\sigma_s(T_qX)=\langle T_{p_1}X,\ldots,T_{p_s}X\rangle.
\end{equation*}
\end{lemma}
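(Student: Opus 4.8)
The plan is to compute the embedded tangent space to $\sigma_s(X)$ at a generic point $q$ directly, by writing down an explicit parametrization of a dense subset of the affine cone $\widehat{\sigma_s(X)}$ and differentiating it. I work throughout with affine cones in $\mathbb A^{N+1}$, since the asserted projective equality is equivalent to the corresponding equality of affine tangent spaces $\widehat{T_q\sigma_s(X)}=\sum_{i=1}^s\widehat{T_{p_i}X}$.

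First I would set up the parametrization. As $X$ is irreducible and reduced, its smooth locus is dense, so the generic points $p_1,\dots,p_s$ are nonsingular points of $X$; near a vector $v_i$ spanning the line $p_i$, choose a regular parametrization $\phi_i\colon U_i\to\widehat X$ with $U_i\subseteq\mathbb A^{\dim X}$ open, $\phi_i(0)=v_i$, and $d\phi_i$ of maximal rank on $U_i$ (shrinking if necessary). Define
\[
\Phi\colon U_1\times\cdots\times U_s\times\mathbb A^s\longrightarrow\mathbb A^{N+1},\qquad
\Phi(u_1,\dots,u_s,t_1,\dots,t_s)=\sum_{i=1}^s t_i\,\phi_i(u_i).
\]
By the definition of the secant variety, the image of $\Phi$ is a constructible set dense in $\widehat{\sigma_s(X)}$, so $\Phi$ is a dominant morphism onto $\widehat{\sigma_s(X)}$.

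Next I would establish the easy inclusion $\langle T_{p_1}X,\dots,T_{p_s}X\rangle\subseteq T_q\sigma_s(X)$. Fixing generic parameter values, put $q=\Phi(u,t)$ and $p_i'=[\phi_i(u_i)]$; the $p_i'$ are again generic in $X$ and $q$ generic in their span, so it suffices to treat this $q$. Holding all variables but $u_i$ fixed and differentiating shows $\operatorname{Im}d\Phi$ contains $\operatorname{Im}\big(t_i\,d\phi_i(u_i)\big)=\operatorname{Im}d\phi_i(u_i)$ since $t_i\neq0$; holding all but $t_i$ fixed shows it contains $\Span\{\phi_i(u_i)\}$. Together these span $\widehat{T_{p_i'}X}$, so $\sum_i\widehat{T_{p_i'}X}\subseteq\operatorname{Im}d\Phi\subseteq\widehat{T_q\sigma_s(X)}$. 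Concretely, moving one point along a curve in $X$ while keeping the others and the coefficients $t_j$ fixed traces a curve in $\widehat{\sigma_s(X)}$ realizing any prescribed tangent vector at $p_i'$.

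Finally, the reverse inclusion carries the real content. Since $\Phi$ is a dominant morphism of irreducible varieties over a field of characteristic zero, it is generically smooth, so at a generic point of the source the differential $d\Phi$ surjects onto $\widehat{T_q\sigma_s(X)}$, where $q$ is the corresponding (generic) image point. The computation above identifies $\operatorname{Im}d\Phi$ with $\sum_i\widehat{T_{p_i'}X}=\langle\widehat{T_{p_1'}X},\dots,\widehat{T_{p_s'}X}\rangle$, and combining the two inclusions yields the equality. I expect the main obstacle to be the genericity bookkeeping: one must check that a ``generic $q$ in the span of generic $p_1,\dots,p_s$'' is precisely a generic point of the image of $\Phi$, so that generic smoothness applies at a point lying over it. This amounts to intersecting the relevant dense open loci — nonsingular points of the source, nonsingular points of $\sigma_s(X)$, and the maximal-rank locus of $d\Phi$ — and invoking that $\kk$ has characteristic zero, a hypothesis that is genuinely needed for this step.
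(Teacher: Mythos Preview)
The paper does not actually supply a proof of Terracini's lemma; it only states the result and attributes it to Terracini's 1911 paper \cite{Terracini}, then moves directly to Corollary~\ref{TerraciniCorollary}. So there is no ``paper's own proof'' to compare against.

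That said, your argument is the standard modern proof and is correct in outline. You parametrize a dense open subset of $\widehat{\sigma_s(X)}$ by a map $\Phi$ from $\prod_i U_i\times\mathbb A^s$, compute $\operatorname{Im}d\Phi$ explicitly as $\sum_i\widehat{T_{p_i}X}$, and then invoke generic smoothness in characteristic zero to conclude that this image equals $\widehat{T_q\sigma_s(X)}$ at a generic point. The identification of the two notions of genericity (generic $(p_1,\dots,p_s,q)$ versus generic point of the source of $\Phi$) is indeed the only place requiring care, and you have flagged it appropriately. One small remark: the statement as printed in the paper has a typo, writing $\sigma_s(T_qX)$ where $T_q\sigma_s(X)$ is meant; your proof correctly addresses the intended assertion.
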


\begin{corollary}\label{TerraciniCorollary}
Let $X\subset\PP^N$ be a projective variety.  If $\sigma_{s_1}(X)$ is nondefective and $s_1(\dim X+1)\leq N+1$, then $\sigma_s(X)$ is nondefective for all $s\leq s_1$.  If $\sigma_{s_2}(X)$ is nondefective and $s_2(\dim X+1)\geq N+1$, then $\sigma_s(X)$ is nondefective for all $s\geq s_2$.
\end{corollary}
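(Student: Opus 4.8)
The plan is to extract everything from Terracini's lemma (Lemma~\ref{TerracinisLemma}): for generic points $p_1,\dots,p_s\in X$ and generic $q\in\langle p_1,\dots,p_s\rangle$, the affine cone of the tangent space to the secant variety is $\widehat T_q\sigma_s(X)=\widehat T_{p_1}X+\dots+\widehat T_{p_s}X$, so that
\begin{equation*}
\dim\sigma_s(X)=\dim\bigl(\widehat T_{p_1}X+\dots+\widehat T_{p_s}X\bigr)-1.
\end{equation*}
Each summand has dimension $\dim X+1$, so the right-hand side is at most $\min\{s(\dim X+1),N+1\}-1=\expdim\sigma_s(X)$, with equality exactly when $\sigma_s(X)$ is nondefective. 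Both implications are then obtained by comparing the span of $s$ tangent spaces with the span of $s_1$ (resp.\ $s_2$) of them.

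For the first implication I would fix generic points $p_1,\dots,p_{s_1}\in X$. Because $\sigma_{s_1}(X)$ is nondefective and $s_1(\dim X+1)\le N+1$, the sum $\widehat T_{p_1}X+\dots+\widehat T_{p_{s_1}}X$ has dimension $s_1(\dim X+1)$, i.e.\ it is a direct sum. Any subsum of a direct sum is again direct, so for $s\le s_1$ the space $\widehat T_{p_1}X+\dots+\widehat T_{p_s}X$ has dimension exactly $s(\dim X+1)$. Since a sub-tuple of a generic tuple of points of $X$ is again generic, Terracini's lemma applies to $p_1,\dots,p_s$ and gives $\dim\sigma_s(X)=s(\dim X+1)-1$; as $s(\dim X+1)\le s_1(\dim X+1)\le N+1$, this equals $\expdim\sigma_s(X)$, so $\sigma_s(X)$ is nondefective.

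For the second implication, nondefectivity of $\sigma_{s_2}(X)$ together with $s_2(\dim X+1)\ge N+1$ forces $\dim\sigma_{s_2}(X)=N$; a closed subvariety of the irreducible space $\PP^N$ of dimension $N$ is all of $\PP^N$, so $\sigma_{s_2}(X)=\PP^N$. For any $s\ge s_2$ one has $\sigma_{s_2}(X)\subseteq\sigma_s(X)$: a generic $s_2$-tuple of points of $X$ occurs as a sub-tuple of a generic $s$-tuple, and $\langle p_1,\dots,p_{s_2}\rangle\subseteq\langle p_1,\dots,p_s\rangle$, so the union of the $(s_2-1)$-planes is contained in the union of the $(s-1)$-planes, and the inclusion of secant varieties follows on taking Zariski closures. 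Hence $\sigma_s(X)=\PP^N$, so $\dim\sigma_s(X)=N=\expdim\sigma_s(X)$ and $\sigma_s(X)$ is nondefective.

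All of the arithmetic here is immediate; there is no deep obstacle, and the only points deserving an explicit sentence rather than a wave of the hand are the two soft facts invoked above — that a subfamily of a direct sum of subspaces is still direct (so removing a point drops the dimension by exactly $\dim X+1$), and that a sub-tuple of a generic tuple of points is still generic (so Terracini's lemma legitimately applies to the smaller configuration). A clean writeup should state these rather than bury them.
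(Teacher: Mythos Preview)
Your proof is correct and follows essentially the same route as the paper's: both parts rest on Terracini's lemma and the nesting of secant varieties, and your direct-sum argument for the first implication is just the contrapositive of the paper's proof by contradiction. The only cosmetic difference is that you argue directly (a subsum of a direct sum is direct) where the paper assumes defectivity at some $s\le s_1$ and derives a contradiction with nondefectivity at $s_1$.
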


\begin{proof}
Suppose there exists some $s\leq s_1$ such that $\sigma_s(X)$ is defective, \textit{i.e.}, $\dim\widehat\sigma_s(X)<s(\dim X+1)$.  Then, by Terracini's lemma,
\begin{align*}
\dim\sigma_{s_1}(X)&=\dim\sum_{i=1}^{s_1-1}\widehat T_{p_i}X-1\\
&\leq\dim\sum_{i=1}^s\widehat T_{p_i}X+\dim\sum_{i=s+1}^{s_1}\widehat T_{p_i}X-1\\
&<s(\dim X+1)+(s_1-s)(\dim X+1)-1\\
&=s_1(\dim X+1)-1.
\end{align*}
However, this contradicts the nondefectivity of $\sigma_{s_1}(X)$.

Note that $\sigma_1(X)\subset\sigma_2(X)\subset\cdots\sigma_{s_2}(X)=\PP^N$.  Therefore, if $s\geq s_2$, then $\sigma_s(X)=\PP^N$ also.
\end{proof}

For any $k\in\NN$ and $j\in\{1,\ldots,k\}$, define a map $\pi_j:R_1^{k}\mapsto\widehat{\Split_{k-1}}(\PP^n)$ by $(f_1,\cdots,f_k)\mapsto f_1\cdots f_{j-1}f_{j+1}\cdots f_k$.  Then, combining Terracini's lemma with Lemma~\ref{TangentToSplit}, we see that solving Waring's problem for completely decomposable forms may be reduced to a linear algebra problem.

\begin{proposition}\label{DimSecantVariety}
For any generic $\tuple f_1,\ldots,\tuple f_s\in R_1^{d}$,
\begin{equation*}
\dim\sigma_s(\Split_d(\PP^n))=\dim\sum_{i=1}^s\sum_{j=1}^d\pi_j(\tuple f_i)R_1-1.
\end{equation*}
\end{proposition}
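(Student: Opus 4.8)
The plan is to combine Terracini's lemma (Lemma \ref{TerracinisLemma}) with the explicit description of the tangent space to $\Split_d(\PP^n)$ given in Lemma \ref{TangentToSplit}. First I would choose $s$ generic points of $\Split_d(\PP^n)$; since the map $\varphi\colon(\PP^n)^d\to\PP^{\binom{n+d}{d}-1}$ is dominant onto $\Split_d(\PP^n)$, a generic point has the form $[f_i]$ with $f_i=\ell_{i,1}\cdots\ell_{i,d}$ where the $\ell_{i,j}$ are generic linear forms; equivalently $\tuple f_i=(\ell_{i,1},\ldots,\ell_{i,d})\in R_1^d$ is generic. Then Lemma \ref{TangentToSplit} identifies the affine cone of the tangent space at $[f_i]$ as
\begin{equation*}
\widehat T_{[f_i]}\Split_d(\PP^n)=\sum_{j=1}^d\ell_{i,1}\cdots\ell_{i,j-1}\ell_{i,j+1}\cdots\ell_{i,d}R_1=\sum_{j=1}^d\pi_j(\tuple f_i)R_1,
\end{equation*}
using the definition of $\pi_j$ just introduced (noting $\pi_j$ here is the degree-$(d-1)$ version, with $k=d$).

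Next, by Terracini's lemma, for generic $[q]\in\langle[f_1],\ldots,[f_s]\rangle$ we have $\widehat T_{[q]}\sigma_s(\Split_d(\PP^n))=\sum_{i=1}^s\widehat T_{[f_i]}\Split_d(\PP^n)$. Since $\sigma_s(\Split_d(\PP^n))$ is irreducible, its dimension equals the dimension of its tangent space at a generic smooth point, so $\dim\sigma_s(\Split_d(\PP^n))=\dim\widehat T_{[q]}\sigma_s(\Split_d(\PP^n))-1$. Substituting the expression above for each tangent cone gives
\begin{equation*}
\dim\sigma_s(\Split_d(\PP^n))=\dim\sum_{i=1}^s\sum_{j=1}^d\pi_j(\tuple f_i)R_1-1,
\end{equation*}
which is the claimed formula. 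One should remark that the genericity of $q$ in Terracini's statement is automatically supplied once the $[f_i]$ are chosen generically, and that a generic point of an irreducible variety is smooth (characteristic zero), so the tangent-space dimension genuinely computes $\dim\sigma_s$.

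The only real subtlety — and the step I would be most careful about — is the passage from "generic points of $\Split_d(\PP^n)$" to "points $[f_i]$ with all $\ell_{i,j}$ generic linear forms," i.e. checking that the image under $\varphi$ of a generic point of $(\PP^n)^{ds}$ is a generic collection of $s$ points on $\Split_d(\PP^n)$, so that Lemma \ref{TangentToSplit} (which requires the $\ell_{i,j}$ to be generic) is applicable. This holds because $\varphi$ is dominant with finite generic fibers, so the preimage of a dense open subset is dense open; hence a generic tuple $(\tuple f_1,\ldots,\tuple f_s)$ maps to a generic $s$-tuple of points, and conversely any property holding for generic $\tuple f_i$ translates into a property of $\dim\sigma_s$. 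Everything else is a direct substitution, so the proposition follows with essentially no computation beyond what is already in Lemma \ref{TangentToSplit} and Corollary \ref{TerraciniCorollary}.
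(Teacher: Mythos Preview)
Your proposal is correct and follows exactly the paper's approach: the paper simply states that the proposition follows by ``combining Terracini's lemma with Lemma~\ref{TangentToSplit},'' and your argument spells out precisely that combination, with some extra care about the genericity hypotheses.
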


A straightforward application of this fact is a direct proof of the $d=2$ case of Conjecture \ref{Conjecture}.

\begin{proposition}\label{DEqualsTwo}
The secant variety $\sigma_s(\Split_2(\PP^n))$ is defective if and only if $2\leq s\leq\frac{n}{2}$.  In particular, 
\begin{equation*}
\delta(\sigma_s(\Split_2(\PP^n)))=\begin{cases}
2s(s-1) &\text{ if }s\leq\frac{\binom{n+2}{2}}{2n+1}\\
\binom{n-2s+2}{2} & \text { if } \frac{\binom{n+2}{2}}{2n+1}\leq s\leq\frac{n}{2}\\
0 &\text{ otherwise.}\\
\end{cases}
\end{equation*}
\end{proposition}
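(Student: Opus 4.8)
The plan is to use Proposition~\ref{DimSecantVariety} to turn the statement into a pure linear-algebra computation, carry out that computation, and then compare the answer with the expected dimension. Writing a generic $\tuple f_i\in R_1^2$ as $\tuple f_i=(\ell_{i,1},\ell_{i,2})$, Proposition~\ref{DimSecantVariety} gives
\[
\dim\sigma_s(\Split_2(\PP^n))=\dim\Big(\sum_{i=1}^s\big(\ell_{i,1}R_1+\ell_{i,2}R_1\big)\Big)-1 .
\]
The first step is to observe that $\ell_{i,1}R_1+\ell_{i,2}R_1=W_iR_1$ for $W_i:=\Span\{\ell_{i,1},\ell_{i,2}\}\subseteq R_1$, so the sum inside equals $WR_1$ where $W:=W_1+\cdots+W_s$. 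Since the $\ell_{i,j}$ are $2s$ generic linear forms, $W$ is a generic subspace of $R_1$ of dimension $r:=\min\{2s,n+1\}$.

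The second step is to compute $\dim(WR_1)$ for an $r$-dimensional subspace $W\subseteq R_1$. Choosing coordinates so that $W=\Span\{x_0,\ldots,x_{r-1}\}$, the space $WR_1$ is spanned by the degree-two monomials having at least one index less than $r$, and the monomials it omits are precisely the degree-two monomials in $x_r,\ldots,x_n$; hence $\dim(WR_1)=\binom{n+2}{2}-\binom{n-r+2}{2}$. (Equivalently, $\sigma_s(\Split_2(\PP^n))$ is the variety of quadratic forms of rank at most $2s$, and this is its classical dimension; the computation above reproves it without reference to determinantal loci.) Therefore
\[
\dim\sigma_s(\Split_2(\PP^n))=\binom{n+2}{2}-\binom{\,n-\min\{2s,n+1\}+2\,}{2}-1 .
\]

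The third step is the comparison with $\expdim\sigma_s(\Split_2(\PP^n))=\min\{s(2n+1),\binom{n+2}{2}\}-1$. If $2s\geq n+1$ then $r=n+1$ and $\dim(WR_1)=\binom{n+2}{2}$, so the secant variety fills the ambient space and $\delta=0$. If $2s\leq n+1$, the identity to verify is
\[
\binom{n+2}{2}-\binom{n-2s+2}{2}=s(2n+3-2s)=s(2n+1)-2s(s-1),
\]
which immediately yields $\delta=2s(s-1)$ when $s(2n+1)\leq\binom{n+2}{2}$ and $\delta=\binom{n-2s+2}{2}$ when $s(2n+1)\geq\binom{n+2}{2}$, matching the stated cases. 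Reading off positivity then gives the ``defective if and only if'' claim: $2s(s-1)>0$ exactly for $s\geq2$, $\binom{n-2s+2}{2}>0$ exactly for $s\leq\frac n2$, and a brief check that $\binom{n+2}{2}/(2n+1)\leq\frac n2$ for $n\geq3$ (the cases $n\leq2$ being handled directly) shows the two ranges combine to $2\leq s\leq\frac n2$.

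None of the individual steps is hard; the part that needs the most care is the final bookkeeping — tracking the two nested minima, checking that the closed forms $2s(s-1)$ and $\binom{n-2s+2}{2}$ agree on the overlap of their validity ranges, and confirming that the boundary values of $n$ and $s$ behave as claimed.
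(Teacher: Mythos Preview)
Your proposal is correct and follows essentially the same approach as the paper: both invoke Proposition~\ref{DimSecantVariety} to reduce to computing $\dim\big(\sum_{i}\ell_i R_1\big)$, extend the $2s$ generic linear forms to a basis of $R_1$ (equivalently, choose coordinates so that $W$ is a coordinate subspace), and then count the quadratic monomials missing from $WR_1$ to obtain $\binom{n+2}{2}-\binom{n-2s+2}{2}$. The only difference is that you carry out the final bookkeeping on the defect ranges more explicitly than the paper does.
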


\begin{proof}
Choose generic $\ell_0,\ldots,\ell_{2s-1}\in R_1$ and let $V=\sum_{i=0}^{2s-1}\ell_iR_1$.  Then, by Proposition \ref{DimSecantVariety},
\begin{equation*}
\dim\sigma_s(\Split_2(\PP^n))=\dim V-1.
\end{equation*}

If $2s>n$, then $V=R_2$, and so we achieve the expected dimension of $\binom{n+2}{2}-1$.

Suppose, on the other hand, that $2s\leq n$.  Then we can find linear forms $\ell_{2s},\ldots,\ell_n$ such that $\{\ell_0,\ldots,\ell_n\}$ forms a basis for $R_1$.  Note that $V$ is spanned by all the quadratic forms $\ell_i\ell_j$ except for those in which both $i$ and $j$ exceed $2s-1$.  Therefore,
\begin{align*}
\dim\sigma_s(\Split_2(\PP^n)) &= \binom{n+2}{2}-\binom{n-2s+2}{2}-1\\
&= s(2n+1)-2s(s-1)-1.\qedhere
\end{align*}
\end{proof}

\pagebreak

\section{Double Points and Hilbert Functions}

Consider a point $[f]\in\Split_d(\PP^n)$, and let $I_{[f]}$ be the ideal of all polynomials in $R$ which vanish at $[f]$ and let $I_{\Split_d(\PP^n)}$ be the ideal of all polynomials in $R$ which vanish on $\Split_d(\PP^n)$.  Then $I_{[f]}^2+I_{\Split_d(\PP^n)}$ defines the \define{double} or \define{2-fat point scheme} $2[f]$ in $\Split_d(\PP^n)$.  Double points are also known as \define{first infinitesimal neighborhoods}, as they contain infinitesimal tangent vectors.  (See, for example, \cite[Example II.9]{EisenbudHarris}.)  Consequently, we have the following lemma.
\begin{lemma}\label{DoublePoints}
Let $[f]\in\Split_d(\PP^n)$ be generic.  A hyperplane of $\PP^{\binom{n+d}{d}-1}$ contains $2[f]$ if and only if it contains $T_{[f]}\Split_d(\PP^n)$.
\end{lemma}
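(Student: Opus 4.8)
The plan is to recall the standard correspondence between double points (fat point schemes) and tangent spaces, which is essentially a linear-algebra reformulation of what it means for a hyperplane to contain a first infinitesimal neighborhood. First I would set up coordinates: a hyperplane $H \subset \PP^{\binom{n+d}{d}-1}$ is the vanishing locus of a single linear form $\varphi \in (R_d)^*$, so containing a subscheme $Z$ means that every element of the degree-$d$ part of the ideal $I_Z$ of $Z$ is annihilated by... wait, more precisely, $H \supseteq Z$ iff $\varphi$ lies in the degree-$d$ piece of the homogeneous ideal of $Z$, viewed inside $(R_d)^* \cong$ the dual space in which the ambient $\PP^{\binom{n+d}{d}-1}$ is taken. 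The key point is that the degree-$d$ component of the ideal $I_{[f]}^2 + I_{\Split_d(\PP^n)}$ cutting out $2[f]$ consists precisely of those linear forms on $\PP^{\binom{n+d}{d}-1}$ whose associated hyperplane passes through $[f]$ \emph{and} whose differential vanishes along the tangent directions to $\Split_d(\PP^n)$ at $[f]$.

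The core step is then the identification: a hyperplane contains $T_{[f]}\Split_d(\PP^n)$ (a linear subspace of the ambient projective space passing through $[f]$) if and only if it both passes through $[f]$ and contains every tangent vector at $[f]$, and the latter condition is exactly the condition that $\varphi$ lies in $(I_{[f]}^2 + I_{\Split_d(\PP^n)})_d$. This is the content of the statement that a double point scheme carries exactly the first-order data of the variety: the equations of degree $d$ vanishing on $2[f]$ are those that vanish at $[f]$ to order two along $\Split_d(\PP^n)$, equivalently those whose linear term (as a function on the ambient space restricted near $[f]$) kills $\widehat T_{[f]}\Split_d(\PP^n)$. I would cite \cite[Example II.9]{EisenbudHarris} (already referenced in the excerpt) for the general principle that the first infinitesimal neighborhood records the tangent space, and note that since $[f]$ is generic, $\Split_d(\PP^n)$ is smooth at $[f]$, so $T_{[f]}\Split_d(\PP^n)$ is genuinely a linear space of the expected dimension $dn$ computed in Lemma \ref{TangentToSplit}, and there are no complications from singularities.

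The main obstacle — really the only subtlety — is being careful about the projectivization and the passage between the affine cone and the projective picture: $T_{[f]}\Split_d(\PP^n)$ as a projective linear subspace corresponds to $\widehat T_{[f]}\Split_d(\PP^n)$ as a subspace of $R_d$, and a hyperplane $\{\varphi = 0\}$ contains it iff $\varphi$ annihilates $\widehat T_{[f]}\Split_d(\PP^n)$, i.e. $\varphi \in (\widehat T_{[f]}\Split_d(\PP^n))^\perp$ in the notation of the excerpt. One checks directly that $(I_{[f]}^2 + I_{\Split_d(\PP^n)})_d = (\widehat T_{[f]}\Split_d(\PP^n))^\perp$: the inclusion $\supseteq$ is immediate since $I_{\Split_d(\PP^n)}$ vanishes on the cone and its square vanishes to second order, and a linear form vanishing to second order at $[f]$ along the variety annihilates all tangent directions; the reverse inclusion uses that $I_{[f]}^2$ contains all products of two linear forms each vanishing at $[f]$, whose degree-$d$ parts together with $I_{\Split_d(\PP^n)}$ already exhaust the annihilator of the $(dn+1)$-dimensional cone $\widehat T_{[f]}\Split_d(\PP^n)$ inside $R_d$. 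Once this identification is in hand, the lemma is immediate. This is a short lemma and I would keep the write-up to a few lines, referring to the general theory rather than reproving it.
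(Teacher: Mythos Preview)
Your proposal is correct and follows exactly the paper's approach: the paper does not give a formal proof at all but simply states the lemma as an immediate consequence of the preceding remark that $2[f]$ is the first infinitesimal neighborhood (containing the infinitesimal tangent vectors), citing \cite[Example II.9]{EisenbudHarris}. Your write-up is a more detailed unpacking of precisely this fact---identifying $(I_{[f]}^2+I_{\Split_d(\PP^n)})_1$ with $(\widehat T_{[f]}\Split_d(\PP^n))^\perp$---so your final suggestion to ``keep the write-up to a few lines, referring to the general theory rather than reproving it'' is exactly what the paper does.
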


\begin{definition}\label{HilbertFunctionDefinition}
The \define{Hilbert function} of a scheme $Z\subset\PP^N$ with ideal sheaf $\sheaf I_Z$ is the function $h_{\PP^N}(Z,\cdot):\ZZ\rightarrow\ZZ$ defined by
\begin{equation*}
d\mapsto\binom{N+d}{d}-h^0(\PP^N,\sheaf I_Z(d)).
\end{equation*}
\end{definition}

If a hyperplane vanishes on $Z$ if and only if it vanishes on some linear subspace $\PP V$, then $h^0(\PP^N,\sheaf I_Z(1))=\dim V^\perp$, and so $h_{\PP^N}(Z,1)=N+1-\dim V^\perp=\dim V$.

Combining Lemma \ref{DoublePoints} with the discussion from Section \ref{CDFIntro} and Terracini's lemma, we have the following result.
\begin{proposition}\label{DoublePointTerracini}
Suppose $[f_1],\ldots,[f_s]\in\Split_d(\PP^n)$ are generic and $Z=\{2[f_1],\ldots,\allowbreak 2[f_s]\}$.  Then
\begin{equation*}
h_{\PP R_d}(Z,1)\leq \min\left\{s(dn+1),\binom{n+d}{d}\right\}.
\end{equation*}
In particular, we have equality if and only if $\sigma_s(\Split_d(\PP^n))$ is nondefective.
\end{proposition}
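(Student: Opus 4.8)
The plan is to rewrite $h_{\PP R_d}(Z,1)$ in terms of the linear span of the tangent spaces to $\Split_d(\PP^n)$ at the points $[f_i]$, and then read off both the inequality and its equality case from facts already established. Set $N=\binom{n+d}{d}-1$, so that $\PP R_d=\PP^N$ and $N+1=\binom{n+d}{d}$.

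First I would invoke Lemma~\ref{DoublePoints}: a hyperplane of $\PP^N$ contains the double point $2[f_i]$ exactly when it contains $T_{[f_i]}\Split_d(\PP^n)$. Consequently a hyperplane contains $Z=\{2[f_1],\ldots,2[f_s]\}$ exactly when it contains the linear span $\PP V:=\langle T_{[f_1]}\Split_d(\PP^n),\ldots,T_{[f_s]}\Split_d(\PP^n)\rangle$, whose affine cone is $V=\sum_{i=1}^s\widehat T_{[f_i]}\Split_d(\PP^n)$. By the remark following Definition~\ref{HilbertFunctionDefinition}, this identifies $h_{\PP R_d}(Z,1)=\dim V$. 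Now Lemma~\ref{TangentToSplit} gives $\widehat T_{[f_i]}\Split_d(\PP^n)=\sum_{j=1}^d\pi_j(\tuple{f}_i)R_1$ for generic $\tuple{f}_i\in R_1^d$, so $V=\sum_{i=1}^s\sum_{j=1}^d\pi_j(\tuple{f}_i)R_1$, and Proposition~\ref{DimSecantVariety} (which is itself Terracini's lemma, Lemma~\ref{TerracinisLemma}, combined with Lemma~\ref{TangentToSplit}) gives $\dim V=\dim\sigma_s(\Split_d(\PP^n))+1$. Hence $h_{\PP R_d}(Z,1)=\dim\sigma_s(\Split_d(\PP^n))+1$.

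To finish, combine this with the general bound $\dim\sigma_s(X)\le\expdim\sigma_s(X)=\min\{s(\dim X+1)-1,N\}$ from Section~\ref{SecantVarieties}, taken with $X=\Split_d(\PP^n)$: since $\dim\Split_d(\PP^n)=dn$ we get $\dim\sigma_s(\Split_d(\PP^n))\le\min\{s(dn+1)-1,\binom{n+d}{d}-1\}$, and adding $1$ produces exactly $h_{\PP R_d}(Z,1)\le\min\{s(dn+1),\binom{n+d}{d}\}$. Moreover, equality holds in the last line if and only if $\dim\sigma_s(\Split_d(\PP^n))=\expdim\sigma_s(\Split_d(\PP^n))$, which is precisely the nondefectivity of $\sigma_s(\Split_d(\PP^n))$.

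There is no genuine obstacle here; the whole argument is bookkeeping built on Lemmas~\ref{DoublePoints} and~\ref{TangentToSplit} and on Proposition~\ref{DimSecantVariety}. The only point requiring a moment of care is that the genericity hypothesis on $[f_1],\ldots,[f_s]$ in the statement is exactly what those earlier results require (genericity of the tuples $\tuple{f}_i$, and genericity of the point of $\langle[f_1],\ldots,[f_s]\rangle$ used in Terracini's lemma), so everything applies without any extra work.
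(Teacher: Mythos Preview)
Your proposal is correct and follows essentially the same approach the paper indicates: the paper does not write out a formal proof but simply states that the result follows by ``combining Lemma~\ref{DoublePoints} with the discussion from Section~\ref{CDFIntro} and Terracini's lemma,'' and your argument spells out precisely that combination, using the remark after Definition~\ref{HilbertFunctionDefinition} to pass from hyperplanes containing $Z$ to $\dim V$, and then invoking Proposition~\ref{DimSecantVariety} and the expected-dimension bound.
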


\begin{lemma}\label{HilbertFunctionInequality}
Suppose $Z\subset\PP^N$ is a scheme and $P=\PP^k$ is a linear subspace of $\PP^N$.  Then $h_{\PP^N}(Z,1)\geq h_{\PP^N}(Z\cup P,1)+h_P(Z\cap P,1)-k-1$.
\end{lemma}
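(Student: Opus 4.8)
The plan is to compare the linear systems of forms vanishing on the three schemes $Z$, $Z\cup P$, and $Z\cap P$ by means of the standard restriction exact sequence for the hyperplane $P=\PP^k$, and then translate the resulting cohomological inequality back into the language of Hilbert functions via Definition \ref{HilbertFunctionDefinition}. Concretely, I would let $\sheaf I_Z$, $\sheaf I_{Z\cup P}$, and $\sheaf I_{Z\cap P, P}$ denote the relevant ideal sheaves (the last one an ideal sheaf on $P$ itself), and consider the exact sequence
\begin{equation*}
0\longrightarrow \sheaf I_{Z\cup P}(1)\longrightarrow \sheaf I_Z(1)\longrightarrow \sheaf I_{Z\cap P,P}(1)\longrightarrow 0,
\end{equation*}
which expresses the fact that a linear form vanishing on $Z$ either vanishes on all of $P$ (landing in $\sheaf I_{Z\cup P}$) or restricts to a linear form on $P$ vanishing on the trace $Z\cap P$. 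Taking global sections gives the left-exact sequence
\begin{equation*}
0\longrightarrow H^0(\sheaf I_{Z\cup P}(1))\longrightarrow H^0(\sheaf I_Z(1))\longrightarrow H^0(P,\sheaf I_{Z\cap P,P}(1)),
\end{equation*}
so that $h^0(\sheaf I_Z(1))\leq h^0(\sheaf I_{Z\cup P}(1))+h^0(P,\sheaf I_{Z\cap P,P}(1))$.

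The second step is bookkeeping. By Definition \ref{HilbertFunctionDefinition}, $h^0(\PP^N,\sheaf I_Z(1))=N+1-h_{\PP^N}(Z,1)$, and similarly $h^0(\PP^N,\sheaf I_{Z\cup P}(1))=N+1-h_{\PP^N}(Z\cup P,1)$, while on $P=\PP^k$ we have $h^0(P,\sheaf I_{Z\cap P,P}(1))=k+1-h_P(Z\cap P,1)$. Substituting these three identities into the inequality from the first step and cancelling gives
\begin{equation*}
N+1-h_{\PP^N}(Z,1)\leq N+1-h_{\PP^N}(Z\cup P,1)+k+1-h_P(Z\cap P,1),
\end{equation*}
and rearranging yields exactly $h_{\PP^N}(Z,1)\geq h_{\PP^N}(Z\cup P,1)+h_P(Z\cap P,1)-k-1$, as claimed.

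The one point that genuinely needs care — and which I expect to be the main obstacle — is the exactness of the sheaf sequence on the left, i.e. the identification of the kernel of the restriction map $\sheaf I_Z\to\sheaf I_{Z\cap P,P}$ with $\sheaf I_{Z\cup P}$. The surjectivity on the right as a map of sheaves and the description of the cokernel as a sheaf supported on $P$ come from twisting the structure sequence $0\to\sheaf O_{\PP^N}(-1)\to\sheaf O_{\PP^N}\to\sheaf O_P\to 0$ by $\sheaf I_Z$ (or more precisely from the ideal-sheaf definitions), but one must check that no torsion or embedded-component subtleties interfere — here it helps that $P$ is a reduced irreducible hyperplane and that we only need the statement at the level of linear forms (degree $1$), where the scheme structure of $Z$ only enters through its trace. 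I would therefore spell out the kernel computation locally on affine charts: a section of $\sheaf I_Z$ over an open set lies in the kernel precisely when it also vanishes on $P$, which is the defining condition for $\sheaf I_{Z\cup P}$. With that local identification in hand, taking global sections and twisting by $\sheaf O(1)$ is routine, and the rest is the substitution above. A remark worth including is that this inequality is the engine behind the "restriction induction" alluded to in the introduction: it lets one bound the Hilbert function of a double-point scheme after specializing some of the points onto a hyperplane.
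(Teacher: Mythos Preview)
Your proof is correct and follows exactly the paper's approach: the Castelnuovo exact sequence, left-exactness on global sections, and the substitution via Definition~\ref{HilbertFunctionDefinition}. One small imprecision in your side discussion: the structure sequence $0\to\sheaf O_{\PP^N}(-1)\to\sheaf O_{\PP^N}\to\sheaf O_P\to 0$ you invoke is only valid when $P$ is a hyperplane, whereas the lemma allows $P=\PP^k$ of any dimension; fortunately your actual argument (the local identification of the kernel as $\sheaf I_{Z\cup P}$) does not rely on this and works for any linear subspace.
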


\begin{proof}
We have the \define{Castelnuovo exact sequence}
\begin{equation*}
0\rightarrow \sheaf I_{Z\cup P}(1)\rightarrow \sheaf I_Z(1)\rightarrow \sheaf I_{Z\cap P,P}(1)\rightarrow 0.
\end{equation*}
This in turn gives a long exact sequence of cohomology groups
\begin{equation*}
0\rightarrow H^0(\PP^N,\sheaf I_{Z\cup P}(1))\rightarrow H^0(\PP^N,\sheaf I_Z(1))\rightarrow H^0(P,\sheaf I_{Z\cap P}(1)),
\end{equation*}
which gives us the inequalities
\begin{align*}
h^0(\PP^N,\sheaf I_Z(1))&\leq h^0(\PP^N,\sheaf I_{Z\cup P}(1))+h^0(P,\sheaf I_{Z\cap P}(1))\\
N+1-h_{\PP^N}(Z,1)&\leq N+1-h_{\PP^N}(Z\cup P,1)+k+1-h_P(Z\cap P,1)\\
h_{\PP^N}(Z,1)&\geq h_{\PP^N}(Z\cup P,1)+h_P(Z\cap P,1)-k-1.\qedhere
\end{align*}
\end{proof}

\pagebreak

\section{Computational Techniques}
\label{ComputationalTechniques}

By Lemma \ref{DimSecantVariety}, we see that the dimension of a secant variety of a completely decomposable form may be determined by calculating the dimension of a vector space, or equivalently, by calculating the rank of a matrix.

Since the matrices we will be using in this dissertation will be quite large, we use the computer algebra system Macaulay2 \cite{M2} to calculate their ranks.  Note that, rather than calculating the rank of a matrix directly, we will be finding the Gr\"{o}bner basis of an ideal generated by linear forms.  However, since Buchberger's algorithm is a generalization of Gaussian elimination, this amounts to the same thing.  (See, for example, \cite[Section 9.6]{DummitFoote}, for a discussion of Gr\"{o}bner bases and Buchberger's algorithm.)

To improve processing time, we will do our computations over a finite field rather than over an algebraically closed one.  This is possible due to the following lemma.

\begin{lemma}\label{WorkOverFiniteFields}
Consider a matrix $A\in M_{m\times n}(\ZZ)$, and let $\pi_p:M_{m\times n}(\ZZ)\rightarrow M_{m\times n}(\ZZ/(p))$ be the natural projection for some prime $p$.  If $\rank\pi_p(A)=r$, then $\rank A\geq r$.
\end{lemma}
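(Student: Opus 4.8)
The plan is to use the standard fact that the rank of an integer matrix can be detected by nonvanishing of minors, and that reduction mod $p$ is a ring homomorphism, so it commutes with taking determinants of submatrices. First I would recall that for any matrix $A$ over an integral domain, $\rank A$ equals the largest $k$ such that some $k\times k$ submatrix of $A$ has nonzero determinant; this holds both over $\ZZ$ (or its fraction field $\QQ$) and over the field $\ZZ/(p)$. Suppose $\rank\pi_p(A)=r$. Then there is an $r\times r$ submatrix $B$ of $A$ whose reduction $\pi_p(B)$ has $\det\pi_p(B)\neq 0$ in $\ZZ/(p)$. Since $\pi_p$ is a ring homomorphism, $\det\pi_p(B)=\pi_p(\det B)$, so $\pi_p(\det B)\neq 0$, which forces $\det B\neq 0$ in $\ZZ$. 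Hence $A$ has an $r\times r$ submatrix of nonzero determinant, so $\rank A\geq r$.

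There is essentially no obstacle here: the only point requiring a word of care is the compatibility of the determinant with the ring homomorphism $\pi_p$, which follows immediately from the Leibniz expansion $\det B=\sum_{\sigma\in\mathfrak S_r}\mathrm{sgn}(\sigma)\prod_i B_{i,\sigma(i)}$, since $\pi_p$ preserves sums and products. One should also note the contrapositive flavor of the argument: we cannot in general expect equality, because $\pi_p$ can only kill minors (send a nonzero determinant to zero when $p$ divides it), never create them. This is exactly why the lemma is stated as an inequality, and why it suffices for our purposes: when we compute over $\ZZ/(p)$ and find the rank to be at least the expected dimension, Proposition \ref{DoublePointTerracini} together with the trivial upper bound $\rank A\leq\min\{s(dn+1),\binom{n+d}{d}\}$ pins the rank down exactly, establishing nondefectivity. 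If instead the mod-$p$ rank came out too small, the computation would be inconclusive and one would retry with a different prime.
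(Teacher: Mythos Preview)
Your proof is correct and follows essentially the same approach as the paper: both argue that a nonvanishing $r\times r$ minor of $\pi_p(A)$ forces the corresponding minor of $A$ to be nonzero, giving $\rank A\geq r$. You have simply made explicit the compatibility of $\det$ with the ring homomorphism $\pi_p$ and added useful contextual remarks.
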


\begin{proof}
Since $\rank\pi_p(A)=r$, there exists a nonzero $r\times r$ minor of $\pi_p(A)$, and consequently the corresponding $r\times r$ minor of $A$ must be nonzero.
\end{proof}

For the code to all of the Macaulay2 computations used in this dissertation, see Appendix \ref{M2Computations}.

\pagebreak

\chapter{Methods of Induction}

Recall that the problem of calculating the dimension of a secant variety may be reduced to calculating the dimension of a vector space by using Terracini's lemma.  

In the following chapter, we will introduce two separate methods of induction which will allow us to find the dimension of such a vector space by finding the dimensions of some easier base cases.

\section{Definitions}
\label{Definitions}

Recall from Proposition \ref{DimSecantVariety} that the vector space $\sum_{i=1}^s\sum_{j=1}^d\pi_j(\tuple f_i)R_1$, where $\tuple f_i\in R_1^d$, may be used to calculate the dimension of a secant variety of a variety of completely decomposable forms.  The two methods of induction in this chapter will involve adding this vector space to some other spaces.  In this section, we introduce notation to describe all of these spaces.

We will use the \define{backward difference operator} $\nabla$ defined by
\begin{equation*}
(\nabla^i_\ell f)(x) = \sum_{j=0}^i(-1)^j\binom{i}{j}f(x-j\ell)
\end{equation*}
for any function $f$.

Note that $\nabla^0_\ell f=f$ and, for all $i\geq 0$,
\begin{align}\label{BackwardDifferenceIdentity}
\begin{split}
(\nabla^{i+1}_\ell f)(d)+(\nabla^i_\ell f)(d-\ell) &= (\nabla^i_\ell f)(d)-(\nabla^i_\ell f)(d-\ell)+(\nabla^i_\ell f)(d-\ell)\\
&=(\nabla^i_\ell f)(d).\\
\end{split}
\end{align}

For any two tuples $\tuple f=(f_1,\ldots,f_{k_1})\in R_1^{k_1}$ and $\tuple g=(g_1,\ldots,g_{k_2})\in R_1^{k_2}$, we denote by $\tuple f|\tuple g$ the tuple $(f_1,\ldots,f_{k_1},g_1,\ldots,g_{k_2})\in R_1^{k_1+k_2}$.

Consider $n,d,\ell\in\NN$ and functions $s,t,u,v:\NN\rightarrow\ZZ_{\geq 0}$.  Fix an $i\in\{0,\ldots,n\}$.  Choose the following tuples of generic linear forms.
\begin{itemize}
\item For each $j\in\{1,\ldots,i\}$, let $\tuple g_j\in R_1^{\ell}$.
\item For each $j\in\{1,\ldots,i\}$ and $k\in\{1,\ldots,(\nabla^{i-1}_\ell s)(d-\ell)\}$, let $\tuple f_{j,k}\in R_1^{d-\ell}$.
\item For each $j\in\{1,\ldots,i\}$ and $k\in\{1,\ldots,(\nabla^{i-1}_\ell t)(d-\ell)\}$, let $\tuple f'_{j,k}\in R_1^{d-\ell+1}$.
\item For each $j\in\{1,\ldots,i\}$ and $k\in\{1,\ldots,(\nabla^{i-1}_\ell u)(d-\ell)\}$, let $\tuple f''_{j,k}\in R_1^{d-\ell+1}$.
\item For each $j\in\{1,\ldots,i\}$ and $k\in\{1,\ldots,(\nabla^{i-1}_\ell v)(d-\ell)\}$, let $\tuple f'''_{j,k}\in R_1^{d-\ell}$.
\item For each $j\in\{1,\ldots,(\nabla^i_\ell s )(d)\}$ let $\tuple f_j\in R_1^{d}$.
\item For each $j\in\{1,\ldots,(\nabla^i_\ell t)(d)\}$ let $\tuple f'_j\in R_1^{d+1}$.
\item For each $j\in\{1,\ldots,(\nabla^i_\ell u)(d)\}$ let $\tuple f''_j\in R_1^{d+1}$.
\item For each $j\in\{1,\ldots,(\nabla^i_\ell v)(d)\}$ let $\tuple f'''_j\in R_1^{d}$.
\end{itemize}

We then define the following subspace of $R_d$.
\begin{align*}
&A_i(n,d,\ell,s,t,u,v) =\sum_{j=1}^i\left(\prod \tuple g_j\right)R_{d-\ell}\\
&\quad+\sum_{j=1}^i\sum_{k=1}^{(\nabla^{i-1}_\ell s)(d-\ell)}\sum_{m=1}^d\pi_m(\tuple f_{j,k}|\tuple g_j)R_1+\sum_{j=1}^i\sum_{k=1}^{(\nabla^{i-1}_\ell t)(d-\ell)}\Span\{\pi_1(\tuple f'_{j,k}|\tuple g_j)\}\\
&\quad+\sum_{j=1}^i\sum_{k=1}^{(\nabla^{i-1}_\ell u)(d-\ell)}\sum_{m=1}^{d+1}\Span\{\pi_m(\tuple f''_{j,k}|\tuple g_j)\}+\sum_{j=1}^i\sum_{k=1}^{(\nabla^{i-1}_\ell v)(d-\ell)}\pi_1(\tuple f'''_{j,k}|\tuple g_j)R_1\\
&\quad+\sum_{j=1}^{(\nabla^i_\ell s )(d)}\sum_{m=1}^d\pi_m(\tuple f_j)R_1+\sum_{j=1}^{(\nabla^i_\ell t )(d)}\Span\{\pi_1(\tuple f'_j)\}\\
&\quad+\sum_{j=1}^{(\nabla^i_\ell u )(d)}\sum_{m=1}^{d+1}\Span\{\pi_m(\tuple f''_j)\}+\sum_{j=1}^{(\nabla^i_\ell v )(d)}\pi_1(\tuple f'''_j)R_1.
\end{align*}

Note that if $i=0$, then the first five summands are zero.  In particular, we have
\begin{align*}
A_0(n,d,\ell,s,t,u,v)&=\sum_{j=1}^{s(d)}\sum_{m=1}^d\pi_m(\tuple f_j)R_1+\sum_{j=1}^{t(d)}\Span\{\pi_1(\tuple f'_j)\}\\
&\quad+\sum_{j=1}^{u(d)}\sum_{m=1}^{d+1}\Span\{\pi_m(\tuple f''_j)\}+\sum_{j=1}^{v(d)}\pi_1(\tuple f'''_j)R_1.
\end{align*}

For the sake of brevity, we will denote this space by $A_i(d)$ if $n,\ell,s,t,u,$ and $v$ are understood (as in Section \ref{RestrictionInduction}).  Also, if $i=0$, $\ell$ is unnecessary, so we may denote this space by $A(n,d,s,t,u,v)$ (as in Section \ref{SplittingInduction}).

Note that $A(n,d,s,0,0,0)$ is precisely the vector space from Proposition \ref{DimSecantVariety}, \textit{i.e.}, 
\begin{equation*}
\dim\sigma_{s(d)}(\Split_d(\PP^n))=\dim A(n,d,s,0,0,0)-1.
\end{equation*}

Next, we define a function
\begin{align*}
a_i(n,d,\ell,s,t,u,v) &= \sum_{j=1}^i(-1)^{j-1}\binom{i}{j}\binom{n+d-j\ell}{d-j\ell}+i\ell n(\nabla^{i-1}_\ell s)(d-\ell)\\
&\quad + i\ell(\nabla^{i-1}_\ell u)(d-\ell)+(dn+1)(\nabla^i_\ell s)(d)+(\nabla^i_\ell t)(d)\\
&\quad + (d+1)(\nabla^i_\ell u)(d) + (n+1)(\nabla^i_\ell v)(d).
\end{align*}

As above, we may abbreviate this as $a_i(d)$ or $a(n,d,s,t,u,v)$ if desired.

\begin{lemma}\label{AiExpdim}
For any $i,n,d,\ell,s,t,u,v$, we have 
\begin{equation*}
\dim A_i(d) \leq \min\left\{a_i(d),\binom{n+d}{d}\right\}.
\end{equation*}
\end{lemma}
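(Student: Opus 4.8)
The plan is to prove the bound $\dim A_i(d)\le\min\{a_i(d),\binom{n+d}{d}\}$ directly, by estimating the dimension of each summand and adding. Since $A_i(d)\subseteq R_d$ and $\dim R_d=\binom{n+d}{d}$, the second term in the minimum is automatic; the real content is $\dim A_i(d)\le a_i(d)$, and this follows from the trivial fact that $\dim(U_1+\cdots+U_r)\le\sum\dim U_j$, so it suffices to bound each summand of the displayed definition of $A_i(n,d,\ell,s,t,u,v)$ separately and check that the sum of the bounds is exactly $a_i(d)$.

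First I would handle the summands one block at a time, matching them to the terms of $a_i(d)$. The first block $\sum_{j=1}^i(\prod\tuple g_j)R_{d-\ell}$ contributes at most $i\dim R_{d-\ell}=i\binom{n+d-\ell}{d-\ell}$, but since these $i$ subspaces all share the common factor structure one should instead use inclusion--exclusion on the products $\prod\tuple g_j$: by genericity the span of $i$ generic products of $\ell$ linear forms times $R_{d-\ell}$ has dimension $\sum_{j=1}^i(-1)^{j-1}\binom{i}{j}\binom{n+d-j\ell}{d-j\ell}$ (the alternating-sum term appearing in $a_i$), since the intersection of $j$ of them is $(\prod\text{of all }j\ell\text{ forms})R_{d-j\ell}$. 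Next, each $\pi_m(\tuple f_{j,k}|\tuple g_j)R_1$ is a product of $d-1$ linear forms times $R_1$, so has dimension $\le n+1$; but grouped over $m=1,\dots,d$ with the $\tuple g_j$ factor fixed, these share $\prod\tuple g_j$ as a common divisor, and I would argue (as in Lemma~\ref{TangentToSplit}) that the relevant span has dimension $\le \ell n$ \emph{beyond} what the first block already contributes — giving the $i\ell n(\nabla^{i-1}_\ell s)(d-\ell)$ term. The single-vector blocks $\Span\{\pi_1(\tuple f'_{j,k}|\tuple g_j)\}$ contribute $1$ each, accounting for the $(\nabla^i_\ell t)(d)$-type terms via the identity \eqref{BackwardDifferenceIdentity}; the $\sum_{m=1}^{d+1}\Span\{\pi_m(\tuple f''_{j,k}|\tuple g_j)\}$ blocks contribute at most $d+1$ each, minus an overlap of $d+1-\ell$ with the first block, yielding the $i\ell(\nabla^{i-1}_\ell u)(d-\ell)$ and $(d+1)(\nabla^i_\ell u)(d)$ terms; and the $\pi_1(\tuple f'''_{j,k}|\tuple g_j)R_1$ blocks contribute $n+1$ each, giving the $(n+1)(\nabla^i_\ell v)(d)$ term. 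The bottom five summands (those without $\tuple g_j$) are handled identically but without the overlap correction: $\sum_m\pi_m(\tuple f_j)R_1$ has dimension $\le dn+1$ by Lemma~\ref{TangentToSplit}, $\Span\{\pi_1(\tuple f'_j)\}$ has dimension $\le 1$, $\sum_m\Span\{\pi_m(\tuple f''_j)\}$ has dimension $\le d+1$, and $\pi_1(\tuple f'''_j)R_1$ has dimension $\le n+1$. Summing the counts and using \eqref{BackwardDifferenceIdentity} to combine the ``$\tuple g_j$'' blocks at level $i-1$ with the free blocks at level $i$ into the backward-difference coefficients appearing in $a_i$ completes the count.

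The main obstacle is the bookkeeping with the backward-difference operator: one must verify that the number of each type of generator in $A_i$ — e.g. $i\cdot(\nabla^{i-1}_\ell s)(d-\ell)$ generators of ``$\tuple f_{j,k}|\tuple g_j$'' type together with $(\nabla^i_\ell s)(d)$ generators of ``$\tuple f_j$'' type — contributes a total of exactly $(dn+1)(\nabla^i_\ell s)(d)+i\ell n(\nabla^{i-1}_\ell s)(d-\ell)$ to $a_i(d)$ after accounting for overlaps, and similarly for the $t,u,v$ families. This is where identity \eqref{BackwardDifferenceIdentity} does the work: the ``$\tuple g_j$'' blocks are precisely what one adds to pass from level $i-1$ to level $i$, so the recursive structure of $A_i$ mirrors the recursion $(\nabla^i_\ell f)(d)=(\nabla^{i-1}_\ell f)(d)-(\nabla^{i-1}_\ell f)(d-\ell)$. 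I would set up the proof by first writing $\dim A_i(d)$ as a sum over the eight (or nine) block-types with explicit per-block dimension bounds, then regroup terms to recognize $a_i(d)$; no genericity beyond ``generic linear forms are in general position'' is needed for the upper bound, so the argument is purely a dimension count plus the observation that $A_i(d)\subseteq R_d$.
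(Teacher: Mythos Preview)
Your overall strategy is the same as the paper's: bound $\dim A_i(d)$ by $\binom{n+d}{d}$ trivially, and by $a_i(d)$ via subadditivity of dimension, subtracting off the overlaps between the $\tuple g_j$-blocks and the first summand $\sum_j(\prod\tuple g_j)R_{d-\ell}$. Your treatment of the inclusion--exclusion term and of the $s$- and $u$-type overlaps is correct and matches the paper.

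There is, however, a concrete error in your handling of the $t$- and $v$-type blocks that carry a $\tuple g_j$. You claim that each $\Span\{\pi_1(\tuple f'_{j,k}|\tuple g_j)\}$ contributes $1$ and each $\pi_1(\tuple f'''_{j,k}|\tuple g_j)R_1$ contributes $n+1$, and that identity~\eqref{BackwardDifferenceIdentity} will then collapse the combined count to the terms $(\nabla^i_\ell t)(d)$ and $(n+1)(\nabla^i_\ell v)(d)$ appearing in $a_i$. It will not: $i(\nabla^{i-1}_\ell t)(d-\ell)+(\nabla^i_\ell t)(d)$ does not equal $(\nabla^i_\ell t)(d)$, and similarly for $v$, so your count overshoots $a_i(d)$. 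The point you are missing is that $\pi_1$ deletes the first entry of $\tuple f'_{j,k}$ (respectively $\tuple f'''_{j,k}$), leaving all of $\tuple g_j$ intact; hence $\prod\tuple g_j$ divides $\pi_1(\tuple f'_{j,k}|\tuple g_j)$ and $\pi_1(\tuple f'''_{j,k}|\tuple g_j)$, so these blocks are already contained in $(\prod\tuple g_j)R_{d-\ell}$ and contribute nothing beyond the first summand. Once you observe this, no use of~\eqref{BackwardDifferenceIdentity} is needed anywhere: the remaining block contributions match the terms of $a_i(d)$ on the nose.
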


\begin{proof}
Certainly, since $A_i(d)$ is a subspace of $R_d$, $\dim A_i(d)\leq\binom{n+d}{d}$.

Note that, for any subset $I\subset\{1,\ldots,i\}$, $\bigcap_{j\in I}\left(\prod \tuple g_j\right)R_{d-\ell}=\left(\prod_{j\in I}\left(\prod \tuple g_j\right)\right)R_{d-|I|\ell}$, which has dimension $\binom{n+d-|I|\ell}{d-|I|\ell}$.  Therefore, since the $\tuple g_j$ are chosen to be generic, by the inclusion-exclusion principle, we have
\begin{equation*}
\dim \sum_{j=1}^i\left(\prod \tuple g_j\right)R_{d-\ell} =\sum_{j=1}^i(-1)^{j-1}\binom{i}{j}\binom{n+d-j\ell}{d-j\ell}.
\end{equation*}

Next note that, for any $j\in\{1,\ldots,i\}$ and $k\in\{1,\ldots,(\nabla^{i-1}_\ell s)(d-\ell)\}$, we have by Lemma \ref{TangentToSplit}
\begin{align*}
\left(\prod \tuple g_j\right)R_{d-\ell}\cap\sum_{m=1}^d\pi_m(\tuple f_{j,k}|\tuple g_j)R_1&=\sum_{m=1}^{d-\ell}\pi_m(\tuple f_{j,k}|\tuple g_j)R_1\\
&=\left(\prod \tuple g_j\right)\widehat T_{\left[\prod \tuple f_{j,k}|\tuple g_j\right]}\Split_{d-\ell}(\PP^n),\\
\intertext{so}
\dim\left(\left(\prod \tuple g_j\right)R_{d-\ell}\cap\sum_{m=1}^d\pi_m(\tuple f_{j,k}|\tuple g_j)R_1\right)&=(d-\ell)n+1.
\end{align*}

Now, for any $j\in\{1,\ldots,i\}$ and $k\in\{1,\ldots,(\nabla^{i-1}_\ell u)(d-\ell)\}$, we have
\begin{equation*}
\left(\prod \tuple g_j\right)R_{d-\ell}\cap\sum_{m=1}^{d+1}\Span\{\pi_m(\tuple f''_{j,k}|\tuple g_j)\}=\sum_{m=1}^{d-\ell+1}\Span\{\pi_m(\tuple f''_{j,k}|\tuple g_j)\},
\end{equation*}
and so
\begin{equation*}
\dim\left(\left(\prod \tuple g_j\right)R_{d-\ell}\cap\sum_{m=1}^{d+1}\Span\{\pi_m(\tuple f''_{j,k}|\tuple g_j)\}\right)=d-\ell+1.
\end{equation*}

Note that, for each $j\in\{1,\ldots,i\}$ and $k\in\{1,\ldots,(\nabla^{i-1}_\ell t)(d-\ell)\}$,
\begin{equation*}
\Span\{\pi_1(\tuple f'_{j,k}|\tuple g_j)\}\subset\left(\prod \tuple g_j\right)R_{d-\ell}.
\end{equation*}

Note also that, for each $j\in\{1,\ldots,i\}$ and $k\in\{1,\ldots,(\nabla^{i-1}_\ell v)(d-\ell)\}$,
\begin{equation*}
\pi_1(\tuple f'''_{j,k}|\tuple g_j)R_1\subset\left(\prod \tuple g_j\right)R_{d-\ell}.
\end{equation*}

Therefore,
\begin{align*}
&\dim A_i(d)\leq\dim\sum_{j=1}^i\left(\prod \tuple g_j\right)R_{d-\ell}\\
&\quad+\dim\sum_{j=1}^i\sum_{k=1}^{(\nabla^{i-1}_\ell s)(d-\ell)}\sum_{m=1}^d\pi_m(\tuple f_{j,k}|\tuple g_j)R_1+\dim\sum_{j=1}^i\sum_{k=1}^{(\nabla^{i-1}_\ell u)(d-\ell)}\sum_{m=1}^{d+1}\Span\{\pi_m(\tuple f''_{j,k}|\tuple g_j)\}\\
&\quad-\sum_{j=1}^i\sum_{k=1}^{(\nabla^{i-1}_\ell s)(d-\ell)}\dim\left(\left(\prod \tuple g_j\right)R_{d-\ell}\cap\sum_{m=1}^d\pi_m(\tuple f_{j,k}|\tuple g_j)R_1\right)\\
&\quad-\sum_{j=1}^i\sum_{k=1}^{(\nabla^{i-1}_\ell u)(d-\ell)}\dim\left(\left(\prod \tuple g_j\right)R_{d-\ell}\cap\sum_{m=1}^{d+1}\Span\{\pi_m(\tuple f''_{j,k}|\tuple g_j)\}\right)\\
&\quad+\dim\sum_{j=1}^{(\nabla^i_\ell s )(d)}\sum_{m=1}^d\pi_m(\tuple f_j)R_1+\dim\sum_{j=1}^{(\nabla^i_\ell t )(d)}\Span\{\pi_1(\tuple f'_j)\}\\
&\quad+\dim\sum_{j=1}^{(\nabla^i_\ell u )(d)}\sum_{m=1}^{d+1}\Span\{\pi_m(\tuple f''_j)\}+\dim\sum_{j=1}^{(\nabla^i_\ell v )(d)}\pi_1(\tuple f'''_j)R_1\\
&\leq\sum_{j=1}^i(-1)^{j-1}\binom{i}{j}\binom{n+d-j\ell}{d-j\ell}+i(dn+1)(\nabla^{i-1}_\ell s)(d-\ell)+i(d+1)(\nabla^{i-1}_\ell u)(d-\ell)\\
&\quad-i((d-\ell)n+1)(\nabla^{i-1}_\ell s)(d-\ell)-i(d-\ell+1)(\nabla^{i-1}_\ell u)(d-\ell)\\
&\quad+(dn+1)(\nabla^i_\ell s)(d)+(\nabla^i_\ell t)(d)+(d+1)(\nabla^i_\ell u)(d)+(n+1)(\nabla^i_\ell v)(d)\\
&=a_i(d).\qedhere
\end{align*}
\end{proof}

\begin{definition}
We define the statement $\statement A_i(n,d,\ell,s,t,u,v)$ to be \define{true} if
\begin{equation*}
\dim A_i(d)=\min\left\{a_i(d),\binom{n+d}{d}\right\}
\end{equation*}
and \define{false} otherwise.  We may abbreviate this as $\statement A_i(d)$ or $\statement A(n,d,s,t,u,v)$ (if $i=0$) if desired.

In addition, we define the \define{abundancy} of $\statement A_i(d)$ as follows.
\begin{itemize}
\item If $a_i(d)\leq\binom{n+d}{d}$, then $\statement A_i(d)$ is \define{subabundant}.
\item If $a_i(d)\geq\binom{n+d}{d}$, then $\statement A_i(d)$ is \define{superabundant}.
\item If $a_i(d)=\binom{n+d}{d}$, then $\statement A_i(d)$ is \define{equiabundant}.
\end{itemize}
\end{definition}

Note that, by Proposition \ref{DimSecantVariety}, $\statement A(n,d,s,0,0,0)$ is true if and only if $\sigma_{s(d)}(\Split_d(\PP^n))$ is nondefective.

\pagebreak

\section{Splitting Induction}
\label{SplittingInduction}

The following method of induction was inspired by one used by Abo, Ottaviani, and Peterson in \cite{AOP} to study secant varieties of Segre varieties.

Consider the subspace $U=\Span\{x_1,\ldots,x_n\}$ of $R_1$.  Note that $R_d=x_0R_{d-1}\oplus S_dU$.  Our goal is to split $A(n,d,s,0,0,0)$ into a direct sum of smaller vector spaces using this fact.  However, we cannot use the natural map $R_d\rightarrow R_{d-1}\oplus S_dU$, as in general, the images of completely decomposable forms under this map will not themselves be completely decomposable.  We therefore restrict our attention to a smaller subset of completely decomposable forms.  Consequently, the following induction method will only work in the subabundant case.

First, we motivate the idea with an example.  Suppose we want to find the dimension of $\sigma_2(\Split_3(\PP^5))$.  By Proposition \ref{DimSecantVariety}, we need only find the dimension of $\widehat T_{[x_0x_1x_2]}\Split_3(\PP^5)+\widehat T_{[x_3x_4x_5]}\Split_3(\PP^5)$.  We have
\begin{align*}
&\widehat T_{[x_0x_1x_2]}\Split_3(\PP^5)+\widehat T_{[x_3x_4x_5]}\Split_3(\PP^5)\\
&\quad=x_0x_1R_1+x_0x_2R_1+x_1x_2R_1\\
&\quad\quad+x_3x_4R_1+x_3x_5R_1+x_4x_5R_1\\
&\quad=(x_1x_2U+x_3x_4U+x_3x_5U+x_4x_5U)\\
&\quad\quad\oplus x_0(x_1U+x_2U+\Span\{x_3x_4,x_3x_5,x_4x_5\})\\
&\quad\quad\oplus x_0^2\Span\{x_1,x_2\}\\
&\quad=(x_1x_2U+\widehat T_{[x_3x_4x_5]}\Split_3(\PP^4))\\
&\quad\quad\oplus(\widehat T_{[x_1x_2]}\Split_2(\PP^4)+\Span\{x_3x_4,x_3x_5,x_4x_5\})\\
&\quad\quad\oplus x_0^2\Span\{x_1,x_2\}.
\end{align*}

Note that, in addition to two tangent spaces, some other vector spaces appear in this splitting.  These vector spaces account for some of the extra terms which appear in the definition of $A(n,d,s,t,u,v)$.

We now generalize this idea.

For any $k\in\NN$ and $j\in\{1,\ldots,k\}$, define a map $\rho_j:R_1^{k}\mapsto R_1^{k-1}$ by $(f_1,\cdots,f_k)\mapsto(f_1,\ldots f_{j-1},f_{j+1},\ldots,f_k)$.

\begin{theorem}[Splitting induction]\label{SplittingInductionTheorem}
Suppose $n\geq 2$, $d\geq 3$, $s=s'+s''$, $t=t'+t''$, $u=u'+u''$, and $v=v'+v''$.  If $\statement A(n-1,d,s'',t''+u',u'',s'+v'')$, $\statement A(n-1,d-1,s',t'+v'',s''+u',v')$, and $\statement A(n-1,d-2,0,v',s',0)$ are all true and subabundant, then $\statement A(n,d,s,t,u,v)$ is true and subabundant.  In particular, if $t=u=v=0$, then $\sigma_s(\Split_d(\PP^n))$ is nondefective.
\end{theorem}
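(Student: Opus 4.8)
The plan is to adapt the inductive method of Abo--Ottaviani--Peterson \cite{AOP}: decompose the ambient space $R_d$ according to powers of $x_0$ and then degenerate the linear forms defining $A(n,d,s,t,u,v)$ so that the specialized space becomes a \emph{direct sum} of three pieces, each of which is (a generically chosen instance of) one of the three spaces named in the hypotheses. Concretely, put $U=\Span\{x_1,\dots,x_n\}$, so that $R_d=\bigoplus_{k\ge 0}x_0^kS_{d-k}U$, and view $x_1,\dots,x_n$ as coordinates on $\PP^{n-1}$ (here $n\ge 2$, $d\ge 3$ guarantee that the three target spaces live on a genuine projective space in degrees $d,d-1,d-2\ge 1$). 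Rather than choosing the tuples defining $A(n,d,s,t,u,v)$ generically in $R_1^{\bullet}$, I would use the splittings $s=s'+s''$, $t=t'+t''$, $u=u'+u''$, $v=v'+v''$ and take the ``doubly primed'' portion of each family to consist of tuples whose entries are all generic in $U$, and the ``primed'' portion to consist of tuples having exactly one entry equal to $x_0$ and the remaining entries generic in $U$.

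First I would do the bookkeeping. A direct computation with the maps $\pi_m$, together with Lemma~\ref{TangentToSplit}, shows that in this degeneration no monomial of $x_0$-degree exceeding $2$ occurs, so the specialized space lies in $S_dU\oplus x_0S_{d-1}U\oplus x_0^2S_{d-2}U$, and being spanned by $x_0$-homogeneous vectors it decomposes as $W_0\oplus W_1\oplus W_2$ with $W_k\subseteq x_0^kS_{d-k}U$. The heart of the matter is to verify that, over the ring $\kk[x_1,\dots,x_n]$ and for generically chosen forms,
\begin{align*}
W_0&=A(n-1,d,s'',t''+u',u'',s'+v''),\\
W_1&=A(n-1,d-1,s',t'+v'',s''+u',v'),\\
W_2&=A(n-1,d-2,0,v',s',0).
\end{align*}
For instance, a tangent space $\widehat T_{[x_0g_1\cdots g_{d-1}]}\Split_d(\PP^n)$ at a ``primed'' point splits as $g_1\cdots g_{d-1}\,U\;\oplus\; x_0\,\widehat T_{[g_1\cdots g_{d-1}]}\Split_{d-1}(\PP^{n-1})\;\oplus\; x_0^2\Span\{\pi_m(g_1,\dots,g_{d-1})\}$, contributing a $v$-type term to $W_0$, an $s$-type (tangent-space) term to $W_1$, and a $u$-type term to $W_2$; the ``doubly primed'' points and the $t$-, $u$-, $v$-type data split in the same spirit, and the displayed parameter vectors are precisely the tallies this produces. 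Since the generic-in-$U$ forms may be chosen freely, each $W_k$ is a generic instance of its $\PP^{n-1}$ space.

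Then I would assemble the conclusion. By lower semicontinuity of the rank of the matrix whose columns span $A(n,d,s,t,u,v)$ (the same principle as in Lemma~\ref{WorkOverFiniteFields}), the generic value of $\dim A(n,d,s,t,u,v)$ is at least its value on the degeneration, which equals $\dim W_0+\dim W_1+\dim W_2$ because the $W_k$ sit in a direct sum. As the three hypotheses are true and subabundant, $\dim W_k$ equals the corresponding value of the function $a$, and matching the coefficients of $s',s'',\dots,v''$ gives the elementary identity
\begin{equation*}
a(n-1,d,s'',t''+u',u'',s'+v'')+a(n-1,d-1,s',t'+v'',s''+u',v')+a(n-1,d-2,0,v',s',0)=a(n,d,s,t,u,v).
\end{equation*}
Hence $\dim A(n,d,s,t,u,v)\ge a(n,d,s,t,u,v)$ for generic data. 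Summing the three subabundance inequalities $a(n-1,d-k,\dots)\le\binom{n-1+d-k}{d-k}$ and using $\sum_{k\ge 0}\binom{n-1+d-k}{d-k}=\binom{n+d}{d}$ shows $a(n,d,s,t,u,v)\le\binom{n+d}{d}$, so $\statement A(n,d,s,t,u,v)$ is subabundant; Lemma~\ref{AiExpdim} then yields the reverse bound $\dim A(n,d,s,t,u,v)\le\min\{a(n,d,s,t,u,v),\binom{n+d}{d}\}=a(n,d,s,t,u,v)$, forcing equality and proving $\statement A(n,d,s,t,u,v)$ true and subabundant. The last assertion is then immediate, since by Proposition~\ref{DimSecantVariety} the statement $\statement A(n,d,s,0,0,0)$ holds exactly when $\sigma_s(\Split_d(\PP^n))$ is nondefective.

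The main obstacle is the bookkeeping in the second paragraph: one must arrange the degeneration so that each of the four term-types ($s$-type tangent spaces, $t$-type single decomposable forms, $u$-type spans of partial products, $v$-type ``decomposable form times $R_1$'') breaks, under $R_d=\bigoplus x_0^kS_{d-k}U$, into exactly the right multiset of term-types distributed among $W_0$, $W_1$, $W_2$, with the resulting counts matching the three parameter vectors in the statement on the nose -- there is no slack, so a single miscount would break the additivity identity for $a$. A secondary point requiring care is that this method only works in the subabundant regime, which is why all three sub-statements are assumed subabundant and why semicontinuity is applied in the ``$\ge$'' direction.
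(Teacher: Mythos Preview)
Your proposal is correct and follows essentially the same approach as the paper's proof: the paper also specializes the primed tuples to have $x_0$ as one entry (with the rest generic in $U=\Span\{x_1,\dots,x_n\}$) while the doubly primed tuples lie entirely in $U$, then decomposes the specialized space according to $x_0$-degree to obtain a direct sum isomorphic to the three named $A(n-1,\ldots)$ spaces and verifies the additivity identity for $a$. The only cosmetic differences are that the paper carries out the bookkeeping separately for each term-type (splitting $A(n,d,s,0,0,0)$, $A(n,d,0,t,0,0)$, $A(n,d,0,0,u,0)$, $A(n,d,0,0,0,v)$ individually before reassembling) and leaves the semicontinuity step and the subabundance of the conclusion implicit rather than arguing them explicitly as you do.
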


\begin{proof}
We specialize the vector space $A(n,d,s,t,u,v)$ as follows.
\begin{itemize}
\item Choose generic $\tuple f_j\in\Span\{x_0\}\times U^{d-1}$ if $j\in\{1,\ldots,s'(d)\}$ and $\tuple f_j\in U^{d}$ if $j\in\{s'(d)+1,\ldots,s(d)\}$.
\item Choose generic $\tuple f'_j\in R_1\times\Span\{x_0\}\times U^{d-1}$ if $j\in\{1,\ldots,t'(d)\}$ and $\tuple f'_j\in U^{d}$ if $j\in\{t'(d)+1,\ldots,t(d)\}$.
\item Choose generic $\tuple f''_j\in\Span\{x_0\}\times U^{d}$ if $j\in\{1,\ldots,u'(d)\}$ and $\tuple f''_j\in U^{d}$ if $j\in\{u'(d)+1,\ldots,u(d)\}$.
\item Choose generic $\tuple f'''_j\in R_1\times\Span\{x_0\}\times U^{d-2}$ if $j\in\{1,\ldots,v'(d)\}$ and $\tuple f'''_j\in U^{d}$ if $j\in\{v'(d)+1,\ldots,v(d)\}$.
\end{itemize}

Recall from the definition that $A(n,d,s,t,u,v) = A(n,d,s,0,0,0)+A(n,d,0,t,0,0)+A(n,d,0,0,u,0)+A(n,d,0,0,0,v)$.

Then
\begin{align*}
A(n,d,s,0,0,0) &= \sum_{j=1}^{s'(d)}\sum_{m=1}^d\pi_m(\tuple f_j)R_1\\
&= \sum_{j=1}^{s'(d)}\left(\pi_1(\tuple f_j)R_1+x_0\sum_{m=2}^d\pi_m(\rho_1(\tuple f_j))R_1\right)\\
&\quad+\sum_{j=s'(d)+1}^{s(d)}\left(\sum_{m=1}^d\pi_m(\tuple f_j)U+x_0\sum_{m=1}^d\Span\{\pi_m(\tuple f_j)\}\right)\\
&=\left(\sum_{j=1}^{s'(d)}\pi_1(\tuple f_j)U+\sum_{j=s'(d)+1}^{s(d)}\sum_{m=1}^d\pi_m(\tuple f_j)U\right)\\
&\quad\oplus x_0\left(\sum_{j=1}^{s'(d)}\sum_{m=2}^d\pi_m(\rho_1(\tuple f_j))U+\sum_{j=s'(d)+1}^{s(d)}\sum_{m=1}^d\Span\{\pi_m(\tuple f_j)\}\right)\\
&\quad\oplus x_0^2\left(\sum_{j=1}^{s'(d)}\sum_{m=2}^d\Span\{\pi_m(\rho_1(\tuple f_j))\}\right)\\
&\cong A(n-1,d,s'',0,0,s')\oplus A(n-1,d-1,s',0,s'',0)\\
&\quad\oplus A(n-1,d-2,0,0,s',0),\\
A(n,d,0,t,0,0) &= \sum_{j=1}^{t(d)}\Span\{\pi_1(\tuple f'_j)\}\\
&=\sum_{j=t'(d)+1}^{t(d)}\Span\{\pi_1(\tuple f'_j)\}\oplus x_0\sum_{j=1}^{t'(d)}\Span\{\pi_1(\rho_1(\tuple f'_j))\}\\
&\cong A(n-1,d,0,t'',0,0)\oplus A(n-1,d-1,0,t',0,0),\\
A(n,d,0,0,u,0) &= \sum_{j=1}^{u(d)}\sum_{m=1}^{d+1}\Span\{\pi_m(\tuple f''_j)\}\\
&= \sum_{j=1}^{u'(d)}\left(\Span\{\pi_1(\tuple f''_j)\}+x_0\sum_{m=2}^{d+1}\Span\{\pi_m(\rho_1(\tuple f''_j))\}\right)\\
&\quad +\sum_{j=u'(d)+1}^{u(d)}\sum_{m=1}^{d+1}\Span\{\pi_m(\tuple f''_j)\}\\
&= \left(\sum_{j=1}^{u'(d)}\Span\{\pi_1(\tuple f''_j)\}+\sum_{j=u'(d)+1}^{u(d)}\sum_{m=1}^{d+1}\Span\{\pi_m(\tuple f''_j)\}\right)\\
&\quad\oplus x_0\sum_{j=1}^{u'(d)}\sum_{m=2}^{d+1}\Span\{\pi_m(\rho_1(\tuple f''_j))\}\\
&\cong A(n-1,d,0,u',u'',0)\oplus A(n-1,d-1,0,0,u',0)\text{, and}\\
A(n,d,0,0,0,v) &= \sum_{j=1}^{v(d)}\pi_m(\tuple f'''_j)R_1\\
&=x_0\sum_{j=1}^{v'(d)}\pi_1(\rho_2(\tuple f'''_j))R_1+\sum_{j=v'(d)+1}^{v(d)}\pi_1(\tuple f'''_j)U\\
&\quad+x_0\sum_{j=v'(d)+1}^{v(d)}\Span\{\pi_1(\tuple f'''_j)\}\\
&=\left(\sum_{j=v'(d)+1}^{v(d)}\pi_1(\tuple f'''_j)U\right)\\
&\quad\oplus x_0\left(\sum_{j=1}^{v'(d)}\pi_1(\rho_2(\tuple f'''_j))U+\sum_{j=v'(d)+1}^{v(d)}\Span\{\pi_1(\tuple f'''_j)\}\right)\\
&\quad\oplus x_0^2\left(\sum_{j=1}^{v'(d)}\Span\{\pi_1(\rho_2(\tuple f'''_j))\}\right)\\
&\cong A(n-1,d,0,0,0,v'')\oplus A(n-1,d-1,0,v'',0,v')\\
&\quad\oplus A(n-1,d-2,0,v',0,0).
\end{align*}

Adding these results together, we get
\begin{align*}
&A(n,d,s,t,u,v) =\\
&\quad A(n-1,d,s'',t''+u',u'',s'+v'')\oplus A(n-1,d-1,s',t'+v'',s''+u',v')\\
&\quad\oplus A(n-1,d-2,0,v',s',0).
\end{align*}

By assumption, we have
\begin{align*}
&\dim A(n,d,s,t,u,v) = \\
&\quad a_0(n-1,d,s'',t''+u',u'',s'+v'')+a_0(n-1,d-1,s',t'+v'',s''+u',v')\\
&\quad+a_0(n-1,d-2,0,v',s',0)\\
&=s''(d)((n-1)d+1)+t''(d)+u'(d)+u''(d)(d+1)+(s'(d)+v''(d))n\\
&\quad+s'(d)((n-1)(d-1)+1)+t'(d)+v''(d)+(s''(d)+u'(d))d+v'(d)n\\
&\quad+v'(d)+s'(d)(d-1)\\
&=s(d)(dn+1)+t(d)+u(d)(d+1)+v(d)(n+1)\\
&=a_0(n,d,s,t,u,v).
\end{align*}

Consequently, $\statement A(n,d,s,t,u,v)$ is true and subabundant.
\end{proof}

\pagebreak

\section{Restriction Induction with Fixed Dimension}
\label{RestrictionInduction}

In this section, we outline a method of induction which is a generalization of the one used by Abo to complete the proof of Conjecture \ref{Conjecture} for forms in three variables and provide a partial proof for forms in four variables \cite{Abo}.  This was in turn adapted from one initially developed by Terracini during his work on the proof of Theorem~\ref{AlexanderHirschowitzTheorem} \cite{Terracini2}.

We sketch the idea with an example before giving the complete proof.  Refer to Figure \ref{InductionExample} for a visual reference.  Suppose we want to prove the nondefectivity of $\sigma_{52}(\Split_{28}(\PP^3))$.  By Proposition \ref{DoublePointTerracini}, we can do this by considering a scheme $Z$ consisting of 52 double points on $\Split_{28}(\PP^3)\subset\PP^{\binom{3+28}{28}-1}=\PP^{4494}$.  We know that $h_{\PP^{4494}}(Z,1)\leq\min\left\{52(28\cdot 3+1),\binom{3+28}{28}\right\}=4420$ and we want to show that in fact we have equality.

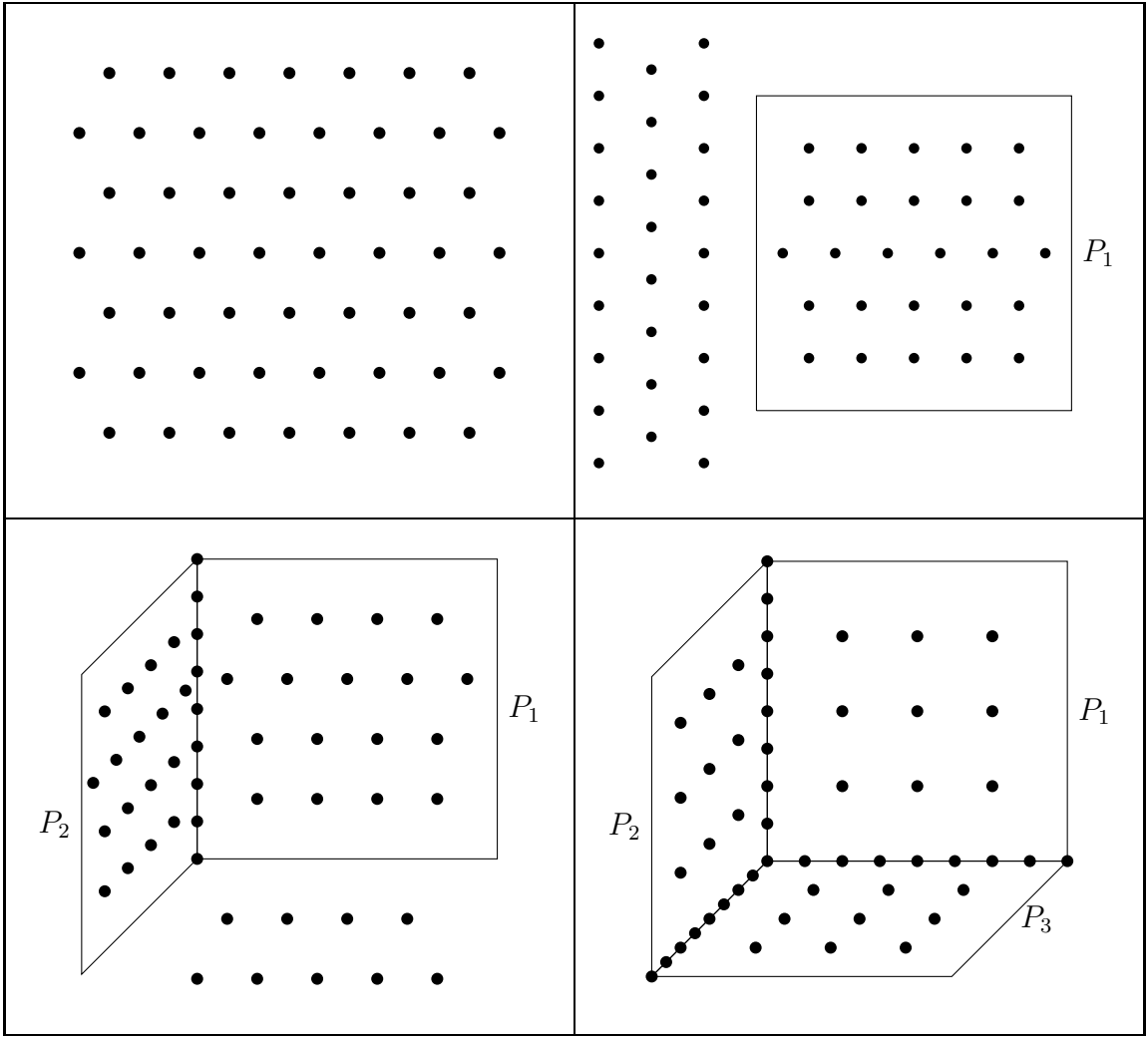
\begin{figure}[h] 
\begin{tabular}{|m{2.82in}|m{2.82in}|}
\hline
\begin{center}
\begin{tikzpicture}[scale=.8]
\foreach \x in {1,3,5,7,9,11,13} do \foreach \y in {0,2,4,6} do \fill (.5*\x,\y) circle (.1);
\foreach \x in {0,2,4,6,8,10,12,14} do \foreach \y in {1,3,5} do \fill (.5*\x,\y) circle (.1);
\end{tikzpicture}
\end{center}
&
\begin{center}
\begin{tikzpicture}[scale=.7]
\draw (0,0) -- (6,0) -- (6,6) -- (0,6) -- (0,0);
\draw (6,3) node[right] {$P_1$};
\foreach \x in {1,2,3,4,5} do \foreach \y in {1,2,4,5} do \fill (\x,\y) circle (.1);
\foreach \x in {.5,1.5,2.5,3.5,4.5,5.5} do \fill (\x,3) circle (.1);
\foreach \x in {-1,-3} do \foreach \y in {-1,...,7} do \fill (\x,\y) circle (.1);
\foreach \y in {-.5,.5,1.5,2.5,3.5,4.5,5.5,6.5} do \fill (-2,\y) circle (.1);
\end{tikzpicture}
\end{center}
\\\hline
\begin{center}
\begin{tikzpicture}[scale=.8]
\draw (0,0) -- (5,0) -- (5,5) -- (0,5) -- (0,0);
\foreach \x in {1,...,4} do \foreach \y in {1,2,4} do \fill (\x,\y) circle (.1);
\foreach \x in {.5,1.5,2.5,3.5,4.5} do \fill (\x,3) circle (.1);
\draw (5,2.5) node[right] {$P_1$};
\draw (0,0) -- (0,0,5) -- (0,5,5) -- (0,5,0) -- (0,0);
\draw (0,2.5,5) node[left] {$P_2$};
\foreach \x in {1,...,4} do \foreach \y in {1,2,4} do \fill (0,\y,\x) circle (.1);
\foreach \x in {.5,1.5,2.5,3.5,4.5} do \fill (0,3,\x) circle (.1);
\foreach \x in {0,...,8} do \fill (0,{(5/8)*\x}) circle (.1);
\foreach \x in {0,...,4} do \fill (\x,-2) circle (.1);
\foreach \x in {.5,1.5,2.5,3.5} do \fill (\x,-1) circle (.1);
\end{tikzpicture}
\end{center}
&
\begin{center}
\begin{tikzpicture}
\draw (0,0) -- (4,0) -- (4,4) -- (0,4) -- (0,0);
\fill (0,0) circle (.08);
\foreach \y in {.5,1,1.5,2,2.5,3,3.5,4} do \fill (0,\y) circle (.08);
\foreach \x in {1,2,3} do \foreach \y in {1,2,3} do \fill (\x,\y) circle (.08);
\draw (0,0) -- (0,0,4) -- (0,4,4) -- (0,4,0) -- (0,0);
\foreach \y in {.5,1,1.5,2,2.5,3,3.5,4} do \fill (0,0,\y) circle (.08);
\foreach \x in {1,2,3} do \foreach \y in {1,2,3} do \fill (0,\x,\y) circle (.08);
\draw (0,0) -- (0,0,4) -- (4,0,4) -- (4,0) -- (0,0);
\foreach \y in {.5,1,1.5,2,2.5,3,3.5,4} do \fill (\y,0) circle (.08);
\foreach \x in {1,2,3} do \foreach \y in {1,2,3} do \fill (\x,0,\y) circle (.08);
\draw (4,2) node[right] {$P_1$};
\draw (0,2,4) node[left] {$P_2$};
\draw (4,0,2) node[right] {$P_3$};
\end{tikzpicture}
\end{center}\\\hline
\end{tabular}
\caption{Example of restriction induction\label{InductionExample}}
\end{figure}

Let $f_1$, $f_2$, and $f_3$ be generic completely decomposable forms of degree 9.  For each $i\in\{1,2,3\}$, let $P_i=\{[g]\in\PP^{4494}:f_i\text{ divides }g\}$.  Note that $P_i\cong\PP^{\binom{3+19}{19}-1}=\PP^{1539}$ by the map $[g]\mapsto\left[\frac{g}{f_i}\right]$.  For this same reason, $\Split_{28}(\PP^3)\cap P_i\cong \Split_{19}(\PP^3)$.  Also, note that for $i\neq j$, $P_i\cap P_j\cong\PP^{\binom{3+10}{10}-1}=\PP^{285}$ by the map $[g]\mapsto\left[\frac{g}{f_if_j}\right]$ and $\Split_{28}(\PP^3)\cap P_i\cap P_j\cong\Split_{10}(\PP^3)$.  Finally $P_1\cap P_2\cap P_3\cong\PP^{\binom{3+1}{1}-1}=\PP^3$ by $[g]\mapsto\left[\frac{g}{f_1f_2f_3}\right]$ and $\Split_{28}(\PP^3)\cap P_1\cap P_2\cap P_3\cong\Split_1(\PP^3)$.

We increasingly specialize $Z$ by declaring that some of the double points lie in one of the above intersections.  First, specialize 26 of the double points to $\Split_{28}(\PP^3)\cap P_1=\Split_{19}(\PP^3)$.  By Lemma \ref{HilbertFunctionInequality}, we have 
\begin{align*}
h_{\PP^{4494}}(Z,1)&\geq h_{\PP^{4494}}(Z\cup P_1,1)+h_{P_1}(Z\cap P_1,1)-\binom{3+28-9}{28-9}\\
&=h_{\PP^{4494}}(Z\cup P_1,1)+h_{P_1}(Z\cap P_1,1)-1540.
\end{align*}

Next, specialize another 17 double points to $\Split_{28}(\PP^3)\cap P_2=\Split_{19}(\PP^3)$, and choose 9 of the double points which have already been specialized to $\Split_{28}(\PP^3)\cap P_1$ to specialize to $\Split_{28}(\PP^3)\cap P_1\cap P_2=\Split_{10}(\PP^3)$.
\begin{align*}
h_{\PP^{4494}}(Z\cup P_1,1) &\geq h_{\PP^{4494}}(Z\cup P_1\cup P_2,1)+h_{P_2}((Z\cup P_1)\cap P_2,1)-1540\\
h_{P_1}(Z\cap P_1,1) &\geq h_{P_1}((Z\cup P_2)\cap P_1,1)+h_{P_1\cap P_2}(Z\cap P_1\cap P_2,1)-\binom{3+28-18}{28-18}\\
&= h_{P_1}((Z\cup P_2)\cap P_1,1)+h_{P_1\cap P_2}(Z\cap P_1\cap P_2,1)-286
\end{align*}
and so
\begin{align*}
h_{\PP^{4494}}(Z,1)&\geq h_{\PP^{4494}}(Z\cup P_1\cup P_2,1)+2h_{P_2}((Z\cup P_1)\cap P_2,1)\\
&\quad+h_{P_1\cap P_2}(Z\cap P_1\cap P_2,1)-3366.
\end{align*}

Finally, we specialize the remaining 9 double points to $\Split_{28}(\PP^3)\cap P_3=\Split_{19}(\PP^3)$.  We further specialize 8 of the double points on $\Split_{28}(\PP^3)\cap P_1$ to $\Split_{28}(\PP^3)\cap P_1\cap P_3=\Split_{10}(\PP^3)$,  8 of the double points on $\Split_{28}(\PP^3)\cap P_2$ to $\Split_{28}(\PP^3)\cap P_2\cap P_3=\Split_{10}(\PP^3)$, and one double point on $\Split_{28}(\PP^3)\cap P_1\cap P_2$ to $\Split_{28}(\PP^3)\cap P_1\cap P_2\cap P_3=\Split_1(\PP^3)$.  Then,
\begin{align*}
h_{\PP^{4494}}(Z\cup P_1\cup P_2,1) &\geq h_{\PP^{4494}}(Z\cup P_1\cup P_2\cup P_3,1)\\
&\quad+h_{P_3}((Z\cup P_1\cup P_2)\cap P_3,1)-1540,\\
h_{P_2}((Z\cup P_1)\cap P_2,1) &\geq h_{P_2}((Z\cup P_1\cup P_3)\cap P_2,1)\\
&\quad+h_{P_2\cap P_3}((Z\cup P_1)\cap P_2\cap P_3,1)-286,\text{ and}\\
h_{P_1\cap P_2}(Z\cap P_1\cap P_2,1) &\geq h_{P_1\cap P_2}((Z\cup P_3)\cap P_1\cap P_2,1)\\
&+h_{P_1\cap P_2\cap P_3}(Z\cap P_1\cap P_2\cap P_3,1)-\binom{3+28-27}{28-27}\\
&=h_{P_1\cap P_2}((Z\cup P_3)\cap P_1\cap P_2,1)\\
&+h_{P_1\cap P_2\cap P_3}(Z\cap P_1\cap P_2\cap P_3,1)-4,
\end{align*}
and so
\begin{align*}
h_{\PP^{4494}}(Z,1) & \geq h_{\PP^{4494}}(Z\cup P_1\cup P_2\cup P_3,1)+3h_{P_3}((Z\cup P_1\cup P_2)\cap P_3,1)&\\
&\quad+3h_{P_2\cap P_3}((Z\cup P_1)\cap P_2\cap P_3,1)\\
&+h_{P_1\cap P_2\cap P_3}(Z\cap P_1\cap P_2\cap P_3,1)-5482.
\end{align*}

At this point, we calculate the above Hilbert functions using Macaulay2, yielding
\begin{align*}
h_{\PP^{4494}}(Z\cup P_1\cup P_2\cup P_3,1)&=4495\\
h_{P_3}((Z\cup P_1\cup P_2)\cap P_3,1)&=1522\\
h_{P_2\cap P_3}((Z\cup P_1)\cap P_2\cap P_3,1)&=279\\
h_{P_1\cap P_2\cap P_3}(Z\cap P_1\cap P_2\cap P_3,1)&=4.
\end{align*}

Putting this all together, we get $h_{\PP^{4494}}(Z,1)\geq 4420$, and so $h_{\PP^{4494}}(Z,1)=4420$, as desired.  Recall, however, that this result holds for a special case of $Z$, with its double points lying on specific subspaces.  To get the general result, consider a scheme $Z'$ consisting of 52 double points on $\Split_{28}(\PP^3)$ in general position.  By the semicontinuity theorem \cite[Theorem III.12.8]{Hartshorne}, $h^0(\PP^{4494},\sheaf I_{Z'}(1))\leq h^0(\PP^{4494},\sheaf I_Z(1))$.  By Definition \ref{HilbertFunctionDefinition}, this implies that $h_{\PP^{4494}}(Z',1)\geq h_{\PP^{4494}}(Z,1)=4420$, and the result follows.

Now we will prove the general method.  Note that, instead of just double points, we also include other spaces which appear in the splitting induction.

\begin{lemma}\label{aiDifference}
If $d\geq i\ell+1$, then $a_{i+1}(d)+a_i(d-\ell) = \binom{n-\ell+d}{d}+a_i(d)$.
\end{lemma}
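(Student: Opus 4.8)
The plan is to verify the identity by a direct expansion, since both sides are explicit combinations of binomial coefficients and values of backward differences of $s,t,u,v$. First I would expand $a_{i+1}(d)$, $a_i(d-\ell)$, and $a_i(d)$ from the definition, form the combination $a_{i+1}(d)+a_i(d-\ell)-a_i(d)$, and sort its summands into five families: the alternating binomial sum $B_j(e):=\sum_{k=1}^j(-1)^{k-1}\binom{j}{k}\binom{n+e-k\ell}{e-k\ell}$, and the four families indexed by $s$, $t$, $u$, and $v$. The whole computation then amounts to showing that the $s$-, $t$-, $u$-, and $v$-families contribute $0$, while the binomial family contributes exactly $\binom{n+d-\ell}{d-\ell}=\dim R_{d-\ell}$, which is the correction term on the right-hand side.

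The $t$- and $v$-families are immediate: their contributions are $(\nabla^{i+1}_\ell t)(d)+(\nabla^i_\ell t)(d-\ell)-(\nabla^i_\ell t)(d)$ and $(n+1)\big[(\nabla^{i+1}_\ell v)(d)+(\nabla^i_\ell v)(d-\ell)-(\nabla^i_\ell v)(d)\big]$, both of which vanish by \eqref{BackwardDifferenceIdentity}. For the $s$-family I would combine, for each of $a_{i+1}(d)$, $a_i(d-\ell)$, $a_i(d)$, the ``$i\ell n\,\nabla^{i-1}$'' term with the ``$(dn+1)\,\nabla^i$'' term; using $(\nabla^{i+1}_\ell s)(d)-(\nabla^i_\ell s)(d)=-(\nabla^i_\ell s)(d-\ell)$ and $(\nabla^{i-1}_\ell s)(d-2\ell)-(\nabla^{i-1}_\ell s)(d-\ell)=-(\nabla^i_\ell s)(d-\ell)$ (both instances of \eqref{BackwardDifferenceIdentity}), everything collapses to $(\nabla^i_\ell s)(d-\ell)$ times the scalar $(i+1)\ell n-(dn+1)-i\ell n+\big((d-\ell)n+1\big)$, which is identically $0$. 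The $u$-family collapses the same way, to $(\nabla^i_\ell u)(d-\ell)$ times $(i+1)\ell-(d+1)-i\ell+(d-\ell+1)=0$.

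The delicate step — the only part I expect to require real care — is the alternating binomial sum. Here I would rewrite the sum occurring in $a_i(d-\ell)$ as $B_i(d-\ell)=\sum_{k=1}^i(-1)^{k-1}\binom{i}{k}\binom{n+d-(k+1)\ell}{d-(k+1)\ell}$, reindex it by $k\mapsto k-1$, and apply Pascal's rule $\binom{i+1}{k}=\binom{i}{k}+\binom{i}{k-1}$ to $B_{i+1}(d)$. The $\binom{i}{k}$-part of $B_{i+1}(d)$ then cancels against $-B_i(d)$ (the nominal $k=i+1$ term drops out since $\binom{i}{i+1}=0$), and the $\binom{i}{k-1}$-part of $B_{i+1}(d)$ telescopes against the reindexed $B_i(d-\ell)$, leaving only the $k=1$ term $\binom{n+d-\ell}{d-\ell}$. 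This gives $a_{i+1}(d)+a_i(d-\ell)-a_i(d)=\binom{n+d-\ell}{d-\ell}$, as asserted. The hypothesis $d\geq i\ell+1$ serves to keep all the quantities in their intended range — so that each $a_\bullet$ is the relevant expected dimension and each binomial coefficient is genuinely the dimension $\dim R_\bullet$ it represents — after which the identity is a formal consequence of Pascal's rule together with \eqref{BackwardDifferenceIdentity}; the rest is mechanical substitution, so the main obstacle is simply getting the reindexing and the Pascal split to line up in the binomial term.
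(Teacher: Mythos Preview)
Your proposal is correct and follows essentially the same route as the paper: split $a_i(d)$ into its alternating binomial part and the remaining $s,t,u,v$ terms, handle the binomial part via reindexing and Pascal's rule, and kill the remaining terms with identity~\eqref{BackwardDifferenceIdentity}. Your treatment of the $s$- and $u$-families is in fact more explicit than the paper's, which simply asserts that the non-binomial part $h_i(d)=a_i(d)-g_i(d)$ satisfies $h_{i+1}(d)+h_i(d-\ell)=h_i(d)$ ``immediately'' from~\eqref{BackwardDifferenceIdentity} without tracking how the $i$- and $d$-dependent coefficients $i\ell n$, $dn+1$, $i\ell$, $d+1$ cancel.
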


\begin{proof}
Let $g_i(d) = \sum_{j=1}^i(-1)^{j-1}\binom{i}{j}\binom{n+d-j\ell}{d-j\ell}$ and $h_i(d)=a_i(d)-g_i(d)$.  Then
\begin{align*}
&g_{i+1}(d)+g_i(d-\ell)\\
&\quad= \sum_{j=1}^{i+1}(-1)^{j-1}\binom{i+1}{j}\binom{n+d-j\ell}{d-j\ell}+\sum_{j=1}^i(-1)^{j-1}\binom{i}{j}\binom{n+d-(j+1)\ell}{d}\\
&\quad= \sum_{j=1}^{i+1}(-1)^{j-1}\binom{i+1}{j}\binom{n+d-j\ell}{d-j\ell}+\sum_{j=2}^{i+1}(-1)^{j-2}\binom{i}{j-1}\binom{n+d-j\ell}{d-j\ell}\\
&\quad= (i+1)\binom{n+d-\ell}{d-\ell}+\sum_{j=2}^{i+1}(-1)^{j-1}\left(\binom{i+1}{j}-\binom{i}{j-1}\right)\binom{n+d-j\ell}{d-j\ell}\\
&\quad= (i+1)\binom{n+d-\ell}{d-\ell}+\sum_{j=2}^{i+1}(-1)^{j-1}\binom{i}{j}\binom{n+d-j\ell}{d-j\ell}\\
&\quad= \binom{n+d-\ell}{d-\ell}+\sum_{j=1}^i(-1)^{j-1}\binom{i}{j}\binom{n+d-j\ell}{d-j\ell}\\
&\quad= \binom{n+d-\ell}{d-\ell}+g_i(d).
\end{align*}

It follows immediately from (\ref{BackwardDifferenceIdentity}) that
\begin{equation*}
h_{i+1}(d)+h_i(d-l) = h_i(d).\qedhere
\end{equation*}
\end{proof}

\begin{proposition}\label{specializationLemma}
Suppose $d\geq (i+1)\ell+1$.  If $\statement A_{i+1}(d)$ and $\statement A_i(d-\ell)$ are both true and subabundant (resp. superabundant), then $\statement A_i(d)$ is true and subabundant (resp. superabundant).
\end{proposition}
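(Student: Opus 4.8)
The plan is to obtain $\statement A_i(d)$ from the two hypotheses by a single Castelnuovo--Horace degeneration, matching the arithmetic of Lemma~\ref{aiDifference}. By Lemma~\ref{AiExpdim} we always have $\dim A_i(d)\le\min\{a_i(d),\binom{n+d}{d}\}$, so in the subabundant case it suffices to prove $\dim A_i(d)\ge a_i(d)$, and in the superabundant case it suffices to prove $A_i(d)=R_d$. I would degenerate the generic data defining $A_i(d)$ onto a special divisor. Pick a generic tuple $\tuple g\in R_1^{\ell}$ and set $W=(\prod\tuple g)R_{d-\ell}\subseteq R_d$, a linear subspace of dimension $\binom{n+d-\ell}{d-\ell}$ for which multiplication by $\prod\tuple g$ identifies $R_{d-\ell}$ with $W$. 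The hypothesis $d\ge(i+1)\ell+1$ makes $d-\ell,\dots,d-(i+1)\ell$ positive, so every backward difference appearing in $A_i(d-\ell)$ has a nonnegative argument and Lemma~\ref{aiDifference} is available.

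Using the identity~(\ref{BackwardDifferenceIdentity}) I would split each of the four ``free'' counts of $A_i(d)$: of the $(\nabla^i_\ell s)(d)$ free $s$-type tangent spaces, force $(\nabla^i_\ell s)(d-\ell)$ of them to be tangent to $\Split_d(\PP^n)$ at completely decomposable forms divisible by $\prod\tuple g$ and leave the remaining $(\nabla^{i+1}_\ell s)(d)$ generic, and similarly for the $t$-, $u$-, and $v$-type families; in addition, for each existing pencil $\tuple g_j$ with $1\le j\le i$ further specialize $(\nabla^{i-1}_\ell s)(d-2\ell)$ of its $s$-type generators, and the corresponding $u$-type generators, to be divisible by $\prod\tuple g$ as well. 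In every case the counts balance by another application of~(\ref{BackwardDifferenceIdentity}). Write $A$ for the specialized space and compute $\dim A=\dim(A+W)+\dim(A\cap W)-\dim W$. Here Lemma~\ref{TangentToSplit} does the local bookkeeping: at a form divisible by $\prod\tuple g$ the tangent space to $\Split_d(\PP^n)$ meets $W$ in exactly $\prod\tuple g$ times the tangent space to $\Split_{d-\ell}(\PP^n)$, of dimension $(d-\ell)n+1$, and projects onto an $\ell n$-dimensional residual modulo $W$; if the form is in addition divisible by some $\prod\tuple g_j$, that residual lies inside the old pencil space $(\prod\tuple g_j)R_{d-\ell}$ and is absorbed there. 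The $u$-type generators behave the same way but with a drop of $\ell$ in place of $\ell n$, which accounts for the $i\ell(\nabla^{i-1}_\ell u)(d-\ell)$ term of $a_{i+1}$, while the specialized $t$- and $v$-type generators lie entirely in $W$. Carrying this through all summands, one checks that $A+W$ is a generic instance of $A_{i+1}(d)$ and that $A\cap W$, divided by $\prod\tuple g$, is a generic instance of $A_i(d-\ell)$: the trace $A\cap W$ only sees the data that was specialized onto $W$, and the further-specialized pencil generators add nothing new to $A+W$ by the absorption just noted. This compatibility check --- reconciling the two sides family by family and pencil by pencil, with the counts controlled by~(\ref{BackwardDifferenceIdentity}) --- is the step I expect to be the main obstacle, with the $u$-type family the most delicate because of the shift in product length.

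Granting this, I would apply Lemma~\ref{HilbertFunctionInequality} to the scheme underlying $A$ with $P=\PP W$. Using the hypotheses $\statement A_{i+1}(d)$ and $\statement A_i(d-\ell)$,
\begin{align*}
\dim A &\ge \dim(A+W)+\dim(A\cap W)-\dim W\\
&= \dim A_{i+1}(d)+\dim A_i(d-\ell)-\binom{n+d-\ell}{d-\ell}.
\end{align*}
In the subabundant case the right-hand side is $a_{i+1}(d)+a_i(d-\ell)-\binom{n+d-\ell}{d-\ell}=a_i(d)$ by Lemma~\ref{aiDifference}. Since a specialization can only lower $\dim A_i(d)$ (the semicontinuity argument of the worked example), the generic $A_i(d)$ has dimension at least $a_i(d)$, and together with Lemma~\ref{AiExpdim} this forces $\dim A_i(d)=a_i(d)$, so $\statement A_i(d)$ is true and subabundant. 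In the superabundant case, $\statement A_{i+1}(d)$ gives $A+W=R_d$ and $\statement A_i(d-\ell)$ gives $A\cap W=W$, hence $\dim A\ge\binom{n+d}{d}$ and $A=R_d$; by semicontinuity the generic $A_i(d)$ is all of $R_d$, so $\statement A_i(d)$ is true and superabundant.
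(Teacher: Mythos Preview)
Your proposal is correct and follows essentially the same approach as the paper: degenerate the data of $A_i(d)$ onto a new $(i{+}1)$-st pencil $W=(\prod\tuple g)R_{d-\ell}$, identify the residue with $A_{i+1}(d)$ and the trace on $W$ with $A_i(d-\ell)$, then combine the Castelnuovo inequality (Lemma~\ref{HilbertFunctionInequality}), Lemma~\ref{aiDifference}, and semicontinuity. The paper phrases the argument in terms of schemes and Hilbert functions while you mix in the Grassmann dimension formula, but the two formulations are equivalent here; one small caution is that the vector-space intersection $A\cap W$ may strictly contain the scheme-theoretic trace, so it is cleaner to bound from below by the trace (which is what Lemma~\ref{HilbertFunctionInequality} actually delivers) rather than asserting equality with $A\cap W$.
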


\begin{proof}
We construct three subschemes of $\PP R_d$ as follows.

For each $j\in\{1,\ldots,i+1\}$, choose generic $\tuple g_j\in R_1^{\ell}$ and let $P=\bigcup_{j=1}^i\PP\left(\prod \tuple g_j\right)R_{d-\ell}$ and $P_{i+1}=\PP\left(\prod \tuple g_{i+1}\right)R_{d-\ell}$.

Let $Z$ consist of
\begin{itemize}
\item $s(d)$ generic double points on $\Split_d(\PP^n)$ such that, for any $I\subset\{1,\ldots,i+1\}$, $(\nabla^{i-\lvert I\rvert}_\ell s)(d-\lvert I\rvert\ell)$ of the double points lie on $\bigcap_{j\in I}\PP\left(\prod \tuple g_j\right)R_{d-\ell}$,
\item $t(d)$ generic points on $\Split_d(\PP^n)$ such that, for any $I\subset\{1,\ldots,i+1\}$, $(\nabla^{i-\lvert I\rvert}_\ell t)(d-\lvert I\rvert\ell)$ of the points lie on $\bigcap_{j\in I}\PP\left(\prod \tuple g_j\right)R_{d-\ell}$,
\item $u(d)$ generic $\PP^d$'s of the form $\PP\sum_{m=1}^{d+1}\Span\{\pi_m(f'')\}$, $f''\in R_1^{d+1}$, such that, for any $I\subset\{1,\ldots,i+1\}$, $(\nabla^{i-\lvert I\rvert}_\ell u)(d-\lvert I\rvert\ell)$ of the $\PP^d$'s lie on $\bigcap_{j\in I}\PP\left(\prod \tuple g_j\right)R_{d-\ell}$, and
\item $v(d)$ generic $\PP^n$'s of the form $\PP\pi_1(f''')R_1$, $f'''\in R_1^{d}$, such that, for any $I\subset\{1,\ldots,i+1\}$, $(\nabla^{i-\lvert I\rvert}_\ell v)(d-\lvert I\rvert\ell)$ of the double points lie on $\bigcap_{j\in I}\PP\left(\prod \tuple g_j\right)R_{d-\ell}$.
\end{itemize}

By construction, using Lemmas \ref{TangentToSplit} and \ref{DoublePoints}, we see that \begin{align*}
h_{\PP R_d}(Z\cup P,1) &\leq \dim A_i(d)\text{ (by semicontinuity)},\\
h_{\PP R_d}(Z\cup P\cup P_{i+1},1) &= \dim A_{i+1}(d)\text{, and}\\
h_{P_{i+1}}((Z\cup P)\cap P_{i+1},1)&= \dim A_i(d-\ell).
\end{align*}

\textit{Case 1}.  Suppose $\statement A_{i+1}(d)$ and $\statement A_i(d-\ell)$ are both subabundant.  Then we have, by Lemma \ref{HilbertFunctionInequality},
\begin{align*}
\dim A_i(d) &\geq a_{i+1}(d)+a_i(d-\ell)-\binom{n+d-\ell}{d-\ell}\\
&= \binom{n+d-\ell}{d-\ell}+a_i(d)-\binom{n+d-\ell}{d-\ell}\text{ (by Lemma \ref{aiDifference})}\\
&= a_i(d).
\end{align*}

\textit{Case 2}.  Suppose $\statement A_{i+1}(d)$ and $\statement A_i(d-\ell)$ are both superabundant.  Then we have, by Lemma \ref{HilbertFunctionInequality},
\begin{align*}
\dim A_i(d) &\geq\binom{n+d}{d}+\binom{n+d-\ell}{d-\ell}-\binom{n+d-\ell}{d-\ell}\\
&=\binom{n+d}{d}.\qedhere
\end{align*}
\end{proof}

We will need the following combinatorial identity.  (See, for example, \cite[(5.43)]{GKP}.)
\begin{lemma}\label{BinomialCoefficientIdentity}
For any $n\geq 0$,
\begin{equation*}
\sum_{j=0}^n(-1)^j\binom{n}{j}\binom{r-j\ell}{n}=\ell^n.
\end{equation*}
\end{lemma}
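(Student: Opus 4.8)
The plan is to read off the left-hand side as an application of the backward difference operator $\nabla_\ell$ from Section~\ref{Definitions}. Put $F(x)=\binom{x}{n}$, interpreted as the polynomial $\tfrac{1}{n!}\,x(x-1)\cdots(x-n+1)$ of degree $n$ in $x$; then by the very definition of $\nabla^n_\ell$,
\begin{equation*}
(\nabla^n_\ell F)(r)=\sum_{j=0}^n(-1)^j\binom{n}{j}F(r-j\ell)=\sum_{j=0}^n(-1)^j\binom{n}{j}\binom{r-j\ell}{n},
\end{equation*}
which is precisely the sum to be evaluated. Reading $\binom{r-j\ell}{n}$ as the displayed polynomial makes the statement a polynomial identity in $r$, which is the form in which it is used later.

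First I would isolate the elementary fact that $\nabla_\ell$ drops the degree of a polynomial by exactly one while multiplying the leading coefficient by $(\textrm{degree})\cdot\ell$: if $p$ has degree $e\geq 1$ and leading coefficient $c$, then from $x^e-(x-\ell)^e=e\ell x^{e-1}+(\textrm{lower order})$ one gets that $(\nabla_\ell p)(x)=p(x)-p(x-\ell)$ has degree $e-1$ and leading coefficient $ce\ell$. Iterating this $n$ times shows that $\nabla^n_\ell$ carries a degree-$n$ polynomial with leading coefficient $c$ to the constant $c\cdot n!\cdot\ell^n$; in particular the result is independent of the point of evaluation, consistent with the fact that the final answer $\ell^n$ does not involve $r$.

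Since $F$ has leading coefficient $1/n!$, this gives $(\nabla^n_\ell F)(r)=\tfrac{1}{n!}\cdot n!\cdot\ell^n=\ell^n$, which is the assertion; the case $n=0$ is the trivial identity $1=1$. There is no real obstacle here: the one point needing a little care is the induction tracking leading coefficients in the degree-lowering step, and that is a single line from the binomial theorem. Should one wish to avoid difference operators entirely, an alternative is a direct induction on $n$ using Pascal's rule on both $\binom{n}{j}$ and $\binom{r-j\ell}{n}$ and reindexing the resulting double sum, but the computation above is shorter and more transparent.
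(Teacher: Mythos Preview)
Your argument is correct. The paper does not actually prove this lemma; it simply cites it as identity (5.43) in Graham--Knuth--Patashnik, \emph{Concrete Mathematics}. Your approach via the backward difference operator $\nabla_\ell$ is a clean self-contained proof, and in fact it is the same mechanism the paper exploits later in the proof of Lemma~\ref{AnEquiabundant}, where the computation $(\nabla^{n-1}_\ell f)(d)=(n-1)!\,\bigl([d^{n-1}]f(d)\bigr)\,\ell^{n-1}$ for a polynomial $f$ of degree $n-1$ is exactly your leading-coefficient tracking. So your proof is not only valid but entirely in the spirit of the surrounding text; the paper just chose to outsource this particular instance to a reference.
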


\begin{proposition}\label{An}
If $\statement A_n(\ell n+1)$ is true and superabundant, then $\statement A_n(d)$ is true and superabundant for all $d\geq\ell n+1$.
\end{proposition}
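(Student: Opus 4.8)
The plan is to prove Proposition~\ref{An} by induction on $d$, using Proposition~\ref{specializationLemma} as the inductive step. The base case $d = \ell n + 1$ is the hypothesis. For the inductive step, suppose $\statement A_n(d-\ell)$ is true and superabundant for some $d$ with $d - \ell \geq \ell n + 1$, equivalently $d \geq \ell(n+1) + 1$. We want to conclude $\statement A_n(d)$ is true and superabundant. The obstacle to a direct appeal to Proposition~\ref{specializationLemma} is that that proposition, with the index shift, would require $\statement A_{n+1}(d)$ to be true and superabundant as well — but $A_{n+1}$ does not make sense, since the construction fixes $i \in \{0,\ldots,n\}$ and there is no room for $n+1$ generic hyperplane-type conditions of the relevant kind. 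So the real content is to handle the top index $i = n$ separately.

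The key observation is that when $i = n$, the intersection $\bigcap_{j=1}^{n} \PP\left(\prod \tuple g_j\right) R_{d-\ell}$ is (for generic $\tuple g_j$, using $\ell n < d$) a single point — the class of $\prod_{j=1}^n \prod \tuple g_j$ times nothing, i.e. a $\PP^0$ — or more precisely $\bigcap_{j=1}^n \PP(\prod \tuple g_j) R_{d-\ell}$ corresponds to the forms divisible by $\prod_{j=1}^n \prod \tuple g_j$, a space of dimension $\binom{n + d - n\ell}{d - n\ell}$. When $d = \ell n + 1$ this is $\binom{n+1}{1} = n+1$, and for larger $d$ it grows. So rather than introducing a spurious $P_{n+1}$, I would re-run the Castelnuovo/specialization argument of Proposition~\ref{specializationLemma} with the hyperplanes $P = \bigcup_{j=1}^{n} \PP(\prod \tuple g_j) R_{d-\ell}$ all at once, specializing the double points (and the $t$-, $u$-, $v$-type subschemes) onto the appropriate intersections exactly as prescribed in the definition of $A_i$, and bounding $h_{\PP R_d}(Z,1)$ from below by peeling off one $P_j$ at a time via Lemma~\ref{HilbertFunctionInequality}. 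Each peel of $P_j$ contributes $h$ on the residual trace inside $P_j$, which by the dimension count in Lemma~\ref{TangentToSplit} and Lemma~\ref{DoublePoints} is exactly $\dim A_{n-1}(d-\ell)$ — and here is where the hypothesis must be leveraged: we need $\statement A_{n-1}(d-\ell)$, not $\statement A_n(d-\ell)$.

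This suggests the cleaner route: prove by a double induction, or simply observe that $\statement A_n(\ell n + 1)$ superabundant forces, via Corollary~\ref{TerraciniCorollary}-type monotonicity together with Lemma~\ref{aiDifference}, that all the lower-$i$ statements $\statement A_i(d)$ for $d \geq \ell i + 1$ in the relevant range are superabundant, letting the recursion in Proposition~\ref{specializationLemma} close. Concretely: first show $\statement A_n(\ell n+1)$ superabundant $\Rightarrow \statement A_{n-1}(\ell n + 1)$ superabundant (the residual trace computation), then iterate Proposition~\ref{specializationLemma} to climb in $d$. I would organize the write-up as: (i) check the base; (ii) for the inductive step from $d - \ell$ to $d$, set up the scheme $Z$ and the $n$ hyperplanes $P_1,\ldots,P_n$ as in Proposition~\ref{specializationLemma}'s proof; (iii) apply Lemma~\ref{HilbertFunctionInequality} $n$ times, getting $h_{\PP R_d}(Z,1) \geq h_{\PP R_d}(Z \cup P_1 \cup \cdots \cup P_n, 1) + \sum_j (\text{trace terms}) - \sum_j \binom{n + d - j\ell}{d - j\ell}$-type corrections; (iv) identify $h_{\PP R_d}(Z \cup \bigcup P_j, 1) = \dim A_n(d)$ (since once all $P_j$ are added, the scheme fills the way $A_n$ prescribes — this is the $i = n$ analogue of the identity $h(Z \cup P \cup P_{i+1},1) = \dim A_{i+1}(d)$), and the trace terms with $\dim A_{n-1}(d - \ell)$ and its further restrictions; (v) invoke superabundancy of all these lower statements so every such $h$ equals the full $\binom{n + \cdot}{\cdot}$, and then the arithmetic collapses — via Lemma~\ref{aiDifference} and Lemma~\ref{BinomialCoefficientIdentity} — to $h_{\PP R_d}(Z,1) \geq \binom{n+d}{d}$, whence equality and superabundancy of $\statement A_n(d)$.

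The main obstacle I anticipate is the bookkeeping in step (v): making the inclusion–exclusion corrections $\sum_j \binom{n+d-j\ell}{d-j\ell}$ and the cascade of trace terms telescope correctly. Lemma~\ref{aiDifference} is evidently the engine for the one-step version, and Lemma~\ref{BinomialCoefficientIdentity} (with $r = n+d$) is presumably what converts the alternating sum of the $P$-codimension corrections into a clean $\ell^n$-type term that matches the superabundant count; the delicate part is verifying that the repeated application of Lemma~\ref{HilbertFunctionInequality}, which is only an \emph{inequality}, still yields the sharp bound once all the lower-index statements are known to be exactly superabundant (so that every intermediate $h$ is forced to its maximum $\binom{\cdot}{\cdot}$ and no slack accumulates). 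I would also need to double-check the hypothesis $d \geq \ell n + 1$ is exactly what is needed for the generic $\tuple g_j$ to behave (so that $\prod_{j=1}^n \prod \tuple g_j$ has degree $\ell n \leq d - 1$ and the relevant restriction maps are surjective), which is why the base case sits precisely at $d = \ell n + 1$.
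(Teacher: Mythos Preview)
Your proposal has a genuine gap: the inductive strategy you outline cannot close with the given hypothesis. You try to pass from $\statement A_n(d-\ell)$ to $\statement A_n(d)$ by peeling off the existing $P_j$'s one at a time, which forces you to know the truth and superabundancy of various lower-index statements $\statement A_{n-1}(d-\ell)$, $\statement A_{n-2}(d-2\ell)$, etc. But the hypothesis gives only $\statement A_n(\ell n+1)$, and there is no mechanism to deduce the truth of the lower-index statements from it (your appeal to a ``Corollary~\ref{TerraciniCorollary}-type monotonicity'' does not apply, since that corollary concerns varying $s$, not varying the index $i$). Moreover, an induction with step size $\ell$ would only reach $d\equiv 1\pmod\ell$, while the proposition asserts the result for \emph{all} $d\ge \ell n+1$.

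The paper's proof avoids induction on $d$ entirely and uses a device you missed: it introduces one \emph{additional} linear subspace $P'=\PP\bigl((\prod\tuple h)\,R_{\ell n+1}\bigr)$, where $\tuple h\in R_1^{\,d-(\ell n+1)}$ is generic, and specializes the entire scheme $Z$ to lie inside $P'$. Then a single application of Lemma~\ref{HilbertFunctionInequality} with $P'$ (not with the $P_j$'s) gives
\[
\dim A_n(d)\;\ge\; h_{\PP R_d}(Z\cup P\cup P',1)\;+\;h_{P'}\bigl((Z\cup P)\cap P',1\bigr)\;-\;\binom{n+\ell n+1}{\ell n+1}.
\]
The trace term is exactly $\dim A_n(\ell n+1)=\binom{n+\ell n+1}{\ell n+1}$ by the hypothesis, so it cancels. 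The remaining term $h_{\PP R_d}(Z\cup P\cup P',1)$ is just the dimension of a sum of $n+1$ linear subspaces $\sum_{j=1}^n(\prod\tuple g_j)R_{d-\ell}+(\prod\tuple h)R_{\ell n+1}$, which is computed directly by inclusion--exclusion, and Lemma~\ref{BinomialCoefficientIdentity} makes the alternating sums collapse to $\binom{n+d}{d}$. No lower-index statements and no induction are needed.
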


\begin{proof}
Consider a generic $\tuple h\in R_1^{d-(\ell n + 1)}$ and let $P'=\PP\left(\prod\tuple h\right)R_{\ell n + 1}$.  Let $P$ and $Z$ be as in the proof of Proposition \ref{specializationLemma} with $i=n$, with the additional condition that $Z$ sits inside $P'$.  Then by construction,
\begin{align*}
h_{\PP R_d}(Z\cup P,1) &\leq \dim A_n(d)\text{ (by semicontinuity)},\\
h_{\PP R_d}(Z\cup P\cup P',1) &= \dim\left(\sum_{j=1}^n\left(\prod \tuple g_j\right)R_{d-\ell}+\left(\prod \tuple h\right)R_{\ell n + 1}\right)\text{, and}\\
h_{P'}((Z\cup P)\cap P',1) &= \dim A_n(\ell n + 1)\\
&= \binom{n+\ell n+1}{\ell n + 1}\text{ (by assumption).}
\end{align*}

Therefore, by Lemma \ref{HilbertFunctionInequality},
\begin{align*}
\dim A_n(d) &\geq \dim\left(\sum_{j=1}^n\left(\prod \tuple g_j\right)R_{d-\ell}+\left(\prod \tuple h\right)R_{\ell n + 1}\right)\\
&\quad+\binom{n+\ell n+1}{\ell n + 1}-\binom{n+\ell n+1}{\ell n + 1}\\
&= \dim \sum_{j=1}^n\left(\prod \tuple g_j\right)R_{d-\ell}+\dim \left(\prod \tuple h\right)R_{\ell n + 1}\\
&\quad-\dim\sum_{j=1}^n\left(\prod \tuple g_j|\tuple h\right)R_{\ell n + 1 - \ell}\\
&= \sum_{j=1}^n(-1)^{j-1}\binom{n}{j}\binom{n+d-j\ell}{d-j\ell}+\binom{n+\ell n + 1}{\ell n + 1}\\
&\quad -\sum_{j=1}^n(-1)^{j-1}\binom{n}{j}\binom{n+\ell n + 1 - j\ell}{\ell n + 1 - j\ell}\\
&=\binom{n+d}{d} - \sum_{j=0}^n(-1)^j\binom{n}{j}\binom{n+d-j\ell}{n}\\
&\quad+\sum_{j=0}^n(-1)^j\binom{n}{j}\binom{n+\ell n + 1 - j\ell}{n}\\
&= \binom{n+d}{d} - \ell^n + \ell^n\text{ (by Lemma \ref{BinomialCoefficientIdentity})}\\
&= \binom{n+d}{d}.\qedhere
\end{align*}
\end{proof}

\begin{theorem}[Restriction induction, fixed dimension]\label{RestrictionInductionFixedDimension}
Suppose $\statement A_n(d)$ is equiabundant for all $d\geq\ell n+1$.  If $\statement A_{\left\lfloor\frac{d-1}{\ell}\right\rfloor}(d)$ is true and subabundant (resp., superabundant) for all $d\leq\ell n + 1$, then $\statement A_0(d)$ is true and subabundant (resp., superabundant) for all $d$.  In particular, if $t=u=v=0$, then $\sigma_{s(d)}(\Split_d(\PP^n))$ is nondefective for all $d$.
\end{theorem}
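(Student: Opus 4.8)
The plan is to prove, by strong induction on $d$, a statement slightly stronger than ``$\statement A_0(d)$ is true'' for all $d$. Set $k(d)=\min\{n,\lfloor (d-1)/\ell\rfloor\}$. I would show: for every $d\ge 1$ and every $i$ with $0\le i\le k(d)$, the statement $\statement A_i(d)$ is true and subabundant (resp., superabundant). Since $k(d)\ge 0$ for all $d\ge 1$, taking $i=0$ gives $\statement A_0(d)$ true and subabundant (resp., superabundant) for every $d$; and when $t=u=v=0$ the space $\statement A_0(d)$ is $\statement A(n,d,s,0,0,0)$, which by the remark immediately following the definition of $\statement A$ is true if and only if $\sigma_{s(d)}(\Split_d(\PP^n))$ is nondefective, so the final sentence follows.

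Fix $d$ and assume the claim for all smaller degrees. Within this step I would run a descending induction on $i$ from $i=k(d)$ down to $i=0$. \emph{Base case $i=k(d)$.} If $d\le\ell n+1$, then $k(d)=\lfloor(d-1)/\ell\rfloor$ and $\statement A_{k(d)}(d)$ is true and subabundant (resp., superabundant) by hypothesis. If $d\ge\ell n+1$, then $\lfloor(d-1)/\ell\rfloor\ge n$, so $k(d)=n$: the hypothesis, applied at $d=\ell n+1$, gives that $\statement A_n(\ell n+1)$ is true, and it is superabundant since, by the other hypothesis, it is equiabundant; hence Proposition \ref{An} shows that $\statement A_n(d)$ is true and superabundant for all $d\ge\ell n+1$, and, being equiabundant by hypothesis, $\statement A_n(d)$ is then true and subabundant (resp., superabundant). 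The two regimes are consistent at $d=\ell n+1$.

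\emph{Descending step.} Suppose $0\le i\le k(d)-1$ and, by the inner induction hypothesis, $\statement A_{i+1}(d)$ is true and subabundant (resp., superabundant). From $i+1\le k(d)\le\lfloor(d-1)/\ell\rfloor$ we get $(i+1)\ell\le d-1$, i.e., $d\ge(i+1)\ell+1$, which is the degree hypothesis of Proposition \ref{specializationLemma}. Moreover $k(d)\ge 1$ forces $d\ge\ell+1$, so $d-\ell\ge 1$, and the elementary identity $\lfloor(d-\ell-1)/\ell\rfloor=\lfloor(d-1)/\ell\rfloor-1$ together with $i\le k(d)-1$ gives $i\le\min\{n,\lfloor(d-\ell-1)/\ell\rfloor\}=k(d-\ell)$; thus the outer induction hypothesis, applied to $d-\ell<d$, shows $\statement A_i(d-\ell)$ is true and subabundant (resp., superabundant). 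Proposition \ref{specializationLemma} now yields that $\statement A_i(d)$ is true and subabundant (resp., superabundant), which closes the inner induction, hence the outer one.

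All the analytic work has already been done in Lemma \ref{aiDifference} and Propositions \ref{specializationLemma} and \ref{An}; what remains, and what I expect to be the only real obstacle, is arranging the induction so that the indices line up. Concretely, one must choose the inductive statement so that each descent step from $\statement A_{i+1}(d)$ to $\statement A_i(d)$ has its required partner $\statement A_i(d-\ell)$ already in the range $0\le i\le k(d-\ell)$, and so that the degree conditions $d\ge(i+1)\ell+1$ (for Proposition \ref{specializationLemma}) and $d\ge\ell n+1$ (for Proposition \ref{An}) hold automatically; the identity $\lfloor(d-\ell-1)/\ell\rfloor=\lfloor(d-1)/\ell\rfloor-1$ and the observation that an equiabundant statement is simultaneously subabundant and superabundant (which is what reconciles the two regimes at $d=\ell n+1$) are the points needing the most care.
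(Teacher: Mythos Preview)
Your proposal is correct and follows essentially the same approach as the paper: use Proposition~\ref{An} to establish $\statement A_n(d)$ for $d\ge\ell n+1$, then descend via Proposition~\ref{specializationLemma}. The paper's own proof is a two-line sketch that leaves the index bookkeeping implicit, whereas you have carefully spelled out the double induction (strong on $d$, descending on $i$) and verified the floor-function identities needed for the ranges to match; this is a welcome elaboration but not a different argument.
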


\begin{proof}
It follows immediately from Proposition \ref{An} that $\statement A_n(d)$ is true for all $d\geq n\ell+1$.  The rest follows by induction using Proposition \ref{specializationLemma}.
\end{proof}

\pagebreak

\section{Restriction Induction with Fixed Degree}
\label{FixedDegree}

In the previous section, we fixed $n$ and found a method for determining the nondefectivity of $\sigma_s(\Split_d(\PP^n))$ by induction on $d$.  However, since many of the arguments rely on properties of the binomial coefficient $\binom{n+d}{d}$, and $\binom{n+d}{d}=\binom{n+d}{n}$, we will see that essentially the same process will allow us to determine the nondefectivity of $\sigma_s(\Split_d(\PP^n))$ by fixing $d$ and using induction on $n$.

As before, consider $n,d,\ell\in\NN$ and functions $s,t,u,v:\NN\rightarrow\ZZ_{\geq 0}$. Fix an $i\in\{0,\ldots,d\}$.  Choose the following tuples of generic linear forms.
\begin{itemize}
\item For each $j\in\{1,\ldots,i\}$, let $V_j$ be a generic subspace of $R_1$ with $\dim V_j=n-\ell+1$.
\item For each $j\in\{1,\ldots,i\}$ and $k\in\{1,\ldots,(\nabla^{i-1}_\ell s)(n-\ell)\}$, let $\tuple f_{j,k}\in V_j^{d}$.
\item For each $j\in\{1,\ldots,i\}$ and $k\in\{1,\ldots,(\nabla^{i-1}_\ell t)(n-\ell)\}$, let $\tuple f'_{j,k}\in V_j^{d+1}$.
\item For each $j\in\{1,\ldots,i\}$ and $k\in\{1,\ldots,(\nabla^{i-1}_\ell u)(n-\ell)\}$, let $\tuple f''_{j,k}\in V_j^{d+1}$.
\item For each $j\in\{1,\ldots,i\}$ and $k\in\{1,\ldots,(\nabla^{i-1}_\ell v)(n-\ell)\}$, let $\tuple f'''_{j,k}\in V_j^{d}$.
\item For each $j\in\{1,\ldots,(\nabla^i_\ell s )(n)\}$ let $\tuple f_j\in R_1^{d}$.
\item For each $j\in\{1,\ldots,(\nabla^i_\ell t)(n)\}$ let $\tuple f'_j\in R_1^{d+1}$.
\item For each $j\in\{1,\ldots,(\nabla^i_\ell u)(n)\}$ let $\tuple f''_j\in R_1^{d+1}$.
\item For each $j\in\{1,\ldots,(\nabla^i_\ell v)(n)\}$ let $\tuple f'''_j\in R_1^{d}$.
\end{itemize}

We then define the following subspace of $R_d$:

\begin{align*}
&B_i(n,d,\ell,s,t,u,v) =\quad\sum_{j=1}^i S_dV_j\\
&\quad+\sum_{j=1}^i\sum_{k=1}^{(\nabla^{i-1}_\ell s)(n-\ell)}\sum_{m=1}^d\pi_m(\tuple f_{j,k})R_1+\sum_{j=1}^i\sum_{k=1}^{(\nabla^{i-1}_\ell t)(n-\ell)}\Span\{\pi_1(\tuple f'_{j,k})\}\\
&\quad+\sum_{j=1}^i\sum_{k=1}^{(\nabla^{i-1}_\ell u)(n-\ell)}\sum_{m=1}^{d+1}\Span\{\pi_m(\tuple f''_{j,k})\}+\sum_{j=1}^i\sum_{k=1}^{(\nabla^{i-1}_\ell v)(n-\ell)}\pi_1(\tuple f'''_{j,k})R_1\\
&\quad+\sum_{j=1}^{(\nabla^i_\ell s )(n)}\sum_{m=1}^d\pi_m(\tuple f_j)R_1+\sum_{j=1}^{(\nabla^i_\ell t )(n)}\Span\{\pi_1(\tuple f'_j)\}\\
&\quad+\sum_{j=1}^{(\nabla^i_\ell u )(n)}\sum_{m=1}^{d+1}\Span\{\pi_m(\tuple f''_j)\}+\sum_{j=1}^{(\nabla^i_\ell v )(n)}\pi_1(\tuple f'''_j)R_1.
\end{align*}

We may abbreviate $B_i(n,d,\ell,s,t,u,v)$ as $B_i(n)$ if desired.

We define the function
\begin{align*}
b_i(n,d,\ell,s,t,u,v) &= \sum_{j=1}^i(-1)^{j-1}\binom{i}{j}\binom{n+d-j\ell}{n-j\ell}+i\ell n(\nabla^{i-1}_\ell s)(n-\ell)\\
&\quad + i\ell(\nabla^{i-1}_\ell u)(n-\ell)+(dn+1)(\nabla^i_\ell s)(n)+(\nabla^i_\ell t)(n)\\
&\quad + (d+1)(\nabla^i_\ell u)(n) + (n+1)(\nabla^i_\ell v)(n).
\end{align*}

Again, we may abbreviate this as $b_i(n)$.  As in Lemma \ref{AiExpdim},
\begin{equation*}
\dim B_i(n)\leq\min\left\{b_i(n),\binom{n+d}{d}\right\}
\end{equation*}

We define the statement $\statement B_i(n,d,\ell,s,t,u,v)$, or $\statement B_i(n)$, analogously to $\statement A_i(d)$.  Then, proceeding almost exactly as in Section \ref{RestrictionInduction}, we finally obtain the following result.

\begin{theorem}[Restriction induction, fixed degree]\label{RestrictionInductionFixedDegree}
Suppose $\statement B_d(n)$ is equiabundant for all $n\geq\ell d+1$.  If $\statement B_{\left\lfloor\frac{n-1}{\ell}\right\rfloor}(n)$ is true and subabundant (resp., superabundant) for all $n\leq\ell d + 1$, then $\statement B_0(n)$ is true and subabundant (resp., superabundant) for all $n$.  In particular, if $t=u=v=0$, then $\sigma_{s(n)}(\Split_d(\PP^n))$ is nondefective for all $n$.
\end{theorem}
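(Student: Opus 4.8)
The plan is to mirror the proof of Theorem~\ref{RestrictionInductionFixedDimension}, interchanging the roles of $n$ and $d$; this is legitimate because $\binom{n+d}{d}=\binom{n+d}{n}$, so every combinatorial identity used there applies equally after the substitution. The one genuine change is the restriction operation driving the induction. In the fixed-dimension proof one intersects $\Split_d(\PP^n)$ with the divisor $\{[g]:\prod\tuple g_j\mid g\}$, which reduces it to $\Split_{d-\ell}(\PP^n)$; here one instead intersects $\PP R_d$ with the linear subspace $\PP S_dV$ for a generic codimension-$\ell$ subspace $V\subseteq R_1$, which reduces $\Split_d(\PP^n)$ to $\Split_d(\PP^{n-\ell})$, since (by unique factorization in the polynomial subring $\kk[V]\subseteq R$) a product of $d$ linear forms lies in $S_dV$ exactly when each of its factors does. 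Both operations send a variety of completely decomposable forms to a smaller one and restrict tangent cones correctly, so the inductive skeleton is unchanged.

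First I would prove the analog of Lemma~\ref{AiExpdim}, $\dim B_i(n)\le\min\{b_i(n),\binom{n+d}{d}\}$, by inclusion--exclusion over $S_dV_1,\dots,S_dV_i$ --- whose multi-intersections are $S_d\bigl(\bigcap_{j\in I}V_j\bigr)$ of dimension $\binom{n+d-\lvert I\rvert\ell}{d}$ --- together with Lemma~\ref{TangentToSplit}, which identifies $\widehat T_{[f]}\Split_d(\PP^n)\cap S_dV_j$ with $\widehat T_{[f]}\Split_d(\PP^{n-\ell})$, of dimension $d(n-\ell)+1$, so that the per-component drop is $dn+1-(d(n-\ell)+1)=d\ell$. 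Then I would establish the analog of Lemma~\ref{aiDifference},
\begin{equation*}
b_{i+1}(n)+b_i(n-\ell)=\binom{n+d-\ell}{n-\ell}+b_i(n)\qquad\text{for }n\ge i\ell+1,
\end{equation*}
by splitting $b_i$ into its alternating-binomial part (handled by Pascal's rule applied to $\binom{n+d-j\ell}{n-j\ell}$) and its remaining terms (handled by the backward-difference identity~(\ref{BackwardDifferenceIdentity}), now with $\nabla_\ell$ in the variable $n$).

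With these in hand, the analog of Proposition~\ref{specializationLemma} goes through exactly as before: take $P=\bigcup_{j=1}^i\PP S_dV_j$ and $P_{i+1}=\PP S_dV_{i+1}$, specialize the constituents of $Z$ (the double points, the $\PP^d$'s $\PP\sum_m\Span\{\pi_m(\tuple f'')\}$, and the $\PP^n$'s $\PP\pi_1(\tuple f''')R_1$) so that for every $I\subseteq\{1,\dots,i+1\}$ the prescribed number $(\nabla^{\,i-\lvert I\rvert}_\ell w)(n-\lvert I\rvert\ell)$ of each kind ($w\in\{s,t,u,v\}$) lies on $\bigcap_{j\in I}\PP S_dV_j$, then run the Castelnuovo sequence of Lemma~\ref{HilbertFunctionInequality} through $P_{i+1}$ and collapse the estimate with the recurrence above. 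The analog of Proposition~\ref{An} is proved by restricting $Z$ into $\PP S_dW$ for a generic $W\subseteq R_1$ with $\dim W=\ell d+2$, so that $\Split_d(\PP^n)\cap\PP S_dW\cong\Split_d(\PP^{\ell d+1})$, and invoking Lemma~\ref{BinomialCoefficientIdentity}. Finally I would assemble everything as in Theorem~\ref{RestrictionInductionFixedDimension}: at $n=\ell d+1$ one has $\lfloor(n-1)/\ell\rfloor=d$, so the two hypotheses together force $\statement B_d(\ell d+1)$ to be true and superabundant; the analog of Proposition~\ref{An} propagates this to $\statement B_d(n)$ for all $n\ge\ell d+1$; and a descending induction on $i$, with an inner strong induction on $n$ whose base cases $i\ell+1\le n\le(i+1)\ell$ are supplied directly by the hypothesis on $\statement B_{\lfloor(n-1)/\ell\rfloor}(n)$, yields $\statement B_0(n)$ for every $n$ via the specialization proposition. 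Taking $t=u=v=0$ and applying Proposition~\ref{DimSecantVariety} gives the nondefectivity statement.

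The main obstacle is the geometry of the specialization step. I would need to check that $Z$ can genuinely be degenerated so that all of the prescribed incidences with the subspaces $\PP S_dV_j$ hold simultaneously, that (by semicontinuity) this degeneration only decreases $h^0(\PP R_d,\sheaf I_Z(1))$, and --- most delicately --- that the scheme-theoretic trace on $\PP S_dV_j\cong\PP^{\binom{n-\ell+d}{d}-1}$ of a generic double point of $\Split_d(\PP^n)$ lying on it is again a generic double point of $\Split_d(\PP^{n-\ell})$; this is precisely the tangent-cone identity $\widehat T_{[f]}\Split_d(\PP^n)\cap S_dV_j=\widehat T_{[f]}\Split_d(\PP^{n-\ell})$, which must be verified directly. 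Everything past that is the routine linear-algebra and combinatorial bookkeeping of Section~\ref{RestrictionInduction}, merely re-indexed; the one constant that does not carry over verbatim is the drop $d\ell$ appearing above, in place of the $\ell n$ of the fixed-dimension setting, which has to be re-derived and threaded through the definitions of $b_i$ and $\statement B_i$.
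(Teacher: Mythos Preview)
Your proposal is correct and is precisely the approach the paper has in mind: the paper gives no independent proof but simply says one proceeds ``almost exactly as in Section~\ref{RestrictionInduction}'', and your outline is a faithful, more detailed elaboration of that parallel argument, correctly identifying the new restriction operation (to $\PP S_dV_j$) and the corrected drop $d\ell$ for the tangent-space terms. One small refinement to your final remark: it is not only the $s$-drop that changes. In the fixed-degree setting the $\PP^d$-type summands $\sum_m\Span\{\pi_m(\tuple f''_{j,k})\}$ (the $u$-terms) lie entirely inside $S_dV_j$ and contribute no residual, whereas the $\PP^n$-type summands $\pi_1(\tuple f'''_{j,k})R_1$ (the $v$-terms) now stick out by $\ell$; so the roles of $u$ and $v$ in the residual count swap relative to the fixed-dimension formula, and this must also be threaded through $b_i$.
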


\pagebreak

\chapter{Results}

\section{Improved result for $n=3$}

In this section, we apply restriction induction to get a small improvement in the bounds for $s$ in Theorem \ref{AboResult}.

\begin{definition}
For any $n,k\in\ZZ_{\geq 0}$, the \define{Stirling number of the second kind} is
\begin{equation*}
\stirlingtwo{n}{k}=\frac{1}{k!}\sum_{j=0}^n(-1)^{k-j}\binom{k}{j}j^n.
\end{equation*}
\end{definition}

For a discussion of Stirling numbers of the second kind, see for example \cite[Section 6.1]{GKP}.  We will use the following two properties.
\begin{itemize}
\item If $n<k$, then $\stirlingtwo{n}{k}=0$.
\item If $n=k$, then $\stirlingtwo{n}{k}=1$.
\end{itemize}

If $f$ is a polynomial and $g$ is a monomial, then $[g]f$ is the coefficient of $g$ in $f$.

The following result, which relies on Stirling numbers of the second kind, will prove useful.

\begin{lemma}\label{AnEquiabundant}
Suppose, for each $r\in\{0,\ldots,\ell-1\}$, there exist polynomial functions $s_r$, $t_r$, $u_r$, and $v_r$ which are either zero or of degree $n-1$ such that $s(d)=s_r(d)$, $t(d)=t_r(d)$, $u(d)=u_r(d)$, and $v(d)=v_r(d)$ for all $d\equiv r\pmod\ell$.  If 
\begin{equation*}
n!\left(n\left(\left[d^{n-1}\right]s_r(d)\right)+\left(\left[d^{n-1}\right]u_r(d)\right)\right)=1,
\end{equation*}
then $\statement A_n(d)$ is equiabundant for all $d\geq\ell n + 1$ and $\statement B_d(n)$ is equiabundant for all $n\geq d\ell+1$.
\end{lemma}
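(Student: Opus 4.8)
The plan is to show that the equiabundancy statements $\statement A_n(d)$ and $\statement B_d(n)$ reduce, via the explicit formula for $a_n(d)$ (resp. $b_d(n)$) and the definition of equiabundancy, to the statement that $a_n(d) = \binom{n+d}{d}$ for all large $d$ (resp. $b_d(n) = \binom{n+d}{n}$ for all large $n$). Since $\binom{n+d}{d} = \binom{n+d}{n}$ is a polynomial of degree $n$ in $d$ (resp. degree $d$ in $n$) with leading coefficient $1/n!$ (resp. $1/d!$), both sides will agree identically as polynomials in the relevant variable precisely when the leading coefficients match and the statement holds at $n$ values — but here the point is subtler, because $s,t,u,v$ are only \emph{eventually} polynomial (one polynomial per residue class mod $\ell$). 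So I would work one residue class $r \in \{0,\dots,\ell-1\}$ at a time: on that class, $a_n(d)$ is a genuine polynomial function of $d$, and I want it to equal the polynomial $\binom{n+d}{d}$ on an arithmetic progression, hence (two polynomials agreeing on infinitely many points) identically.

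First I would expand $a_n(d)$ using its definition. The term $\sum_{j=1}^n(-1)^{j-1}\binom{n}{j}\binom{n+d-j\ell}{d-j\ell}$ I would rewrite, adding the $j=0$ term, as $\binom{n+d}{n} - \sum_{j=0}^n (-1)^j \binom{n}{j}\binom{n+d-j\ell}{n}$, and then apply Lemma~\ref{BinomialCoefficientIdentity} (with $r = n+d$) to evaluate the degree-$n$ part of that alternating sum as $\ell^n$; more precisely I expect $\sum_{j=0}^n(-1)^j\binom{n}{j}\binom{n+d-j\ell}{n}$ to be a \emph{constant} (independent of $d$) equal to $\ell^n$, using that finite differences of order $n$ of a degree-$n$ polynomial are constant — this is exactly where the hypothesis that $s_r,\dots$ have degree $\le n-1$ will matter, since then $(\nabla^n_\ell s)(d)$, $(\nabla^n_\ell t)(d)$, etc., are \emph{constants} in $d$ (the $n$-th difference of a polynomial of degree $< n$, evaluated with step $\ell$, kills everything except... wait, it kills it entirely if degree $< n$; it gives $n!\ell^n$ times the leading coefficient if degree $= n-1$ — no: $\nabla^n$ of a degree $n-1$ polynomial is $0$). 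Let me restate: since $\deg s_r \le n-1 < n$, we get $(\nabla^n_\ell s_r)(d) = 0$; similarly the other $\nabla^n$ terms in $a_n(d)$ vanish. What survives in $a_n(d)$ besides $\binom{n+d}{n} - \ell^n$ are the cross terms $n\ell n (\nabla^{n-1}_\ell s)(d-\ell) + n\ell(\nabla^{n-1}_\ell u)(d-\ell)$, and $(\nabla^{n-1}_\ell s_r)(d-\ell)$, $(\nabla^{n-1}_\ell u_r)(d-\ell)$ are now \emph{constants} equal to $(n-1)!\,\ell^{n-1}$ times the leading coefficient of $s_r$ (resp. $u_r$), i.e. $(n-1)!\,\ell^{n-1}\,[d^{n-1}]s_r(d)$.

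Putting this together, on the residue class $r$ we get $a_n(d) - \binom{n+d}{n} = -\ell^n + n^2\ell\cdot(n-1)!\,\ell^{n-1}[d^{n-1}]s_r + n\ell\cdot(n-1)!\,\ell^{n-1}[d^{n-1}]u_r = -\ell^n + n!\,\ell^n\big(n[d^{n-1}]s_r + [d^{n-1}]u_r\big) = \ell^n\big(n!(n[d^{n-1}]s_r + [d^{n-1}]u_r) - 1\big)$, which is $0$ exactly under the hypothesis $n!\big(n[d^{n-1}]s_r + [d^{n-1}]u_r\big) = 1$. Hence $a_n(d) = \binom{n+d}{n} = \binom{n+d}{d}$ for all $d \equiv r \pmod \ell$ with $d \ge \ell n+1$ (the lower bound ensures the binomial identity Lemma~\ref{BinomialCoefficientIdentity} and the finite-difference manipulations are valid, i.e. no out-of-range binomial coefficients), so $\statement A_n(d)$ is equiabundant there; ranging over all $r$ gives it for all $d \ge \ell n+1$. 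The argument for $\statement B_d(n)$ is identical after swapping the roles of $n$ and $d$: $b_d(n)$ has the same shape with $\binom{n+d}{n}$ in place of $\binom{n+d}{d}$ and $n-\ell$ in the arguments, and the same cancellation occurs. The main obstacle I anticipate is purely bookkeeping: keeping straight which $\nabla^{n-1}$ versus $\nabla^n$ terms vanish and which become the relevant leading-coefficient constants, and correctly tracking the coefficients $i\ell n$ and $i\ell$ (here $i = n$) so that they assemble into the clean factor $n!\,\ell^n$ — the Stirling-number machinery and the two cited properties ($\stirlingtwo{m}{k}=0$ for $m<k$, $=1$ for $m=k$) are presumably the intended clean way to package "$\nabla^{n-1}$ of a degree-$(n-1)$ polynomial picks out $(n-1)!\,\ell^{n-1}$ times the leading coefficient," and I would phrase the finite-difference evaluation through them rather than re-deriving it by hand.
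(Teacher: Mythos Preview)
Your proposal is correct and follows essentially the same route as the paper: the paper also computes $(\nabla^i_\ell f)(d)$ for a polynomial $f$ of degree $n-1$ via the Stirling-number expansion to conclude $(\nabla^{n-1}_\ell f)(d)=(n-1)!\,\ell^{n-1}[d^{n-1}]f(d)$ and $(\nabla^n_\ell f)(d)=0$, then combines this with Lemma~\ref{BinomialCoefficientIdentity} to produce exactly the cancellation $-\ell^n + \ell^n\cdot n!\bigl(n[d^{n-1}]s_r + [d^{n-1}]u_r\bigr)=0$ that you describe. One small clarification: the lower bound $d\geq \ell n+1$ is needed not so much for Lemma~\ref{BinomialCoefficientIdentity} (which is a polynomial identity) as to guarantee that all the values $d,d-\ell,\ldots,d-n\ell$ entering the backward differences lie in the range where $s,t,u,v$ agree with the polynomials $s_r,t_r,u_r,v_r$.
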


\begin{proof}
Let $f$ be any polynomial of degree $n-1$.  By induction on $i$, we have
\begin{align*}
(\nabla^i_\ell f)(d) &= (\nabla^{i-1}_\ell f)(d)-(\nabla^{i-1}_\ell f)(d-\ell)\\
&= \sum_{j=0}^{i-1}(-1)^j\binom{i-1}{j}f(d-j\ell)-\sum_{j=0}^{i-1}(-1)^j\binom{i-1}{j}f(d-(j+1)\ell)\\
&= \sum_{j=0}^{i-1}(-1)^j\binom{i-1}{j}f(d-j\ell)+\sum_{j=1}^{i}(-1)^{j-1+1}\binom{i-1}{j-1}f(d-j\ell)\\
&= f(d) + \sum_{j=1}^{i-1}(-1)^j\left[\binom{i-1}{j}+\binom{i-1}{j-1}\right]f(d-j\ell)+(-1)^if(d-i\ell)\\
&= \sum_{j=0}^i(-1)^j\binom{i}{j}f(d-j\ell)\\
&= \sum_{j=0}^i(-1)^j\binom{i}{j}\sum_{k=0}^{n-1}([d^k]f(d))(d-j\ell)^k\\
&=\sum_{j=0}^i(-1)^j\binom{i}{j}\sum_{k=0}^{n-1}([d^k]f(d))\sum_{m=0}^k\binom{k}{m}d^m(-j\ell)^{k-m}\\
&=\sum_{k=0}^{n-1}\sum_{m=0}^k([d^k]f(d))\binom{k}{m}d^m(-\ell)^{k-m}\sum_{j=0}^i(-1)^j\binom{i}{j}j^{k-m}\\
&=\sum_{k=0}^{n-1}\sum_{m=0}^k([d^k]f(d))\binom{k}{m}d^m(-\ell)^{k-m}(-1)^ki!\stirlingtwo{k-m}{i}\\
&=\sum_{k=0}^{n-1}\sum_{m=0}^{k-i}([d^k]f(d))\binom{k}{m}d^m(-\ell)^{k-m}(-1)^ki!\stirlingtwo{k-m}{i}\\
&=\sum_{m=0}^{n-1-i}\left[\sum_{k=m+i}^{n-1}(-1)^m([d^k]f(d))\ell^{k-m}i!\binom{k}{m}\stirlingtwo{k-m}{i}\right]d^m.
\end{align*}

In particular, $(\nabla^{n-1}_\ell f)(d)=(n-1)!\left(\left[d^{n-1}\right]f(d)\right)\ell^{n-1}$ and $(\nabla^n_\ell f)(d)=0$.

Therefore,
\begin{align*}
&a_n(d)=\\
&\quad\sum_{j=1}^n(-1)^{j-1}\binom{n}{j}\binom{n+d-j\ell}{d-j\ell}+n\ell n(n-1)!\left(\left[d^{n-1}\right]s_r(d)\right)\ell^{n-1}\\
&\quad+ n\ell(n-1)!\left(\left[d^{n-1}\right]u_r(d)\right)\ell^{n-1}\\
&=-\sum_{j=0}^n(-1)^j\binom{n}{j}\binom{n+d-j\ell}{n}+\binom{n+d}{d}\\
&\quad+\ell^n\left(n!\left(n\left(\left[d^{n-1}\right]s_r(d)\right)+\left(\left[d^{n-1}\right]u_r(d)\right)\right)\right)\\
&= -\ell^n+\binom{n+d}{d}+\ell^n\text{ (by Lemma \ref{BinomialCoefficientIdentity})}\\
&= \binom{n+d}{d}.
\end{align*}

The same argument holds for $b_d(n)$.
\end{proof}

Consider the following functions on $\mathbb N$:
\begin{align*}
s_1(d) =
\begin{cases}
\frac{1}{18}d^2+\frac{5}{18}d&\text{ if }d\equiv 0,4\pmod{9}\\
\frac{1}{18}d^2+\frac{5}{18}d+\frac{2}{9}&\text{ if }d\equiv 2,5,8\pmod{9}\\
\frac{1}{18}d^2+\frac{5}{18}d+\frac{2}{3}&\text{ if }d\equiv 1,3\pmod{9}\\
\frac{1}{18}d^2+\frac{5}{18}d+\frac{1}{3}&\text{ if }d\equiv 6,7\pmod{9}\\
\end{cases}\\
s''_2(d) =
\begin{cases}
\frac{1}{18}d^2+\frac{7}{18}d&\text{ if }d\equiv 0\pmod{9}\\
\frac{1}{18}d^2+\frac{7}{18}d+\frac{5}{9}&\text{ if }d\equiv 1,4,7\pmod{9}\\
\frac{1}{18}d^2+\frac{7}{18}d+1&\text{ if }d\equiv 2\pmod{9}\\
\frac{1}{18}d^2+\frac{7}{18}d+\frac{1}{3}&\text{ if }d\equiv 3,8\pmod{9}\\
\frac{1}{18}d^2+\frac{7}{18}d+\frac{2}{3}&\text{ if }d\equiv 5,6\pmod{9}.\\
\end{cases}
\end{align*}
Note that $s_1(d) = \left\lfloor\frac{\binom{d+3}{3}}{3d+1}\right\rfloor$ and $s''_2(d) = \left\lceil\frac{\binom{d+3}{3}}{3d+1}\right\rceil$ for all $d\in\{1,\ldots,9\}$.  Since $s_1(d)-s_1(d-9)=d-2$  and $s''_2(d)-s''_2(d-9)=d-1$ for all $d\geq 10$, we see that $s_1(d),s''_2(d)\in\mathbb N$ for all $d\in\mathbb N$.

Recall the functions $s'_1$ and $s'_2$ from Theorem \ref{AboResult}.  Note that $s_1(d)\geq s'_1(d)$ for all $d$ and $s''_2(d)\leq s'_2(d)$ for all $d\not\equiv 0\pmod 3$.

The following result gives some insight on why these functions were chosen.

\begin{lemma}
If $d\in\mathbb N$, then $s_1(d)\leq\frac{\binom{d+3}{3}}{3d+1}\leq s''_2(d)$.
\end{lemma}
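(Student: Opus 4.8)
Write $q(d)=\frac{\binom{d+3}{3}}{3d+1}=\frac{(d+1)(d+2)(d+3)}{6(3d+1)}$. The two inequalities to be proved say precisely that $s_1(d)$ is a lower bound and $s''_2(d)$ an upper bound for $q(d)$. Since $\lfloor q(d)\rfloor\le q(d)\le\lceil q(d)\rceil$ always, and since the remark preceding the lemma records that $s_1(d)=\lfloor q(d)\rfloor$ and $s''_2(d)=\lceil q(d)\rceil$ for $1\le d\le 9$, the claim is immediate for $d\in\{1,\dots,9\}$. The plan is to handle the remaining values by strong induction in steps of $9$: assume $s_1(d-9)\le q(d-9)\le s''_2(d-9)$ for some $d\ge 10$. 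The recurrences $s_1(d)=s_1(d-9)+(d-2)$ and $s''_2(d)=s''_2(d-9)+(d-1)$ from that same remark give $s_1(d)\le q(d-9)+(d-2)$ and $s''_2(d)\ge q(d-9)+(d-1)$, so both halves of the lemma for $d$ follow as soon as we establish
\begin{equation*}
d-2\;\le\;q(d)-q(d-9)\;\le\;d-1\qquad\text{for all }d\ge 10.
\end{equation*}

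To get this estimate I would divide $(d+1)(d+2)(d+3)$ by $6(3d+1)$, obtaining the partial-fraction expression
\begin{equation*}
q(d)=\frac{d^{2}}{18}+\frac{17d}{54}+\frac{41}{81}+\frac{40}{81(3d+1)}.
\end{equation*}
Subtracting $q(d-9)$, the polynomial part contributes $\bigl(d-\tfrac{9}{2}\bigr)+\tfrac{17}{6}=d-\tfrac{5}{3}$, the constant $\tfrac{41}{81}$ cancels, and the rational tails combine through $\tfrac{1}{3d+1}-\tfrac{1}{3d-26}=\tfrac{-27}{(3d+1)(3d-26)}$, so that
\begin{equation*}
q(d)-q(d-9)=d-\frac{5}{3}-\frac{40}{3(3d+1)(3d-26)}.
\end{equation*}
For $d\ge 10$ both $3d+1$ and $3d-26$ are positive, so the last term is positive and hence $q(d)-q(d-9)<d-\tfrac{5}{3}<d-1$, which is the upper bound. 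The lower bound $q(d)-q(d-9)\ge d-2$ rearranges to $(3d+1)(3d-26)\ge 40$; the left side is increasing for $d\ge 10$ and already equals $124$ at $d=10$, so it holds.

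That calculation is essentially the whole proof; the only delicate point is the polynomial division, where one must check that the quadratic and linear coefficients come out to $\tfrac{1}{18}$ and $\tfrac{17}{54}$ and that the constant term genuinely telescopes away when $d$ is replaced by $d-9$. A self-contained alternative, if one prefers to avoid the division, is to clear denominators and verify $s_1(d)(3d+1)\le\binom{d+3}{3}\le s''_2(d)(3d+1)$ directly within each residue class modulo $9$ using the explicit piecewise formulas for $s_1$ and $s''_2$; that works as well, but it is longer and less uniform than the inductive argument above.
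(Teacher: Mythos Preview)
Your proof is correct and follows essentially the same approach as the paper: base cases $d\le 9$, then induction in steps of $9$ using the recurrences for $s_1$ and $s''_2$ together with the estimate $d-2\le q(d)-q(d-9)\le d-1$, derived from the identical difference formula (the paper writes the denominator as $9d^2-75d-26$, which is your $(3d+1)(3d-26)$). You supply more detail on the partial-fraction derivation and the verification of the lower bound at $d=10$, but the argument is the same.
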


\begin{proof}
By definition, the result is trivial for $d\leq 9$.  

Note that
\begin{equation*}
\frac{\binom{d+3}{3}}{3d+1}-\frac{\binom{d-9+3}{3}}{3(d-9)+1} = d - \frac{5}{3} -\frac{40}{3}\frac{1}{9d^2-75d-26}.
\end{equation*}

Assume $d\geq 10$.  By induction on $d$, we have
\begin{equation*}
d-2 \leq d - \frac{5}{3} -\frac{40}{3}\frac{1}{9d^2-75d-26}  \leq d-1
\end{equation*}
\begin{equation*}
s_1(d-9)+d-2 \leq \frac{\binom{d-9+3}{3}}{3(d-9)+1}+\frac{\binom{d+3}{3}}{3d+1}-\frac{\binom{d-9+3}{3}}{3(d-9)+1}  \leq s''_2(d-9) + d - 1
\end{equation*}
\begin{equation*}
s_1(d) \leq \frac{\binom{d+3}{3}}{3d+1}  \leq s''_2(d).
\end{equation*}
\end{proof}

Note that
\begin{equation*}
3!\left(3\cdot\frac{1}{18}+0\right)=1
\end{equation*}
and so, by Lemma \ref{AnEquiabundant}, $\statement A_3(3,d,9,s''_i,0,0,0)$ is equiabundant for all $n,d\geq 28$ and $i=1,2$.

By Macaulay2 Computation \ref{NEqualsThreeM2}, we may apply Theorem \ref{RestrictionInductionFixedDimension} to see that $\sigma_{s_1(d)}(\Split_d(\PP^3))$ and $\sigma_{s''_2(d)}(\Split_d(\PP^3))$ are nondefective for all $d$.  
Combining this with Corollary \ref{TerraciniCorollary} and Theorem \ref{AboResult}, we obtain the following result.

\begin{theorem}\label{nEqualsThreeResult}
Let 
\begin{align*}
s_1(d) =
\begin{cases}
\frac{1}{18}d^2+\frac{5}{18}d&\text{ if }d\equiv 0,4\pmod{9}\\
\frac{1}{18}d^2+\frac{5}{18}d+\frac{2}{9}&\text{ if }d\equiv 2,5,8\pmod{9}\\
\frac{1}{18}d^2+\frac{5}{18}d+\frac{2}{3}&\text{ if }d\equiv 1,3\pmod{9}\\
\frac{1}{18}d^2+\frac{5}{18}d+\frac{1}{3}&\text{ if }d\equiv 6,7\pmod{9}\\
\end{cases}\\
s_2(d) =
\begin{cases}
\frac{1}{18}d^2+\frac{1}{3}d+1 &\text{ if }d\equiv 0\pmod{6}\\
\frac{1}{18}d^2+\frac{1}{3}d+\frac{1}{2} &\text{ if }d\equiv 3\pmod{6}\\
\frac{1}{18}d^2+\frac{7}{18}d+\frac{5}{9}&\text{ if }d\equiv 1,4,7\pmod{9}\\
\frac{1}{18}d^2+\frac{7}{18}d+1&\text{ if }d\equiv 2\pmod{9}\\
\frac{1}{18}d^2+\frac{7}{18}d+\frac{2}{3}&\text{ if }d\equiv 5\pmod{9}\\
\frac{1}{18}d^2+\frac{7}{18}d+\frac{1}{3}&\text{ if }d\equiv 8\pmod{9}.\\
\end{cases}
\end{align*}
If $s\leq s_1(d)$ or $s\geq s_2(d)$, then $\sigma_s(\Split_d(\PP^3))$ is nondefective for all $d\in\NN$.  
\end{theorem}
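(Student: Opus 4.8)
The plan is to deduce both halves of the statement from the fixed-dimension restriction induction, Theorem~\ref{RestrictionInductionFixedDimension}, applied with $n=3$ and $\ell=9$, taking the counting function to be $s_1$ on the subabundant side and $s_2''$ on the superabundant side, and then to widen to arbitrary $s\le s_1(d)$ and $s\ge s_2(d)$ using Corollary~\ref{TerraciniCorollary} together with Abo's Theorem~\ref{AboResult}. First I would record the elementary arithmetic facts that $s_1(d)=\bigl\lfloor\binom{d+3}{3}/(3d+1)\bigr\rfloor$ and $s_2''(d)=\bigl\lceil\binom{d+3}{3}/(3d+1)\bigr\rceil$ for every $d\in\NN$: verify $d\le 9$ directly and then run the recursions $s_1(d)-s_1(d-9)=d-2$ and $s_2''(d)-s_2''(d-9)=d-1$ against the closed form already computed for $\binom{d+3}{3}/(3d+1)-\binom{d-6}{3}/(3d-26)$. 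In particular this makes $\statement A(3,d,s_1,0,0,0)$ subabundant and $\statement A(3,d,s_2'',0,0,0)$ superabundant for every $d$, and gives the bound $s_1(d)\le\binom{d+3}{3}/(3d+1)\le s_2''(d)$ that will be needed to feed into Corollary~\ref{TerraciniCorollary} at the end.

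Next I would check the equiabundancy hypothesis of Theorem~\ref{RestrictionInductionFixedDimension}. On each residue class modulo $9$ the functions $s_1$ and $s_2''$ are quadratic in $d$, i.e. of degree $n-1=2$, each with leading coefficient $\tfrac{1}{18}$, and $3!\bigl(3\cdot\tfrac{1}{18}+0\bigr)=1$; hence Lemma~\ref{AnEquiabundant} (with $\ell=9$ and $t=u=v=0$) gives that $\statement A_3(3,d,9,s_1,0,0,0)$ and $\statement A_3(3,d,9,s_2'',0,0,0)$ are equiabundant for all $d\ge 9\cdot 3+1=28$.

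The computational heart of the argument is the finite list of base cases $d\le 28$. Here I would show, by rank computations of the explicit matrices attached to these vector spaces, carried out over a finite field (justified by Lemmas~\ref{WorkOverFiniteFields} and~\ref{DimSecantVariety}, and recorded as Macaulay2 Computation~\ref{NEqualsThreeM2}), that $\statement A_{\lfloor(d-1)/9\rfloor}(d)$ is true and subabundant for the $s_1$ choice and true and superabundant for the $s_2''$ choice; note $\lfloor(d-1)/9\rfloor$ takes the values $0,1,2,3$ as $d$ ranges over $\{1,\dots,9\}$, $\{10,\dots,18\}$, $\{19,\dots,27\}$, $\{28\}$. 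With the equiabundancy from the previous paragraph, Theorem~\ref{RestrictionInductionFixedDimension} then yields that $\statement A_0(3,d,9,s_1,0,0,0)$ and $\statement A_0(3,d,9,s_2'',0,0,0)$ are true for all $d$; by Proposition~\ref{DimSecantVariety} this is precisely the assertion that $\sigma_{s_1(d)}(\Split_d(\PP^3))$ and $\sigma_{s_2''(d)}(\Split_d(\PP^3))$ are nondefective for every $d$.

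Finally I would spread these two distinguished values of $s$ over the full ranges. Since $\sigma_{s_1(d)}(\Split_d(\PP^3))$ is nondefective and $s_1(d)(3d+1)\le\binom{d+3}{3}$, the first half of Corollary~\ref{TerraciniCorollary} gives nondefectivity for all $s\le s_1(d)$; since $\sigma_{s_2''(d)}(\Split_d(\PP^3))$ is nondefective and $s_2''(d)(3d+1)\ge\binom{d+3}{3}$, the second half gives nondefectivity for all $s\ge s_2''(d)$. For $d\equiv 0,3\pmod 6$ one can do slightly better on the upper side: Theorem~\ref{AboResult} already supplies nondefectivity for $s\ge s_2'(d)$, with $s_2'(d)\le s_2''(d)$ in those classes; taking $s_2(d)=\min\{s_2'(d),s_2''(d)\}$ reproduces the piecewise function in the statement, completing the proof. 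I expect the only genuine obstacle to be this base-case step — organizing the $\statement A_i$ data so that all the small cases with $d\le 28$ are covered uniformly with the correct index $i=\lfloor(d-1)/9\rfloor$, and then actually executing the (rather large) linear-algebra computations — since everything else reduces to binomial-coefficient and backward-difference bookkeeping already packaged in Lemmas~\ref{aiDifference}--\ref{AnEquiabundant} and Proposition~\ref{An}.
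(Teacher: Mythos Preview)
Your proposal follows the paper's argument essentially step for step: apply Lemma~\ref{AnEquiabundant} with $n=3$, $\ell=9$ to get equiabundancy of $\statement A_3$ for $d\ge 28$; verify the $28$ base cases $\statement A_{\lfloor(d-1)/9\rfloor}(d)$ via Macaulay2 Computation~\ref{NEqualsThreeM2}; invoke Theorem~\ref{RestrictionInductionFixedDimension}; then spread using Corollary~\ref{TerraciniCorollary} and take the minimum with Abo's $s_2'$ on the classes $d\equiv 0,3\pmod 6$ to get the stated $s_2$.

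There is one genuine slip. Your ``elementary arithmetic fact'' that $s_1(d)=\bigl\lfloor\binom{d+3}{3}/(3d+1)\bigr\rfloor$ and $s_2''(d)=\bigl\lceil\binom{d+3}{3}/(3d+1)\bigr\rceil$ for \emph{every} $d$ is false, and your proposed inductive proof of it cannot work: the difference $\binom{d+3}{3}/(3d+1)-\binom{d-6}{3}/(3d-26)$ lies strictly between $d-2$ and $d-1$, so the floor can jump by either $d-2$ or $d-1$ when $d$ increases by $9$, whereas $s_1(d)-s_1(d-9)=d-2$ always. (Indeed the floor and ceiling are eventually periodic of period $27$, not $9$; this is exactly why the paper's future-work section introduces the period-$27$ functions $\underline s,\overline s$.) Fortunately nothing in the argument needs this equality: all that is required, both for sub-/superabundance and for Corollary~\ref{TerraciniCorollary}, is the inequality $s_1(d)\le\binom{d+3}{3}/(3d+1)\le s_2''(d)$, and that is precisely what the paper proves (and what your recursion argument does establish). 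Replace the floor/ceiling claim with this inequality and the proof goes through unchanged.
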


\pagebreak

\section{A result for $n\geq 4$}
\label{NAtLeastFour}

Consider the function
\begin{equation*}
\tilde s(d) = \begin{cases}
\frac{1}{24}d^2+\frac{1}{12}d&\text{ if }d\equiv 0,4\pmod{6}\\
\frac{1}{24}d^2+\frac{1}{6}d-\frac{5}{24}&\text{ if }d\equiv 1\pmod{6}\\
\frac{1}{24}d^2+\frac{1}{12}d-\frac{1}{3}&\text{ if }d\equiv 2\pmod{6}\\
\frac{1}{24}d^2+\frac{1}{6}d+\frac{1}{8}&\text{ if }d\equiv 3,5\pmod{6}.\\
\end{cases}
\end{equation*}

Note that $\tilde s(d)\leq\frac{1}{24}d^2+\frac{1}{6}d+\frac{1}{8}$ for all $d\geq 1$.  Then
\begin{align*}
a(3,d,\tilde s,\tilde s,\tilde s,\tilde s) &= (3d+1)\tilde s(d)+\tilde s(d)+(d+1)\tilde s(d)+4\tilde s(d)\\
&=(4d+7)\tilde s(d)\\
&\leq(4d+7)\left(\frac{1}{24}d^2+\frac{1}{6}d+\frac{1}{8}\right)\\
&=\frac{1}{6}d^3+\frac{23}{24}d^2+\frac{5}{3}d+\frac{7}{8}\\
&<\frac{1}{6}d^3+d^2+\frac{11}{6}d+1=\binom{d+3}{d}.
\end{align*}
and so $\statement A(3,d,\tilde s,\tilde s,\tilde s,\tilde s)$ is subabundant.

Note that
\begin{equation*}
3!\left(3\cdot\frac{1}{24}+\frac{1}{24}\right)=1
\end{equation*}
and so, by Lemma \ref{AnEquiabundant}, $\statement A_3(3,d,6,\tilde s,\tilde s,\tilde s,\tilde s)$ is equiabundant for $d\geq 18$.  By Macaulay2 Computation \ref{TildeSM2}, Theorem \ref{RestrictionInductionFixedDimension}, and an argument similar to Corollary \ref{TerraciniCorollary}, we obtain the following result.

\begin{proposition}\label{TildeSBound}
For any $d\in\NN$, $\statement A(3,d,s,t,u,v)$ is true and subabundant for all functions $s,t,u,v$ such that $s(d),t(d),u(d),v(d)\leq\tilde s(d)$.
\end{proposition}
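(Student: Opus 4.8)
The strategy is to feed the three ingredients assembled immediately before the statement into Theorem~\ref{RestrictionInductionFixedDimension}, and then to descend from the single quadruple $(\tilde s,\tilde s,\tilde s,\tilde s)$ to every $(s,t,u,v)$ dominated by it. Fix $n=3$ and $\ell=6$. The hypothesis of Theorem~\ref{RestrictionInductionFixedDimension} that $\statement A_3(3,d,\ell,\tilde s,\tilde s,\tilde s,\tilde s)$ be equiabundant for all $d\geq\ell n+1$ is exactly Lemma~\ref{AnEquiabundant}, applied via the identity $3!\,(3\cdot\tfrac1{24}+\tfrac1{24})=1$ and the fact that $\tilde s$ restricts to a degree-$2$ polynomial on each residue class modulo $6$. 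The only remaining hypothesis is that $\statement A_{\lfloor(d-1)/\ell\rfloor}(3,d,\ell,\tilde s,\tilde s,\tilde s,\tilde s)$ be true and subabundant for the finitely many degrees $d\leq\ell n+1$: subabundance here is the numerical inequality $a_{\lfloor(d-1)/\ell\rfloor}(d)\leq\binom{d+3}{3}$, a finite check, while truth is the assertion that an explicit matrix built from the generic linear forms defining $A_{\lfloor(d-1)/\ell\rfloor}$ has maximal rank, which is performed over a finite field in Macaulay2 Computation~\ref{TildeSM2} and lifts to characteristic zero by Lemma~\ref{WorkOverFiniteFields}. Granting these, Theorem~\ref{RestrictionInductionFixedDimension} yields that $\statement A_0(3,d,\tilde s,\tilde s,\tilde s,\tilde s)=\statement A(3,d,\tilde s,\tilde s,\tilde s,\tilde s)$ is true and subabundant for every $d$.

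It then remains to pass from $(\tilde s,\tilde s,\tilde s,\tilde s)$ to an arbitrary $(s,t,u,v)$ with $s(d),t(d),u(d),v(d)\leq\tilde s(d)$; this is the analogue of Corollary~\ref{TerraciniCorollary} referred to in the statement. Fix $d$. The space $A(3,d,s,t,u,v)=A_0$ depends only on the four integers $s(d),t(d),u(d),v(d)$, and by construction it is a sum of $s(d)$ subspaces of dimension $3d+1$ (each term $\sum_m\pi_m(\tuple f_j)R_1$, of that dimension by Lemma~\ref{TangentToSplit}), $t(d)$ lines, $u(d)$ subspaces of dimension $d+1$, and $v(d)$ subspaces of dimension $4$, while $a_0(3,d,s,t,u,v)$ is precisely the sum of these dimensions. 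Truth together with subabundance of $\statement A(3,d,\tilde s,\tilde s,\tilde s,\tilde s)$ says that, for the maximal counts, this sum of subspaces is direct. Realizing the smaller quadruple with a sub-collection of the generic forms realizing the larger one, each individual summand keeps its dimension (genericity is inherited by sub-collections, and is all that the dimension computations in Lemma~\ref{TangentToSplit} require), and a sub-sum of a direct sum is again direct; hence $\dim A(3,d,s,t,u,v)=a_0(3,d,s,t,u,v)$. Since $a_0$ is nondecreasing in each of $s(d),t(d),u(d),v(d)$, this value is still $\leq\binom{d+3}{3}$, so $\statement A(3,d,s,t,u,v)$ is true and subabundant, as required.

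I expect the genuine work to lie in Macaulay2 Computation~\ref{TildeSM2}: correctly indexing the degrees $d\leq\ell n+1$, the exponents $i=\lfloor(d-1)/6\rfloor$, and the values of $\tilde s$ on each residue class, and then assembling and row-reducing the corresponding linear systems over a finite field. The structural part above is essentially bookkeeping layered on top of Theorem~\ref{RestrictionInductionFixedDimension} and the descent argument; the one point to treat with care there is that passing to a sub-configuration of generically chosen forms preserves the dimension of every summand entering $A_0$ (immediate for the span-type terms, and exactly Lemma~\ref{TangentToSplit} for the tangent-type terms), so that a direct sum restricts to a direct sum.
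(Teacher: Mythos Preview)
Your proposal is correct and follows the paper's approach exactly: the paper's proof consists of the sentence ``By Macaulay2 Computation~\ref{TildeSM2}, Theorem~\ref{RestrictionInductionFixedDimension}, and an argument similar to Corollary~\ref{TerraciniCorollary}, we obtain the following result,'' and you have faithfully unpacked each of these three ingredients. Your descent argument (passing from $(\tilde s,\tilde s,\tilde s,\tilde s)$ to any dominated quadruple via the observation that a sub-sum of a direct sum remains direct) is precisely what the paper means by ``an argument similar to Corollary~\ref{TerraciniCorollary},'' spelled out in the detail the paper omits.
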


Suppose $n_0\geq 3$ and let $\mathcal A_{n_0}$ be the set of all statements of the form $\statement A(n,d,s,t,u,v)$ such that $n\geq n_0$ and $s(d)$, $t(d)$, $u(d)$, and $v(d)$ are integers divisible by $2^{n-3}$.  We define the following maps from $\mathcal A_4\rightarrow\mathcal A_3$:
\begin{align*}
\varphi_0(\statement A(n,d,s,t,u,v))&=\statement A(n-1,d,s/2,(t+u)/2,u/2,(s+v)/2),\\
\varphi_1(\statement A(n,d,s,t,u,v))&=\statement A(n-1,d-1,s/2,(t+v)/2,(s+u)/2,v/2),\text{ and}\\
\varphi_2(\statement A(n,d,s,t,u,v))&=\statement A(n-1,d-2,0,v/2,s/2,0).
\end{align*}
Note that, by Theorem \ref{SplittingInductionTheorem}, if $\statement A\in\mathcal A_4$ and $\varphi_0(\statement A)$, $\varphi_1(\statement A)$, and $\varphi_2(\statement A)$ are all true and subabundant, then $\statement A$ is true and subabundant.

\begin{lemma}\label{commutative}
For any distinct $i,j\in\{0,1,2\}$, the following diagram commutes.
\begin{equation*}
\begin{CD}
\mathcal A_5 @>\varphi_i>> \mathcal A_4\\
@VV\varphi_jV            @VV \varphi_jV\\
\mathcal A_4 @>\varphi_i>> \mathcal A_3\\
\end{CD}
\end{equation*}
\end{lemma}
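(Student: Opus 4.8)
The statement is that the square commutes for distinct $i,j$, which is exactly the identity $\varphi_j\circ\varphi_i=\varphi_i\circ\varphi_j$ on $\mathcal A_5$. Since all three maps are given by explicit formulas, the plan is simply to substitute and compare, treating each of the three unordered pairs $\{0,1\}$, $\{0,2\}$, $\{1,2\}$ in turn (the same formulas define $\varphi_k$ on $\mathcal A_5$ and on $\mathcal A_4$, so there is no ambiguity about which maps appear in the square).

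First I would separate off the bookkeeping for the first two arguments. Writing $\epsilon_0=0$, $\epsilon_1=1$, $\epsilon_2=2$, each $\varphi_k$ sends $\statement A(n,d,s,t,u,v)$ to a statement whose first two entries are $n-1$ and $d-\epsilon_k$, with no dependence of these entries on $(s,t,u,v)$. Hence both composites $\varphi_j\circ\varphi_i$ and $\varphi_i\circ\varphi_j$ produce a statement whose first two entries are $n-2$ and $d-\epsilon_i-\epsilon_j$, and these already agree. All remaining content lives in the last four entries, on which each $\varphi_k$ acts by an explicit $\QQ$-linear map of $(s,t,u,v)$ with entries in $\{0,\tfrac12\}$; commutativity thus reduces to an equality of two compositions of such maps, which one may either check as $4\times 4$ matrices or, equivalently, read off from the substitutions below.

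Then I would carry out the three substitutions directly. Applied to $\statement A(n,d,s,t,u,v)$, both $\varphi_1\circ\varphi_0$ and $\varphi_0\circ\varphi_1$ give $\statement A\bigl(n-2,\,d-1,\,\tfrac14 s,\,\tfrac14(s+t+u+v),\,\tfrac14(s+u),\,\tfrac14(s+v)\bigr)$; both $\varphi_2\circ\varphi_0$ and $\varphi_0\circ\varphi_2$ give $\statement A\bigl(n-2,\,d-2,\,0,\,\tfrac14(s+v),\,\tfrac14 s,\,0\bigr)$; and both $\varphi_2\circ\varphi_1$ and $\varphi_1\circ\varphi_2$ give $\statement A\bigl(n-2,\,d-3,\,0,\,\tfrac14 v,\,\tfrac14 s,\,0\bigr)$. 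I would also note that these composites genuinely land in $\mathcal A_3$: membership in $\mathcal A_5$ forces $s(d),t(d),u(d),v(d)$ to be divisible by $2^{n-3}$, so every quartered combination displayed above is divisible by $2^{n-5}=2^{(n-2)-3}$, which is the condition defining $\mathcal A_3$ after the shift $n\mapsto n-2$.

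The only real ``obstacle'' is clerical: there is no conceptual difficulty, just the risk of an arithmetic slip in tracking which of $s,t,u,v$ feeds into which slot as the piecewise rules are composed. I would hedge against this by computing each of the six composites independently and checking symbolically that the two members of each pair coincide, and as a sanity check by observing that the value for the pair $\{0,1\}$ is manifestly of the symmetric shape it must have.
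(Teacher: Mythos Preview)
Your proposal is correct and follows essentially the same approach as the paper: both proofs simply compute the six composites $\varphi_i\circ\varphi_j$ directly from the defining formulas and observe that the three unordered pairs yield the common values $\statement A(n-2,d-1,s/4,(s+t+u+v)/4,(s+u)/4,(s+v)/4)$, $\statement A(n-2,d-2,0,(s+v)/4,s/4,0)$, and $\statement A(n-2,d-3,0,v/4,s/4,0)$. Your extra remarks---separating off the $(n,d)$ bookkeeping and verifying the divisibility condition needed to land in $\mathcal A_3$---are helpful touches the paper omits, but the core argument is identical.
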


\begin{proof}
Suppose $n\geq 5$.  By definition,
\begin{align*}
&\varphi_0(\varphi_1(\statement A(n,d,s,t,u,v))=\varphi_1(\varphi_0(\statement A(n,d,s,t,u,v))\\
&\quad=\statement A(n-2,d-1,s/4,(s+t+u+v)/4,(s+u)/4,(s+v)/4)\\
&\varphi_0(\varphi_2(\statement A(n,d,s,t,u,v))=\varphi_2(\varphi_0(\statement A(n,d,s,t,u,v))\\
&\quad=\statement A(n-2,d-2,0,(s+v)/4,s/4,0)\\
&\varphi_1(\varphi_2(\statement A(n,d,s,t,u,v))=\varphi_2(\varphi_1(\statement A(n,d,s,t,u,v))\\
&\quad=\statement A(n-2,d-3,0,v/4,s/4,0).\qedhere
\end{align*}
\end{proof}
For any $i,j,k$ such that $i+j+k\leq n-3$, define the function $\varphi_{i,j,k}=\varphi_0^i\circ \varphi_1^j\circ \varphi_2^k$, where the exponents represent function composition, which, as we just saw in Lemma \ref{commutative}, is commutative.

\begin{lemma}\label{VertexFormulas}
Let $c\in\NN$.  If $i+j+k\leq n-3$ and $k\leq 2$, then
\begin{align*}
&\varphi_{i,j,k}(\statement A(n,d,2^{n-3}c,0,0,0))=\\
&\quad\begin{cases}
\statement A(n-i-j,d-j,2^{n-3-i-j}c,2^{n-3-i-j}cij,2^{n-3-i-j}cj,2^{n-3-i-j}ci) & \text{ if }k=0\\
\statement A(n-i-j-1,d-j-2,0,2^{n-4-i-j}ci,2^{n-4-i-j}c,0)& \text{ if }k=1\\
\statement A(n-i-j-2,d-j-4,0,0,0,0)&\text{ if }k=2
\end{cases}
\end{align*}
\end{lemma}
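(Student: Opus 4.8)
The plan is to compute $\varphi_{i,j,k}(\statement A(n,d,2^{n-3}c,0,0,0))$ directly, peeling off the three families of maps one at a time. By Lemma \ref{commutative} we may write $\varphi_{i,j,k}=\varphi_0^i\circ\varphi_1^j\circ\varphi_2^k$ and apply $\varphi_2^k$ first, then $\varphi_1^j$, then $\varphi_0^i$. Everything reduces to tracking how each of $\varphi_0,\varphi_1,\varphi_2$ acts on a short list of special shapes of statement, namely $\statement A(N,D,s,0,0,0)$, $\statement A(N,D,0,0,w,0)$, $\statement A(N,D,a,0,b,0)$, and $\statement A(N,D,0,0,0,0)$; the proof is then a sequence of one-line inductions on $i$, $j$, $k$, plus the bookkeeping needed to see that the divisibility conditions defining the sets $\mathcal A_m$ are preserved, so that each map in the composition is actually defined (this is where the hypotheses $i+j+k\le n-3$ and $k\le 2$ enter).

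First I would handle $\varphi_2^k$. Straight from the definition, $\varphi_2(\statement A(n,d,s,t,u,v))=\statement A(n-1,d-2,0,v/2,s/2,0)$, so $\varphi_2(\statement A(n,d,2^{n-3}c,0,0,0))=\statement A(n-1,d-2,0,0,2^{n-4}c,0)$; applying $\varphi_2$ once more to this statement, whose first and last entries are $0$, yields $\statement A(n-2,d-4,0,0,0,0)$. Hence $\varphi_2^k$ of our statement is itself when $k=0$, is $\statement A(n-1,d-2,0,0,2^{n-4}c,0)$ when $k=1$, and is $\statement A(n-2,d-4,0,0,0,0)$ when $k=2$ --- this is precisely the origin of the three cases in the statement.

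Next come two elementary induction lemmas. From $\varphi_1(\statement A(N,D,s,0,u,0))=\statement A(N-1,D-1,s/2,0,(s+u)/2,0)$ one gets, by induction on $j$ with the obvious base case, that $\varphi_1^j(\statement A(N,D,s,0,0,0))=\statement A(N-j,D-j,s/2^j,0,js/2^j,0)$ and $\varphi_1^j(\statement A(N,D,0,0,w,0))=\statement A(N-j,D-j,0,0,w/2^j,0)$. From the identity $\varphi_0(\statement A(N,D,a/2^i,ib/2^i,b/2^i,ia/2^i))=\statement A(N-1,D,a/2^{i+1},(i+1)b/2^{i+1},b/2^{i+1},(i+1)a/2^{i+1})$ one gets, by induction on $i$, that $\varphi_0^i(\statement A(N,D,a,0,b,0))=\statement A(N-i,D,a/2^i,ib/2^i,b/2^i,ia/2^i)$. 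Composing with the output of the previous step: for $k=0$, $\varphi_1^j$ gives $\statement A(n-j,d-j,2^{n-3-j}c,0,j2^{n-3-j}c,0)$ and then $\varphi_0^i$ (with $a=2^{n-3-j}c$, $b=j2^{n-3-j}c$) gives exactly $\statement A(n-i-j,d-j,2^{n-3-i-j}c,2^{n-3-i-j}cij,2^{n-3-i-j}cj,2^{n-3-i-j}ci)$; for $k=1$, $\varphi_1^j$ gives $\statement A(n-1-j,d-2-j,0,0,2^{n-4-j}c,0)$ and $\varphi_0^i$ (with $a=0$, $b=2^{n-4-j}c$) gives $\statement A(n-1-i-j,d-2-j,0,2^{n-4-i-j}ci,2^{n-4-i-j}c,0)$; and for $k=2$ both $\varphi_1^j$ and $\varphi_0^i$ merely decrement $n$ (and, for $\varphi_1$, $d$) on a statement whose last four entries are all $0$, producing $\statement A(n-2-i-j,d-4-j,0,0,0,0)$.

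The argument is entirely formal, so I do not expect a serious obstacle; the only thing requiring genuine care is checking at every stage that the parameter being halved is still a nonnegative integer divisible by the appropriate power of $2$, so that the statement actually lies in the domain $\mathcal A_m$ of the next map to be applied --- which is exactly what the bound $i+j+k\le n-3$ guarantees --- and that the restriction $k\le 2$ is precisely what makes the three displayed normal forms exhaust the cases.
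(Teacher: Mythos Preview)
Your proof is correct and follows essentially the same approach as the paper: first compute $\varphi_2^k$ to obtain the three base cases, then handle the remaining two indices by separate one-step inductions. The only cosmetic difference is that the paper inducts on $i$ before $j$ whereas you induct on $j$ before $i$; by Lemma~\ref{commutative} this is immaterial, and your intermediate formulas are the correct counterparts of the paper's.
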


\begin{proof}
First, we check the base cases for each $k$.
\begin{align*}
\varphi_{0,0,0}(\statement A(n,d,2^{n-3}c,0,0,0))&=\statement A(n,d,2^{n-3}c,0,0,0)\\
\varphi_{0,0,1}(\statement A(n,d,2^{n-3}c,0,0,0))&=\statement A(n-1,d-2,0,0,2^{n-4}c,0)\\
\varphi_{0,0,2}(\statement A(n,d,2^{n-3}c,0,0,0))&=\statement A(n-2,d-4,0,0,0,0).
\end{align*}
Next, we use induction on $i$.
\begin{align*}
&\varphi_0(\statement A(n-(i-1),d,2^{n-3-(i-1)}c,0,0,2^{n-3-(i-1)}c(i-1)))\\
&\quad=\statement A(n-i,d,2^{n-3-i}c,0,0,2^{n-3-i}ci)\\
&\varphi_0(\statement A(n-(i-1)-1,d-2,0,2^{n-4-(i-1)}c(i-1),2^{n-4-(i-1)}c,0)\\
&\quad=\statement A(n-i-1,d-2,0,2^{n-4-i}ci,2^{n-4-i}c,0)\\
&\varphi_0(\statement A(n-(i-1)-2,d-4,0,0,0,0)\\
&\quad=\statement A(n-i-2,d-4,0,0,0,0).
\end{align*}
Finally, we use induction on $j$.
\begin{align*}
&\varphi_1(\statement A(n-i-(j-1),d-(j-1),2^{n-3-i-(j-1)}c,2^{n-3-i-(j-1)}ci(j-1),\\
&\quad\quad 2^{n-3-i-(j-1)}c(j-1),2^{n-3-i-(j-1)}ci))\\
&\quad=\statement A(n-i-j,d-j,2^{n-3-i-j}c,2^{n-3-i-j}cij,2^{n-3-i-j}cj,2^{n-3-i-j}ci)\\
&\varphi_1(\statement A(n-i-(j-1)-1,d-(j-1)-2,0,2^{n-4-i-(j-1)}ci,2^{n-4-i-(j-1)}c,0))\\
&\quad=\statement A(n-i-j-1,d-j-2,0,2^{n-4-i-j}ci,2^{n-4-i-j}c,0)\\
&\varphi_1(\statement A(n-i-(j-1)-2,d-(j-1)-4,0,0,0,0))\\
&\quad=\statement A(n-i-j-2,d-j-4,0,0,0,0).\qedhere
\end{align*}
\end{proof}

Consider a statement $\statement A=\statement A(n,d,2^{n-3}c,0,0,0)$ with $n\geq 4$ and $d\geq n$.  The \define{splitting graph} $\Gamma(\statement A)$ is the directed acyclic graph with vertex set
\begin{equation*}
V=\{\varphi_{i,j,k}(\statement A):i+j+k\leq n-3\text{ and }k\leq 2\}
\end{equation*}
and edge set
\begin{equation*}
E=\{\statement A'\statement A'':\varphi_i(\statement A')=\statement A''\text{ for some }i\in\{0,1,2\}\}.
\end{equation*}
For example, $\Gamma(\statement A(5,17,24,0,0,0))$ is shown in Figure \ref{SplittingGraphExample}.

\begin{figure}[h]
\centering
\begin{tikzpicture}[>=latex,scale=3]
\node (1) at (0,0) [draw,ellipse] {$\statement A(5,17,24,0,0,0)$};
\node (2) at (0,1) [draw,ellipse] {$\statement A(4,17,12,0,0,12)$};
\node (3) at (-.866,.5) [draw,ellipse] {$\statement A(3,16,6,6,6,6)$};
\node (4) at (-.866,-.5) [draw,ellipse] {$\statement A(4,16,12,0,12,0)$};
\node (5) at (0,-1) [draw,ellipse] {$\statement A(3,14,0,0,6,0)$};
\node (6) at (.866,-.5) [draw,ellipse] {$\statement A(4,15,0,0,12,0)$};
\node (7) at (.866,.5) [draw,ellipse] {$\statement A(3,15,0,6,6,0)$};
\node (8) at (0,2) [draw,ellipse] {$\statement A(3,17,6,0,0,12)$};
\node (9) at (-1.732,-1) [draw,ellipse] {$\statement A(3,15,6,0,12,0)$};
\node (10) at (1.732,-1) [draw,ellipse] {$\statement A(3,13,0,0,0,0)$};
\draw[->] (1) -- (2);
\draw[->] (1) -- (4);
\draw[->] (1) -- (6);
\draw[->] (2) -- (7);
\draw[->] (2) -- (3);
\draw[->] (2) -- (8);
\draw[->] (4) -- (3);
\draw[->] (4) -- (5);
\draw[->] (4) -- (9);
\draw[->] (6) -- (5);
\draw[->] (6) -- (7);
\draw[->] (6) -- (10);
\end{tikzpicture}
\caption{Splitting graph of $\statement A(5,17,24,0,0,0)$\label{SplittingGraphExample}}
\end{figure}

A \define{sink vertex} of a directed graph is a vertex with no outgoing edges.  By Theorem~\ref{SplittingInductionTheorem} and induction on the vertices of $\Gamma(\statement A)$, we have the following result.

\begin{proposition}\label{SinkVerticesTrueSubabundant}
If all of the sink vertices of $\Gamma(\statement A)$ are true and subabundant, then $\statement A$ is true and subabundant.
\end{proposition}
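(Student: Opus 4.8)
The plan is to induct over the vertices of the finite directed acyclic graph $\Gamma(\statement A)$, with Theorem \ref{SplittingInductionTheorem} as the only inductive engine. Since $\Gamma(\statement A)$ is a finite DAG, we may enumerate its vertices $\statement A_1,\ldots,\statement A_N$ so that every edge $\statement A_p\statement A_q$ has $q<p$ (a reverse topological ordering); note that $\statement A=\varphi_{0,0,0}(\statement A)$ occurs among the $\statement A_p$. I would prove by induction on $p$ that each $\statement A_p$ is true and subabundant, whence in particular $\statement A$ is.

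The base case $p=1$ is immediate: $\statement A_1$ has no outgoing edge, so it is a sink and is true and subabundant by hypothesis. For the inductive step, fix $p>1$ and write $\statement A_p=\varphi_{i,j,k}(\statement A)$ with $i+j+k\le n-3$, $k\le 2$. If $\statement A_p$ is a sink, it is again true and subabundant by hypothesis, so assume not. If $k=2$, then by Lemma \ref{VertexFormulas} we have $\statement A_p=\statement A(n',d',0,0,0,0)$, whose underlying vector space is $\{0\}$ and whose associated quantity $a_0$ is $0$; hence $\statement A_p$ is true and subabundant, with nothing further to check. If $k\in\{0,1\}$, an elementary inspection of the conditions $i+j+k\le n-3$, $k\le 2$ shows that a non-sink $\statement A_p$ has precisely the three out-neighbours $\varphi_0(\statement A_p)$, $\varphi_1(\statement A_p)$, $\varphi_2(\statement A_p)$; each of these has index $<p$, so by the inductive hypothesis each is true and subabundant. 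It then remains to verify the hypotheses of Theorem \ref{SplittingInductionTheorem} for $\statement A_p$: by Lemma \ref{VertexFormulas} its parameters $(n',d')$ satisfy $n'\ge 4$ and, using the standing assumption $d\ge n$, also $d'\ge 3$; moreover its coefficient functions are nonnegative integers divisible by $2^{n'-3}$, so the splittings $s=s'+s''$, $t=t'+t''$, $u=u'+u''$, $v=v'+v''$ induced by $\varphi_0,\varphi_1,\varphi_2$ have nonnegative integer summands. Theorem \ref{SplittingInductionTheorem} then gives that $\statement A_p$ is true and subabundant, completing the induction.

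The one genuinely delicate point, which I expect to be the main (though still routine) obstacle, is the case analysis in the $k\in\{0,1\}$ step: checking that $\varphi_0$, $\varphi_1$, $\varphi_2$ applied to a non-sink vertex all stay inside the vertex set $V$, and that the parameters $(n',d')$ stay in the range $n'\ge 2$, $d'\ge 3$ required by Theorem \ref{SplittingInductionTheorem}. This is precisely where the hypothesis $d\ge n$ from the definition of $\Gamma(\statement A)$ gets used, and where the divisibility by $2^{n-3}$ in the definition of $\mathcal A_{n_0}$ ensures that the repeated halving performed by $\varphi_0,\varphi_1,\varphi_2$ never leaves $\ZZ_{\ge 0}$. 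Once these elementary arithmetic facts in $i,j,k,n,d$ are in place, the proposition follows formally from the DAG induction together with Theorem \ref{SplittingInductionTheorem}.
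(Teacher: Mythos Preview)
Your proposal is correct and takes essentially the same approach as the paper: the paper's proof is simply the one-line remark ``By Theorem~\ref{SplittingInductionTheorem} and induction on the vertices of $\Gamma(\statement A)$,'' and your argument is a careful elaboration of exactly that DAG induction. Your additional bookkeeping (the reverse topological ordering, the separate handling of the trivial $k=2$ vertices, and the verification that $n'\ge 4$, $d'\ge 3$ using $d\ge n$) fills in the details the paper leaves implicit but does not deviate from its strategy.
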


Note that, by construction, each sink vertex either has $n=3$, and so we may use Proposition \ref{TildeSBound} to determine its truth and subabundance, or has $s=t=u=v=0$, and thus is certainly true and subabundant.

For example, refer to Figure \ref{SplittingGraphExample}.  Since $\tilde s(14)=9$, $\tilde s(15)=\tilde s(16)=12$, and $\tilde s(17)=15$, we see by Proposition \ref{TildeSBound} that each sink vertex is true and subabundant.  Therefore, by Proposition \ref{SinkVerticesTrueSubabundant}, $\statement A(5,17,24,0,0,0)$ is true, and consequently $\sigma_{24}(\Split_{17}(\PP^5))$ is nondefective.

We now generalize this idea.  

Consider a splitting graph $G=\Gamma(\statement A(n,d,2^{n-3}c,0,0,0))$.  For each $m\in\{0,\ldots,n-2\}$, let $\mathcal V_m$ be the set of all sink vertices of $G$ of the form $\statement A(3,d-m,s,t,u,v)$.  For any such $\statement A\in\mathcal V_m$, let $h(\statement A)=\max\{s(d-m),t(d-m),u(d-m),v(d-m)\}$.  By Lemma~\ref{VertexFormulas}, $c$ will divide $h(\statement A)$.  We define 
\begin{equation*}
g_n(m)=\max\left\{\frac{1}{c}h(\statement A):\statement A\in\mathcal V_m\right\}.
\end{equation*}

\begin{lemma}
If $n\geq 4$ and $0\leq m\leq n-2$, then
\begin{equation*}
g_n(m)=\begin{cases}
n-3 &\text{ if }m=0\text{ or }m=n-3\\
n-4 &\text{ if }n\geq 5\text{ and }m=1\\
1 &\text{ if }m=n-2\\
m(n-m-3) &\text{ if }2\leq m\leq n-4.\\
\end{cases}
\end{equation*}
\end{lemma}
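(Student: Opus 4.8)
The plan is to identify all sink vertices of $\Gamma(\statement A)$, read off from Lemma~\ref{VertexFormulas} the functions $s,t,u,v$ attached to each, and then take the relevant maximum.

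First I would determine the sinks. Because $\varphi_0,\varphi_1,\varphi_2$ pairwise commute (Lemma~\ref{commutative}), applying $\varphi_0$, $\varphi_1$, or $\varphi_2$ to the vertex $\varphi_{i,j,k}(\statement A)$ produces $\varphi_{i+1,j,k}(\statement A)$, $\varphi_{i,j+1,k}(\statement A)$, or $\varphi_{i,j,k+1}(\statement A)$, respectively; moreover distinct index triples give distinct vertices, as one sees from the formulas in Lemma~\ref{VertexFormulas}. By the definition of the vertex set of $\Gamma(\statement A)$, each of those three images lies in $\Gamma(\statement A)$ exactly when its index triple still obeys the bounds (sum $\le n-3$, last entry $\le 2$). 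Since every vertex of $\Gamma(\statement A)$ already satisfies $i+j+k\le n-3$ and $k\le 2$, it follows that $\varphi_{i,j,k}(\statement A)$ has at least one outgoing edge iff $i+j+k\le n-4$. Hence the sink vertices are precisely those with $i+j+k=n-3$, and by Lemma~\ref{VertexFormulas} every such vertex has first coordinate $3$.

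Next I would list the members of $\mathcal V_m$. Substituting $i+j+k=n-3$ into the three cases of Lemma~\ref{VertexFormulas} (which trivializes the powers of $2$) shows that the sinks of degree $d-m$ are exactly: for $0\le m\le n-3$, the $k=0$ sink $\statement A(3,d-m,c,m(n-m-3)c,mc,(n-m-3)c)$; for $2\le m\le n-2$, the $k=1$ sink $\statement A(3,d-m,0,(n-m-2)c,c,0)$; and for $4\le m\le n-2$, the $k=2$ sink $\statement A(3,d-m,0,0,0,0)$. Hence $\frac{1}{c}h$ of these sinks is $\max\{1,m(n-m-3),m,n-m-3\}$, $\max\{n-m-2,1\}$, and $0$, and $g_n(m)$ is the largest of the values attached to the sinks that are present for that $m$.

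Finally I would simplify this maximum. For $m=0$ or $m=n-3$ the $k=0$ sink gives $\max\{1,0,n-3\}=n-3$ (using $n\ge4$), which beats the value $\le1$ of the other sinks when they occur, so $g_n(m)=n-3$. For $n\ge5$ and $m=1$ only the $k=0$ sink occurs, with value $\max\{1,n-4\}=n-4$. For $m=n-2$ only the $k=1$ sink (and, if $n\ge6$, the zero-valued $k=2$ sink) occurs, with value $\max\{0,1\}=1$. For $2\le m\le n-4$ one has $m\ge2$ and $n-m-3\ge1$, so the $k=0$ sink gives $m(n-m-3)$ and the $k=1$ sink gives $n-m-2$; writing $p=n-2\ge4$ (the range is nonempty only when $n\ge6$) and using that $x\mapsto x(p-x)$ is concave on $[2,p-2]$, one gets $m(p-m)\ge 2(p-2)\ge p$, i.e.\ $m(n-m-3)\ge n-m-2$, so $g_n(m)=m(n-m-3)$. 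The one step with real content is the sink characterization; everything afterwards is routine, the only thing to watch being that the small cases ($n=4$, where $m=1$ and $m=n-3$ coincide, and $n=5,6$) fall into the branch claimed by the piecewise formula, which they do.
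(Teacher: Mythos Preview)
Your approach is essentially identical to the paper's: identify the sinks as the triples with $i+j+k=n-3$, read off the $s,t,u,v$ values from Lemma~\ref{VertexFormulas} for $k=0,1,2$, and then compare case by case. Your sink characterization is argued more carefully than the paper's, which simply asserts it.

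There is, however, an arithmetic slip in your final case $2\le m\le n-4$. You set $p=n-2$ and argue via concavity that $m(p-m)\ge 2(p-2)\ge p$, then claim this is the desired inequality $m(n-m-3)\ge n-m-2$. But with $p=n-2$ you have $m(p-m)=m(n-m-2)$ and $p=n-2$, neither of which matches what you need. The inequality you actually want is the one the paper proves directly: from $m\le n-4$ one has $n-m-3\ge 1$, hence $2(n-m-3)\ge n-m-2$, and since $m\ge 2$ this gives $m(n-m-3)\ge 2(n-m-3)\ge n-m-2$. This is routine to fix and does not affect the structure of your argument.
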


\begin{proof}
We refer to Lemma \ref{VertexFormulas} and use these formulas to find the vertices where $n-i-j-k=3$.  We may ignore the $k=2$ case, as all of the values we are trying to maximize are zero.

\textit{Case 1.}  Suppose $k=0$.  Then $d-j=d-m$, and so $j=m$.  Consequently, $n-i-m=3$, and so $i=n-m-3$ and $m\leq n-3$.  Note that, in this case, $n-3-i-j=n-3-(n-m-3)-m=0$.  Therefore, these sink vertices are of the form $\statement A(3,d-m,c,cm(n-m-3),cm,c(n-m-3))$.

\textit{Case 2.}  Suppose $k=1$.  Then $d-j-2=d-m$, and so $j=m-2$ and $m\geq 2$.  Consequently, $n-i-(m-2)-1=3$, and so $i=n-m-2$ and $m\leq n-2$.  Note that, in this case, $n-4-i-j=n-4-(n-m-2)-(m-2)=0$.  Therefore, these sink vertices are of the form $\statement A(3,d-m,0,c(n-m-2),c,0)$.

We now investigate the maximum values for specific values of $m$.

\textit{Case (a).}  Suppose $m=0$.  Then we must have $k=0$.  These sink vertices are of the form $\statement A(3,d,c,0,0,c(n-3))$.  Therefore, $g_n(0)=n-3$.

\textit{Case (b).}  Suppose $m=1$.  Then we must have $k=0$.  These sink vertices are of the form $\statement A(3,d-1,c,c(n-4),c,c(n-4))$.  Therefore, $g_n(1)=n-4$ if $n\geq 5$ and $g_4(1)=1$.

Note that since $m\leq n-2$, we have completed the $n=4$ case.

\textit{Case (c).}  Suppose $n\geq 5$ and $m=n-3$.  If $k=0$, then the sink vertices are of the form $\statement A(3,d-n+3,c,0,c(n-3),0)$.  If $k=1$, then the sink vertices are of the form $\statement A(3,d-n+3,0,c,c,0)$.  Therefore, $g_n(m)=n-3$.

\textit{Case (d).}  Suppose $m=n-2$.  Then we must have $k=1$.  These sink vertices are of the form $\statement A(3,d-n+2,0,0,c,0)$.  Therefore, $g_n(n-2)=1$.

\textit{Case (e).}  For the remaining cases, we have $n\geq 6$ and $2\leq m\leq n-4$.  In this case, $\max\{1,m(n-m-3),m,n-m-3\}=m(n-m-3)$ and $\max\{n-m-2,1\}=n-m-2$, and so we have $g_n(m)=\max\{m(n-m-3),n-m-2\}$.  Since $m\leq n-4$, we have
\begin{align*}
n-m-3 &\geq 1\\
2(n-m-3) &\geq n-m-2\\
m(n-m-3) &\geq n-m-2\text{ (since $m\geq 2$)}.
\end{align*}
Consequently, $g_n(m)=m(n-m-3)$.
\end{proof}

\begin{thmn}[\ref{ExponentialBound}]
Consider the function
\begin{equation*}
c(n,d)=\min\left\{\left\lfloor\frac{\tilde s(d-m)}{g_n(m)}\right\rfloor:0\leq m\leq n-2\right\}.
\end{equation*}
If $d\geq n\geq 4$ and $s\leq 2^{n-3}c(n,d)$, then $\sigma_s(\Split_d(\PP^n))$ is nondefective.
\end{thmn}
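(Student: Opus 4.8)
The plan is to apply the splitting-graph machinery of this section directly to the statement $\statement A = \statement A(n,d,2^{n-3}c(n,d),0,0,0)$. Since this statement has $t=u=v=0$, showing that it is true and subabundant will, by the reformulation following Proposition~\ref{DimSecantVariety}, prove that $\sigma_{2^{n-3}c(n,d)}(\Split_d(\PP^n))$ is nondefective. Moreover subabundance of $\statement A$ is exactly the inequality $2^{n-3}c(n,d)(dn+1)\le\binom{n+d}{d}$, which is precisely the extra hypothesis needed to invoke the first half of Corollary~\ref{TerraciniCorollary} and deduce that $\sigma_s(\Split_d(\PP^n))$ is nondefective for every $s\le 2^{n-3}c(n,d)$. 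So the whole theorem reduces to showing $\statement A$ is true and subabundant.

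For that step I would use Proposition~\ref{SinkVerticesTrueSubabundant}: it suffices to verify that every sink vertex of the splitting graph $\Gamma(\statement A)$ is true and subabundant. The hypothesis $d\ge n$ is exactly what is needed here, since by the explicit vertex descriptions in Lemma~\ref{VertexFormulas} every non-sink vertex still has degree at least $3$, so that Theorem~\ref{SplittingInductionTheorem} legitimately applies at each of them (this bookkeeping is the one point I would want to check with care). Now each sink vertex is of one of two types. If it has the form $\statement A(n',d',0,0,0,0)$, it is trivially true and subabundant. Otherwise it lies in some $\mathcal V_m$, i.e.\ it has the form $\statement A(3,d-m,s,t,u,v)$ with $0\le m\le n-2$, and by the definitions of $h$ and $g_n$ all four of $s(d-m),t(d-m),u(d-m),v(d-m)$ are at most $c(n,d)\,g_n(m)$.

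It then remains only to compare $c(n,d)\,g_n(m)$ with $\tilde s(d-m)$. Since $g_n(m)\ge 1$ (immediate from the formula for $g_n$ and $n\ge 4$), the definition of $c(n,d)$ as a minimum over $0\le m\le n-2$ gives $c(n,d)\le\lfloor\tilde s(d-m)/g_n(m)\rfloor\le\tilde s(d-m)/g_n(m)$, hence $c(n,d)\,g_n(m)\le\tilde s(d-m)$. Therefore every parameter of the sink vertex $\statement A(3,d-m,s,t,u,v)$ is bounded above by $\tilde s(d-m)$, so Proposition~\ref{TildeSBound} shows this vertex is true and subabundant. With all sink vertices handled, Proposition~\ref{SinkVerticesTrueSubabundant} yields that $\statement A$ is true and subabundant, and the theorem follows as explained in the first paragraph.

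As for the main obstacle: essentially all of the genuine difficulty has already been absorbed into the earlier results — Proposition~\ref{TildeSBound} for the $\PP^3$ base case (which rests on a restriction-induction computation), and Lemma~\ref{VertexFormulas} together with the computation of $g_n(m)$ for the combinatorics of the splitting graph. Consequently the proof of Theorem~\ref{ExponentialBound} itself is mostly an unwinding of definitions and an arithmetic comparison; the only place demanding real attention is confirming that the degree hypotheses of Theorem~\ref{SplittingInductionTheorem} persist along the entire splitting graph, which is exactly where $d\ge n$ enters.
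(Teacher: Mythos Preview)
Your proposal is correct and follows essentially the same route as the paper: apply Proposition~\ref{SinkVerticesTrueSubabundant} to $\Gamma(\statement A(n,d,2^{n-3}c(n,d),0,0,0))$, bound the parameters of each nontrivial sink vertex by $c(n,d)\,g_n(m)\le\tilde s(d-m)$, and invoke Proposition~\ref{TildeSBound}. Your write-up is in fact more thorough than the paper's, which leaves implicit both the appeal to Corollary~\ref{TerraciniCorollary} for $s<2^{n-3}c(n,d)$ and the verification (which you rightly flag) that the degree hypothesis $d'\ge 3$ of Theorem~\ref{SplittingInductionTheorem} persists at every non-sink vertex.
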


\begin{proof}
Consider the nontrivial sink vertices of $\Gamma(\statement A(n,d,2^{n-3}c(n,d),0,0,0))$.  By construction, they will be of the form $\statement A(3,d-m,s,t,u,v)$, where 
\begin{align*}
\max\{s(d-m),t(d-m),u(d-m),v(d-m)\}&\leq c(n,d)g_n(m)\\
&\leq\frac{\tilde s(d-m)}{g_n(m)}g_n(m)\\
&=\tilde s(d-m).
\end{align*}
By Proposition \ref{TildeSBound}, each sink vertex is true and subabundant, and thus the result follows by  Proposition \ref{SinkVerticesTrueSubabundant}.
\end{proof}

\pagebreak

\section{Results for small $s$}
\label{SmallS}

Note that, by Propositions \ref{LargeN} and \ref{DEqualsTwo}, the dimensions of $\sigma_2(\Split_d(\PP^n))$ are completely known.  Our goal in this section is to prove conjecture \ref{Conjecture} for other small $s$.

We will need the following lemma.

\begin{lemma}\label{F true}
If $\statement A(n,d,s,0,0,0)$ is true and subabundant, then $\statement A(n,d-1,0,0,s,0)$ is true and subabundant.
\end{lemma}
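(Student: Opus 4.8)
The plan is to unwind both statements using Lemma~\ref{TangentToSplit} and then transport a linear-independence fact from degree $d$ down to degree $d-1$ by multiplying by a single generic linear form. Unpacked, the statement ``$\statement A(n,d,s,0,0,0)$ is true and subabundant'' says that for generic $\tuple f_1,\dots,\tuple f_{s(d)}\in R_1^d$ the tangent spaces $\widehat T_{[\prod\tuple f_j]}\Split_d(\PP^n)=\sum_{m=1}^d\pi_m(\tuple f_j)R_1$ are in general position, i.e., $\dim\sum_j\widehat T_{[\prod\tuple f_j]}\Split_d(\PP^n)=(dn+1)s(d)$, so the sum is direct. On the other side, $A_0(n,d-1,0,0,s,0)=\sum_{j=1}^{s(d-1)}\sum_{m=1}^{d}\Span\{\pi_m(\tuple g_j)\}\subseteq R_{d-1}$ for generic $\tuple g_1,\dots,\tuple g_{s(d-1)}\in R_1^{d}$, and $a_0(n,d-1,0,0,s,0)=d\,s(d-1)$; so ``$\statement A(n,d-1,0,0,s,0)$ is true and subabundant'' amounts to the $d\,s(d-1)$ partial products $\pi_m(\tuple g_j)$ being linearly independent in $R_{d-1}$ (which forces $d\,s(d-1)\le\binom{n+d-1}{d-1}$ for free).

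First I would dispose of the single-tuple case: for a generic $\tuple g=(g_1,\dots,g_d)\in R_1^d$ the $d$ products $\pi_1(\tuple g),\dots,\pi_d(\tuple g)$ are linearly independent in $R_{d-1}$. Indeed, reducing a putative relation $\sum_m c_m\prod_{k\neq m}g_k=0$ modulo the ideal $(g_{m_0})$ annihilates every term except the $m_0$-th, and the survivor $\prod_{k\neq m_0}\bigl(g_k\bmod g_{m_0}\bigr)$ is a nonzero form on the hyperplane $\{g_{m_0}=0\}$ since the $g_k$ are generic; hence $c_{m_0}=0$ for all $m_0$. Next, fix a generic linear form, say $x_0$. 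Since multiplication $\cdot\,x_0\colon R_{d-1}\to R_d$ is injective and $x_0\pi_m(\tuple g_j)\in\pi_m(\tuple g_j)R_1\subseteq\widehat T_{[\prod\tuple g_j]}\Split_d(\PP^n)$, we have
\[
x_0\cdot A_0(n,d-1,0,0,s,0)=\sum_{j=1}^{s(d-1)}\sum_{m=1}^{d}\Span\{x_0\pi_m(\tuple g_j)\}\ \subseteq\ \sum_{j=1}^{s(d-1)}\widehat T_{[\prod\tuple g_j]}\Split_d(\PP^n).
\]
Since the $\tuple g_j$ are chosen generic, the right-hand side is a direct sum by the hypothesis --- we only need $s(d-1)$ of the tangent spaces to be independent, which follows from the hypothesis via Corollary~\ref{TerraciniCorollary} when $s(d-1)\le s(d)$, in particular whenever $s$ is constant, as it is in the applications of this section. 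Hence its subspace on the left is a direct sum too, and by the single-tuple case each block $\sum_m\Span\{x_0\pi_m(\tuple g_j)\}$ has dimension $d$; so the left-hand side has dimension $d\,s(d-1)$, and by injectivity of $\cdot\,x_0$ so does $A_0(n,d-1,0,0,s,0)$.

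This gives $\dim A_0(n,d-1,0,0,s,0)=d\,s(d-1)=a_0(n,d-1,0,0,s,0)$; since $A_0(n,d-1,0,0,s,0)$ is a subspace of $R_{d-1}$ we get $d\,s(d-1)\le\binom{n+d-1}{d-1}$, i.e., subabundance, after which truth is immediate. (The degenerate cases $n=0$ or $d\le 1$ are checked by hand.) I do not expect a genuine obstacle here: the only points needing attention are the single-tuple independence of the $\pi_m(\tuple g)$, routine via the hyperplane reduction above, and lining up the number of summands in the two statements; the conceptual content is just that multiplying the partial-product space $A_0(n,d-1,0,0,s,0)$ by a linear form embeds it into a union of tangent spaces, so that general position of the tangent spaces descends to linear independence of the partial products.
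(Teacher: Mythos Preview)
Your proof is correct and follows essentially the same route as the paper: multiply by a linear form to embed the span of partial products into the direct sum of tangent spaces guaranteed by the hypothesis, split the relation (or subspace) block by block, and then verify independence of the $d$ partial products within a single block. Your hyperplane-reduction argument for the single-tuple case is a clean alternative to the paper's divisibility argument, and you correctly flag the $s(d-1)$ versus $s(d)$ bookkeeping issue that the paper silently elides (it simply writes $s(d)$ throughout, which is harmless in the applications since $s$ is constant there).
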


\begin{proof}
For each $j\in\{1,\ldots,s(d)\}$, choose generic $\tuple f_j\in R_1^d$, and let $V_j=\sum_{m=1}^d\pi_m(\tuple f_j)R_1$.  

Note that, for each $j$, $\dim V_j\leq dn+1$.  Indeed, it is spanned by $d(n+1)$ polynomials, but $\prod\tuple f_j$ appears $d$ times, and $d(n+1)-(d-1)=dn+1$.  

However, by assumption, we have $\dim\sum_{j=1}^{s(d)}V_j=s(d)(dn+1)$, and so we must have $\dim V_j=dn+1$.  Furthermore, $\sum_{j=1}^{s(d)}V_j=\bigoplus_{j=1}^{s(d)}V_j$.

Suppose there exist $a_{j,m}\in\kk$ such that $\sum_{j=1}^{s(d)}\sum_{m=1}^da_{j,m}\pi_m(\tuple f_j)=0$.  Therefore, for nonzero $g\in R_1$, we have $\sum_{j=1}^{s(d)}\sum_{m=1}^da_{j,m}\pi_m(\tuple f_j)g=0$.  However, $\sum_{m=1}^da_{j,m}\pi_m(\tuple f_j)g\in V_j$ for each $j$, and this implies that we must have $\sum_{m=1}^da_{j,m}\pi_m(\tuple f_j)g=0$, because, as shown above, the sum of these vectors spaces is a direct sum.  Consequently, $\sum_{j=1}^da_{j,m}\pi_m(\tuple f_j)=0$, for each $i$.

Suppose $a_{j,m}\neq 0$ for some $j,m$.  Without loss of generality, suppose $m=1$.  Then we have
\begin{align*}
\pi_1(\tuple f_j)&=-\sum_{m=2}^d\frac{a_{j,m}}{a_{j,1}}\pi_m(\tuple f_j)\\
&=-(\tuple f_j)_1\sum_{m=2}^d\frac{a_{j,m}}{a_{j,1}}\pi_m(\rho_1(\tuple f_j)).
\end{align*}
Therefore, $(\tuple f_j)_1$ occurs twice in $\tuple f_j$, up to scalar multiplication.  This is a contradiction since $\tuple f_j$ was chosen to be generic.  Therefore, $a_{j,m}=0$ for all $i,j$, and thus $\{\pi_m(\tuple f_j):1\leq j\leq s(d),\,1\leq m\leq d\}$ is a linearly independent set.  Since this set spans $A(n,d-1,0,0,s,0)$ and $a(n,d-1,0,0,s,0)=ds(d)$, the corresponding statement is true.
\end{proof}

\begin{theorem}\label{induction on n}
If $s(dn_0+1)\leq\binom{n_0+d}{d}$ and $\sigma_s(\Split_d(\PP^{n_0}))$ is nondefective, then $\sigma_s(\Split_d(\PP^n))$ is nondefective for all $n\geq n_0$.
\end{theorem}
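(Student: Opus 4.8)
The plan is to induct on $n\geq n_0$, applying splitting induction (Theorem \ref{SplittingInductionTheorem}) at each step with the trivial partition $s'=0$, $s''=s$ and $t=u=v=0$ (so $t',t'',u',u'',v',v''$ all vanish). The hypothesis $s(dn_0+1)\leq\binom{n_0+d}{d}$ is exactly the assertion that $\statement A(n_0,d,s,0,0,0)$ is subabundant, and nondefectivity of $\sigma_s(\Split_d(\PP^{n_0}))$ is the assertion that it is true; so the base case $n=n_0$ is immediate. We may assume $d\geq 3$: for $d=1$ we have $\Split_1(\PP^n)=\PP^n$, and for $d=2$ the hypotheses force $s=1$ by Proposition \ref{DEqualsTwo}, so in both cases the conclusion is trivial.

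For the inductive step, suppose $n>n_0$ and that $\statement A(n-1,d,s,0,0,0)$ is true and subabundant. With the partition above, Theorem \ref{SplittingInductionTheorem} reduces $\statement A(n,d,s,0,0,0)$ to the truth and subabundance of $\statement A(n-1,d,s,0,0,0)$, $\statement A(n-1,d-1,0,0,s,0)$, and $\statement A(n-1,d-2,0,0,0,0)$. The first is the inductive hypothesis. The third is trivially true and subabundant, since its vector space and its expected dimension $a_0(n-1,d-2,0,0,0,0)$ are both $0$. The second follows from Lemma \ref{F true} applied at $n-1$: truth and subabundance of $\statement A(n-1,d,s,0,0,0)$ imply the same for $\statement A(n-1,d-1,0,0,s,0)$. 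Therefore $\statement A(n,d,s,0,0,0)$ is true and subabundant, i.e., $\sigma_s(\Split_d(\PP^n))$ is nondefective, and the induction is complete.

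The only thing worth checking is that the subabundance hypothesis really does survive each step, and this is automatic from the conclusion of Theorem \ref{SplittingInductionTheorem}: the direct-sum decomposition there partitions both $A(n,d,s,0,0,0)$ and its expected dimension $a_0(n,d,s,0,0,0)=s(dn+1)$ into the pieces attached to the three sub-statements, while the corresponding ambient dimensions $\binom{n+d-1}{d}$, $\binom{n+d-2}{d-1}$, $\binom{n+d-3}{d-2}$ add up to at most $\binom{n+d}{d}$ by iterating Pascal's rule. I do not expect a real obstacle here: the genuine content was already extracted in Lemma \ref{F true} — independence of $s$ tangent spaces on $\Split_d(\PP^{n-1})$ forces independence of the lower-degree forms generating them, which is precisely the auxiliary input needed for the degree-lowering branch of the split — and this theorem is the routine assembly of that fact with splitting induction.
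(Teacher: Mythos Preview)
Your proof is correct and follows essentially the same route as the paper: use Lemma~\ref{F true} to obtain $\statement A(n-1,d-1,0,0,s,0)$ from the inductive hypothesis, then feed the three pieces into Theorem~\ref{SplittingInductionTheorem} with the trivial partition $s'=0$, $s''=s$. Your treatment is in fact slightly more careful than the paper's, since you handle $d\leq 2$ explicitly (Theorem~\ref{SplittingInductionTheorem} requires $d\geq 3$) and note that the third branch $\statement A(n-1,d-2,0,0,0,0)$ is vacuous; the final paragraph on subabundance is unnecessary, as that is already part of the conclusion of Theorem~\ref{SplittingInductionTheorem}.
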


\begin{proof}
By assumption, $\statement A(n_0,d,s,0,0,0)$ is true and subabundant.  Therefore, by Lemma \ref{F true}, $\statement A(n_0,d-1,0,0,s,0)$ is true and subabundant.  It follows by Theorem \ref{SplittingInductionTheorem} that $\statement A(n_0+1,d,s,0,0,0)$ is true and subabundant.  The result follows by induction on $n$.
\end{proof}

Recall the following function from Theorem \ref{nEqualsThreeResult}.
\begin{equation*}
s_1(d) =
\begin{cases}
\frac{1}{18}d^2+\frac{5}{18}d&\text{ if }d\equiv 0,4\pmod{9}\\
\frac{1}{18}d^2+\frac{5}{18}d+\frac{2}{9}&\text{ if }d\equiv 2,5,8\pmod{9}\\
\frac{1}{18}d^2+\frac{5}{18}d+\frac{2}{3}&\text{ if }d\equiv 1,3\pmod{9}\\
\frac{1}{18}d^2+\frac{5}{18}d+\frac{1}{3}&\text{ if }d\equiv 6,7\pmod{9}\\
\end{cases}
\end{equation*}
The following result follows immediately from Theorem \ref{nEqualsThreeResult} and Lemma \ref{induction on n}.

\begin{corollary}\label{S1Generalized}
If $d\in\NN$ and $s\leq s_1(d)$, then $\sigma_s(\Split_d(\PP^n))$ is nondefective for all $n\geq 3$.
\end{corollary}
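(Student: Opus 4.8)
The plan is to bootstrap from the three-variable case established in Theorem~\ref{nEqualsThreeResult} up to arbitrary $n\geq 3$ using the splitting-induction package of Theorem~\ref{induction on n}. Fix $d\in\NN$ and $s\leq s_1(d)$. Theorem~\ref{nEqualsThreeResult} already gives that $\sigma_s(\Split_d(\PP^3))$ is nondefective, so the base case $n_0=3$ of Theorem~\ref{induction on n} is in hand; it remains only to verify the hypothesis $s(dn_0+1)\leq\binom{n_0+d}{d}$, i.e. $s(3d+1)\leq\binom{d+3}{d}$.

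First I would dispatch this subabundance inequality. By the lemma proved just before Theorem~\ref{nEqualsThreeResult} we have $s_1(d)\leq\frac{\binom{d+3}{3}}{3d+1}$ for every $d\in\NN$, and since $s\mapsto s(3d+1)$ is monotone,
\begin{equation*}
s(3d+1)\leq s_1(d)(3d+1)\leq\binom{d+3}{3}=\binom{d+3}{d}.
\end{equation*}
Equivalently, this says that the statement $\statement A(3,d,s,0,0,0)$ is subabundant in the sense of the earlier definitions. So both hypotheses of Theorem~\ref{induction on n} with $n_0=3$ are met.

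With the base case nondefective and the subabundance inequality confirmed, Theorem~\ref{induction on n} applies verbatim: internally it converts the tangent-space statement $\statement A(3,d,s,0,0,0)$ into $\statement A(3,d-1,0,0,s,0)$ via Lemma~\ref{F true} and then steps from $n$ to $n+1$ using Splitting Induction (Theorem~\ref{SplittingInductionTheorem}), so by induction on $n$ the secant variety $\sigma_s(\Split_d(\PP^n))$ is nondefective for all $n\geq 3$. This is exactly the assertion of the corollary.

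Since every ingredient is already in place, there is essentially no obstacle here. The only point needing care is the numeric inequality $s_1(d)(3d+1)\leq\binom{d+3}{3}$, and that is precisely the content of the lemma cited above; the passage from $s=s_1(d)$ to a general admissible $s\leq s_1(d)$ is immediate by monotonicity, and Theorem~\ref{induction on n} is already phrased for arbitrary such $s$.
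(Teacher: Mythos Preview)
Your proof is correct and follows exactly the route the paper takes: the paper's proof is the single sentence ``follows immediately from Theorem~\ref{nEqualsThreeResult} and [Theorem]~\ref{induction on n},'' and you have simply unpacked this by explicitly verifying the subabundance hypothesis $s(3d+1)\leq\binom{d+3}{3}$ via the unlabeled lemma preceding Theorem~\ref{nEqualsThreeResult}. There is nothing to add.
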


Fix an $s\in\NN$.  Note that, for large $n$, $\sigma_s(\Split_d(\PP^n))$ is nondefective for $d\geq 3$ by Proposition \ref{LargeN}.  For large $d$, $\sigma_s(\Split_d(\PP^n))$ is nondefective by Corollary \ref{S1Generalized}.  Therefore, there are only finitely many more cases to check.  (In fact, we end up not even needing Proposition \ref{LargeN}, for as soon as we check one subabundant case for some $d$, we get the rest of them by Theorem \ref{induction on n}.)

For example, consider $s=11$.  Refer to Figure \ref{SmallSExample} for visual reference.  We know that $\sigma_{11}(\Split_1(\PP^n))$, $\sigma_{11}(\Split_d(\PP^1))$, and $\sigma_{11}(\Split_d(\PP^2))$ are nondefective for all $n$ and $d$.  By Proposition \ref{DEqualsTwo}, $\sigma_{11}\Split_2(\PP^n)$ is nondefective for $n\leq 21$ and defective for $n\geq 22$.  By Theorem \ref{nEqualsThreeResult}, $\sigma_{11}(\Split_d(\PP^3))$ is nondefective for all $d\leq 10$, and by Theorem \ref{AboResult}, $\sigma_{11}(\Split_3(\PP^n))$ is nondefective for all $n\neq 11$.  By Corollary \ref{S1Generalized}, $\sigma_{11}(\Split_d(\PP^n))$ is nondefective for all $d\geq 12$ and $n\geq 3$.

From this point, we start to calculate dimensions of vector spaces in Macaulay2 to confirm the nondefectivity of the remaining cases, as shown in Proposition \ref{DimSecantVariety}.

First, we check $\sigma_{11}(\Split_3(\PP^{11}))$ to complete the cubic case.  Next, we check $\sigma_{11}(\Split_4(\PP^n)$ for $n\in\{4,\ldots,7\}$.  At this point, since $11<\frac{1}{4\cdot 7 + 1}\binom{7+4}{4}$, we may use Theorem \ref{induction on n} to complete the quartic case.  We continue in this fashion, incrementing $d$ until we reach $d=11$.  We check $\sigma_{11}(\Split_{11}(\PP^n))$ for $n\in\{3,4\}$, and since $11<\frac{1}{4\cdot 11+1}\binom{4+11}{11}$, the rest follows from Theorem \ref{induction on n}.  As a result of these calculations, we have proved Conjecture \ref{Conjecture} for $s=11$.

\begin{figure}[!ht]
\centering
\begin{tikzpicture}[scale=.5]
\fill[pattern=north west lines] (0,0) -- (15,0) -- (15,-25) -- (11,-25) -- (11,-2) -- (10,-2) -- (10,-3) -- (3,-3) -- (3,-10) -- (2,-10) -- (2,-11) -- (3,-11) -- (3,-25) -- (2,-25) -- (2,-21) -- (1,-21) -- (1,-25) -- (0,-25) -- (0,0);
\fill[pattern=crosshatch] (1,-21) -- (2,-21) -- (2,-25) -- (1,-25) -- (1,-21);
\fill[pattern=crosshatch dots] (3,-10) -- (2,-10) -- (2,-11) -- (3,-11) -- (3,-10);
\fill[pattern=crosshatch dots] (3,-3) -- (3,-7) -- (4,-7) -- (4,-6) -- (5,-6) -- (5,-5) -- (6,-5) -- (6,-4) -- (11,-4) -- (11,-2) -- (10,-2) -- (10,-3) -- (3,-3);
\draw[->] (0,0) -- (16,0) node[right] {$d$};
\draw[->] (0,0) -- (0,-26) node[below] {$n$};
\foreach \n in {1,...,25} \draw (0,-\n+.5) node[left] {$\n$};
\foreach \d in {1,...,15} \draw (\d-.5,0) node[above] {$\d$};
\foreach \n in {1,...,25} \draw (0,-\n) -- (15,-\n);
\foreach \d in {1,...,15} \draw (\d,0) -- (\d,-25);
\fill[pattern=north west lines] (18,-5) -- (19,-5) -- (19,-6) -- (18,-6);
\draw (18,-5) -- (19,-5) -- (19,-6) -- (18,-6) -- (18,-5);
\draw (19,-5.5) node[right] {known nondefective cases};
\fill[pattern=crosshatch] (18,-7) -- (19,-7) -- (19,-8) -- (18,-8);
\draw  (18,-7) -- (19,-7) -- (19,-8) -- (18,-8) -- (18,-7);
\draw (19,-7.5) node[right] {known defective cases};
\fill[pattern=crosshatch dots] (18,-9) -- (19,-9) -- (19,-10) -- (18,-10);
\draw (18,-9) -- (19,-9) -- (19,-10) -- (18,-10) -- (18,-9);
\draw (19,-9.5) node[right] {check in Macaulay2};
\draw (18,-11) -- (19,-11) -- (19,-12) -- (18,-12) -- (18,-11);
\draw (19,-11.5) node[right] {follows by Theorem \ref{induction on n}};
\end{tikzpicture}
\caption{Application of Proposition \ref{SmallSAlgorithm} for $s=11$\label{SmallSExample}}
\end{figure}
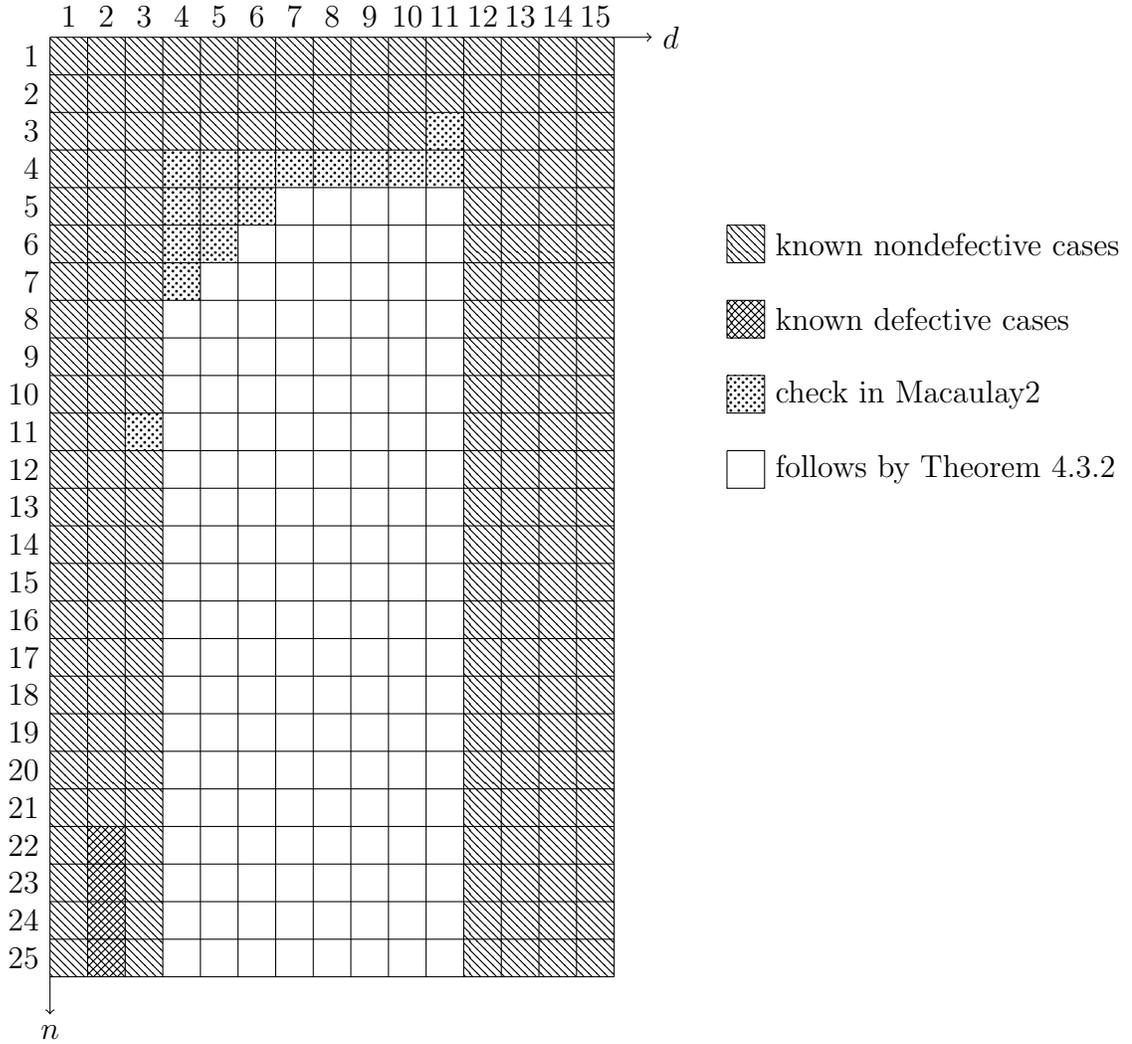

We make this argument precise in the following proposition.

Recall the following functions from Theorems \ref{AboResult} and \ref{nEqualsThreeResult}.
\begin{align*}
s_2(d) &=
\begin{cases}
\frac{1}{18}d^2+\frac{1}{3}d+1 &\text{ if }d\equiv 0\pmod{6}\\
\frac{1}{18}d^2+\frac{1}{3}d+\frac{1}{2} &\text{ if }d\equiv 3\pmod{6}\\
\frac{1}{18}d^2+\frac{7}{18}d+\frac{5}{9}&\text{ if }d\equiv 1,4,7\pmod{9}\\
\frac{1}{18}d^2+\frac{7}{18}d+1&\text{ if }d\equiv 2\pmod{9}\\
\frac{1}{18}d^2+\frac{7}{18}d+\frac{2}{3}&\text{ if }d\equiv 5\pmod{9}\\
\frac{1}{18}d^2+\frac{7}{18}d+\frac{1}{3}&\text{ if }d\equiv 8\pmod{9}\\
\end{cases}\\
s'_2(n) &=
\begin{cases}
\frac{1}{18}n^2+\frac{1}{3}n+1&\text{ if }n\equiv 0\pmod{6}\\
\frac{1}{18}n^2+\frac{7}{18}n+\frac{14}{9}&\text{ if }n\equiv 1\pmod{6}\\
\frac{1}{18}n^2+\frac{4}{9}n+\frac{8}{9}&\text{ if }n\equiv 2\pmod{6}\\
\frac{1}{18}n^2+\frac{1}{3}n+\frac{1}{2}&\text{ if }n\equiv 3\pmod{6}\\
\frac{1}{18}n^2+\frac{7}{18}n+\frac{5}{9}&\text{ if }n\equiv 4\pmod{6}\\
\frac{1}{18}n^2+\frac{4}{9}n+\frac{7}{18}&\text{ if }n\equiv 5\pmod{6}\\\end{cases}
\end{align*}

\begin{proposition}\label{SmallSAlgorithm}
Fix an $s\in\NN$.  If $\sigma_s(\Split_d(\PP^n))$ is nondefective for all of the following cases
\begin{enumerate}[(i)]
\item $d=3$ and $\min\{n:s<s'_2(n)\}\leq n\leq\min\left\{n:s\leq\frac{1}{3n+1}\binom{n+3}{3}\right\}$,
\item $4\leq d\leq\max\{d:s\geq s_2(d)\}$ and $4\leq n\leq\min\left\{n:s\leq\frac{1}{dn+1}\binom{n+d}{d}\right\}$, and
\item $\min\{d:s<s_2(d)\}\leq d\leq\max\{d:s>s_1(d)\}$ and \\$3\leq n\leq\min\left\{n:s\leq\frac{1}{dn+1}\binom{n+d}{d}\right\}$,
\end{enumerate}
then $\sigma_s(\Split_d(\PP^n))$ is nondefective for all $n,d\in\NN$ unless $d=2$ and $n\geq 2s$.
\end{proposition}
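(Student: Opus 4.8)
The plan is to fix $s$ and establish nondefectivity of $\sigma_s(\Split_d(\PP^n))$ for every pair $(n,d)$ outside the exceptional family, by first reducing to the region $d\geq 3$, $n\geq 3$, $s\geq 2$ and then arguing one degree at a time. The boundary cases are immediate: $\sigma_1(X)=X$ is always nondefective; $\Split_1(\PP^n)=\PP^n$ and $\Split_d(\PP^1)=\PP^d$ dispose of $d=1$ and $n=1$; Theorem~\ref{AboResultNEqualsTwo} disposes of $n=2$; and Proposition~\ref{DEqualsTwo} disposes of $d=2$, producing exactly the exceptional family $d=2$, $n\geq 2s$. Two monotonicity facts organize the rest: $s_1$, $s_2$, $s'_1$, $s'_2$ are increasing in their single argument, and a short computation shows that $\binom{n+d}{d}/(dn+1)$ is strictly increasing in $n$ for $d\geq 2$. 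Consequently, for fixed $d$ the statement $\statement A(n,d,s,0,0,0)$ is subabundant exactly for $n\geq n^*(d):=\min\{n:s\leq\binom{n+d}{d}/(dn+1)\}$, and since $s_1(d)=\lfloor\binom{d+3}{3}/(3d+1)\rfloor$ we have $n^*(d)\geq 4$ if and only if $s>s_1(d)$.

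Assume now $d\geq 3$ and $n\geq 3$, and set $D=\max\{d:s>s_1(d)\}$; by monotonicity $\{d:s>s_1(d)\}=\{1,\dots,D\}$. For $d>D$ we have $s\leq s_1(d)$, so Corollary~\ref{S1Generalized} yields nondefectivity for all $n\geq 3$; only the finitely many degrees $3\leq d\leq D$ remain, and for each of them $n^*(d)\geq 4$. The argument for a fixed such $d$ has three parts. First, at $n=n^*(d)$ nondefectivity is already available: for $d\geq 4$ this is the top endpoint of range (ii) or of range (iii), and for $d=3$ it is either the top endpoint of range (i) or, when $s\geq s'_2(n^*(3))$, covered by Theorem~\ref{AboResult}; in every case Theorem~\ref{induction on n}, whose hypothesis $s(dn^*(d)+1)\leq\binom{n^*(d)+d}{d}$ holds by the definition of $n^*(d)$, propagates nondefectivity to all $n\geq n^*(d)$. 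Second, for $n=3$: if $s\geq s_2(d)$ then Theorem~\ref{nEqualsThreeResult} applies, and otherwise $s_1(d)<s<s_2(d)$, so by monotonicity of $s_2$ one has $\min\{d:s<s_2(d)\}\leq d\leq D$ and $\sigma_s(\Split_d(\PP^3))$ falls under case (iii). Third, for $4\leq n<n^*(d)$: when $d=3$, Theorem~\ref{AboResult} covers the $n$ with $s\geq s'_2(n)$, while for the remaining $n$ — those with $n\geq\min\{n:s<s'_2(n)\}$, where also $s>\binom{n+3}{3}/(3n+1)\geq s'_1(n)$ — the form $\sigma_s(\Split_3(\PP^n))$ falls under case (i); when $d\geq 4$ and $s\geq s_2(d)$ it falls under case (ii); when $d\geq 4$ and $s<s_2(d)$, hence $\min\{d:s<s_2(d)\}\leq d\leq D$, it falls under case (iii). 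Combining these parts over $3\leq d\leq D$ and $n\geq 3$, together with the boundary cases, covers every $(n,d)$ off the exceptional family; in particular Proposition~\ref{LargeN} is never invoked, since once a single subabundant case has been checked for a given $d$, Theorem~\ref{induction on n} supplies all larger $n$.

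The genuinely delicate part, and where I would take the most care, is the bookkeeping that confirms cases (i)--(iii) are exactly the residual set: that the window $[s_1(d),s_2(d)]$ (or $[s'_1(n),s'_2(n)]$ when $d=3$) together with the subabundant threshold $n^*(d)$ partitions the $(n,d)$-plane into a part handled by Corollary~\ref{S1Generalized}, Theorem~\ref{nEqualsThreeResult}, or Theorem~\ref{AboResult}, a part handled by the upward $n$-induction of Theorem~\ref{induction on n}, and a finite residue enumerated by (i)--(iii). Part of this is recognizing why $d=3$ must be isolated: for $d\geq 4$ there is no closed-form lower bound for $s$ of the sort that the $d=3$ clause of Theorem~\ref{AboResult} provides on the superabundant side, so for $4\leq n<n^*(d)$ one must either verify the case computationally (ranges (ii) and (iii)) or already have $s\geq s_2(d)$, so that $\Split_d(\PP^3)$ is in hand and the $n$-induction can start at $n^*(d)$. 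The remaining supporting facts — that $\{d:s>s_1(d)\}$, $\{d:s\geq s_2(d)\}$, and $\{n:s\leq\binom{n+d}{d}/(dn+1)\}$ are intervals of the indicated shape, and that the listed ranges are empty precisely when the corresponding region is already covered — all follow from the monotonicity statements above, after which the proof is a finite case analysis completed by the Macaulay2 computations in the appendix.
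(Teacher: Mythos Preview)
Your proof is correct and follows essentially the same approach as the paper's: reduce to $n\geq 3$ and $d\geq 3$ via the boundary results, use Corollary~\ref{S1Generalized} to bound $d$ above, then for each remaining $d$ use Theorem~\ref{induction on n} from the first subabundant $n$ upward and handle the superabundant side via Theorem~\ref{AboResult} (for $d=3$) and Theorem~\ref{nEqualsThreeResult} (for $n=3$), leaving exactly the finite residue (i)--(iii). Your version is more explicit than the paper's about the monotonicity bookkeeping and why the residue is precisely (i)--(iii); the one minor overreach is the assertion that $s_1(d)=\lfloor\binom{d+3}{3}/(3d+1)\rfloor$ for all $d$ (the paper only proves $s_1(d)\leq\binom{d+3}{3}/(3d+1)$), but this does not affect the argument since case~(iii) already begins at $n=3$.
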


\begin{proof}

The $n=1$ case is trivial and the $n=2$ case follows from Theorem \ref{AboResultNEqualsTwo}, so we may assume $n\geq 3$.

The linear case is trivial and the quadratic case follows from Proposition \ref{DEqualsTwo}.  For all $d$ such that $s\leq s_1(d)$ and $n\geq 3$, $\sigma_s(\Split_d(\PP^n))$ is nondefective by Corollary~\ref{S1Generalized}.  Consequently, we may assume $3\leq d\leq\max\{d:s>s_1(d)\}$.

Fix one such $d$.  By assumption, $\sigma_s(\Split_d(\PP^n))$ is nondefective for the smallest $n$ such that $s\leq\frac{1}{dn+1}\binom{n+d}{d}$.  By Theorem \ref{induction on n}, it is also true for all larger $n$.

It remains to check the superabundant cases.  If $d=3$, then $\sigma_s(\Split_d(\PP^n))$ is nondefective if $s\geq s'_2(n)$ by Theorem \ref{AboResult}.  If $s\geq s_2(d)$, then $\sigma_s(\Split_d(\PP^3))$ is nondefective by Theorem \ref{nEqualsThreeResult}.

The remaining cases are all true by assumption.
\end{proof}

\begin{thmn}[\ref{SUpperBound}]
If $s\leq 30$, then $\sigma_s(\Split_d(\PP^n))$ is nondefective for all $n,d\in\NN$ unless $d=2$ and $2\leq s\leq\frac{n}{2}$.
\end{thmn}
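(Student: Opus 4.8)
The plan is to apply Proposition~\ref{SmallSAlgorithm}. For any \emph{fixed} $s$, that proposition reduces Conjecture~\ref{Conjecture} (restricted to that value of $s$) to the nondefectivity of $\sigma_s(\Split_d(\PP^n))$ for the finitely many pairs $(n,d)$ in its three families~(i)--(iii); since the bounding functions $s_1$, $s_2$, $s'_2$ are explicit quadratics in one variable and $\binom{n+d}{d}/(dn+1)$ grows without bound in each of $n$ and $d$, those three lists are finite and can be written down mechanically. So it suffices to run this enumeration for each $s\in\{1,\ldots,30\}$ and to certify every pair on every resulting list.

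For $s=1$ and $s=2$ the reduction is essentially vacuous: $\sigma_1(\Split_d(\PP^n))=\Split_d(\PP^n)$ is always nondefective, and the $s=2$ case is classical (Propositions~\ref{LargeN} and~\ref{DEqualsTwo}). For $3\le s\le 30$ I would, for each $s$, spell out families (i)--(iii) explicitly --- e.g.\ for $s=11$ this is exactly the finite region depicted in Figure~\ref{SmallSExample} --- and then certify each pair $(n,d)$ on the list by the linear-algebra criterion of Proposition~\ref{DimSecantVariety}: choose pseudo-random tuples $\tuple f_1,\ldots,\tuple f_s\in R_1^d$, build a matrix whose rows span the subspace $\sum_{i=1}^s\sum_{j=1}^d\pi_j(\tuple f_i)R_1\subseteq R_d$, and compute its rank in Macaulay2 by row reduction (equivalently, a Gr\"{o}bner basis computation). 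If this rank equals $\min\{s(dn+1),\binom{n+d}{d}\}$ for one such choice, then by semicontinuity it equals that value for generic tuples, so $\sigma_s(\Split_d(\PP^n))$ is nondefective. To keep this feasible I would work over a finite field $\ZZ/(p)$ for a convenient prime $p$ and invoke Lemma~\ref{WorkOverFiniteFields}: a witness of the expected rank mod $p$ forces at least that rank in characteristic zero, and since the expected rank is also an a priori upper bound, one gets equality. Moreover, by the parenthetical remark following Corollary~\ref{S1Generalized}, once one subabundant pair for a given $d$ is certified, Theorem~\ref{induction on n} supplies all larger $n$, so in practice only the smallest admissible $n$ in each of families (ii) and (iii) needs a direct computation.

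Finally, assembling the pieces: Proposition~\ref{SmallSAlgorithm} yields nondefectivity of $\sigma_s(\Split_d(\PP^n))$ for all $(n,d)$ with the sole exception of $d=2$ and $n\ge 2s$; for $s\ge 2$ the condition ``$d=2$ and $n\ge 2s$'' is exactly ``$d=2$ and $2\le s\le\frac n2$'' (and in that range $\sigma_s(\Split_2(\PP^n))$ is genuinely defective by Proposition~\ref{DEqualsTwo}), while for $s=1$, $d=2$ the secant variety is nondefective as just noted. This is precisely the asserted statement for every $s\le 30$. The main obstacle is computational rather than conceptual: the matrices grow quickly with $s$ --- for $s=30$ one must go out to $d$ of order $\sqrt{18s}\approx 23$, where $\binom{n+d}{d}$ is already in the thousands --- so the whole argument rests on the finite-field reduction keeping the row reductions tractable, and one must also double-check that each inequality $s<\binom{n+d}{d}/(dn+1)$ used to close off a range is \emph{strict}, so that Theorem~\ref{induction on n} genuinely applies and no pair is silently skipped.
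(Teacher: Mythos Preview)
Your approach is correct and matches the paper's own proof: invoke Proposition~\ref{SmallSAlgorithm} to reduce each fixed $s\le 30$ to a finite list of pairs $(n,d)$, dispose of $s\le 2$ by known results, and certify the remaining cases for $3\le s\le 30$ computationally via Proposition~\ref{DimSecantVariety} over a finite field (this is exactly Macaulay2 Computation~\ref{SmallSM2}). One small correction: Theorem~\ref{induction on n} only propagates \emph{subabundant} nondefectivity upward in $n$, so for each $d$ in families (ii) and (iii) you must still directly verify every $n$ in the stated range---the superabundant cases with $n$ below the first subabundant value do not follow from the single endpoint check you suggest.
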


\begin{proof}
By Proposition \ref{SmallSAlgorithm}, to prove that $\sigma_s(\Split_d(\PP^n))$ is nondefective for all $n,d\in\NN$ (except for the known defective cases), we need only check that finitely many cases are nondefective, for example by using Proposition \ref{DimSecantVariety}.  This is done for $3\leq s\leq 30$ in Macaulay2 Computation \ref{SmallSM2}.  (The $s=1$ case is trivial and the $s=2$ case is known.  See the discussion at the beginning of this section.)
\end{proof}

\pagebreak

\section{Comparison of results}

We have seen three different upper bounds for $s$ such that $\sigma_s(\Split_d(\PP^n))$ is nondefective for $n\geq 4$ and $d\geq 3$:
\begin{enumerate}[(i)]
\item $s\leq\frac{1}{3}n+1$ (Proposition \ref{LargeN}),
\item $s\leq 2^{n-3}c(n,d)$ (Theorem \ref{ExponentialBound}), and
\item $s\leq s_1(d)$ (Theorem \ref{S1AndS2}).
\end{enumerate}

Note that, depending on the values of $n$ and $d$, any one of these bounds may be the best.  Informally, if $n\gg d$, then (i) is the best bound.  If $d\gg n$, then (ii) is the best bound.  Otherwise, (iii) is the best bound.  We illustrate this in Figure \ref{BoundComparison}.

\begin{figure}[!ht]
\centering
\input{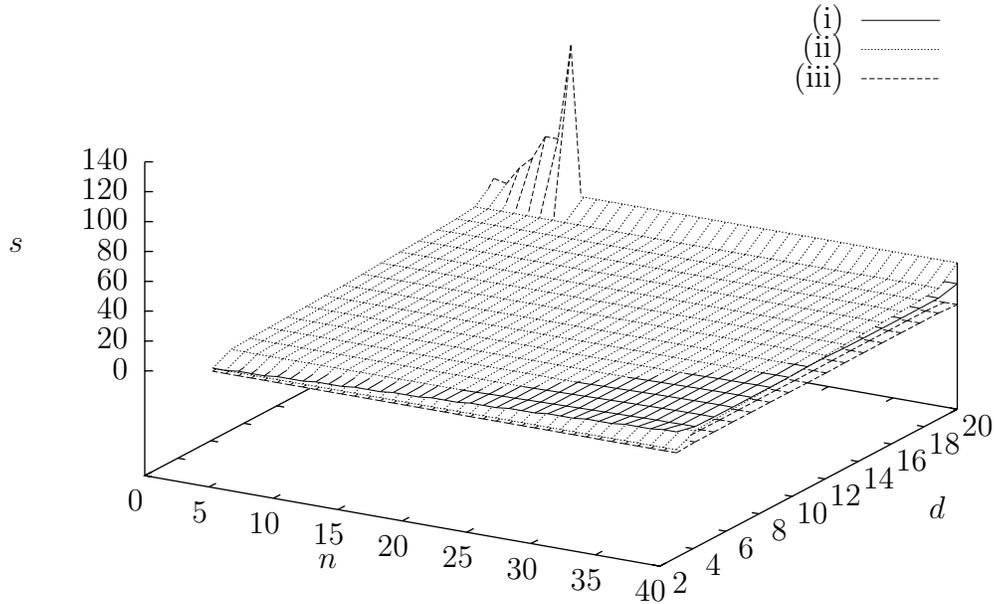}
\caption{Comparison of upper bounds on $s$ for $n\geq 4$ and $d\geq 3$\label{BoundComparison}}
\end{figure}

\pagebreak

\chapter{Future work}

\section{Complete the $n=3$ and $d=3$ cases}

Using Theorems \ref{RestrictionInductionFixedDimension} and \ref{RestrictionInductionFixedDegree}, it is theoretically possible to prove Conjecture \ref{Conjecture} for any fixed $n$ or $d$.  Indeed, if the function $s$ is chosen such that $s(d)=\left\lfloor\frac{\binom{n+d}{d}}{dn+1}\right\rfloor$ (in the subabundant case) or $s(d)=\left\lceil\frac{\binom{n+d}{d}}{dn+1}\right\rceil$ (in the superabundant case) for all $d$, then the result will follow (or, in the case of Theorem \ref{RestrictionInductionFixedDegree}, these identities hold for $s(n)$ for all $n$).  However, in practice, for large $n$ or $d$, we will need a large step size $\ell$ in order to define such a function.  Therefore, some of the base cases needed will require calculating the ranks of very large matrices, which may not be possible with the available computing power.

In this section, we provide functions which could be used, along with the required base cases, to complete the proof of Conjecture \ref{Conjecture} for $n=3$ and $d=3$

Define the following functions.
\begin{align*}
\underline s(d) &= \frac{1}{18}d^2+\frac{17}{54}d+\frac{a(d)}{27}\\
\overline s(d) &= \underline s(d)+1,
\end{align*}
where the values of $a(d)$ are given in Table \ref{ValuesOfA} for all $d=27t+r$, $t\in\ZZ$.

\begin{table}[h]
\centering
\begin{equation*}
\begin{array}{|r|r||r|r||r|r|}
\hline
r & a(27t+r) & r & a(27t+r) & r & a(27t+r) \\\hline
0 & 0   & 9  & -9  & 18 & -9 \\
1 & -10 & 10 & 8   & 19 & -1 \\
2 & 4   & 11 & -5  & 20 & 13 \\
3 & -12 & 12 & 6   & 21 & -3 \\
4 & -4  & 13 & -13 & 22 & 5 \\
5 & 1   & 14 & -8  & 23 & 10 \\
6 & 3   & 15 & -6  & 24 & 12 \\
7 & 2   & 16 & -7  & 25 & 11 \\
8 & -2  & 17 & -11 & 26 & 7 \\
\hline
\end{array}
\end{equation*}
\caption{Values of $a(d)$\label{ValuesOfA}}
\end{table}

\begin{lemma}\label{TheseAreTheCorrectFunctions}
If $d\in\NN\setminus\{1,3,13\}$, then $\underline s(d)=\left\lfloor\frac{\binom{d+3}{3}}{3d+1}\right\rfloor$.  If $d\in\NN$, then $\overline s(d)=\left\lceil\frac{\binom{d+3}{3}}{3d+1}\right\rceil$.
\end{lemma}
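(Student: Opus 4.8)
The idea is to reduce both equalities to a finite verification against Table~\ref{ValuesOfA}. First I would carry out the polynomial division
\begin{equation*}
\frac{\binom{d+3}{3}}{3d+1}=\frac{(d+1)(d+2)(d+3)}{6(3d+1)}=\frac{1}{18}d^2+\frac{17}{54}d+\frac{41}{81}+\frac{40}{81(3d+1)},
\end{equation*}
so that, setting $N(d)=(41-3a(d))(3d+1)+40$, subtraction of $\underline s(d)=\frac1{18}d^2+\frac{17}{54}d+\frac{a(d)}{27}$ gives
\begin{equation*}
\frac{\binom{d+3}{3}}{3d+1}-\underline s(d)=\frac{41-3a(d)}{81}+\frac{40}{81(3d+1)}=\frac{N(d)}{81(3d+1)}.
\end{equation*}

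The next step is to check that $\underline s(d)\in\ZZ$ for every $d\in\NN$. Clearing denominators, this is the statement $54\mid 3d^2+17d+2a(d)$. Since $3(d+27)^2+17(d+27)\equiv 3d^2+17d\pmod{54}$ (the two extra contributions are $27$ and $27$) and $a$ has period $27$, it suffices to verify this congruence for $d\equiv r\pmod{27}$, $r=0,\dots,26$, which is a direct computation from the table.

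Granting integrality, the identity $\underline s(d)=\big\lfloor\binom{d+3}{3}/(3d+1)\big\rfloor$ is equivalent to $0\le N(d)<81(3d+1)$. Positivity is immediate: the table gives $a(d)\le 13$, hence $41-3a(d)\ge 2$ and $N(d)>0$. The upper bound rearranges to $(40+3a(d))(3d+1)>40$; when $a(d)\ge -9$ this holds for all $d\ge 1$ because the left-hand side is then at least $13\cdot 4>40$. Scanning the table, the only residues with $a<-9$ are $r=1,3,13,17$ (with values $-10,-12,-13,-11$), and for these classes the inequality $(40+3a(d))(3d+1)>40$ fails exactly at $d=1$, $d=3$ and $d=13$. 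This establishes the floor statement; moreover we obtain $0<N(d)<81(3d+1)$ for every $d\notin\{1,3,13\}$, so $\binom{d+3}{3}/(3d+1)\notin\ZZ$ for such $d$.

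Finally, for the ceiling: for $d\notin\{1,3,13\}$ the quotient is not an integer, so its ceiling exceeds its floor by $1$, giving $\overline s(d)=\underline s(d)+1=\big\lceil\binom{d+3}{3}/(3d+1)\big\rceil$. For the three exceptional degrees one checks directly that $\binom{d+3}{3}/(3d+1)$ equals $1$, $2$, $14$ and that $\overline s(1)=1$, $\overline s(3)=2$, $\overline s(13)=14$. The only substantive labor is the two passes through Table~\ref{ValuesOfA} — the integrality check over the $27$ residue classes and the size analysis of $N(d)$ for the four classes with $a<-9$ — so the main obstacle is careful bookkeeping rather than any genuine difficulty; the conceptual content is just the polynomial division above.
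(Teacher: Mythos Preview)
Your proof is correct and follows the same overall strategy as the paper: perform the polynomial division of $\binom{d+3}{3}$ by $3d+1$, then reduce to a finite check over the $27$ residue classes modulo $27$. Where you diverge is in the execution of that finite check. The paper writes $d=27t+r$, introduces the auxiliary function $g(r)=\frac{1}{18}r^2+\frac{17}{54}r+\frac{41}{81}$, and tabulates for each $r$ the values of $g(r)$, $-f(r)+\lfloor g(r)\rfloor$, and a threshold $h(r)$ beyond which the remainder term $\frac{40/81}{3d+1}$ no longer affects the floor; this requires a second table (Table~\ref{ProofCalculations}). You instead subtract $\underline s(d)$ directly, package everything into the single quantity $N(d)=(41-3a(d))(3d+1)+40$, and reduce the floor statement to the pair of inequalities $0<N(d)<81(3d+1)$. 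The upper bound then becomes the clean criterion $(40+3a(d))(3d+1)>40$, which you dispose of uniformly when $a(d)\ge -9$ and by inspection for the four remaining residues. Your route avoids the second table entirely and makes transparent exactly why $d=1,3,13$ are the only exceptions; the paper's route is more mechanical but perhaps easier to verify by eye. Both arrive at the same finite verification against the $a(d)$ values, so the difference is one of packaging rather than substance.
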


\begin{proof}
First, note that
\begin{equation*}
\frac{\binom{d+3}{3}}{3d+1} = \frac{1}{18}d^2+\frac{17}{54}d+\frac{41}{81}+\frac{40/81}{3d+1}.\\
\end{equation*}

Suppose $d=27t+r$, where $r\in\{0,\ldots,26\}$.  Then we have
\begin{align*}
\frac{\binom{d+3}{3}}{3d+1} &= \frac{1}{18}(27t+r)^2+\frac{17}{54}(27t+r)+\frac{41}{81}+\frac{40/81}{3d+1}\\
&= \frac{81}{2}t^2+\left(3r+\frac{17}{2}\right)t+\frac{1}{18}r^2+\frac{17}{54}r+\frac{41}{81}+\frac{40/81}{3d+1}.
\end{align*}

Note that $\frac{81}{2}t^2+\left(3r+\frac{17}{2}\right)t\in\ZZ$ for all $t\in\ZZ$.  Let $f(d)=\frac{1}{18}d^2+\frac{17}{54}$.  Therefore, if $g(r)=\frac{1}{18}r^2+\frac{17}{54}r+\frac{41}{81}$, then
\begin{align*}
\left\lfloor\frac{\binom{d+3}{3}}{3d+1}\right\rfloor &= \frac{81}{2}t^2+\left(3r+\frac{17}{2}\right)t + \left\lfloor g(r)+\frac{40/81}{3d+1}\right\rfloor\\
&= f(d)-f(r)+\left\lfloor g(r)+\frac{40/81}{3d+1}\right\rfloor\\
\left\lceil\frac{\binom{d+3}{3}}{3d+1}\right\rceil &= f(d)-f(r)+\left\lceil g(r)+\frac{40/81}{3d+1}\right\rceil.
\end{align*}
Note that, for sufficiently large $d$, the $\frac{40/81}{3d+1}$ term will have no effect on the floor and ceiling functions, provided that $g(r)\not\in\ZZ$.  In this case, let $h(r)=\max\{d:\left\lfloor g(r)+\frac{40/81}{3d+1}\right\rfloor>\left\lfloor g(r)\right\rfloor\}$.  We see then that, if $d>h(r)$,
\begin{align*}
\left\lfloor\frac{\binom{d+3}{3}}{3d+1}\right\rfloor &= f(d)-f(r)+\left\lfloor g(r)\right\rfloor\\
\intertext{and}
\left\lceil\frac{\binom{d+3}{3}}{3d+1}\right\rceil &= f(d)-f(r)+\left\lfloor g(r)\right\rfloor+1.
\end{align*}

It remains to check that $g(r)$ is not an integer for all $r$ and to find the values of $-f(r)+\left\lfloor g(r)\right\rfloor$ and $h(r)$ for all $r$.  These may all be calculated simply and are shown in Table \ref{ProofCalculations}.

This proves the result for all $d\not\in\{1,3,13\}$.  For these three cases, it is easy to check that $\frac{\binom{d+3}{3}}{3d+1}\in\ZZ$, and that we have $\overline s(d)=\frac{\binom{d+3}{3}}{3d+1}$.
\end{proof}

\begin{table}[h]
\centering
\begin{equation*}
\begin{array}{|c|c|c|c||c|c|c|c|}
\hline
r & g(r) & -f(r)+\left\lfloor g(r)\right\rfloor & h(r) & r & g(r) & -f(r)+\left\lfloor g(r)\right\rfloor & h(r)\\ \hline
0 & 41/81 & 0 & 0 & 14 & 1280/81 & -8/27 & 0 \\
1 & 71/81 & -10/27 & 1 &15 & 1436/81 & -2/9 & 0 \\
2 & 110/81 & 4/27 & 0 & 16 & 1601/81 & -7/27 & 0 \\
3 & 158/81 & -4/9 & 3 & 17 & 1775/81 & -11/27 & 1 \\
4 & 215/81 & -4/27 & 0 & 18 & 1958/81 & 1/3 & 0 \\
5 & 281/81 & 1/27 & 0 & 19 & 2150/81 & -1/27 & 0 \\
6 & 356/81 & 1/9 & 0 & 20 & 2351/81 & 13/27 & 0 \\
7 & 440/81 & 2/27 & 0 & 21 & 2561/81 & -1/9 & 0 \\
8 & 533/81 & -2/27 & 0 & 22 & 2780/81 & 5/27 & 0 \\
9 & 635/81 & -1/3 & 0 & 23 & 3008/81 & 10/27 & 0 \\
10 & 746/81 & 8/27 & 0 & 24 & 3245/81 & 4/9 & 0 \\
11 & 866/81 & -5/27 & 0 & 25 & 3491/81 & 11/27 & 0 \\
12 & 995/81 & 2/9 & 0 & 26 & 3746/81 & 7/27 & 0 \\
13 & 1133/81 & -13/27 & 13 & & & &\\
\hline
\end{array}
\end{equation*}
\caption{Calculations for the proof of Lemma \ref{TheseAreTheCorrectFunctions}\label{ProofCalculations}}
\end{table}

Next, note that 
\begin{equation*}
3!\left(3\cdot\frac{1}{18}\right)=1
\end{equation*}
and so, by Lemma \ref{AnEquiabundant}, $\statement A_3(3,d,27,\underline s,0,0,0)$ and $\statement A_3(3,d,27,\overline s,0,0,0)$ are equiabundant for all $d\geq 82$, as are $\statement B_3(n,3,27,\underline s,0,0,0)$ and $\statement B_3(n,3,27,\overline s,0,0,0)$ for all $n\geq 82$.

It remains to check the base cases in order to apply Theorems \ref{RestrictionInductionFixedDimension} and \ref{RestrictionInductionFixedDegree}.  However, the matrices required to do this are very large.  For example, in order to check the truth of $\statement A_3(3,82,27,\underline s,0,0,0)$, one would need to calculate the rank of a 98,770$\times$98,770 matrix, which is beyond the computing power available to the author at the time of this writing.

\pagebreak

\section{Secant varieties of $\Seg(\PP^m\times\Split_d(\PP^n))$}

The \define{Segre map} $\Seg$ embeds the biprojective space $\PP^m\times\PP^n$ into $\PP^{(m+1)(n+1)-1}$ by $([u],[v])\mapsto[u\otimes v]$.  The image $\Seg(\PP^m\times\PP^n)$ is known as a \define{Segre variety}, and the dimensions of its secant varieties are well known, as $\sigma_s(\Seg(\PP^m\times\PP^n))$ corresponds to all of the $(m+1)\times(n+1)$ matrices of rank at most $s$.  (The more general question of the dimensions of $\sigma_s(\Seg(\PP^{n_1}\times\cdots\times\PP^{n_k}))$ has been the source of much active research.  See for example \cite{AOP,AladpooshHaghighi,CGG3,CGG2,CGG4}.)

One may study the secant varieties of $\Seg(\PP^m\times\Split_d(\PP^n))$ to attempt to solve the following variation of Waring's problem.

\begin{problem}
What is the smallest $s$ such that any $m+1$ generic polynomials in $n+1$ variables of degree $d$ may be written as linear combinations of at most $s$ of the same completely decomposable forms?
\end{problem}

As with Problems \ref{WaringPolynomials} and \ref{WaringCDF}, the strategy is to determine the smallest $s$ such that the secant variety $\sigma_s(\Seg(\PP^m\times\Split_d(\PP^n)))$ fills the ambient space.  We have
\begin{equation*}
\expdim\sigma_s(\Seg(\PP^m\times\Split_d(\PP^n)))=\min\left\{s(m+dn+1),(m+1)\binom{n+d}{d}\right\}-1
\end{equation*}

Therefore, provided that $\sigma_s(\Seg(\PP^m\times\Split_d(\PP^n)))$ is nondefective, it will fill the ambient space when
\begin{equation*}
s\geq\left\lceil\frac{(m+1)\binom{n+d}{d}}{m+dn+1}\right\rceil.
\end{equation*}

It remains to determine which cases are defective.  The following family is known to be defective (see \cite[Proposition 3.9]{BuczynskiLandsberg} or \cite[Theorem 3.4]{AboWan}).

\begin{proposition}\label{UnbalancedSegPmSplit}
Suppose $m>\binom{n+d}{d}-dn$.  Then $\sigma_s(\Seg(\PP^m\times\Split_d(\PP^n)))$ is defective if and only if $\binom{n+d}{d}-dn<s\leq\min\left\{m,\binom{n+d}{d}-1\right\}$.
\end{proposition}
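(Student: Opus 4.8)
The plan is to compute $\dim\sigma_s(\Seg(\PP^m\times X))$, where $X=\Split_d(\PP^n)\subset\PP^N$ with $N+1=\binom{n+d}{d}$, by Terracini's lemma, and then compare with the expected dimension. Recall that $\dim X=dn$ and that $X$ is nondegenerate, since the degree-$d$ monomials are completely decomposable and span $R_d$. Differentiating curves $([u(t)],[v(t)])$ on $\Seg(\PP^m\times X)$, as in Lemma~\ref{TangentToSplit}, gives $\widehat T_{[u\otimes v]}\Seg(\PP^m\times X)=\kk^{m+1}\otimes\langle v\rangle+u\otimes\widehat T_{[v]}X$, so for a generic point $[A]=[\sum_{i=1}^s u_i\otimes v_i]$ of $\sigma_s(\Seg(\PP^m\times X))$ (with $u_i\in\kk^{m+1}$ and $v_i\in\widehat X$ generic) Terracini's lemma (Lemma~\ref{TerracinisLemma}) yields
\[
\widehat T_{[A]}\sigma_s(\Seg(\PP^m\times X))=\kk^{m+1}\otimes W+\sum_{i=1}^s u_i\otimes\widehat T_{[v_i]}X,\qquad W:=\langle v_1,\dots,v_s\rangle .
\]

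The one nontrivial geometric ingredient will be a general-position statement about $X$: for generic $v_1,\dots,v_s\in\widehat X$ the span $W$ has dimension $\min\{s,N+1\}$, and for each $i$ the image $\overline T_i$ of $\widehat T_{[v_i]}X$ in $\kk^{N+1}/W$ has the expected dimension $\min\{dn,\max\{0,N+1-s\}\}$. I would prove this by the standard incremental argument for a nondegenerate variety: fix $v_i$, hence the subspace $\widehat T_{[v_i]}X$, and adjoin the remaining generic points one at a time, each one dropping the intersection dimension by one for as long as that is possible, because $X$ lies in no hyperplane through the span accumulated so far. This is the step I expect to demand the most care, and it is the analogue of the transversality lemmas in \cite{BuczynskiLandsberg} and \cite{AboWan}.

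Granting this, the rest is bookkeeping. Since the $u_i$ are generic they are independent for $s\leq m$, so the subspaces $u_i\otimes\overline T_i$ are in direct sum in $\kk^{m+1}\otimes(\kk^{N+1}/W)$; reducing the Terracini formula modulo $\kk^{m+1}\otimes W$ then gives, for $s\leq m$,
\[
\dim\sigma_s(\Seg(\PP^m\times X))+1=(m+1)\dim W+\sum_{i=1}^s\dim\overline T_i=(m+1)s+s\min\{dn,\,N+1-s\}
\]
(the last equality also using $s\leq N$). If $s\leq\binom{n+d}{d}-dn$ this equals $s(m+dn+1)$, the expected dimension (or it exceeds $(m+1)\binom{n+d}{d}$, in which case $\sigma_s$ fills the ambient space, again as expected), so $\sigma_s$ is nondefective. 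If $\binom{n+d}{d}-dn<s\leq\min\{m,\binom{n+d}{d}-1\}$ it equals $s\bigl(m+1+\binom{n+d}{d}-s\bigr)$, and one checks this is strictly smaller than $s(m+dn+1)$, by $s\bigl(s-(\binom{n+d}{d}-dn)\bigr)>0$, and strictly smaller than $(m+1)\binom{n+d}{d}$, by $(m+1-s)\bigl(\binom{n+d}{d}-s\bigr)>0$; hence $\sigma_s$ is defective.

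Finally I would dispatch the remaining ranges directly from the Terracini formula. If $s\geq\binom{n+d}{d}$, then $W=\kk^{N+1}$, so $\widehat T_{[A]}\sigma_s$ is the whole ambient space and $\sigma_s$ fills it; if $m<s<\binom{n+d}{d}$, then the $u_i$ span $\kk^{m+1}$ and each $\overline T_i$ is already all of $\kk^{N+1}/W$ (because $N+1-s<dn$ here), so again $\widehat T_{[A]}\sigma_s$ is everything and $\sigma_s$ fills. In both cases the hypothesis $m>\binom{n+d}{d}-dn$ forces $s(m+dn+1)\geq(m+1)\binom{n+d}{d}$, so filling the ambient space is exactly the expected behaviour and $\sigma_s$ is nondefective. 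Assembling these four ranges gives precisely the stated equivalence.
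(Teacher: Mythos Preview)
The paper does not prove this proposition; it is stated as known with citations to \cite{BuczynskiLandsberg} and \cite{AboWan}. Your approach via Terracini's lemma is exactly the standard one used in those references: the statement is the ``unbalanced'' case of a Segre product $\Seg(\PP^m\times X)$ with an arbitrary nondegenerate variety $X\subset\PP^N$, specialized to $X=\Split_d(\PP^n)$.

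Your argument is correct. The key geometric input is precisely the general-position claim you isolate, namely that for generic $v_1,\dots,v_s\in\widehat X$ one has $\dim\bigl(\widehat T_{[v_i]}X+\langle v_1,\dots,v_s\rangle\bigr)=\min\{dn+s,N+1\}$, and your incremental proof of it is valid because $\Split_d(\PP^n)$ is nondegenerate in $\PP^{\binom{n+d}{d}-1}$ (the monomials are completely decomposable and span $R_d$). The bookkeeping in the four ranges is also correct; one minor remark is that generic $u_1,\dots,u_s\in\kk^{m+1}$ are already independent for $s\leq m+1$, not just $s\leq m$, but since you treat $s=m+1$ in the ``$m<s$'' case this does not affect the conclusion.
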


Let $R$ be as usual and $S=\kk[y_0,\ldots,y_m]$.  Then, as in Proposition \ref{DimSecantVariety}, we may find the dimension of the secant variety by calculating the dimension of a vector space.

\begin{proposition}
For any generic $g_1,\ldots,g_s\in S_1$ and $\tuple f_1,\ldots,\tuple f_s\in R_1^{d}$,
\begin{equation*}
\dim\sigma_s(\Seg(\PP^m\times\Split_d(\PP^n)))=\dim\sum_{i=1}^s\left[S_1\otimes\prod\tuple f_i+\sum_{j=1}^dg_i\otimes\pi_j(\tuple f_i)R_1\right]-1.
\end{equation*}
\end{proposition}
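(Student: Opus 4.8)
The plan is to mimic the proof of Proposition \ref{DimSecantVariety}, replacing $\Split_d(\PP^n)$ by $\Seg(\PP^m\times\Split_d(\PP^n))$ and feeding the appropriate tangent-space computation into Terracini's lemma (Lemma \ref{TerracinisLemma}). First I would record the tangent space to a Segre product. Since the Segre embedding $\Seg\colon\PP S_1\times\PP R_d\to\PP(S_1\otimes R_d)$, $([g],[f])\mapsto[g\otimes f]$, is a closed embedding, the variety $\Seg(\PP^m\times\Split_d(\PP^n))$ is the image of $\PP^m\times\Split_d(\PP^n)$ under it, so for $[g]\in\PP^m$ and $[f]\in\Split_d(\PP^n)$ the affine cone of its tangent space at $[g\otimes f]$ is the image of $\widehat T_{[g]}\PP^m\times\widehat T_{[f]}\Split_d(\PP^n)=S_1\times\widehat T_{[f]}\Split_d(\PP^n)$ under the differential $(\dot g,\dot f)\mapsto\dot g\otimes f+g\otimes\dot f$ of $\Seg$; hence
\[
\widehat T_{[g\otimes f]}\Seg(\PP^m\times\Split_d(\PP^n))=S_1\otimes f+g\otimes\widehat T_{[f]}\Split_d(\PP^n).
\]
(One can also see this concretely by differentiating curves $t\mapsto(g+g't)\otimes h(t)$ with $h$ a curve in $\widehat{\Split_d}(\PP^n)$, together with a dimension count: the two summands meet only in the line $\langle g\otimes f\rangle$, so the sum has dimension $\dim\widehat T_{[f]}\Split_d(\PP^n)+(m+1)-1$, which matches $\dim\Seg(\PP^m\times\Split_d(\PP^n))+1$ at a generic point.)

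Next, I would specialize to $[f]=[\prod\tuple f_i]$ with $\tuple f_i\in R_1^d$ generic. By Lemma \ref{TangentToSplit}, $\widehat T_{[\prod\tuple f_i]}\Split_d(\PP^n)=\sum_{j=1}^d\pi_j(\tuple f_i)R_1$, so the tangent-space formula above becomes precisely the $i$-th summand appearing in the statement:
\[
\widehat T_{[g_i\otimes f_i]}\Seg(\PP^m\times\Split_d(\PP^n))=S_1\otimes\prod\tuple f_i+\sum_{j=1}^dg_i\otimes\pi_j(\tuple f_i)R_1.
\]

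Finally, I would apply Terracini's lemma in the form used for Proposition \ref{DimSecantVariety}: for generic points $q_1,\ldots,q_s$ of $\Seg(\PP^m\times\Split_d(\PP^n))$,
\[
\dim\sigma_s(\Seg(\PP^m\times\Split_d(\PP^n)))=\dim\sum_{i=1}^s\widehat T_{q_i}\Seg(\PP^m\times\Split_d(\PP^n))-1.
\]
A generic point of $\Seg(\PP^m\times\Split_d(\PP^n))$ has the form $[g_i\otimes f_i]$ for a generic point $([g_i],[f_i])$ of $\PP^m\times\Split_d(\PP^n)$, i.e.\ for generic $g_i\in S_1$ and generic $\tuple f_i\in R_1^d$ with $f_i=\prod\tuple f_i$; substituting the tangent-space formula yields the claimed equality. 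The only step needing any care — the ``hard part,'' such as it is — is justifying the tangent-space formula for the Segre product and, relatedly, that genericity of $q_i$ on $\Seg(\PP^m\times\Split_d(\PP^n))$ is equivalent to genericity of the pair $([g_i],[f_i])$, hence of $g_i$ and $\tuple f_i$ (so that the generic point is also a smooth point); both are standard consequences of the Segre map being an isomorphism onto its image, but they are what make the reduction to the displayed vector space legitimate.
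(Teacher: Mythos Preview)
Your proposal is correct and is exactly the argument the paper intends: the paper does not give a proof at all, only the remark ``as in Proposition \ref{DimSecantVariety}'' preceding the statement, and your write-up fills in precisely that analogy---the Segre tangent-space formula combined with Lemma \ref{TangentToSplit} and Terracini's lemma.
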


Based on a number of Macaulay2 computations, the author believes that it is reasonable to make the following conjecture.

\begin{conjecture}
If $m\geq 1$, then $\sigma_s(\Seg(\PP^m\times\Split_d(\PP^n)))$ is nondefective for all cases except those outlined in Proposition \ref{UnbalancedSegPmSplit}
\end{conjecture}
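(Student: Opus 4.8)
The plan is to reduce the computation of $\dim\sigma_s(\Seg(\PP^m\times\Split_d(\PP^n)))$ to the statements $\statement A(n,d,s',t',0,0)$ that the induction machinery of Chapter~3 is built to handle. Write $S_1=\langle y_0,\ldots,y_m\rangle$, so that $S_1\otimes R_d=\bigoplus_{k=0}^m y_k\otimes R_d$. First I would specialise the generic data in the vector-space formula above, taking each $g_i$ to be a basis vector $y_{c(i)}$ for an assignment $c\colon\{1,\ldots,s\}\to\{0,\ldots,m\}$ to be chosen later, while keeping the $\tuple f_i\in R_1^d$ generic. Then the $k$-th direct summand of the resulting specialisation of $\sum_{i=1}^s\bigl[S_1\otimes\prod\tuple f_i+\sum_{j=1}^d g_i\otimes\pi_j(\tuple f_i)R_1\bigr]$ is
\[
\sum_{i\,:\,c(i)=k}\widehat T_{[\prod\tuple f_i]}\Split_d(\PP^n)\;+\;\sum_{i\,:\,c(i)\neq k}\Span\{\prod\tuple f_i\},
\]
which, after matching $\pi$-maps via Lemma~\ref{TangentToSplit}, is a specialisation of $A(n,d,s_k,s-s_k,0,0)$, where $s_k=\#c^{-1}(k)$ --- i.e.\ the affine cone over $s_k$ double points and $s-s_k$ simple points of $\Split_d(\PP^n)$ in general position. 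Since
\[
\sum_{k=0}^m a(n,d,s_k,s-s_k,0,0)=\sum_{k=0}^m\bigl((dn+1)s_k+(s-s_k)\bigr)=s(m+dn+1),
\]
the decomposition is dimension-consistent with $\expdim\sigma_s(\Seg(\PP^m\times\Split_d(\PP^n)))$.

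Second, I would choose $c$ so that the multiset $\{s_0,\ldots,s_m\}$ is as balanced as possible: equal parts in the subabundant regime $s(m+dn+1)\le(m+1)\binom{n+d}{d}$, and otherwise letting some components saturate $R_d$. The defective family of Proposition~\ref{UnbalancedSegPmSplit} should emerge as exactly the obstruction to this balancing: when $m>\binom{n+d}{d}-dn$ and $\binom{n+d}{d}-dn<s\le\min\{m,\binom{n+d}{d}-1\}$ no assignment keeps every component subabundant without forcing others to overflow, so the direct-sum bound genuinely falls short; for all other $(m,n,d,s)$ I would check that the balanced assignment keeps each component inside the regime where $\statement A$ is expected to hold. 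Confirming that this dichotomy reproduces the stated interval exactly, with no spurious extra defective cases, is one of the two delicate points --- and in a bounded band of near-threshold $(m,s)$ it may be cleaner to fall back on direct computation via Proposition~\ref{DimSecantVariety}.

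Third --- the crux --- I would prove the required statements $\statement A(n,d,s_k,s-s_k,0,0)$. The splitting induction (Theorem~\ref{SplittingInductionTheorem}) and both restriction inductions (Theorems~\ref{RestrictionInductionFixedDimension} and~\ref{RestrictionInductionFixedDegree}) already carry the auxiliary parameters $t,u,v$, so one would run the same inductions with $t\neq0$, reducing to finitely many base cases to be verified in Macaulay2. Finally, semicontinuity upgrades the lower bound obtained from the special configuration to the generic one, yielding nondefectivity of $\sigma_s(\Seg(\PP^m\times\Split_d(\PP^n)))$.

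The main obstacle is this last step. The pure double-point statement $\statement A(n,d,s,0,0,0)$ is itself only Conjecture~\ref{Conjecture}, known just in the ranges of Theorems~\ref{S1AndS2}, \ref{ExponentialBound} and~\ref{SUpperBound}, so the mixed statements $\statement A(n,d,s,t,0,0)$ cannot be expected in full generality; the realistic target is the conjecture restricted to those $(n,d,s)$ for which the underlying $\Split_d(\PP^n)$ nondefectivity is already available in $A$-form, and even there the extra simple points shift the induction base cases and the step-size bookkeeping, so the relevant Macaulay2 computations must be redone. The secondary obstacle, as noted, is controlling the balancing near the unbalanced threshold so that the decomposition certifies nondefectivity rather than merely failing to contradict it, and so that exactly the family of Proposition~\ref{UnbalancedSegPmSplit} is recovered.
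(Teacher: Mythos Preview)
The statement you are trying to prove is labelled \emph{Conjecture} in the paper, and the paper does not prove it. Immediately after stating it, the author writes only that ``it would be relatively straightforward to adapt the methods of this dissertation for $\sigma_s(\Split_d(\PP^n))$ to $\sigma_s(\Seg(\PP^m\times\Split_d(\PP^n)))$ in order to provide a partial proof of this conjecture.'' There is therefore no proof in the paper to compare your proposal against.

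Your plan is in fact exactly the kind of adaptation the author has in mind: specialise the $g_i$ to basis vectors of $S_1$, decompose along $S_1\otimes R_d=\bigoplus_k y_k\otimes R_d$, and reduce each summand to a mixed double-point/simple-point statement $\statement A(n,d,s_k,s-s_k,0,0)$ amenable to the splitting and restriction inductions of Chapter~3. You have also correctly diagnosed the fundamental obstruction: since the underlying statement $\statement A(n,d,s,0,0,0)$ is itself only Conjecture~\ref{Conjecture}, proved in the paper only in the partial ranges of Theorems~\ref{S1AndS2}, \ref{ExponentialBound}, and~\ref{SUpperBound}, no argument built on top of it can yield the full conjecture for $\Seg(\PP^m\times\Split_d(\PP^n))$ --- at best a partial result in the corresponding ranges, which is precisely what the paper promises as future work. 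So your proposal is sound as a programme for a \emph{partial} proof, but it is not, and cannot be, a proof of the conjecture as stated.
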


It would be relatively straightforward to adapt the methods of this dissertation for $\sigma_s(\Split_d(\PP^n))$ to $\sigma_s(\Seg(\PP^m\times\Split_d(\PP^n)))$ in order to provide a partial proof of this conjecture.
\pagebreak

\addcontentsline{toc}{chapter}{References}
\renewcommand\bibname{References}

\bibliography{sources}{}

\begin{thebibliography}{10}

\bibitem{Abo}
H.~Abo.
\newblock Varieties of completely decomposable forms and their secants.
\newblock Preprint available at
  \url{http://www.webpages.uidaho.edu/~abo/Research/Papers/split.pdf}.

\bibitem{AOP}
H.~Abo, G.~Ottaviani, and C.~Peterson.
\newblock Induction for secant varieties of {S}egre varieties.
\newblock {\em Trans. Amer. Math. Soc.}, 361(2):767--792, 2009.

\bibitem{AboWan}
H.~Abo and J.~Wan.
\newblock On {W}aring's problem for systems of skew-symmetric forms.
\newblock Preprint available at
  \url{http://www.webpages.uidaho.edu/~abo/Research/Papers/SG1228.pdf}.

\bibitem{AladpooshHaghighi}
T.~Aladpoosh and H.~Haghighi.
\newblock On the dimension of higher secant varieties of {S}egre varieties
  {$\Bbb P^n\times\dots\times\Bbb P^n$}.
\newblock {\em J. Pure Appl. Algebra}, 215(5):1040--1052, 2011.

\bibitem{AlexanderHirschowitz}
J.~Alexander and A.~Hirschowitz.
\newblock Polynomial interpolation in several variables.
\newblock {\em J. Algebraic Geom.}, 4(2):201--222, 1995.

\bibitem{ArrondoBernardi}
E.~Arrondo and A.~Bernardi.
\newblock On the variety parameterizing completely decomposable polynomials.
\newblock {\em J. Pure Appl. Algebra}, 215(3):201--220, 2011.

\bibitem{BDD}
R.~Balasubramanian, J.-M. Deshouillers, and F.~Dress.
\newblock Probl\`eme de {W}aring pour les bicarr\'es. {I}. {S}ch\'ema de la
  solution.
\newblock {\em C. R. Acad. Sci. Paris S\'er. I Math.}, 303(4):85--88, 1986.

\bibitem{BrambillaOttaviani}
M.~C. Brambilla and G.~Ottaviani.
\newblock On the {A}lexander-{H}irschowitz theorem.
\newblock {\em J. Pure Appl. Algebra}, 212(5):1229--1251, 2008.

\bibitem{BuczynskiLandsberg}
J.~Buczy{\'n}ski and J.~M. Landsberg.
\newblock Ranks of tensors and a generalization of secant varieties.
\newblock {\em Linear Algebra Appl.}, 438(2):668--689, 2013.

\bibitem{CGG3}
M.~V. Catalisano, A.~V. Geramita, and A.~Gimigliano.
\newblock Ranks of tensors, secant varieties of {S}egre varieties and fat
  points.
\newblock {\em Linear Algebra Appl.}, 355:263--285, 2002.

\bibitem{CGG2}
M.~V. Catalisano, A.~V. Geramita, and A.~Gimigliano.
\newblock Higher secant varieties of the {S}egre varieties {$\Bbb
  P^1\times\dots\times\Bbb P^1$}.
\newblock {\em J. Pure Appl. Algebra}, 201(1-3):367--380, 2005.

\bibitem{CGG}
M.~V. Catalisano, A.~V. Geramita, and A.~Gimigliano.
\newblock Secant varieties of {G}rassmann varieties.
\newblock {\em Proc. Amer. Math. Soc.}, 133(3):633--642, 2005.

\bibitem{CGG4}
M.~V. Catalisano, A.~V. Geramita, and A.~Gimigliano.
\newblock Secant varieties of {$\Bbb P^1\times\dots\times\Bbb P^1$}
  ({$n$}-times) are not defective for {$n\geq 5$}.
\newblock {\em J. Algebraic Geom.}, 20(2):295--327, 2011.

\bibitem{DummitFoote}
D.~S. Dummit and R.~M. Foote.
\newblock {\em Abstract algebra}.
\newblock John Wiley \& Sons Inc., Hoboken, NJ, third edition, 2004.

\bibitem{EisenbudHarris}
D.~Eisenbud and J.~Harris.
\newblock {\em The geometry of schemes}, volume 197 of {\em Graduate Texts in
  Mathematics}.
\newblock Springer-Verlag, New York, 2000.

\bibitem{GKP}
R.~L. Graham, D.~E. Knuth, and O.~Patashnik.
\newblock {\em Concrete mathematics}.
\newblock Addison-Wesley Publishing Company, Reading, MA, second edition, 1994.
\newblock A foundation for computer science.

\bibitem{M2}
D.~R. Grayson and M.~E. Stillman.
\newblock Macaulay2, a software system for research in algebraic geometry.
\newblock Available at \url{http://www.math.uiuc.edu/Macaulay2/}.

\bibitem{Harris}
J.~Harris.
\newblock {\em Algebraic geometry}, volume 133 of {\em Graduate Texts in
  Mathematics}.
\newblock Springer-Verlag, New York, 1992.
\newblock A first course.

\bibitem{Hartshorne}
R.~Hartshorne.
\newblock {\em Algebraic geometry}.
\newblock Springer-Verlag, New York, 1977.
\newblock Graduate Texts in Mathematics, No. 52.

\bibitem{Hilbert}
D.~Hilbert.
\newblock Beweis f\"ur die {D}arstellbarkeit der ganzen {Z}ahlen durch eine
  feste {A}nzahl {$n^{ter}$} {P}otenzen ({W}aringsches {P}roblem).
\newblock {\em Math. Ann.}, 67(3):281--300, 1909.

\bibitem{Kempner}
A.~Kempner.
\newblock Bemerkungen zum {W}aringschen {P}roblem.
\newblock {\em Math. Ann.}, 72(3):387--399, 1912.

\bibitem{Lagrange}
J.-L. Lagrange.
\newblock D\'emonstration d'un th\'eoreme d'{A}rithm\'etique.
\newblock {\em Nouv. M\'em. Acad. Roy. Sc. de Berlin}, pages 123--133, 1770.

\bibitem{Landsberg}
J.~M. Landsberg.
\newblock {\em Tensors: geometry and applications}, volume 128 of {\em Graduate
  Studies in Mathematics}.
\newblock American Mathematical Society, Providence, RI, 2012.

\bibitem{Palatini}
F.~Palatini.
\newblock Sulla rappresentazione delle forme ternarie mediante la somma di
  potenze di forme lineari.
\newblock {\em Rom. Acc. L. Rend}, 12(5):378--384, 1903.

\bibitem{Shin2}
Y.~S. Shin.
\newblock Some applications of the union of star-configurations in $\mathbb
  {P}^n$.
\newblock {\em J. Chungcheong Math. Soc}, 24:807--824, 2011.

\bibitem{Shin}
Y.~S. Shin.
\newblock Secants to the variety of completely reducible forms and the
  {H}ilbert function of the union of star-configurations.
\newblock {\em J. Algebra Appl.}, 11(06):1250109, 2012.

\bibitem{Terracini}
A.~Terracini.
\newblock Sulle ${V}_k$ per cui la variet\`{a} degli ${S}_h$ $(h + 1)$-seganti
  ha dimensione minore dell'ordinario.
\newblock {\em Rend. Circ. Mat. Palermo}, 31:392--396, 1911.

\bibitem{Terracini2}
A.~Terracini.
\newblock Sulla rappresentazione delle forme quaternarie mediante somme di
  potenze di forme lineari.
\newblock {\em Atti della R. Acc. delle Scienze di Torino}, 51:107--117, 1916.

\bibitem{Waring}
E.~Waring.
\newblock {\em Meditationes algebraic\ae}.
\newblock American Mathematical Society, Providence, RI, 1991.
\newblock Translated from the Latin, edited and with a foreword by Dennis
  Weeks, With an appendix by Franz X. Mayer, translated from the German by
  Weeks.

\bibitem{Wieferich}
A.~Wieferich.
\newblock Beweis des {S}atzes, da\ss\ sich eine jede ganze {Z}ahl als {S}umme
  von h\"ochstens neun positiven {K}uben darstellen l\"a\ss t.
\newblock {\em Math. Ann.}, 66(1):95--101, 1908.

\end{thebibliography}
\bibliographystyle{abbrv}

\begin{appendices}
\chapter{Macaulay2 Code}
\label{M2Computations}

\section{Basic functions}

In this section, we outline the Macaulay2 functions used to check the truth and abundancy of the statement $\statement A_i(n,d,\ell,s,t,u,v)$, as outlined in Sections \ref{Definitions} and \ref{FixedDegree}

First, we implement the backward difference operator $\nabla$.

\lstset{columns=flexible,breaklines=true,basicstyle=\ttfamily}
\begin{lstlisting}
bd = (f,i,l) -> x -> sum(i+1,j->(-1)^j*binomial(i,j)*f(x-j*l))
\end{lstlisting}

Next, we have a function which chooses generic points in $V^{d}$ for some subspace $V\subset R_1$.  Note that we implement this by choosing random elements of an ideal $I$.

\begin{lstlisting}
genericLinearForms = (I,d) -> (
     R := ring I;
     apply(d,i->first flatten entries (gens I*random(R^(numgens I),R^1)))
     )
\end{lstlisting}

We then implement the map $\pi_j$.  (See Section \ref{TerraciniSection}.)

\begin{lstlisting}
p = (j,f) -> product remove(f,j-1)
\end{lstlisting}

By definition, $s$, $t$, $u$, and $v$ are functions.  However, in practice, in cases when $i=0$ (\textit{e.g.}, in Section \ref{SplittingInduction}), we need only their values at $d$.  We need a way to correctly interpret them as functions when they are given as numbers.

\begin{lstlisting}
convertToFunctions = (s,t,u,v) -> (
     if instance(s,Number) then (
	  s' := s;
	  s = x -> s';
	  );
     if instance(t,Number) then (
	  t' := t;
	  t = x -> t';
	  );
     if instance(u,Number) then (
	  u' := u;
	  u = x -> u';
	  );
     if instance(v,Number) then (
	  v' := v;
	  v = x -> v';
	  );
     (s,t,u,v)
     )
\end{lstlisting}

We then build the vector space $A_i(n,d,\ell,s,t,u,v)$ itself.  Note that it is implemented as an ideal.  As explained in Section \ref{ComputationalTechniques}, we work over a finite field.  Also note that it is not given exactly as defined, but rather an optimized version is given as suggested by the proof of Lemma \ref{AiExpdim}

\begin{lstlisting}
A = (i,n,d,l,s,t,u,v) -> (
     (s,t,u,v) = convertToFunctions(s,t,u,v);
     R := ZZ/32003[x_0..x_n];
     M := ideal vars R;
     scan(toList(1..i),j->g_j=genericLinearForms(M,l));
     scan(toList(1..i),j->
	  scan(toList(1..(bd(s,i-1,l))(d-l)),k->
	       f_(j,k)=genericLinearForms(M,d-l)
	       )
	  );
     scan(toList(1..i),j->
	  scan(toList(1..(bd(u,i-1,l))(d-l)),k->
	       f''_(j,k)=genericLinearForms(M,d-l+1)
	       )
	  );
     scan(toList(1..(bd(s,i,l))(d)),j->
	  f_j = genericLinearForms(M,d)
	  );
     scan(toList(1..(bd(t,i,l))(d)),j->
	  f'_j = genericLinearForms(M,d+1)
	  );
     scan(toList(1..(bd(u,i,l))(d)),j->
	  f''_j = genericLinearForms(M,d+1)
	  );
     scan(toList(1..(bd(v,i,l))(d)),j->
	  f'''_j = genericLinearForms(M,d)
	  );
     A := ideal 0_R
     	  + sum(toList(1..i),j->ideal (product g_j*basis(d-l,R)))
     	  + sum(toList(1..i),j->
	       sum(toList(1..(bd(s,i-1,l))(d-l)),k->
	       	    sum(toList((d-l+1)..d),m->ideal (p(m,f_(j,k)|g_j)*genericLinearForms(M,n)))))
	  + sum(toList(1..i),j->
	       sum(toList(1..(bd(u,i-1,l))(d-l)),k->
	       	    sum((d-l+2)..(d+1),m->ideal p(m,f''_(j,k)|g_j))))
     	  + sum(toList(1..(bd(s,i,l))(d)),j->
	       sum(toList(1..d),m->ideal (p(m,f_j)*vars R)))
     	  + sum(toList(1..(bd(t,i,l))(d)),j->ideal p(1,f'_j))
     	  + sum(toList(1..(bd(u,i,l))(d)),j->
	       sum(toList(1..(d+1)),m->ideal p(m,f''_j)))
     	  + sum(toList(1..(bd(v,i,l))(d)),j->ideal (p(1,f'''_j)*vars R));
     trim A
     )
\end{lstlisting}

Finally, the function $a_i(n,d,\ell,s,t,u,v)$ is given, as is the statement $\statement A_i(n,d,\ell,s,t,\allowbreak u,v)$ and its abundancy.

\begin{lstlisting}
a = (i,n,d,l,s,t,u,v) -> (
     (s,t,u,v) = convertToFunctions(s,t,u,v);
     sum(toList(1..i),j->(-1)^(j-1)*binomial(i,j)*binomial(n+d-j*l,d-j*l))
     	  + i*l*n*(bd(s,i-1,l))(d-l)+i*l*(bd(u,i-1,l))(d-l)
	  + (d*n+1)*(bd(s,i,l))(d) + (bd(t,i,l))(d)
	  + (d+1)*(bd(u,i,l))(d) + (n+1)*(bd(v,i,l))(d)
     )

statementA = (i,n,d,l,s,t,u,v) -> 
     numgens A(i,n,d,l,s,t,u,v) ==
     	  min{a(i,n,d,l,s,t,u,v),binomial(n+d,d)}

subabundantA = (i,n,d,l,s,t,u,v) ->
     a(i,n,d,l,s,t,u,v) <= binomial(n+d,d)

superabundantA = (i,n,d,l,s,t,u,v) ->
     a(i,n,d,l,s,t,u,v) >= binomial(n+d,d)
\end{lstlisting}

\pagebreak
\section{Restriction Induction}

The following function checks the base cases for Theorem \ref{RestrictionInductionFixedDimension}.

\begin{lstlisting}
restrictionInductionFixedDimension = (n,l,s,t,u,v) -> (
     (all(toList(1..(l*n+1)),d->subabundantA(floor((d-1)/l),n,d,l,s,t,u,v))     
	  or all(toList(1..(l*n+1)),d->superabundantA(floor((d-1)/l),n,d,l,s,t,u,v)))
     	  and all(toList(1..(l*n+1)),d->(
		    print("checking d = "|toString d);
		    time statementA(floor((d-1)/l),n,d,l,s,t,u,v))
	       )
     )
\end{lstlisting}

\begin{computation}\label{NEqualsThreeM2}
We check the base cases for Theorem \ref{nEqualsThreeResult}.

\begin{lstlisting}
i18 : s1 = d -> (
     r := d % 9;
     sub(1/18*d^2 + 5/18*d + {0,2/3,2/9,2/3,0,2/9,1/3,1/3,2/9}_r,ZZ)
     )
                  
o18 = s1

o18 : FunctionClosure

i19 : restrictionInductionFixedDimension(3,9,s1,0,0,0)
checking d = 1
     -- used 0.0029162 seconds
checking d = 2
     -- used 0.00313631 seconds
checking d = 3
     -- used 0.00570147 seconds
checking d = 4
     -- used 0.00938839 seconds
checking d = 5
     -- used 0.0242013 seconds
checking d = 6
     -- used 0.0647426 seconds
checking d = 7
     -- used 0.158475 seconds
checking d = 8
     -- used 0.355998 seconds
checking d = 9
     -- used 0.750897 seconds
checking d = 10
     -- used 1.64098 seconds
checking d = 11
     -- used 3.00147 seconds
checking d = 12
     -- used 5.71705 seconds
checking d = 13
     -- used 11.5449 seconds
checking d = 14
     -- used 17.1712 seconds
checking d = 15
     -- used 30.4635 seconds
checking d = 16
     -- used 50.1195 seconds
checking d = 17
     -- used 80.062 seconds
checking d = 18
     -- used 122.41 seconds
checking d = 19
     -- used 127.514 seconds
checking d = 20
     -- used 185.234 seconds
checking d = 21
     -- used 254.379 seconds
checking d = 22
     -- used 354.713 seconds
checking d = 23
     -- used 477.995 seconds
checking d = 24
     -- used 640.095 seconds
checking d = 25
     -- used 821.756 seconds
checking d = 26
     -- used 1074.18 seconds
checking d = 27
     -- used 1423.75 seconds
checking d = 28
     -- used 780.115 seconds

o19 = true

i20 : s2'' = d -> (
     r := d % 9;
     sub(1/18*d^2 + 7/18*d + {0,5/9,1,1/3,5/9,2/3,2/3,5/9,1/3}_r,ZZ)
     )

                  
o20 = s2''

o20 : FunctionClosure

i21 : restrictionInductionFixedDimension(3,9,s2'',0,0,0)
checking d = 1
     -- used 0.00279336 seconds
checking d = 2
     -- used 0.00361807 seconds
checking d = 3
     -- used 0.00534276 seconds
checking d = 4
     -- used 0.0124839 seconds
checking d = 5
     -- used 0.0326489 seconds
checking d = 6
     -- used 0.0816455 seconds
checking d = 7
     -- used 0.19598 seconds
checking d = 8
     -- used 0.430366 seconds
checking d = 9
     -- used 0.879879 seconds
checking d = 10
     -- used 1.84921 seconds
checking d = 11
     -- used 5.3462 seconds
checking d = 12
     -- used 6.34763 seconds
checking d = 13
     -- used 13.1411 seconds
checking d = 14
     -- used 21.5793 seconds
checking d = 15
     -- used 34.5114 seconds
checking d = 16
     -- used 59.0605 seconds
checking d = 17
     -- used 90.318 seconds
checking d = 18
     -- used 136.225 seconds
checking d = 19
     -- used 139.097 seconds
checking d = 20
     -- used 201.146 seconds
checking d = 21
     -- used 273.597 seconds
checking d = 22
     -- used 374.172 seconds
checking d = 23
     -- used 496.67 seconds
checking d = 24
     -- used 655.896 seconds
checking d = 25
     -- used 872.895 seconds
checking d = 26
     -- used 1112.72 seconds
checking d = 27
     -- used 1430.58 seconds
checking d = 28
     -- used 781.155 seconds

o21 = true
\end{lstlisting}
\end{computation}

\begin{computation}\label{TildeSM2}
We check the base cases for Theorem \ref{TildeSBound}.

\begin{lstlisting}
i11 : sTilde = d -> (
     r := d % 6;
     sub(1/24*d^2 + {1/12*d,1/6*d-5/24,1/12*d-1/3,1/6*d+1/8,1/12*d,1/6*d+1/8}_r,ZZ)
     )

                  
o11 = sTilde

o11 : FunctionClosure

i12 : restrictionInductionFixedDimension(3,6,sTilde,sTilde,sTilde,sTilde)
checking d = 1
     -- used 0.0174115 seconds
checking d = 2
     -- used 0.0122132 seconds
checking d = 3
     -- used 0.0220609 seconds
checking d = 4
     -- used 0.0298028 seconds
checking d = 5
     -- used 0.0777791 seconds
checking d = 6
     -- used 0.135792 seconds
checking d = 7
     -- used 0.456312 seconds
checking d = 8
     -- used 0.759432 seconds
checking d = 9
     -- used 1.9626 seconds
checking d = 10
     -- used 3.47087 seconds
checking d = 11
     -- used 7.7691 seconds
checking d = 12
     -- used 12.5589 seconds
checking d = 13
     -- used 18.2089 seconds
checking d = 14
     -- used 26.8096 seconds
checking d = 15
     -- used 43.7815 seconds
checking d = 16
     -- used 61.4002 seconds
checking d = 17
     -- used 91.766 seconds
checking d = 18
     -- used 125.668 seconds
checking d = 19
     -- used 83.8318 seconds

o12 = true
\end{lstlisting}
\end{computation}

\pagebreak
\section{Small $s$}

The following functions check the bases cases for a given $s$ from Theorem \ref{SmallSAlgorithm}.
\begin{lstlisting}
s1 = d -> (
     r := d % 9;
     sub(1/18*d^2 + 5/18*d + {0,2/3,2/9,2/3,0,2/9,1/3,1/3,2/9}_r,ZZ)
     )
s2 = d -> (
     if d % 3 == 0 then (
	  r := sub((d % 6)/3,ZZ);
	  sub(1/18*d^2+1/3*d + {1,1/2}_r,ZZ)
	  )
     else (
	  r = d % 9;
	  sub(1/18*d^2+7/18*d+{0,5/9,1,0,5/9,2/3,0,5/9,1/3}_r,ZZ)
	  )
     )
s2' = d -> (
     r := d % 6;
     sub(1/18*d^2+{1/3*d+1,7/18*d+14/9,4/9*d+8/9,1/3*d+1/2,7/8*d+5/9,4/9*d+7/18}_r,ZZ)
     )
smallS = s -> (
     ndPairs = {};
     d := 3;
     while s > s1(d) do (
	  if d == 3 then (
	       n := 1;
	       while s > 1/(3*(n-1)+1)*binomial(n+2,3) do (
		    if s < s2'(n) then ndPairs = append(ndPairs,(n,d));
		    n = n + 1
		    )
	       )
	  else if s >= s2(d) then (
	       n = 4;
	       while s > 1/(d*(n-1)+1)*binomial(n+d-1,d) do (
		    ndPairs = append(ndPairs,(n,d));
		    n = n + 1
		    )
	       )
	  else (
	       n = 3;
	       while s > 1/(d*(n-1)+1)*binomial(n+d-1,d) do (
		    ndPairs = append(ndPairs,(n,d));
		    n = n + 1
		    )
	       );    
	  d = d + 1
	  );
     	  all(ndPairs,(n,d)->statementA(0,n,d,0,s,0,0,0))
     )
\end{lstlisting}

\begin{computation}\label{SmallSM2}
In this computation, we run \texttt{smallS} for each $s$ starting at $s=3$ and continuing until the computer runs out of memory.  This forms the proof of Theorem \ref{SUpperBound}.

\begin{lstlisting}
i15 : s = 3
o15 = 3
i16 : while true do (
           print("checking s = "|toString s);
           print("     "|toString time smallS s);
           s = s + 1
           )
checking s = 3
     -- used 0.053642 seconds
     true
checking s = 4
     -- used 0.181827 seconds
     true
checking s = 5
     -- used 0.621823 seconds
     true
checking s = 6
     -- used 1.7983 seconds
     true
checking s = 7
     -- used 5.56529 seconds
     true
checking s = 8
     -- used 12.7186 seconds
     true
checking s = 9
     -- used 18.3932 seconds
     true
checking s = 10
     -- used 40.9189 seconds
     true
checking s = 11
     -- used 83.181 seconds
     true
checking s = 12
     -- used 102.282 seconds
     true
checking s = 13
     -- used 194.171 seconds
     true
checking s = 14
     -- used 230.404 seconds
     true
checking s = 15
     -- used 453.733 seconds
     true
checking s = 16
     -- used 897.747 seconds
     true
checking s = 17
     -- used 962.889 seconds
     true
checking s = 18
     -- used 1664.02 seconds
     true
checking s = 19
     -- used 1855.09 seconds
     true
checking s = 20
     -- used 3448.47 seconds
     true
checking s = 21
     -- used 3700.53 seconds
     true
checking s = 22
     -- used 6155.73 seconds
     true
checking s = 23
     -- used 6798.77 seconds
     true
checking s = 24
     -- used 7599.76 seconds
     true
checking s = 25
     -- used 12280.4 seconds
     true
checking s = 26
     -- used 13260.5 seconds
     true
checking s = 27
     -- used 20752.3 seconds
     true
checking s = 28
     -- used 22647.2 seconds
     true
checking s = 29
     -- used 23984.9 seconds
     true
checking s = 30
     -- used 37101.7 seconds
     true
\end{lstlisting}
\end{computation}

\end{appendices}

\end{document}